\theoremstyle{plain}
\newtheorem{Lemma}{Lemma}
\newtheorem{Thm}[Lemma]{Theorem}
\newtheorem*{Thm*}{Theorem}
\newtheorem{Prop}[Lemma]{Proposition}
\newtheorem{Cor}[Lemma]{Corollary}
\theoremstyle{definition}
\newtheorem{Defn}[Lemma]{Definition}
\newtheorem{Example}[Lemma]{Example}
\theoremstyle{remark}
\newtheorem{Remark}[Lemma]{Remark}
\newcommand{\BBB}{\mathscr{B}}
\newcommand{\CCC}{\mathscr{C}}
\newcommand{\FFF}{\mathcal{F}}
\newcommand{\OOO}{\mathcal{O}}
\newcommand{\PPP}{\mathscr{P}}
\newcommand{\UUU}{\mathscr{U}}
\newcommand{\VVV}{\mathscr{V}}
\newcommand{\WWW}{\mathscr{W}}
\newcommand{\XXX}{\mathscr{X}}
\newcommand{\YYY}{\mathscr{Y}}
\newcommand{\Fb}{\mathfrak{b}}
\newcommand{\Fm}{\mathfrak{m}}
\newcommand{\DD}{{\mathbb{D}}}
\newcommand{\FF}{{\mathbb{F}}}
\newcommand{\GG}{{\mathbb{G}}}
\newcommand{\QQ}{{\mathbb{Q}}}
\newcommand{\ZZ}{{\mathbb{Z}}}
\newcommand{\Ga}{{\mathbb{G}_a}}
\newcommand{\Gm}{{\mathbb{G}_m}}
\DeclareMathOperator{\Aff}{Aff}
\DeclareMathOperator{\Aut}{Aut}
\DeclareMathOperator{\BT}{BT}
\DeclareMathOperator{\Coker}{Coker}
\DeclareMathOperator{\Dieud}{Dieud}
\DeclareMathOperator{\Ext}{Ext}
\DeclareMathOperator{\Hom}{Hom}
\DeclareMathOperator{\GL}{GL}
\DeclareMathOperator{\Image}{Im}
\DeclareMathOperator{\Isom}{Isom}
\DeclareMathOperator{\Ker}{Ker}
\DeclareMathOperator{\Lie}{Lie}
\DeclareMathOperator{\Spec}{Spec}
\DeclareMathOperator{\Spf}{Spf}
\DeclareMathOperator{\uAut}{\underline{Aut}}
\DeclareMathOperator{\uHom}{\underline{Hom}}
\DeclareMathOperator{\uIsom}{\underline{Isom}}
\DeclareMathOperator{\uLie}{\underline{Lie}}
\DeclareMathOperator{\disp}{disp}
\DeclareMathOperator{\divv}{div}
\DeclareMathOperator{\form}{f.}
\DeclareMathOperator{\formgr}{f.g.}
\DeclareMathOperator{\grp}{grp}
\DeclareMathOperator{\id}{id}
\DeclareMathOperator{\ndisp}{n.disp}
\DeclareMathOperator{\per}{per}
\DeclareMathOperator{\predisp}{pre-disp}
\DeclareMathOperator{\red}{red}
\DeclareMathOperator{\univ}{univ}
\newcommand{\BTst}{\mathscr{B}\!\mathscr{T}}
\newcommand{\Disp}{\mathscr{D}isp}
\newcommand{\dispC}{{\disp}{}}
\newcommand{\predispC}{{\predisp}{}}
\newcommand{\ndispC}{{\ndisp}{}}
\newcommand{\fgC}{{\formgr}}
\newcommand{\pgrpC}{p\text{-}{\grp}}
\newcommand{\pdivC}{p\text{-}{\divv}{}}
\newcommand{\fpdivC}{{\form}\text{~$p$-}{\divv}}
\newcommand{\DieudC}{{\Dieud}{}}
\begin{document}

\title{Smoothness of the truncated display functor}
\author{Eike Lau}
\address{Fakult\"{a}t f\"{u}r Mathematik,
Universit\"{a}t Bielefeld, D-33501 Bielefeld}
\email{lau@math.uni-bielefeld.de}

\begin{abstract}
We show that to every $p$-divisible group over a $p$-adic ring 
one can associate a display by crystalline Dieudonn\'e theory. 
For an appropriate notion of truncated displays, this induces
a functor from truncated Barsotti-Tate groups to truncated
displays, which is a smooth morphism of smooth algebraic stacks.
As an application we obtain a new proof of the equivalence
between infinitesimal $p$-divisible groups and nilpotent
displays over $p$-adic rings, and a new proof of the equivalence
due to Berthelot and Gabber
between commutative finite flat group schemes of $p$-power order 
and Dieudonn\'e modules over perfect rings.
\end{abstract}

\maketitle

\section*{Introduction}

\renewcommand{\theLemma}{\Alph{Lemma}}

The notion of displays over $p$-adic rings arises naturally 
both in Cartier theory and in crystalline Dieudonn\'e theory. 

In Cartier theory, displays are a categorised form of
structure equations of Cartier modules of formal Lie 
groups. This is the original
perspective in \cite{Zink-Disp}. Passing from a
structure equation to the module corresponds to Zink's
functor $\BT$ from displays to formal Lie groups,
which induces an equivalence between nilpotent
displays and $p$-divisible formal groups by
\cite{Zink-Disp,Lau-Disp}. The theory includes
at its basis a description of the Dieudonn\'e crystal
of a $p$-divisible group $\BT(\PPP)$ in terms of the 
nilpotent display $\PPP$. We view this
as a passage from Cartier theory to crystalline
Dieudonn\'e theory.

On the other hand, let $G$ be a $p$-divisible group over 
a $p$-adic ring $R$, and let $D$ be the covariant Dieudonn\'e 
crystal of $G$. 
It is well-known that the Frobenius of $D$ restricted 
to the Hodge filtration is divisible by $p$.
If the ring of Witt vectors $W(R)$ has no $p$-torsion, this gives a
natural display structure on the value of $D$ on $W(R)$. 
We show that this construction extends in a 
unique way to a functor from $p$-divisible groups to displays 
over an arbitrary $p$-adic ring $R$
$$
\Phi_R:(\pdivC/R)\to(\dispC/R).
$$
The proof uses that the stacks of
truncated $p$-divisible groups are smooth algebraic stacks 
with smooth transition morphisms by \cite{Illusie-BT}, 
which implies that in a universal case the 
ring of Witt vectors has no $p$-torsion.

There is a natural notion of truncated displays over
rings of characteristic $p$. While a display is given
by an invertible matrix over $W(R)$ if a suitable basis
of the underlying module is fixed, a truncated display
is given by an invertible matrix over the truncated
Witt ring $W_n(R)$ for a similar choice of basis.
The functors $\Phi_R$ induce functors
from truncated Barsotti-Tate (BT) groups to truncated
displays of the same level
$$
\Phi_{n,R}:(\pdivC_n/R)\to(\dispC_n/R).
$$
For varying rings $R$
of characteristic $p$ they induce a morphism from
the stack of truncated BT groups of level $n$ to
the stack of truncated displays of level $n$, which
we denote by
$$
\phi_n:\BTst_n\to\Disp_n.
$$
The following is the central result of this article.

\begin{Thm}
\label{Th-A}
The morphism $\phi_n$ is a smooth morphism of smooth 
algebraic stacks over $\FF_p$, which is an equivalence
on geometric points.
\end{Thm}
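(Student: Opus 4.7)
The plan is to split the theorem into three parts: smoothness of the target $\Disp_n$, the equivalence on geometric points, and smoothness of the morphism $\phi_n$ itself. Smoothness of the source $\BTst_n$ is the content of \cite{Illusie-BT} and is taken as input.

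First I would give an explicit groupoid presentation of $\Disp_n$. A truncated display of level $n$ and fixed height $h$ equipped with a prescribed normal decomposition is essentially an invertible matrix in $\GL_h(W_n(R))$ of a particular block form, so the stack of framed truncated displays is a smooth affine $\FF_p$-scheme, and the change-of-framing groupoid is representable by a smooth affine group scheme. Smoothness of the Witt ring scheme $W_n$ over $\FF_p$ is the essential input. This presents $\Disp_n$ as a smooth algebraic stack and supplies a convenient smooth atlas on which the smoothness of $\phi_n$ can be tested.

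Second, for the equivalence on geometric points, over an algebraically closed field $k$ of characteristic $p$ I would identify both groupoids with the classical groupoid of truncated Dieudonn\'e modules of level $n$ over $k$. On the BT side this is classical Dieudonn\'e theory; on the display side it follows at once from the matrix description, since over a perfect field the display structure is literally the data of a truncated Dieudonn\'e module. That the two identifications are intertwined by $\phi_n$ is built into the construction of $\Phi_R$ via the Hodge-filtered Frobenius of the Dieudonn\'e crystal.

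The main obstacle is smoothness of $\phi_n$, which I would attack by the infinitesimal lifting criterion. Given a square-zero surjection $R\to R_0$ of $\FF_p$-algebras, a truncated BT group $G_0/R_0$, and a lift $\PPP$ to $R$ of its truncated display $\Phi_{n,R_0}(G_0)$, I must produce a lift $G/R$ of $G_0$ whose truncated display matches $\PPP$. Using smoothness of $\BTst_n$ one first chooses some lift $\widetilde{G}/R$; the displays of $\widetilde{G}$ and $\PPP$ over $R$ then agree modulo the kernel and differ by a deformation of the Hodge filtration inside the value of the Dieudonn\'e crystal on $W_n(R)$. Grothendieck--Messing theory, applied to the divided-power thickening coming from Witt vectors, converts such a deformation of the Hodge filtration into a deformation of the $p$-divisible group; the compatibility with $\phi_n$ then reduces, via the untruncated functor $\Phi_R$, to a statement about $p$-divisible groups and their crystalline Dieudonn\'e theory in which both sides are read off the same crystal. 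Combined with the equivalence on geometric points this forces the relative dimension to vanish, so $\phi_n$ is in fact \'etale, although only the weaker smoothness is stated for use in the sequel.
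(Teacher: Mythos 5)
Your treatment of the first two parts matches the paper: smoothness of $\Disp_n$ via the block-matrix atlas is Proposition \ref{Pr-Disp-n-smooth}, and the equivalence on geometric points via truncated Dieudonn\'e modules over a perfect field is Lemma \ref{Le-Dieud-disp-field} together with Remark \ref{Re-Phi-n-perf}. The gap is in the third step. Your lifting argument rests on two identifications: that a lift of $\Phi_{n,R_0}(G_0)$ to $R$ amounts to a lift of the Hodge filtration in the value of $\DD(G_0)$ on a Witt-vector thickening, and that Grothendieck--Messing for that thickening converts such a lift into a lift of $G_0$. Both are rigidity statements for a divided power thickening whose divided powers are \emph{not} nilpotent (the ideal $I_R\subset W(R)$), and they hold only for infinitesimal or unipotent groups, resp.\ nilpotent or $F$-nilpotent displays; see Remark \ref{Re-Groth-Messing} and \cite[Thm.~44, Prop.~45]{Zink-Disp}. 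At a point where $G_0$ has both a nontrivial \'etale quotient and a nontrivial multiplicative part (e.g.\ an ordinary group), neither the crystal of $G_0$ nor the underlying $F$-$V$-data of its display lifts canonically along the Witt thickening, so the statement that your two lifts ``differ by a deformation of the Hodge filtration'' is not available, and the argument does not close at exactly those points. The paper's proof concedes this: Corollary \ref{Co-Phi-form-etale} gives surjectivity of the tangent map only at infinitesimal and unipotent points, and a genuinely separate global input is then needed, namely that the smooth locus is open and that by Oort's theorem every geometric point of $\overline{\BTst_n}$ occurs as the generic fibre of a family over the ring of integers of an algebraic closure of $k((t))$ whose special fibre is isoclinic, hence infinitesimal or unipotent (Lemma \ref{Le-open-in-BT}). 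That specialization argument is the essential missing idea.

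A secondary error: your conclusion that $\phi_n$ is \'etale is false. It is smooth of relative dimension zero, but since it is not representable this does not imply \'etaleness: by Theorem \ref{Th-phi-n-diag} and Remark \ref{Re-phi-n-diag} the diagonal of $\phi_n$ is a torsor under an infinitesimal finite flat group scheme of degree $p^{rsn}$, so $\phi_n$ is not unramified and is not an equivalence on $R$-points for non-reduced $R$. This is not merely a slip of terminology; the nontriviality of $\uAut^o(G)$ means the deformation groupoids on the two sides are genuinely inequivalent at ordinary points, which is another symptom of why the naive Grothendieck--Messing comparison you propose cannot be carried out there.
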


Let us sketch the proof. The deformation theory of
nilpotent displays together with the crystalline
deformation theory of $p$-divisible groups implies
that the restriction of the functor $\Phi$ to
infinitesimal $p$-divisible groups is formally
\'etale in the sense that it induces an 
equivalence of infinitesimal deformations.
It follows that the smooth locus of $\phi_n$ 
contains all points of $\BTst_n$ that correspond
to infinitesimal groups; since the smooth locus
is open it must be all of $\BTst_n$.

For a truncated BT group $G$ we denote by 
$\uAut^o(G)$ the sheaf of automorphisms of $G$
which become trivial on the associated truncated display. 

\begin{Thm}
\label{Th-B}
Let $G_1$ and $G_2$ be truncated BT groups over a
ring $R$ of characteristic $p$ with associated 
truncated displays $\PPP_1$ and $\PPP_2$.
The group scheme $\uAut^o(G_i)$ is commutative, infinitesimal,
and finite flat over $R$. The natural morphism
$\uIsom(G_1,G_2)\to\uIsom(\PPP_1,\PPP_2)$ is
a torsor under $\uAut^o(G_i)$ for each $i$.
\end{Thm}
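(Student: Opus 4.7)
The plan is to deduce Theorem~B from Theorem~A by exhibiting $\phi_n$ as an fppf gerbe banded by the group scheme $\uAut^o$. The formal part is easy: the sheaf $\uAut^o(G_2)$ acts on $\uIsom(G_1, G_2)$ by post-composition, and this action preserves the morphism to $\uIsom(\PPP_1, \PPP_2)$ since elements of $\uAut^o(G_2)$ induce the identity on $\PPP_2$ by definition. If $f_1, f_2 \colon G_1 \to G_2$ are two isomorphisms lifting the same display isomorphism, then $f_2 \circ f_1^{-1} \in \uAut^o(G_2)$, so the action is free and transitive on fibres; the analogous statement for $\uAut^o(G_1)$ via pre-composition is symmetric. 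What remains is to show the map is faithfully flat and that $\uAut^o(G_i)$ is finite flat, commutative, and infinitesimal.

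For the flatness step, I would interpret $\uIsom(G_1, G_2) \to \uIsom(\PPP_1, \PPP_2)$ as the pullback along $(G_1, G_2) \colon \Spec R \to \BTst_n \times \BTst_n$ of the relative diagonal $\Delta \colon \BTst_n \to \BTst_n \times_{\Disp_n} \BTst_n$. The target of $\Delta$ is a smooth algebraic stack (as a fibre product of smooth stacks along the smooth morphism $\phi_n$), and the equivalence of $\phi_n$ on geometric points implies that $\Delta$ is surjective on geometric points with zero-dimensional geometric fibres. The fibre-dimension flatness criterion, applied on smooth atlases to a map between smooth algebraic stacks of matching dimension, then yields flatness of $\Delta$; finiteness of the fibres is inherited from that of $\uAut(G_i) \subset \uHom(G_i, G_i)$, upgrading to finite flatness. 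Pulling back shows that $\uIsom(G_1, G_2) \to \uIsom(\PPP_1, \PPP_2)$ is finite faithfully flat, and combined with the formal part this provides the $\uAut^o(G_i)$-torsor structure.

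Specializing to $G_1 = G_2 = G$ exhibits $\uAut^o(G)$ as the fibre of a finite flat torsor over the identity section, hence finite flat; infinitesimality follows from the equivalence on geometric points, which makes $\uAut(G) \to \uAut(\PPP)$ an isomorphism at every geometric point. For commutativity I would rely on an explicit description of $\uAut^o(G)$ as a Hom-module between parts of the display data, which carries an evident abelian group structure. The main obstacle is the flatness step: smoothness of a morphism of Artin stacks does not by itself imply flatness of its diagonal, so the argument crucially combines the smoothness of both $\BTst_n$ and $\Disp_n$, the equivalence on geometric points, and the equality of stack dimensions forced by these hypotheses.
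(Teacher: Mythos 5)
Your overall strategy is the paper's: pull the relative diagonal $\Delta:\overline{\BTst_n}\to\overline{\BTst_n}\times_{\Disp_n}\overline{\BTst_n}$ back to smooth atlases, get flatness from miracle flatness (equidimensional smooth source and target of the same dimension, zero-dimensional fibres), and then run the quasi-torsor formalism. But two steps as written do not go through.

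First, the finiteness step. You claim that finiteness of the fibres ``is inherited from that of $\uAut(G_i)\subset\uHom(G_i,G_i)$,'' but $\uAut(G)$ is \emph{not} finite for a general truncated BT group: already for the non-ordinary $\BT_1$ of height $2$ over an algebraically closed field the automorphism group scheme has dimension $1$ (this is exactly why the Ekedahl--Oort strata of $\overline{\BTst_1}$ have positive codimension while the stack has dimension $0$; compare $\uAut(\alpha_p)\cong\mathbb{A}^1$). Zero-dimensionality of the fibres of $\Delta$ instead comes from the equivalence on geometric points: a finite-type scheme over an algebraically closed field with exactly one point is finite. More seriously, even granting quasi-finiteness, ``quasi-finite $+$ flat $\Rightarrow$ finite'' is false (open immersions), so ``upgrading to finite flatness'' is a genuine missing step. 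This is precisely where the paper works: it shows the atlas-level map $X'\to Y'$ is radicial and then identifies $X'$ with the normalisation of $Y'$ in the purely inseparable function-field extension, invoking Zariski's main theorem to conclude finiteness. (Alternatively one can argue that a flat, locally of finite presentation, radicial and surjective morphism is a universal homeomorphism, hence integral, hence finite; but some such argument must be supplied.)

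Second, commutativity. There is no ``explicit description of $\uAut^o(G)$ as a Hom-module'' in general -- for supersingular $G$ the group $\uAut^o(G)$ is a nonsplit infinitesimal subgroup of a nonabelian automorphism scheme, and it is not a priori a Hom-group. The paper's argument is different and uses the geometry you have already established: since $\uAut^o(G^{\univ})$ is finite flat over the reduced stack $\overline{\BTst_n}$, commutativity is a closed condition that may be checked on the dense open ordinary locus, where $G=(\ZZ/p^n\ZZ)^r\times(\mu_{p^n})^s$ and $\uAut^o(G)$ embeds into the commutative group of unipotent matrices with entries in $\uHom(\ZZ/p^n\ZZ,\mu_{p^n})=\mu_{p^n}$ (using that $\uHom(\mu_{p^n},\ZZ/p^n\ZZ)=0$ and that the \'etale automorphism factors die in $\uAut^o$). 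You should replace your commutativity paragraph by this density argument; your Hom-module picture is only valid on the ordinary locus, and the point is to reduce to it.
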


This is more or less a formal
consequence of Theorem \ref{Th-A}.
If $G$ is a truncated BT group of dimension $r$,
codimension $s$, and level $n$, one can show that
the degree of $\uAut^o(G)$ is equal to $p^{rsn}$. 
In particular, the functors $\Phi_{n,R}$ are
usually far from being an equivalence. 
The situation changes if one passes to the limit $\Phi_R$.
Namely, we have the following application of Theorems \ref{Th-A} 
and \ref{Th-B}:

\begin{Thm}
\label{Th-C}
For a $p$-adic ring $R$, the functor $\Phi_R$ induces
an equivalence between infinitesimal $p$-divisible groups
and nilpotent displays over $R$. 
\end{Thm}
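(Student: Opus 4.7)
The plan is to combine the formally \'etale property of $\Phi$ on infinitesimal groups (established in the proof of Theorem~\ref{Th-A}) with the torsor structure of Theorem~\ref{Th-B}, and to propagate the equivalence on geometric points of Theorem~\ref{Th-A} along deformations to arbitrary $p$-adic rings. First I would reduce to the case where $R$ is an Artinian local ring of characteristic $p$ with perfect residue field $k$: crystalline Dieudonn\'e theory on the $p$-divisible side matches Zink's base-change theorem on the display side, making $\Phi$ compatible with the thickening $R/p\hookrightarrow R$, so we may assume $p=0$ in $R$; a direct-limit argument over finitely presented data reduces to $R$ Noetherian, and formal completion reduces to $R$ complete local. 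Over $k$, specialising Theorem~\ref{Th-A} at this geometric point and identifying the infinitesimal/nilpotent subcategories by a classical Dieudonn\'e-theoretic check over fields settles the claim for $k$.

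To lift from $k$ to $R$ I would handle the two halves separately. For essential surjectivity, given a nilpotent display $\PPP$ over $R$ with reduction $\PPP_k$, pick $G_k$ with $\Phi(G_k)\cong\PPP_k$ and use the formally \'etale property to lift $G_k$ uniquely step by step along a chain of square-zero thickenings from $k$ to $R$, matching the reductions of $\PPP$, to obtain an infinitesimal $p$-divisible group $G$ over $R$ with $\Phi(G)\cong\PPP$. For full faithfulness, write
\[
\Isom(G_1,G_2)=\varprojlim_n\Isom(G_1[p^n],G_2[p^n])
\]
and similarly on the display side; Theorem~\ref{Th-B} presents each level as a torsor under the infinitesimal group scheme $\uAut^o(G_1[p^n])$, and the formally \'etale property translates into the statement that this inverse system of torsors has a unique compatible section, which gives bijectivity after passing to the limit.

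The main obstacle will be the limit argument for full faithfulness: each $\uAut^o(G[p^n])$ is a genuinely non-trivial infinitesimal group scheme of order $p^{rsn}$, so the triviality in the limit must come from the interaction between the levels encoded by the formally \'etale property, not from any single level in isolation. A secondary difficulty is establishing effectivity of the constructed formal system of infinitesimal $p$-divisible groups over a non-Noetherian $p$-adic ring, which I would expect to handle by descent through a Noetherian approximation together with the usual equivalence between formal and genuine $p$-divisible groups in the $p$-adic setting.
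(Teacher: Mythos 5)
There is a genuine gap, and it sits exactly where your plan puts all its weight: the reduction to Artinian local rings of characteristic $p$ with perfect residue field. Neither step of that reduction works as stated. The ``direct-limit argument over finitely presented data'' applies to truncated BT groups and truncated displays, but a full $p$-divisible group and a full display are \emph{not} finitely presented objects (they are inverse limits of their truncations), so neither an object over $R=\varinjlim R_i$ nor an isomorphism between two such objects descends to any single $R_i$; essential surjectivity and fullness do not pass to the colimit without a separate argument level by level. Worse, the passage from a Noetherian ring to its completions is not a reduction at all: full faithfulness would have to be checked on $S$-points for arbitrary $\hat R_{\Fm}$-algebras $S$, and essential surjectivity over $R$ would require gluing objects constructed over the various $\hat R_{\Fm}$, i.e.\ descent along $R\to\prod_{\Fm}\hat R_{\Fm}$, whose self-tensor-products are neither local nor Noetherian --- so you would need the theorem over rings outside the class you reduced to. Once you are over an Artinian local ring with perfect residue field, your ``main obstacle'' actually evaporates: Corollary \ref{Co-Phi-form-etale}, applied inductively along the square-zero extensions down to $k$, identifies $\Isom_R(G_1,G_2)$ with the fibre product of $\Isom_k(G_{1,k},G_{2,k})$ and $\Isom_R(\PPP_1,\PPP_2)$ over $\Isom_k(\PPP_{1,k},\PPP_{2,k})$, and classical Dieudonn\'e theory over $k$ finishes the job; no analysis of the tower of $\uAut^o(G[p^n])$-torsors is needed there. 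The real difficulty is global, over a non-local base, which is precisely what your reduction discards.

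The paper circumvents all of this by proving the statement once at the level of stacks: the morphism $\phi^o:\BTst^o\to\Disp^o$ is an isomorphism (Theorem \ref{Th-phi-o}), and the case of a general $p$-adic $R$ then follows because the closed immersions $\BTst^o\to\BTst$ and $\Disp^o\to\Disp$ are of finite presentation, so the locus where a given formal group or nilpotent display fails to be ``infinitesimal/nilpotent on the nose'' is cut out by a nilpotent ideal, and Corollary \ref{Co-Phi-form-etale} bridges that nilpotent thickening --- this last step is in the spirit of your deformation argument. The key input you assert but do not prove (``the inverse system of torsors has a unique compatible section'') is Lemma \ref{Le-m-n}: for each $m$ there is $n\ge m$ with $\uAut^o(G[p^n])\to\uAut^o(G[p^m])$ zero over the relevant base. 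The mechanism is not a section-by-section argument but a cotangent-complex computation: at a geometric point the limit of the torsors is $\Spec B$ with $B=\varinjlim B_n$ formally \'etale over $k$ by Corollary \ref{Co-Phi-form-etale}, hence $H_1(L_{B/k})=0$; the stabilised images $A_n'=\Spec B_n$ form a tower of epimorphisms of finite infinitesimal group schemes, along which $H_1(L_{B_n/k})$ injects, forcing each $A_n'$ to be trivial; a generic-point/constructibility argument then makes the vanishing uniform over the (reduced) base. Without this, or the reduction to the local case, your proof of full faithfulness does not close.
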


Let us sketch the argument.
For a $p$-divisible group $G$ over a ring $R$ of 
characteristic $p$ the situation is controlled by 
the projective limit of finite flat group schemes
$$
\uAut^o(G)=\varprojlim_n\uAut^o(G[p^n]).
$$
If the group $G$ and its dual have a non-trivial 
\'etale part at some point of $\Spec R$, one can see 
directly that $\uAut^o(G)$ is non-trivial,
which explains the restriction to infinitesimal
groups in Theorem \ref{Th-C}. 
One has to show that $\uAut^o(G)$ is trivial
if $G$ is infinitesimal. 
If $\uAut^o(G)$ were non-trivial, the first
homology of its cotangent complex would be non-trivial, which
would contradict the fact that $\Phi$ is formally \'etale
for infinitesimal groups.

As a second application of Theorems \ref{Th-A} and \ref{Th-B}
we obtain an alternative proof of the following result of Gabber.

\begin{Thm}
\label{Th-D}
The category of $p$-divisible groups over a perfect ring $R$
of characteristic $p$ is equivalent to the category of
Dieudonn\'e modules over $R$.
\end{Thm}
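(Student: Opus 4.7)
The plan is to deduce Theorem~\ref{Th-D} from Theorems~\ref{Th-A} and~\ref{Th-B} together with two reductions. First, I would check that over a perfect $\FF_p$-algebra $R$ the category of displays is naturally equivalent to the category of Dieudonn\'e modules. Since the Frobenius on $W(R)$ is bijective, a display $(P,Q,F,F_1)$ determines a Verschiebung on $P$ by transporting $F_1$ along $\sigma^{-1}$; conversely, the data of a Dieudonn\'e module $(P,F,V)$ recovers the display data by a routine bookkeeping using the filtration $Q\supseteq IP$ where $I=VW(R)$. This translation reduces Theorem~\ref{Th-D} to the assertion that $\Phi_R:(\pdivC/R)\to(\dispC/R)$ is an equivalence for every perfect $R$.

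For full faithfulness I would apply Theorem~\ref{Th-B} to the truncations $G_1[p^n]$ and $G_2[p^n]$ of two $p$-divisible groups for every $n$: the induced map between the $\uIsom$-sheaves of the truncated BT groups and of their associated truncated displays is a torsor under the infinitesimal commutative finite flat group scheme $\uAut^o(G_1[p^n])$. The key auxiliary fact is the vanishing of $H^1_{\mathrm{fppf}}(R,H)$ for every such $H$ over a perfect $R$, which reduces via the relative Frobenius filtration to $R/R^p=0$ and $R^\times/(R^\times)^p=0$; both hold because $R$ is perfect. Hence the $\uIsom$-sheaves are isomorphic at every truncation level, and passing to the inverse limit in $n$ yields full faithfulness on $p$-divisible groups.

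For essential surjectivity, given a display $\PPP$ over a perfect $R$, Theorem~\ref{Th-A} implies that the fibre $X_n=\BTst_n\times_{\Disp_n}\Spec R$ of $\phi_n$ at the truncation $\PPP_n$ is a smooth algebraic $R$-stack whose geometric points are singletons, and Theorem~\ref{Th-B} identifies $X_n$ locally with a classifying gerbe of the infinitesimal group $\uAut^o$. The $H^1$-vanishing above, together with a parallel $H^2$-vanishing for infinitesimal commutative finite flat group schemes over perfect rings, trivialises this gerbe, so $X_n(R)\neq\emptyset$ and one obtains a truncated BT group $G_n$ with $\Phi_{n,R}(G_n)\cong\PPP_n$. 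Full faithfulness makes these $G_n$ canonically compatible as $n$ varies, so $G=\varinjlim_n G_n$ is a $p$-divisible group with $\Phi_R(G)\cong\PPP$. The main obstacle I expect is the $H^2$-vanishing that trivialises the gerbes $X_n$; this is the genuinely new cohomological input one must supply beyond the formal content of Theorems~\ref{Th-A} and~\ref{Th-B}, and its proof is where any subtlety specific to the perfect-ring case will concentrate.
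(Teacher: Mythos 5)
Your overall strategy --- identify displays over a perfect ring with Dieudonn\'e modules, then prove that $\Phi_R$ is an equivalence using Theorem~\ref{Th-B} for full faithfulness and Theorem~\ref{Th-A} for essential surjectivity --- is the same as the paper's, and your full faithfulness step is sound in substance: injectivity on isomorphisms holds because an infinitesimal group scheme has only the trivial section over a reduced ring, and surjectivity holds because a torsor under an infinitesimal finite flat group scheme over a perfect affine base is trivial. Three caveats, though. First, your d\'evissage of $H^1_{\mathrm{fppf}}(R,H)$ to $\alpha_p$ and $\mu_p$ via the relative Frobenius filtration is not available over a general perfect base: the graded pieces are height-one groups which need not be successive extensions of $\alpha_p$'s and $\mu_p$'s, and even for $\mu_p$ you need $\mathrm{Pic}(R)[p]=0$ in addition to $R^\times=(R^\times)^p$. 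The paper's cleaner mechanism is that the torsor $T\to\Spec R$ is finite, flat, radicial and surjective, hence induces an isomorphism on perfections, and $T^{\per}\cong\Spec R^{\per}=\Spec R$ hands you the (unique) section directly; this also shows the sheaves are \emph{not} isomorphic, only their points over perfect test rings are, so your phrase ``the $\uIsom$-sheaves are isomorphic'' should be weakened to bijectivity on $R$-points. Second, to get bijectivity on $\Hom$'s rather than $\Isom$'s you still need the standard trick of encoding $g:G\to G'$ as the automorphism $\left(\begin{smallmatrix}1&g\\0&1\end{smallmatrix}\right)$ of $G'\oplus G$. Third, the ``routine bookkeeping'' identifying Dieudonn\'e modules with displays hides one genuine point: the projectivity of $M/VM$ over $R$, which the paper proves by a semicontinuity argument using that $R$ is reduced (Lemma~\ref{Le-Dieud-trunc-1}).

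The genuine gap is in essential surjectivity. You reduce it to the vanishing of $H^2_{\mathrm{fppf}}(R,A)$ for infinitesimal commutative finite flat $A$ over perfect $R$ in order to neutralise the gerbe $\overline{\BTst_n}\times_{\Disp_n}\Spec R$, and you explicitly leave this unproved; you also tacitly assume that the band of this gerbe descends to a finite flat group scheme over $R$ before any truncated BT group over $R$ is in hand. Neither point is fatal (the $H^2$-vanishing is in fact true for such $A$), but the paper shows this entire cohomological input is unnecessary: take a smooth presentation $X\to\BTst_n$ with affine $X$; by Theorem~\ref{Th-A} the composite $X\to\Disp_n$ is smooth and surjective, so writing $\Spec S=X\times_{\Disp_n}\Spec R$, the display $\PPP_n$ lifts to a truncated BT group over the faithfully flat $R$-algebra $S$, hence over its perfection $R'=S^{\per}$, which is still faithfully flat over $R$ and is perfect --- as is $R''=R'\otimes_RR'$. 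The already-established full faithfulness over $R'$ and $R''$ then produces a canonical descent datum (uniqueness of the lifted isomorphisms gives the cocycle condition), and fppf descent for truncated BT groups and for truncated displays finishes the argument. In other words, the paper neutralises the gerbe by exhibiting a point over a perfect fppf cover and descending it, which replaces your unproved $H^2$-input by descent plus full faithfulness. If you want to keep your cohomological formulation, you must actually prove $H^2_{\mathrm{fppf}}(R,A)=0$; otherwise the descent route is the one that closes the proof.
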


As in the case of perfect fields,
a Dieudonn\'e module over $R$ is a 
projective $W(R)$-module $M$ of finite type
with a Frobenius-linear
endomorphism $F$ and a Frobenius$^{-1}$-linear endomorphism 
$V$ such that $FV=p=VF$. One deduces formally an
equivalence between commutative finite flat group schemes
of $p$-power order over $R$ and an appropriate category of
finite Dieudonn\'e modules. Over perfect valuation rings
this equivalence is proved by Berthelot \cite{Berthelot-Parfait},
and in general it is proved by Gabber by a reduction to the case
of valuation rings. 
Theorem \ref{Th-D} follows from Theorems \ref{Th-A} and \ref{Th-B}
since they show that the morphism $\phi_n$ is represented by a
morphism of groupoids of affine schemes which induces an isomorphism
of the perfect hulls.

Finally, we study the relation between the functors
$\Phi_R$ and $\BT_R$. One can form the composition
$$
(\pdivC/R)\xrightarrow{\Phi_R}
(\dispC/R)\xrightarrow{\BT_R}
(\text{formal groups}/R).
$$
Here both functors induce inverse equivalences
when restricted to formal $p$-divisible groups and
nilpotent displays.

\begin{Thm}
\label{Th-E}
For each $p$-divisible group $G$ over a $p$-adic ring $R$, 
the formal group $\BT_R(\Phi_R(G))$ is naturally isomorphic
to the formal completion $\hat G$.
\end{Thm}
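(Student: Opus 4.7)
The plan is to construct a natural morphism of formal groups $\alpha_G \colon \BT_R(\Phi_R(G)) \to \hat G$ and then to verify that it is an isomorphism. Both sides are functors on the category of $R$-algebras equipped with a nilpotent ideal, and the construction proceeds by comparing their crystalline descriptions.

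By definition of $\Phi_R$, the underlying module $P$ of the display $\Phi_R(G) = (P,Q,F,\dot F)$ is the value of the covariant Dieudonn\'e crystal $D(G)$ on $W(R)$. Zink's description of $\BT$ (cf.\ \cite{Zink-Disp,Lau-Disp}) then computes $\BT_R(\Phi_R(G))(S)$, for an $R$-algebra $S$ with nilpotent ideal $\Fm$, as a cokernel involving $\hat W(\Fm) \otimes_{W(R)} P$ and the divided Frobenius $\dot F$. On the other hand, by Grothendieck--Messing and Mazur--Messing, $\hat G(S) = \Ker(G(S) \to G(S/\Fm))$ is canonically computed from the universal vector extension of $G$, again in terms of $D(G)$ evaluated on the Witt frame of $S$ together with its Hodge filtration over $S$. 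Since both descriptions are built from the same crystalline datum of $G$, they can be matched to produce a canonical morphism $\alpha_G$.

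To verify that $\alpha_G$ is an isomorphism one first treats the infinitesimal case. If $G$ is infinitesimal, then $\hat G = G$ and $\Phi_R(G)$ is a nilpotent display by Theorem \ref{Th-C}, which moreover asserts that $\BT_R \circ \Phi_R \cong \id$ on infinitesimal groups; the compatibility of this equivalence with the map $\alpha_G$ constructed above is built into Zink's original result. For the general case, observe that both formal groups depend only on the infinitesimal part of $G$: by construction $\hat G$ only sees $G$ in a nilpotent neighbourhood of the identity, and on the display side $\BT$ annihilates the \'etale quotient of $\Phi_R(G)$. Combining these observations reduces the question to the infinitesimal case treated above.

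The main obstacle is this last reduction step. Over a general $p$-adic ring $R$ there is no functorial connected--infinitesimal decomposition of $G$, so one cannot simply factor $\alpha_G$ through $\alpha_{G^0}$ for some infinitesimal subgroup $G^0 \subset G$. The cleanest way around this is to argue directly from the explicit formulas of the construction step, showing that neither $\BT_R(\Phi_R(G))$ nor $\hat G$ is altered when the \'etale quotient of $G$ is modified; together with the naturality of $\alpha_G$, this suffices. Alternatively one can hope to exploit the smoothness results of Theorem \ref{Th-A} to descend the isomorphism from a universal situation where $G$ does decompose, though the absence of a natural infinitesimal base point on $\BTst_n$ makes this less direct.
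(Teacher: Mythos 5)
There is a genuine gap, and it sits exactly where you locate ``the main obstacle'': the reduction of the general case to the infinitesimal one is named but not carried out. Your first proposed workaround --- ``showing that neither $\BT_R(\Phi_R(G))$ nor $\hat G$ is altered when the \'etale quotient of $G$ is modified'' --- is not a meaningful operation over a general $p$-adic ring, since $G$ has no \'etale quotient there (that is the very reason the connected--\'etale d\'evissage fails); so this sentence restates the difficulty rather than resolving it. Your second workaround (descend from a universal situation) is in fact what the paper does, but you dismiss it as ``less direct'' without executing it, and executing it \emph{is} the proof: one defines $\tilde\rho_A(G)$ over $I$-adic rings $A$ with $G_{A/I}$ infinitesimal by passing to the limit of the infinitesimal case, takes the isogeny-space presentation $\Spec A\to\BTst\times\Spec\ZZ_p$ of Proposition \ref{Pr-AG-exist} (whose special fibres are isoclinic, hence infinitesimal or \'etale), and then must verify a cocycle condition over $\Spec\hat C$ to descend to arbitrary $R$. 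That verification is the hard content: one embeds $\hat C$ into a completion $\hat C'$ built from the local rings $B_i=\hat{(A_i)}_{pA_i}$ where $G$ becomes ordinary, compares with the multiplicative part $H\subset G_B$, and the needed compatibility (Lemma \ref{Le-BT-Phi}) is proved by a four-step chain of reductions --- to complete local rings, to a one-parameter blow-up, to $W_n(\OOO)$ for $\OOO$ the integers of a complete algebraically closed field, and finally to $\mu_{p^\infty}$. None of this is visible in, or replaceable by, ``naturality of $\alpha_G$.''

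Two smaller but real gaps. First, the construction of $\alpha_G$ itself for general $G$ is asserted (``they can be matched to produce a canonical morphism'') rather than performed; the paper never constructs a direct crystalline comparison map for non-infinitesimal $G$, precisely because the matching cannot be done globally --- the isomorphism is produced only after the descent argument above. Second, even the infinitesimal case is not ``built into Zink's original result'': Theorem \ref{Th-C} gives that $\Phi_R^1$ is an equivalence, and Zink's \cite[Thm.~94, Cor.~97]{Zink-Disp} give an isomorphism $\Upsilon^1_R(\Phi^1_R(\BT^1_R(\PPP)))\cong\Upsilon^1_R(\PPP)$ of filtered $F$-$V$-modules, but one must still check that this isomorphism respects the divided Frobenius $F_1$. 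That is automatic only when $W(R)$ has no $p$-torsion, and in general requires the argument of Lemma \ref{Le-Phi-BT}, which reduces to a $p$-torsion-free $J$-adic completion of a noetherian presentation of $\BTst$. Your correct observations --- that $\BT_R\circ\Phi_R$ kills \'etale groups (Lemma \ref{Le-BT-Phi-etale}) and that the extension-of-\'etale-by-infinitesimal case follows from exactness --- are the easy part of the paper's proof; the remaining steps are where the theorem actually lives.
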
 

In other words, we have obtained a passage from crystalline
Dieudonn\'e theory to Cartier theory:
The natural display structure on the
Dieudonn\'e crystal of $G$, viewed as a 
structure equation of a Cartier module, gives the
Cartier module of $\hat G$.

\medskip

The author thanks Th.~Zink for many interesting discussions.
Section \ref{Subse-Erratum} contains an erratum to \cite{Lau-Disp}.
The author thanks O.~B\"ultel for pointing out this mistake.

\setcounter{tocdepth}{1}
\tableofcontents


\section{Preliminaries}

\numberwithin{Lemma}{section}
\numberwithin{equation}{section}

\subsection{Properties of ring homomorphisms}

All rings are commutative with a unit. 
Let $f:A\to B$ be a ring homomorphism.

We call $f$ \emph{ind-\'etale} (resp.\ \emph{ind-smooth}) 
if $B$ can be written as a filtered
direct limit of \'etale (resp.\ smooth) $A$-algebras.
In the ind-\'etale case the transition maps in the
filtered system are necessarily \'etale. 
We call $f$ an \emph{$\infty$-smooth} covering if there
is a sequence of faithfully flat smooth ring homomorphisms
$A=B_0\to B_1\to B_2\to\cdots$ with $B\cong\varinjlim B_i$.

We call $f$ \emph{reduced} if $f$ is flat and if the 
geometric fibres of $f$ are reduced.
This differs from EGA IV, 6.8.1, where in addition the 
fibres of $f$ are assumed to be noetherian. If $f$ is reduced,
then for each reduced $A$-algebra $A'$ the ring
$B\otimes_AA'$ is reduced. 
Every ind-smooth homomorphism is reduced.

Assume that $A$ and $B$ are noetherian. By the
Popescu desingularisation theorem, \cite[Thm.~2.5]{Popescu} 
and \cite{Swan-desingu}, $f$ is ind-smooth
if and only if $f$ is regular; recall that $f$ is
regular if $f$ is flat and if $B\otimes_AL$ is a
regular ring when $L$ is a finite extension of a residue 
field of a prime of $A$. 

Without noetherian hypothesis again,
we say that $f$ is \emph{quasi-\'etale} if the cotangent complex
$L_{B/A}$ is acyclic, and that $f$ is \emph{quasi-smooth} 
if the augmentation $L_{B/A}\to\Omega_{B/A}$ is a 
quasi-isomorphism and if $\Omega_{B/A}$ is a projective
$B$-module. Quasi-smooth implies formally smooth, and
quasi-\'etale implies formally \'etale; 
see \cite[III, Proposition 3.1.1]{Illusie-CC-I} and its proof.

\subsection{Affine algebraic stacks}
\label{Subse-aff-alg-stack}

Let $\Aff$ be the category of affine schemes.
Let $\XXX$ be a category which is fibered in groupoids
over $\Aff$. For 
a topology $\tau$ on $\Aff$, $\XXX$ is called a $\tau$-stack 
if $\tau$-descent is effective for $\XXX$. We call $\XXX$
an \emph{affine algebraic stack} if $\XXX$ is an fpqc stack,
if the diagonal morphism
$\XXX\to\XXX\times\XXX$ is representable affine, and if
there is an affine scheme $X$ with a faithfully flat 
morphism $X\to\XXX$, called a presentation of $\XXX$.
Equivalently, $\XXX$ is the fpqc stack associated to a 
flat groupoid of affine schemes.

Let $P$ be a property of ring homomorphisms which is stable
under base change. A representable affine morphism
of fpqc stacks is said to have the property $P$ if its
pull back to affine schemes has the property $P$. In particular,
one can demand that an affine algebraic stack has a 
presentation with the property $P$, called a $P$-presentation. 

Assume that the property $P$ is stable under composition and satisfies the following descent condition: If a composition of ring homomorphisms $v\circ u$ and $v$ have the property $P$ and if $v$ is faithfully flat, then $u$ has the property $P$. One example is $P$\,=\,reduced. Let $\XXX$ be an affine algebraic stack which has a $P$-presentation $X\to\XXX$. A morphism of affine algebraic stacks $\XXX\to\YYY$ is said to have the property $P$ if the composition $X\to\XXX\to\YYY$ has the property $P$. This does not depend on the $P$-presentation of $\XXX$.

Let $\XXX$ be an affine algebraic stack which has a reduced 
presentation $X\to\XXX$. We call $\XXX$ \emph{reduced} if $X$ 
is reduced; this does not depend on the reduced presentation.
In general, there is a maximal reduced closed
substack $\XXX_{\red}$ of $\XXX$. Indeed, the inverse images
of $X_{\red}$ under the two projections $X\times_{\XXX}X\to X$
are equal because they coincide with $(X\times_{\XXX}X)_{\red}$;
thus $X_{\red}$ descends to a substack of $\XXX$.

Assume that $\XXX$ is a locally noetherian Artin algebraic
stack and that $Y$ is a locally noetherian scheme. We call
a morphism $Y\to\XXX$ \emph{regular} if for a smooth
presentation $X\to\XXX$ the projection $Y\times_\XXX X\to X$
is regular. This is independent of the smooth presentation
of $\XXX$.

\subsection{The stack of $p$-divisible groups}
\label{Subse-stack-pdiv}

We fix a non-negative integer $h$.
Let $\BTst=\BTst^h$ be the stack of $p$-divisible groups
of height $h$, viewed as a fibered category over the
category of affine schemes. Thus for an affine scheme
$X$, $\BTst(X)$ is the category 
with $p$-divisible groups of height $h$ over $X$ as objects
and with isomorphisms of $p$-divisible groups as
morphisms. Similarly, for each non-negative integer $n$ let 
$\BTst_n=\BTst_n^h$ be the stack
of truncated Barsotti-Tate groups
of height $h$ and level $n$. This is an Artin algebraic
stack of finite type over $\ZZ$ with affine diagonal;
see \cite[Prop.~1.8]{Wedhorn} and \cite[Sec.~2]{Lau-Disp}.
The truncation morphisms 
$$
\tau_n:\BTst_{n+1}\to\BTst_n
$$ 
are smooth and surjective by 
\cite[Thm.~4.4 and Prop.~1.8]{Illusie-BT}.
Note that $\BTst_n$ has pure dimension zero
since the dense open substack $\BTst_n\times\Spec\QQ$
is the classifying space of the finite group
$\GL_h(\ZZ/p^n\ZZ)$.

\begin{Lemma}
\label{Le-BT-pres}
The fibered category $\BTst$ is an affine algebraic stack 
in the sense of section \ref{Subse-aff-alg-stack}. There is 
a presentation $\pi:X\to\BTst$ such that $\pi$ and the 
compositions $X\xrightarrow\pi\BTst\xrightarrow\tau\BTst_n$
for $n\ge 0$ are $\infty$-smooth coverings; in particular
$X\to\Spec\ZZ$ is an $\infty$-smooth covering.
\end{Lemma}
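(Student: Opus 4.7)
The plan is to build the presentation $\pi: X \to \BTst$ as the ``inverse limit in affine schemes'' of an inductively constructed tower of smooth affine presentations $Y_n \to \BTst_n$ compatible with the truncation morphisms $\tau_n$.

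\textbf{Affine diagonal.} For two $p$-divisible groups $G_1, G_2$ over an affine $T$, the functor $\uIsom(G_1, G_2)$ equals the inverse limit $\varprojlim_n \uIsom(G_1[p^n], G_2[p^n])$; each term is affine of finite presentation over $T$ since $G_i[p^n]$ is finite locally free, so the limit is affine over $T$. Hence the diagonal of $\BTst$ is representable and affine.

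\textbf{Inductive construction.} I would start with $Y_0 = \Spec\ZZ$, noting $\BTst_0 = \Spec\ZZ$ since a truncated BT of level $0$ is trivial. Given a smooth affine presentation $Y_n \to \BTst_n$, I form $Z_{n+1} = Y_n \times_{\BTst_n} \BTst_{n+1}$; by base change of $\tau_n$, this is a smooth Artin stack over $Y_n$. Then I pick any smooth affine surjection $Y_{n+1} \to Z_{n+1}$. The resulting $Y_{n+1} \to \BTst_{n+1}$ is a smooth affine presentation and $Y_{n+1} \to Y_n$ is smooth and faithfully flat. Using the smoothness of $\BTst_n$ over $\Spec\ZZ$ (which can be derived inductively from the smoothness of $\tau_n$ and the base case at level $1$), each $Y_n$ is smooth and faithfully flat over $\Spec\ZZ$.

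\textbf{Assembly.} Set $X = \Spec(\varinjlim_n \Gamma(Y_n, \OOO))$. The compatible maps $X \to Y_n \to \BTst_n$ give a compatible family of truncated BT groups on $X$, hence a $p$-divisible group and a morphism $\pi: X \to \BTst$, via the identification $\BTst(R) = \varprojlim_n \BTst_n(R)$ that holds for every ring $R$. By construction $\Gamma(Y_n) \to \Gamma(X)$ is the filtered colimit of the smooth faithfully flat ring maps $\Gamma(Y_n) \to \Gamma(Y_m)$ for $m \geq n$, so $X \to Y_n$ is an $\infty$-smooth covering. Composing with the smooth affine presentation $Y_n \to \BTst_n$ yields the $\infty$-smooth covering condition for the composition $X \to \BTst_n$, and specialising to $n=0$ gives the particular claim that $X \to \Spec\ZZ$ is $\infty$-smooth. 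Faithful flatness of $\pi$ follows from the level-by-level statement, so $\BTst$ is an affine algebraic stack with $X \to \BTst$ as a presentation.

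\textbf{Hardest step.} The most delicate point is verifying that $\pi$ itself is an $\infty$-smooth covering of $\BTst$. For an affine $S \to \BTst$, the fibre $X \times_\BTst S$ is affine by the affine diagonal and equals $\varprojlim_n X \times_{\BTst_n} S$; the transition maps between these affine schemes are smooth by smoothness of $\tau_n$, but their surjectivity is subtle because extending a level-$n$ isomorphism of truncated BT groups to level $n+1$ is constrained by the specific $p$-divisible groups on $X$ and $S$. The task is to exploit the compatibility built into the tower $(Y_n)$ together with the smoothness of $\tau_n$ to realise $\Gamma(X \times_\BTst S)$ as a filtered colimit of smooth faithfully flat $\Gamma(S)$-algebras, from which the $\infty$-smooth property of $\pi$ follows.
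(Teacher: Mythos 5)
Your overall strategy --- an inductive tower of compatible smooth affine presentations $Y_n\to\BTst_n$ followed by a passage to the limit --- is exactly the paper's, and your construction of the tower (choosing $Y_{n+1}$ as a smooth affine cover of $Z_{n+1}=Y_n\times_{\BTst_n}\BTst_{n+1}$) is sound. But the step you label ``hardest'' is genuinely the content of the lemma, and the route you sketch for it does not work. You propose to write $X\times_\BTst S=\varprojlim_n (X\times_{\BTst_n}S)$ with $X$ \emph{fixed} and to argue about the transition maps of that system. Those transition maps $X\times_{\BTst_{n+1}}S\to X\times_{\BTst_n}S$ are base changes of the relative diagonal $\BTst_{n+1}\to\BTst_{n+1}\times_{\BTst_n}\BTst_{n+1}$, not of $\tau_n$ itself; the fibre over a point $(x,s,\alpha_n)$ is the scheme of isomorphisms $G_x[p^{n+1}]\cong G_s[p^{n+1}]$ lifting $\alpha_n$, which can be empty, and which is otherwise a torsor under a finite (typically non-smooth) group scheme in characteristic $p$. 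So these transition maps are neither smooth nor surjective, your assertion that they are ``smooth by smoothness of $\tau_n$'' is false, and no amount of exploiting the compatibility of the tower will rescue this particular system.

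The correct decomposition --- and the one your own tower is designed to produce --- lets the covering space vary with $n$: one has $X\times_\BTst S\cong\varprojlim_n\,(Y_n\times_{\BTst_n}S)$, and here the transition map $Y_{n+1}\times_{\BTst_{n+1}}S\to Y_n\times_{\BTst_n}S$ is the base change of the smooth affine surjection $Y_{n+1}\to Z_{n+1}$ along $Z_{n+1}\times_{\BTst_{n+1}}S\cong Y_n\times_{\BTst_n}S$, hence is smooth, affine, surjective. This is precisely how the paper argues: its recursion $X_{n+1}=Y_{n+1}\times_{\BTst_n}X_n$ is the same device with the roles of the two factors interchanged, and the transition maps of the resulting system $X_n\times_{\BTst_n}\BTst$ are base changes of the presentations $\psi_{n+1}$, so the limit is an $\infty$-smooth covering. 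With this substitution your argument closes; without it the central claim of the lemma remains unproved.
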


\begin{proof}
This follows from the properties of $\BTst_n$ and $\tau_n$, 
using that $\BTst$ is the projective limit of $\BTst_n$ 
for $n\to\infty$. More precisely,
the diagonal of $\BTst$ is representable affine 
because a projective limit of affine schemes is affine. 
We choose smooth presentations $\psi_n:Y_n\to\BTst_n$ with 
affine $Y_n$, and define recursively another sequence of smooth 
presentations $\pi_n:X_n\to\BTst_n$ with affine $X_n$ by 
$X_1=Y_1$ and $X_{n+1}=Y_{n+1}\times_{\BTst_n}X_n$. Let 
\begin{equation}
\label{Eq-BT-pres}
X=\varprojlim_n\,X_n=\varprojlim_n(X_n\times_{\BTst_n}\BTst)
\end{equation}
and let $\pi:X\to\BTst$ be the limit of the morphisms $\pi_n$.
The transition maps in the second system in \eqref{Eq-BT-pres}
are smooth and surjective because all $\psi_n$ are
smooth and surjective. Thus $\pi$ is presentation 
of $\BTst$ and an $\infty$-smooth covering.
The transition maps in the first system in \eqref{Eq-BT-pres}
are smooth
and surjective because the truncation morphisms $\tau_n$ are 
smooth and surjective too. Thus $X\to X_n\to\BTst_n$ is
an $\infty$-smooth covering.
\end{proof}

We refer to section \ref{Se-small-present} for presentations
of $\BTst\times\Spec\ZZ_p$ where the covering space is noetherian, 
and closer to $\BTst$ in some sense.

\subsection{Newton stratification}
\label{Subse-Newton}

In the following we write
$$
\overline\BTst=\BTst\times\Spec\FF_p;\qquad
\overline{\BTst_n}=\BTst_n\times\Spec\FF_p.
$$
We call a Newton polygon of height $h$ a polygon that 
appears as the Newton polygon of a $p$-divisible group of 
height $h$.

\begin{Lemma}
\label{Le-Newton}
For each Newton polygon $\nu$ of height $h$ there is a unique
reduced closed substack $\BTst_\nu$ of $\overline\BTst$
such that the geometric points of $\BTst_\nu$ are the
$p$-divisible groups with Newton polygon $\preceq\nu$. 
\end{Lemma}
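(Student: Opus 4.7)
The plan is to construct $\BTst_\nu$ by fpqc descent from a reduced closed subscheme of a presentation. The key external input is Grothendieck--Katz upper semi-continuity: for any $p$-divisible group $G$ over a scheme $S$ of characteristic $p$, the locus $Z_\nu(G)=\{s\in S:\nu(G_s)\preceq\nu\}$ is closed in $S$. For noetherian $S$ this is the classical statement; in general I would reduce to it by writing $S$ Zariski-locally as a filtered limit of finitely presented $\FF_p$-schemes to which $G$ descends, and using that the Newton locus is compatible with such limits. Equipping $Z_\nu(G)$ with its reduced closed subscheme structure, the construction becomes functorial in $(S,G)$ and depends only on the isomorphism class of $G$.

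Next I would take the $\infty$-smooth, hence ind-smooth, hence reduced presentation $\bar\pi\colon\overline X\to\overline\BTst$ obtained from Lemma \ref{Le-BT-pres} by base change to $\FF_p$, and let $G^{\univ}$ denote the universal $p$-divisible group on $\overline X$. Set $Z=Z_\nu(G^{\univ})\subseteq\overline X$, a reduced closed subscheme. Writing $p_1,p_2\colon\overline X\times_{\overline\BTst}\overline X\to\overline X$ for the two projections, the $p$-divisible groups $p_1^*G^{\univ}$ and $p_2^*G^{\univ}$ are canonically isomorphic, so by functoriality $p_1^{-1}(Z)=p_2^{-1}(Z)$ as reduced closed subschemes. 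This is precisely the descent datum invoked in the construction of $\XXX_{\red}$ in section \ref{Subse-aff-alg-stack}, and it produces the desired reduced closed substack $\BTst_\nu\subseteq\overline\BTst$.

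The description of geometric points is then immediate: given $x\colon\Spec k\to\overline\BTst$, choose a lift $\tilde x\colon\Spec k\to\overline X$; then $x$ factors through $\BTst_\nu$ iff $\tilde x$ factors through $Z$, which by definition means the Newton polygon of the associated $p$-divisible group is $\preceq\nu$. Uniqueness follows because a reduced closed substack of $\overline\BTst$ is determined by its underlying set, and this set must pull back along $\bar\pi$ to the underlying set of $Z$.

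The main delicate point is the closedness of the Newton locus on the possibly non-noetherian scheme $\overline X$. My preferred resolution is the limit argument sketched in the first paragraph; an alternative, if one is uneasy with it, is to apply Grothendieck--Katz on noetherian presentations of $\BTst\times\Spec\ZZ_p$ (such as those constructed later in the paper) and transport the resulting reduced closed subscheme to a reduced closed substack of $\overline\BTst$ via the same descent mechanism.
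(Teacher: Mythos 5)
Your proposal is correct and follows essentially the same route as the paper: apply Katz's specialization theorem to the universal group on a reduced presentation, endow the closed Newton locus with its reduced structure, and descend using that the two projections from $X\times_{\overline\BTst}X$ are reduced morphisms, so the scheme-theoretic preimages of the locus are reduced and coincide on points, hence are equal. Your additional care about closedness of the Newton locus over the non-noetherian presentation (via a limit argument, or equivalently via the boundedness principle reducing the Newton polygon to a finite truncation) addresses a point the paper passes over silently.
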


\begin{proof}
We consider a reduced presentation 
$X\to\overline\BTst$ with affine $X$, 
defined by the $p$-divisible group $G$ over $X$.
The points of $X$ where $G$ has Newton polygon $\preceq\nu$
form a closed subset of $X$; see \cite[Thm.~2.3.1]{Katz-Slope}.
The corresponding reduced subscheme $X_{\nu}$ of $X$ descends
to a reduced substack of $\BTst$ because 
the inverse images of $X_\nu$ under the two projections 
$X\times_{\BTst}X\to X$ are
reduced and coincide on geometric points, so they are equal.
\end{proof}

By a well-known boundedness principle, there is an integer
$N$ depending on $h$ such that the Newton polygon of
a $p$-divisible group $G$ of height $h$ is determined by
its truncation $G[p^N]$. 

\begin{Lemma}
\label{Le-Newton-trunc}
For $n\ge N$ there is a unique reduced closed substack 
$\BTst_{n,\nu}$ of $\overline{\BTst_n}$ such that we have 
a Cartesian diagram
$$
\xymatrix@M+0.2em{
\BTst_\nu \ar[r] \ar[d] & 
\BTst \ar[d]^\tau \\
\BTst_{n,\nu} \ar[r] & {\BTst_n}
}
$$
where $\tau$ is the truncation.
In particular, the closed immersion $\BTst_{\nu}\to\BTst$
is a morphism of finite presentation.
\end{Lemma}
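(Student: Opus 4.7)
The plan is to descend a Newton-polygon stratum through the cover of $\overline{\BTst_n}$ obtained from Lemma~\ref{Le-BT-pres}, with the hypothesis $n \ge N$ and Lemma~\ref{Le-Newton} doing the heavy lifting.

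Let $X \to \BTst$ be the presentation furnished by Lemma~\ref{Le-BT-pres}, and set $\bar X = X \times \Spec\FF_p$, which is affine and carries a universal $p$-divisible group $G$ over $\FF_p$. The composition $\bar X \to \overline{\BTst_n}$ is an $\infty$-smooth covering, hence faithfully flat. By \cite[Thm.~2.3.1]{Katz-Slope} the locus $\bar X_\nu \subset \bar X$ of points where $G$ has Newton polygon $\preceq \nu$ is closed; equip it with its reduced subscheme structure.

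To descend $\bar X_\nu$ to $\overline{\BTst_n}$, I would verify that its two pullbacks along the projections $\bar X \times_{\overline{\BTst_n}} \bar X \rightrightarrows \bar X$ agree. Both are reduced closed subschemes of the (affine) fibre product, and their underlying sets consist of those geometric points at which the respective pulled-back $p$-divisible group has Newton polygon $\preceq \nu$. Now the two pulled-back $p$-divisible groups agree after truncation to level $n$, and the hypothesis $n \ge N$ forces their Newton polygons to coincide at every geometric point. Hence the underlying sets of the two pullbacks agree, so the pullbacks themselves agree, and $\bar X_\nu$ descends to a reduced closed substack $\BTst_{n,\nu} \subset \overline{\BTst_n}$; its uniqueness is immediate from faithful flatness of $\bar X \to \overline{\BTst_n}$.

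Finally, the preimage $\tau^{-1}(\BTst_{n,\nu}) \subset \overline\BTst$ is a reduced closed substack (as $\tau$ is smooth) whose geometric points are exactly the height-$h$ $p$-divisible groups with Newton polygon $\preceq \nu$; Lemma~\ref{Le-Newton} then identifies it with $\BTst_\nu$, giving the Cartesian square. For finite presentation: $\BTst_{n,\nu} \hookrightarrow \BTst_n$ is a closed immersion into a noetherian Artin stack of finite type over $\ZZ$, hence of finite presentation, and $\BTst_\nu \to \BTst$ is its base change along $\tau$. The main technical step is really the agreement of the two pullbacks on the fibre product: this is precisely where $n \ge N$ enters, and without it the construction would fail since truncated BT groups carry no intrinsic Newton polygon.
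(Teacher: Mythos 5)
Your argument is correct and is essentially the paper's proof: compose a reduced presentation of $\overline\BTst$ with the truncation to present $\overline{\BTst_n}$, descend the reduced Newton stratum $X_\nu$ using that $n\ge N$ makes the two pullbacks over $X\times_{\overline{\BTst_n}}X$ coincide on geometric points, and deduce finite presentation from $\BTst_n$ being of finite type. The only cosmetic slip is calling $\tau:\overline\BTst\to\overline{\BTst_n}$ smooth; it is only a projective limit of smooth morphisms, but it is reduced in the sense of the paper, which is all that is needed to see that $\tau^{-1}(\BTst_{n,\nu})$ is reduced.
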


\begin{proof}
A reduced presentation $X\to\overline\BTst$ 
composed with $\tau$ is a reduced presentation of 
$\overline{\BTst_n}$. As in the proof of Lemma \ref{Le-Newton}, 
the reduced subscheme $X_\nu$ of $X$ descends to a
reduced substack of $\overline{\BTst_n}$.
Since $\BTst_n$ is of finite type, the
immersion $\BTst_{n,\nu}\to\BTst_n$ is a 
morphisms of finite presentation.
\end{proof}

Along the same lines, one can consider the locus of
infinitesimal groups:

\begin{Lemma}
\label{Le-BT-o}
There are unique reduced closed substacks 
$\BTst^o\subseteq\overline\BTst$ and 
$\BTst^o_n\subseteq\overline{\BTst_n}$ for $n\ge 1$ such 
that the geometric points of $\BTst^o$ and $\BTst^o_n$
are precisely the infinitesimal groups. There is a
Cartesian diagram
$$
\xymatrix@M+0.2em{
\BTst^o \ar[r] \ar[d] &
\BTst^o_{n+1} \ar[r] \ar[d] &
\BTst^o_n \ar[d] \\
\BTst \ar[r]^-{\tau} &
\BTst_{n+1} \ar[r]^-{\tau} & \BTst_n.\!
}
$$
In particular, the closed immersion $\BTst^o\to\BTst$ is
of finite presentation.
\end{Lemma}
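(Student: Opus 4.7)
The plan is to mimic Lemmas \ref{Le-Newton} and \ref{Le-Newton-trunc}, replacing the Newton condition by the condition of being infinitesimal. Concretely, I would fix reduced presentations $X\to\overline{\BTst}$ and $X_n\to\overline{\BTst_n}$ with affine total spaces, classifying the universal $p$-divisible group $G$ and the universal truncated BT groups $G_n$ respectively, and show that the subsets $X^o\subseteq X$ and $X_n^o\subseteq X_n$ on which the fiber is infinitesimal are closed. For closedness it suffices to reduce to level one: a (truncated) BT group of level $\geq 1$ is infinitesimal if and only if its $p$-torsion is infinitesimal, and for any finite locally free group scheme $H$ over a base $S$, the locus on $S$ where the fiber is connected is closed, since the length of the local ring at the identity section in the fiber is upper semicontinuous on $S$ and equals the rank of $H$ exactly on this locus.

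I would then endow $X^o$ and $X_n^o$ with their reduced induced subscheme structures and descend them to reduced closed substacks $\BTst^o\subseteq\overline{\BTst}$ and $\BTst^o_n\subseteq\overline{\BTst_n}$ exactly as in the proof of Lemma \ref{Le-Newton}: the two pullbacks along the projections $X\times_{\overline{\BTst}}X\rightrightarrows X$ (and likewise for $X_n$) are reduced and coincide on geometric points, hence are equal, so they descend. Uniqueness of a reduced closed substack with a prescribed set of geometric points is automatic.

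For the Cartesian squares I would use that the truncation morphisms $\tau$ are smooth (by Subsection \ref{Subse-stack-pdiv} and Lemma \ref{Le-BT-pres}), so $\tau^{-1}(\BTst^o_n)$ remains a reduced closed substack. It has the same geometric points as $\BTst^o_{n+1}$ and $\BTst^o$, because a BT$_{n+1}$ group (resp.\ $p$-divisible group) is infinitesimal if and only if its level-$n$ truncation is, so the uniqueness clause forces the claimed equalities. Finite presentation of $\BTst^o\to\BTst$ then follows by base change from the finite presentation immersion $\BTst^o_n\to\BTst_n$, which is of finite presentation because $\BTst_n$ is of finite type over $\ZZ$.

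The only genuinely non-formal ingredient is the closedness of the infinitesimal locus in a flat family of finite group schemes; I would treat this as a standard fact, appealing to upper semicontinuity of the length of the local ring at a section of a finite flat family. Everything else is a direct repetition of the scheme-theoretic descent and pullback arguments already used for the Newton strata.
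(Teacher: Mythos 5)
Your proposal is correct and follows essentially the same route as the paper: closedness of the infinitesimal locus (reduced to level one and checked by semicontinuity), descent of the reduced induced structure exactly as for the Newton strata, and the Cartesian squares from the fact that the truncations $\tau$ are reduced (your appeal to smoothness is just a stronger version of this) together with uniqueness of reduced closed substacks with prescribed geometric points. The only difference is that you supply a justification for the closedness of the connected locus, which the paper treats as standard.
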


\begin{proof}
Let $G$ be a $p$-divisible group or truncated Barsotti-Tate 
group of positive level over an $\FF_p$-scheme $X$.
Since the points of $X$ where the fibre of $G$ is infinitesimal
form a closed subset of $X$, the substacks $\BTst^o$
and $\BTst^o_n$ exist; see the proof of Lemma \ref{Le-Newton}.
The diagram is Cartesian since the truncation morphisms 
$\tau$ are reduced, and since $G$ is infinitesimal if and only 
if $G[p]$ is infinitesimal. The vertical immersions are of
finite presentation because $\BTst_n$ is of finite type.
\end{proof}


\section{The display functor}

\subsection{Frame formalism}

We recall some constructions from \cite{Lau-Frames}
and \cite{Lau-Relation}.
Let $\FFF=(S,I,R,\sigma,\sigma_1)$ be a frame in the
sense of \cite{Lau-Frames} with $p\sigma_1=\sigma$ on $I$. 

In this article, the main example is the following. 
For a $p$-adic ring $R$,
we denote by $W(R)$ the ring of $p$-typical Witt vectors
and by $f$ and $v$ the Frobenius and Verschiebung
of $W(R)$. Let $I_R=v(W(R))$ and let $f_1:I_R\to W(R)$
be the inverse of $v$. Then
$$
\WWW_R=(W(R),I_R,R,f,f_1)
$$ 
is a frame with $pf_1=f$. Windows over $\WWW_R$ in the
sense of \cite{Lau-Frames} are (not necessarily nilpotent)
displays over $R$ in the sense of \cite{Zink-Disp} and
\cite{Messing-Disp}.

For an $S$-module $M$ let 
$M^{(1)}$ be its $\sigma$-twist, and for a $\sigma$-linear
map of $S$-modules $\alpha:M\to N$ let 
$\alpha^\sharp:M^{(1)}\to N$ be its linearisation.
A filtered $F$-$V$-module over $\FFF$ is a
quadruple $(P,Q,F^\sharp,V^\sharp)$ where
$P$ is a projective $S$-module of finite
type with a filtration $IP\subseteq Q\subseteq P$
such that $P/Q$ is projective over $R$, and where
$F^\sharp:P^{(1)}\to P$ and $V^\sharp:P\to P^{(1)}$
are homomorphisms of $S$-modules with
$F^\sharp V^\sharp=p$ and $V^\sharp F^\sharp=p$.
There is a functor
\begin{align*}
\Upsilon:(\FFF\text{-windows})&\to
(\text{filtered $F$-$V$-modules over }\FFF) \\
(P,Q,F,F_1)&\mapsto(P,Q,F^\sharp,V^\sharp)
\end{align*}
such that $F^\sharp$ is the linearisation of $F$, and
$V^\sharp$ is determined by the relation
$V^\sharp(F_1(x))=1\otimes x$ for $x\in Q$;
see \cite[Lemma 10]{Zink-Disp} in the case of displays, 
and \cite[Lemma 2.3]{Lau-Relation}. 
If $S$ has no $p$-torsion, $\Upsilon$ is fully faithful.

Assume that $S$ and $R$ are $p$-adic rings and that $I$
is equipped with divided powers which are compatible
with the canonical divided powers of $p$.
For a $p$-divisible group $G$ over $R$ we denote by 
$\DD(G)$ the \emph{covariant} Dieudonn\'e crystal of $G$. 
By a standard construction, it gives rise to a functor 
\begin{align*}
\Theta:(\pdivC/R)&\to
(\text{filtered $F$-$V$-modules over }\FFF) \\
G&\mapsto(P,Q,F^\sharp,V^\sharp);
\end{align*}
see \cite[Constr.~3.14]{Lau-Relation}.
Here $P=\DD(G)_{S\to R}$,
the submodule $Q$ is the kernel of $P\to\Lie(G)$,
and the Frobenius and Verschiebung of $G\otimes_RR/pR$ 
induce $V^\sharp$ and $F^\sharp$. Note that
$F^\sharp$ is equivalent to a $\sigma$-linear
map $F:P\to P$.

If $S$ has no $p$-torsion, there is a unique 
$\sigma$-linear map $F_1:Q\to P$ such that $(P,Q,F,F_1)$
is an $\FFF$-window that gives back $(P,Q,F^\sharp,V^\sharp)$
when $\Upsilon$ is applied; see \cite[Lemma A.2]{Kisin-Crys} 
and \cite[Prop.~3.15]{Lau-Relation}.
In other words, there is a unique functor
$$
\Phi:(\pdivC/R)\to(\FFF\text{-windows})
$$
together with an isomorphism $\Theta\cong\Upsilon\circ\Phi$.

\subsection{The display functor}

Let $R$ be a $p$-adic ring.
The ideal $I_R$ carries natural divided powers
which are compatible with the canonical divided powers of $p$.
Moreover the ring $W(R)$ is $p$-adic; see \cite[Prop.~3]{Zink-Disp}.
Thus we have a functor $\Theta$ for the frame $\WWW_R$,
which we denote by
$$
\Theta_R:(\pdivC/R)\to
(\text{filtered $F$-$V$-modules over }\WWW_R).
$$
We also have a functor $\Upsilon_{\!R}:(\dispC/R)\to$
(filtered $F$-$V$-modules over $\WWW_R$).

\begin{Prop}
\label{Pr-disp-functor}
For each $p$-adic ring $R$ there is a functor
$$
\Phi_R:(\pdivC/R)
\to(\dispC/R)
$$
together with an isomorphism $\Theta_R\cong\Upsilon_{\!R}\circ\Phi_R$
compatible with base change in $R$. This determines
$\Phi_R$ up to unique isomorphism. 
\end{Prop}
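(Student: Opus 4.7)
The plan is to handle first the case where $W(R)$ has no $p$-torsion, where the construction is essentially given by the frame formalism recalled in the previous subsection, and then to reduce the general case to this one by faithfully flat descent from the universal presentation of $\BTst$ provided by Lemma \ref{Le-BT-pres}.

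If $W(R)$ is $p$-torsion free, the functor $\Upsilon_{\!R}$ is fully faithful and, as recalled above, for each $p$-divisible group $G$ over $R$ the filtered $F$-$V$-module $\Theta_R(G)$ lifts uniquely to a $\WWW_R$-window; this gives $\Phi_R$ together with the desired isomorphism, automatically functorial in $G$ and compatible with base change along any map $R\to R'$ for which $W(R')$ is also $p$-torsion free.

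For a general $p$-adic ring $R$, I would choose a presentation $\pi\colon X=\Spec A\to\BTst$ as in Lemma \ref{Le-BT-pres}, together with the coordinate rings $A'$ and $A''$ of $X\times_\BTst X$ and $X\times_\BTst X\times_\BTst X$. Since $X$ and these fibre products are $\infty$-smooth over $\ZZ$, the rings $A$, $A'$, $A''$ are filtered colimits of smooth $\ZZ$-algebras, and the next step is to verify that this suffices to ensure that the Witt vectors of their $p$-adic completions have no $p$-torsion. Applying the first case to the universal $p$-divisible group over the $p$-adic completion of $A$ produces a display $\Phi(G_{\univ})$; applied on the completion of $A'$ it yields a canonical isomorphism of the two pullbacks, and applied on the completion of $A''$ it shows that this isomorphism satisfies the cocycle condition. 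The display $\Phi(G_{\univ})$ therefore descends along the presentation to a display on the stack $\BTst$, and its pullback along the classifying map of any $p$-divisible group $G$ over a $p$-adic ring $R$ defines $\Phi_R(G)$ together with the required isomorphism. Compatibility with base change and uniqueness of the pair $(\Phi_R,\Theta_R\cong\Upsilon_{\!R}\circ\Phi_R)$ both reduce, after pulling back to a $p$-torsion-free cover, to the uniqueness already established in the first case.

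The main technical hurdle---and the reason the smoothness results of Illusie enter via Lemma \ref{Le-BT-pres}---is to verify that the Witt vectors of the universal coordinate rings become $p$-torsion free after $p$-adic completion. Once this is granted, the construction is formally determined by the uniqueness statement in the $p$-torsion-free case, so the remainder of the argument is pure descent.
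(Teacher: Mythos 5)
Your overall strategy is the same as the paper's: construct the window structure in the universal situation, where $p$-torsion-freeness of the Witt ring makes the lift of $\Theta$ to a window exist and be unique, and then descend along the presentation of Lemma \ref{Le-BT-pres}. You also identify the right technical input: $A$ and the coordinate rings of the fibre products are filtered colimits of smooth, hence flat, $\ZZ$-algebras, so they and their $p$-adic completions have no $p$-torsion, and therefore neither do their Witt rings. Two steps, however, are genuinely missing.

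First, the display you construct lives over $\Spec\hat A$, not over $\Spec A$, and $\Spec\hat A\to\BTst\times\Spec\ZZ_p$ is not a presentation. For a general $p$-adic ring $R$ the fibre product $\Spec S=\Spec R\times_{\BTst}\Spec A$ need not map to $\Spec\hat A$ at all, so ``pullback along the classifying map'' does not literally apply. The paper resolves this by first reducing to the case where $p$ is nilpotent in $R$ (displays over a $p$-adic ring are equivalent to compatible systems of displays over the rings $R/p^nR$); then $p$ is nilpotent in $S$, every map $A\to S$ factors through $\hat A$, and the descent along the faithfully flat map $R\to S$ goes through. You need this reduction, or some substitute for it, before your descent argument parses.

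Second, your construction produces $\Phi_R$ only on objects and on isomorphisms, because $\BTst$ is fibered in groupoids: a non-invertible homomorphism $g:G\to G'$ gives two different classifying maps and no morphism at the stack level, so descent says nothing about it. The proposition asserts a functor on $(\pdivC/R)$, whose morphisms are arbitrary homomorphisms, so you must still check that $\Theta_R(g)$ commutes with $F_1$. The paper does this by encoding $g$ as the automorphism $\left(\begin{smallmatrix}1&g\\0&1\end{smallmatrix}\right)$ of $G'\oplus G$, reducing to the case of isomorphisms already covered. Without this (or an equivalent device, e.g.\ pulling back to a cover over which $G\oplus G'$ becomes universal and invoking full faithfulness of $\Upsilon$ over a $p$-torsion-free Witt ring), the asserted functoriality is unproved.
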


In other words, for each $p$-divisible group $G$ over
a $p$-adic ring $R$ with $\Theta_R(G)=(P,Q,F^\sharp,V^\sharp)$
there is a unique map $F_1:Q\to P$ which is functorial in $G$ and
$R$ such that $(P,Q,F,F_1)$ is a display which induces $V^\sharp$; 
here $F$ is defined by $F(x)=F^\sharp(1\otimes x)$.

\begin{proof}[Proof of Proposition \ref{Pr-disp-functor}]
Let $X=\Spec A\xrightarrow\pi\BTst\times\Spec\ZZ_p$ 
be a reduced presentation, given by a $p$-divisible
group $G$ over $A$; see Lemma \ref{Le-BT-pres}. 
We write $X\times_{\BTst\times\Spec\ZZ_p}X=\Spec B$.
The rings $A$ and $B$ have no $p$-torsion; the 
rings $A/pA$ and $B/pB$ are reduced. Thus the $p$-adic
completions $\hat A$ and $\hat B$ have no $p$-torsion
and are reduced. In particular, the functors 
$\Phi_{\!\hat A}$ and $\Phi_{\hat B}$ exist and 
are unique, which implies that they commute with base
change by arbitrary homomorphisms between $\hat A$ and $\hat B$.

Since displays over a $p$-adic ring $R$ are equivalent
to compatible systems of displays over $R/p^nR$ for $n\ge 1$, 
to prove the proposition it suffices to show that there is 
a unique functor $\Phi_R$ if $p$ is nilpotent in $R$.
Let $H$ be a $p$-divisible group over $R$.
It defines a morphism $\alpha:\Spec R\to\BTst\times\Spec\ZZ/p^m\ZZ$
for some $m$.
We define $S$ and $T$ such that the following diagram
has Cartesian squares, where $\pi_1$ and $\pi_2$ are the 
natural projections.
$$
\xymatrix@!C@M+0.2em{
\Spec T \ar@<0.7ex>[r]^{\psi_1} \ar@<-0.7ex>[r]_{\psi_2} 
\ar[d]^{\alpha''} & 
\Spec S \ar[r]^-\psi \ar[d]^{\alpha'} & 
\Spec R \ar[d]^{\alpha} \\
\Spec B \ar@<0.7ex>[r]^{\pi_1} \ar@<-0.7ex>[r]_{\pi_2} & 
\Spec A \ar[r]^-\pi & \BTst \times\Spec\ZZ_p
}
$$
Then $T\cong S\otimes_RS$ such that $\psi_1$ and $\psi_2$
are the projections, and $\psi$ is faithfully flat. 
Let $H_S=\psi^*H$. We have descent data
$u:\pi_1^*G\cong\pi_2^*G$ and $v:\psi_1^*H_S\cong\psi_2^*H_S$
and an isomorphism $w:\alpha^{\prime*}G\cong H_S$ which preserves
the descent data. Since $p$ is nilpotent in $R$, the pair 
$(\alpha',\alpha'')$ factors into
$$
\xymatrix@C+1em@M+0.2em{
\Spec T \ar@<0.7ex>[d]^{\psi_1} \ar@<-0.7ex>[d]_{\psi_2} 
\ar[r]^{\hat\alpha''} &
\Spec\hat B \ar@<0.7ex>[d]^{\hat\pi_1} \ar@<-0.7ex>[d]_{\hat\pi_2}
\ar[r] &
\Spec B \ar@<0.7ex>[d]^{\pi_1} \ar@<-0.7ex>[d]_{\pi_2} \\ 
\Spec S \ar[r]^{\hat\alpha'} & \Spec\hat A \ar[r] & \Spec A. 
}
$$
The isomorphism $u$ induces
$\hat u:\hat\pi_1^*G_{\!\hat A}\cong\hat\pi_2^*G_{\!\hat A}$,
and $w$ induces an isomorphism 
$\hat w:\hat\alpha^{\prime*}G_{\!\hat A}\cong H_S$
which transforms $\hat u$ into $v$.
The isomorphism $\hat w$
induces an isomorphism of filtered $F$-$V$-modules
$$
\hat\alpha^{\prime*}\Theta_{\!\hat A}(G_{\!\hat A})
\cong\Theta_S(H_S).
$$ 
Thus the operator $F_1$ on $\Theta_{\!\hat A}(G_{\!\hat A})$
given by $\Phi_{\!\hat A}(G_{\!\hat A})$ induces an
operator $F_1$ on $\Theta_S(H_S)$ which makes a display 
$\Phi_S(H_S)$. The descent datum 
$\psi_1^*\Theta_S(H_S)\cong\psi_1^*\Theta_S(H_S)$
induced by $v$ preserves $F_1$ since
$\hat w$ transforms $\hat u$ into $v$ and since
the isomorphism $\pi_1^*\Theta_{\!\hat A}(G_{\!\hat A})\cong 
\pi_1^*\Theta_{\!\hat A}(G_{\!\hat A})$ induced by $\hat u$
preserves $F_1$ by the uniqueness of $\Phi_{\hat B}$.
By fpqc descent, cf.\ \cite[Thm.~37]{Zink-Disp}, 
the operator $F_1$ on $\Theta_S(H_S)$ descends to an
operator $F_1$ on $\Theta_R(H)$ which makes a display $\Phi_R(H)$. 
This display is uniquely determined by the requirement
that the functors $\Phi_{\hat B},\Phi_S,\Phi_R$ are
compatible with base change by the given ring homomorphisms 
$\hat A\to S\leftarrow R$.

The construction implies that $F_1$ is preserved under 
base change by homomorphisms $R\to R'$ of $p$-adic rings
and under isomorphisms
of $p$-divisible groups over $R$. Since a homomorphism
of $p$-divisible groups $g:G\to G'$ can be encoded by
the automorphism $\left(\begin{smallmatrix}1&g\\0&1
\end{smallmatrix}\right)$ of $G'\oplus G$, it follows
that $F_1$ is also preserved under homomorphisms of
$p$-divisible groups over $R$.
\end{proof}

\begin{Prop}
A $p$-divisible group $G$ over a $p$-adic ring
$R$ is infinitesimal (unipotent) if and only 
if the display $\Phi_R(G)$ is nilpotent ($F$-nilpotent).
\end{Prop}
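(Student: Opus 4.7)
The plan is to reduce to the case where $R=k$ is an algebraically closed field of characteristic $p$, where both equivalences become consequences of the classical decomposition of the Dieudonn\'e module of $G$.

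First I would reduce to $pR=0$: by Lemma \ref{Le-BT-o}, $G$ is infinitesimal if and only if $G[p]$ is, and nilpotence (respectively $F$-nilpotence) of a display over a $p$-adic ring depends only on its reduction modulo $p$, so by the base-change compatibility in Proposition \ref{Pr-disp-functor} both sides of the equivalence descend through $R\to R/pR$. Next I would reduce to geometric points. Both conditions are fibrewise over an $\FF_p$-algebra $R$: the infinitesimal locus is a closed substack cut out fibrewise by Lemma \ref{Le-BT-o}, while Zink's nilpotence condition on $\Phi_R(G)$ says that some iterate of the linearised Verschiebung $V^\sharp\colon P\to P^{(1)}$ vanishes modulo $(I_R,p)$, which is a linear-algebra condition over $R/pR$ that can be tested after specialisation to each residue field. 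Thus it suffices to prove the equivalence when $R=k$ is algebraically closed of characteristic $p$.

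Over such a $k$, the ring $W(k)$ has no $p$-torsion, so by the uniqueness part of Proposition \ref{Pr-disp-functor} the display $\Phi_k(G)$ is the classical Dieudonn\'e display of $G$: its underlying module is the covariant Dieudonn\'e module $(M,F,V)$ and the Hodge filtration is $VM$. Zink's nilpotence condition then becomes \emph{$V$ topologically nilpotent on $M$}, which by the classical decomposition of $M$ over $W(k)$ is equivalent to $M$ having no \'etale part, hence to $G$ being infinitesimal. The unipotent versus $F$-nilpotent case is strictly parallel, swapping the roles of $F$ and $V$ and of the multiplicative and \'etale parts.

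The main obstacle is the second reduction, i.e.\ verifying that nilpotence and $F$-nilpotence of a display are genuinely fibrewise properties, so that failure over $R$ forces failure after a specialisation to some residue field. This is implicit in Zink's original setup but deserves to be spelt out explicitly here; granted it, the remainder of the argument is the textbook structure theory of Dieudonn\'e modules over an algebraically closed field of characteristic $p$.
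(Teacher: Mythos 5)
Your proposal is correct and follows essentially the same route as the paper: both reduce the statement to geometric fibres of characteristic $p$ (using that infinitesimality/unipotence of $G$ and nilpotence/$F$-nilpotence of $\Phi_R(G)$ are fibrewise conditions, the latter because it only involves $\bar V^\sharp$ or $\bar F^\sharp$ on the finitely generated module $P/(I_RP+pP)$) and then invoke classical Dieudonn\'e theory over perfect fields. The fibrewise nature of the nilpotence condition, which you rightly flag as the point needing care, is exactly the step the paper also justifies only briefly.
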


\begin{proof}
The $p$-divisible group $G$ is infinitesimal or unipotent
if and only if the geometric fibres of $G$ in points of 
characteristic $p$ have this property; 
see \cite[Chap.~II, Prop.~4.4]{Messing-Crys}.
Similarly, a display $\PPP=(P,Q,F,F_1)$ over $R$ is nilpotent or 
$F$-nilpotent if and only if the geometric fibres of $\PPP$ 
in points of characteristic $p$ have this property. 
Indeed, let $\bar P$ be the projective $R/pR$-module $P/(I_RP+pP)$. 
The display $\PPP$ is nilpotent (resp.\ $F$-nilpotent)
if and only if the homomorphism 
$\bar V^\sharp:\bar P\to\bar P^{(1)}$ 
(resp.\ $\bar F^\sharp:\bar P^{(1)}\to\bar P$) is nilpotent,
which can be verified at the geometric points since $\bar P$
is finitely generated.
Thus the proposition follows from the case of
perfect fields, which is well-known.
\end{proof}

\begin{Remark}
\label{Re-Phi-dual}
There is a natural duality isomorphism 
$\Phi_R(G^\vee)\cong\Phi_R(G)^t$, where $G^\vee$ is the Serre 
dual of $G$, and where ${}^t$ denotes the dual display as
in \cite[Def.~19]{Zink-Disp}. 
Indeed, the crystalline duality theorem \cite[5.3]{BBM} 
implies that the functor $\Theta_R$ is compatible with
duality, and the assertion follows from the uniqueness
part of Proposition \ref{Pr-disp-functor}.
See also \cite[Cor.~3.26]{Lau-Relation}.
\end{Remark}

\subsection{The extended display functor}

Assume that $B\to R$ is a surjective homomorphism 
of $p$-adic rings whose kernel $\Fb\subset B$ is equipped
with divided powers $\delta$ that are compatible with
the canonical divided powers of $p$. Then one can define
a frame 
$$
\WWW_{B/R}=(W(B),I_{B/R},R,f,\tilde f_1)
$$
with $p\tilde f_1=f$;
see \cite[Section 2.2]{Lau-Relation}.
Windows over $\WWW_{B/R}$ are called displays for $B/R$.
The ideal $I_{B/R}$ carries natural divided powers,
depending on $\delta$, which are
compatible with the canonical divided powers of $p$;
see \cite[Section 2.7]{Lau-Frames}. 
Thus there is a functor $\Theta$ for $\WWW_{B/R}$,
which we denote by
$$
\Theta_{B/R}:(\pdivC/R)\to
(\text{filtered $F$-$V$-modules over }\WWW_{B/R}).
$$
We also have $\Upsilon_{\!B/R}:$ (displays for
$B/R$) $\to$ (filtered $F$-$V$-mod.\ over $\WWW_{B/R}$).

\begin{Prop}
\label{Pr-disp-functor-pd}
For each divided power extension $B\to R$ of $p$-adic rings
which is compatible with the canonical divided powers of $p$,
there is a functor
$$
\Phi_{B/R}:(\pdivC/R)
\to(\text{displays for }B/R)
$$
together with an isomorphism 
$\Theta_{B/R}\cong\Upsilon_{\!B/R}\circ\Phi_{B/R}$
compatible with base change in $(B,R,\delta)$.
This determines $\Phi_{B/R}$ up to unique isomorphism.
\end{Prop}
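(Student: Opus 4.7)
The plan is to follow the proof of Proposition \ref{Pr-disp-functor} closely, adapting it to the presence of divided powers. As before, a display for $B/R$ is equivalent to a compatible system of displays for $B/p^nB\to R/p^nR$, so it suffices to construct $\Phi_{B/R}$ when $p$ is nilpotent in $B$. First I would handle the case in which $W(B)$ has no $p$-torsion, which holds in particular when $B$ itself is $p$-torsion free. Then $\Upsilon_{\!B/R}$ is fully faithful by the same argument as for $\WWW_R$, so $\Phi_{B/R}$ is unique if it exists, and existence is provided by the $\WWW_{B/R}$-variant of Kisin's lemma, namely Proposition~3.15 of \cite{Lau-Relation} (extending Lemma~A.2 of \cite{Kisin-Crys}): the linearised Frobenius $F^\sharp$ restricts on $Q$ to a map divisible by $p$, and dividing by $p$ yields the required $\sigma$-linear operator $F_1$ on $Q$.

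For a general pd extension $(B,R,\delta)$ with $p$ nilpotent in $B$, I would descend from the torsion-free case, mirroring the descent step of Proposition~\ref{Pr-disp-functor}. Choose a reduced presentation $\pi:X=\Spec A\to\BTst\times\Spec\ZZ_p$ from Lemma~\ref{Le-BT-pres}, with universal $p$-divisible group $G/A$, so that $\hat A$ and the $p$-completion $\hat A'$ of $A\otimes_{\BTst\times\Spec\ZZ_p}A$ are reduced and $p$-torsion free. For a $p$-divisible group $H/R$ classified by $\alpha:\Spec R\to\BTst\times\Spec\ZZ/p^m\ZZ$, form the faithfully flat cover $\psi:\Spec S\to\Spec R$ and $T=S\otimes_R S$ as in the previous proof, so that $H_S=\alpha^{\prime*}G$ and the pair $(\alpha',\alpha'')$ factors through $\hat\alpha':\Spec S\to\Spec\hat A$ and $\hat\alpha'':\Spec T\to\Spec\hat A'$. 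Flatness of $R\to S$ extends $\delta$ to a pd structure on $\Fb\otimes_R S$, giving a pd extension $(B_S,S,\delta_S)$, and similarly $(B_T,T,\delta_T)$ over $T$.

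To construct $F_1$ on $\Theta_{B_S/S}(H_S)$ I would lift the pd thickening $B_S\to S$ to an auxiliary $p$-torsion-free pd thickening $\tilde B\to\hat A$ fitted into a pd-compatible commutative square with $(B_S,S)$; this can be arranged using the formal smoothness of $\hat A$ over $\ZZ_p$ together with a pd-envelope construction. Since $\tilde B$ is $p$-torsion free, the first step produces a unique $F_1$ on $\Theta_{\tilde B/\hat A}(G_{\!\hat A})$, which pulls back to the required $F_1$ on $\Theta_{B_S/S}(H_S)$; the analogous construction over $\hat A'$ shows that the two pullbacks to $(B_T,T,\delta_T)$ agree, so fpqc descent for windows, compare Theorem~37 of \cite{Zink-Disp}, produces the $F_1$ on $\Theta_{B/R}(H)$ that defines $\Phi_{B/R}(H)$. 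Functoriality in $H$ and compatibility with base change in $(B,R,\delta)$ follow from the same uniqueness, encoding a homomorphism $g:H\to H'$ as an automorphism of $H\oplus H'$ exactly as in the proof of Proposition~\ref{Pr-disp-functor}.

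The main obstacle is producing the auxiliary pd thickening $(\tilde B,\tilde R,\tilde\delta)$ over $\hat A$ that both lifts $(B_S,S,\delta_S)$ and is $p$-torsion free; without such a lift there is no way to leverage the uniqueness of the torsion-free case. The rigidity provided by this torsion-free uniqueness is what ensures that the resulting $F_1$ is well-defined independently of the auxiliary choices, and the standard fpqc descent machinery then takes care of the rest.
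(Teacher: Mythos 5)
Your overall strategy (reduce to the case where the Witt ring has no $p$-torsion, then transport the resulting $F_1$ to the general case via a universal presentation and descent) is the right one, but the step you yourself flag as ``the main obstacle'' is in fact the entire content of the proposition, and your proposal does not resolve it. Two problems. First, the existence of a $p$-torsion-free pd thickening $\tilde B\to\hat A$ mapping pd-compatibly onto $(B_S\to S)$ is doubtful: divided power envelopes over $\ZZ_p$ are $p$-torsion free only in special situations (regular immersions into smooth schemes), not for an arbitrary surjection onto $B_S$ from a smooth algebra --- indeed the structure theorem for pd envelopes of regular ideals exhibits them as free modules over the quotient, which here is killed by a power of $p$. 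Second, and more seriously, even if such a $\tilde B$ exists, the uniqueness of $F_1$ over $\tilde B$ does \emph{not} make the induced $F_1$ on $\Theta_{B_S/S}(H_S)$ well defined: two different choices of $(\tilde B,\text{maps})$ give two a priori different operators after base change along $W(\tilde B)\to W(B_S)$, and since $W(B_S)$ has $p$-torsion there is no rigidity left to compare them. Appealing to ``the rigidity provided by the torsion-free uniqueness'' begs the question; to compare two choices you must dominate both by a common $p$-torsion-free pd thickening, and producing that domination is exactly the problem.

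The paper's proof supplies the missing mechanism. One \emph{defines} $\Phi_{B/R}(H)$ as the base change of $\Phi_B(H_1)$ along the frame morphism $\WWW_B\to\WWW_{B/R}$ for a lift $H_1$ of $H$ to $B$ (such a lift exists because $\Fb$ is a nilideal); no torsion-free thickening of $B\to R$ is ever needed. The whole issue is then independence of the lift $H_1$, and the key idea is to compare \emph{two} lifts inside the $p$-adic completion $A'$ of the divided power envelope of the \emph{diagonal} $X\to X\times_{\Spec\ZZ_p}X$ of an $\infty$-smooth presentation: the diagonal is a regular immersion of (ind-)smooth $\ZZ_p$-algebras, so $A'$ \emph{is} $p$-torsion free, and the two pullbacks $G_1,G_2$ of the universal group are two lifts of $G$ inducing the same $F_1$ on $\Theta_{A'/A}(G)$. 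Given two lifts $H_1,H_2$ over $B$, one passes to an ind-\'etale cover $B'\to R'$ (note: your faithfully flat cover $S$ of $R$ does not canonically lift to a cover of $B$; the paper uses an ind-\'etale cover of $R$, which lifts uniquely across the nil-immersion $\Spec R\to\Spec B$ and along which $\delta$ extends by flatness), lifts the two classifying maps $\Spec B'\to X$ using $\infty$-smoothness, and observes that the resulting map $\Spec B'\to X\times X$ factors through $\Spec A'$ by the universal property of the pd envelope. Injectivity of $W(B)\to W(B')$ then finishes the argument. This diagonal pd-envelope comparison is the idea absent from your proposal.
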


\begin{proof}[Proof of Proposition \ref{Pr-disp-functor-pd}]
We may assume that $p$ is nilpotent in $B$.
For a given $p$-divisible group $H$ over $R$ we
choose a $p$-divisible group $H_1$ over $B$
which lifts $H$; this is possible since $B\to R$ 
is a nil-extension due to the divided powers.
Necessarily we have to define $\Phi_{B/R}(H)$
as the base change of $\Phi_{B}(H_1)$ under the
natural frame homomorphism $\WWW_{B}\to\WWW_{B/R}$. Here 
$\Phi_B$ is well-defined by Proposition \ref{Pr-disp-functor}.
We have to show that the operator $F_1$ defined in this
way on $\Theta_{B/R}(H)$ does not depend on the choice
of $H_1$; then it follows easily that $F_1$ is compatible
with base change in $B/R$ and commutes with homomorphisms
of $p$-divisible groups over $R$. 

As in the proof of Proposition \ref{Pr-disp-functor},
the assertion is reduced to a universal situation.
Let $X=\Spec A\xrightarrow\pi\BTst\times\Spec\ZZ_p$ be a 
presentation given by a $p$-divisible group $G$ over $X$
such that $\pi$ and $X\to\Spec\ZZ_p$ are $\infty$-smooth 
coverings; see Lemma \ref{Le-BT-pres}.
Let $X'=\Spec A'$ be the $p$-adic completion of the divided
power envelope of the diagonal $X\to X\times_{\Spec\ZZ_p}X$
and let $G_1,G_2$ be the inverse images of $G$ under the
two projections $X'\to X$. These are two lifts of $G$ with
respect to the diagonal morphism $X\to X'$.
Since the divided power envelope
of the diagonal of a smooth $\ZZ_p$-algebra has no
$p$-torsion and since $A$ is the direct limit of smooth 
$\ZZ_p$-algebras, $A'$ has no $p$-torsion, and thus $W(A')$
has no $p$-torsion. Thus the operators $F_1$ on 
$\Theta_{A'/A}(G)$ defined by $\Phi_{A'}(G_1)$ and by
$\Phi_{A'}(G_2)$ are equal.

The given $p$-divisible group $H$ over $R$ defines 
a morphism $\alpha:\Spec R\to\BTst\times\Spec\ZZ_p$.
Since $\pi$ is an $\infty$-smooth covering and since
a surjective smooth morphism of schemes has a section
\'etale locally in the base, we can find a ring $R'$ and
a commutative diagram
$$
\xymatrix@M+0.2em{
\Spec R' \ar[d]_\psi \ar[r]^-{\alpha'} & X \ar[d]^\pi \\
\Spec R \ar[r]^-\alpha & \BTst\times\Spec\ZZ_p
}
$$ 
where $\psi$ ind-\'etale and surjective.
Since $\Spec R\to\Spec B$ is a nil-immersion, there
is a unique ind-\'etale and surjective morphism
$\Spec B'\to\Spec B$ which extends $\psi$.
Since $B'$ is flat over $B$, the given divided powers on 
the kernel of $B\to R$ extend to divided powers on the
kernel of $B'\to R'$.
Let $H_1$ and $H_2$ be two lifts of $H$ to $B$ and let 
$\beta_i:\Spec B'\to\BTst\times\Spec\ZZ_p$ be the morphism
given by $H_i\otimes_BB'$ for $i=1,2$. We have a 
commutative diagram
$$
\xymatrix@M+0.2em{
\Spec R'\ar[r]^-{\alpha'} \ar[d] & X \ar[d]^\pi \\
\Spec B' \ar[r]^-{\beta_i} & \BTst\times\Spec\ZZ_p.\!
}
$$
Since $\pi$ is an $\infty$-smooth covering and since
a smooth morphism 
satisfies the lifting criterion of formal smoothness for
arbitrary nil-immersions of affine schemes, there are morphisms 
$\beta_i':\Spec B'\to X$ for $i=1,2$ such that in the
preceding diagram both triangles commute. They define 
a morphism $\beta':\Spec B'\to X\times_{\Spec\ZZ_p}X$,
which factors uniquely over $\beta'':\Spec B'\to X'$, and
we have isomorphisms $H_i\otimes_BB'\cong\beta^{\prime\prime*}G_i$
that lift the given isomorphism $H\otimes_RR'\cong\alpha^{\prime*}G$.
Thus the operators $F_1$ on $\Theta_{B'/R'}(H\otimes_RR')$ defined 
by $\Theta_{B'}(H_1\otimes_BB')$ and by $\Theta_{B'}(H_2\otimes_BB')$ 
are equal. Since $W(B)\to W(B')$ is injective, it follows that
the operators $F_1$ on $\Theta_{B/R}(H)$ defined by
$\Theta_{B}(H_1)$ and by $\Theta_{B}(H_2)$ are equal as well.
\end{proof}

\begin{Remark}
As in Remark \ref{Re-Phi-dual} we have 
$\Phi_{B/R}(G^\vee)\cong\Phi_{B/R}(G)^t$.
\end{Remark}

\subsection{Consequences}

Let us recall the Dieudonn\'e crystal of a nilpotent display.
If $B\to R$ is a divided power extension of $p$-adic rings 
which can be written as the projective limit of divided power
extensions $B_n\to R_n$ with $p^nB_n=0$, for example
if the divided powers are compatible with the
canonical divided powers of $p$, the base change
functor from nilpotent displays for $B/R$ to nilpotent
displays over $R$ is an equivalence of categories by
\cite[Thm.~44]{Zink-Disp}. Using this, one defines the
Dieudonn\'e crystal $\DD(\PPP)$ of a nilpotent display 
$\PPP$ over $R$ as follows: If $(\tilde P,\tilde Q,F,F_1)$ is
the unique lift of $\PPP$ to a display for $B/R$,
$$
\DD(\PPP)_{B/R}=\tilde P/I_B\tilde P.
$$
By duality, the same applies to $F$-nilpotent displays.

\begin{Remark}
\label{Re-Groth-Messing}
It is well-known that the crystalline deformation theorem
for $p$-divisible groups in \cite{Messing-Crys} holds for not
necessarily nilpotent divided powers if the
groups are infinitesimal or unipotent; see
\cite[Sec.~4]{Grothendieck-Nice} and 
\cite[p.~111]{Grothendieck-Montreal}.

More precisely, assume that $B\to R=B/\Fb$ is a divided 
power extension of rings in which $p$ is nilpotent. 
We have a natural functor $G\mapsto(H,V)$ from $p$-divisible
groups $G$ over $B$ to $p$-divisible groups $H$ over $R$ 
together with a lift of the Hodge filtration of $H$ to a 
direct summand $V\subset\DD(H)_{B/R}$. Here 
$\DD(H)$ is the covariant Dieudonn\'e crystal given by \cite{BBM}.
If the divided powers on $\Fb$ are nilpotent, this functor 
is an equivalence of categories by the deformation theorem
\cite[V.1.6]{Messing-Crys} together with the comparison
of the crystals of \cite{Messing-Crys} and of \cite{BBM} 
in \cite[3.2.11]{Berthelot-Messing}.
For general divided powers, the functor induces an equivalence between
unipotent (infinitesimal) $p$-divisible groups $G$ over $B$
and pairs $(H,V)$ where $H$ is unipotent (infinitesimal).

Let us indicate a proof of this fact. First,
by the crystalline duality theorem \cite[5.3]{BBM}
it suffices to consider unipotent $p$-divisible groups.

For each commutative formal Lie group $L=\Spf A$ over $B$ 
there is a homomorphism 
$\log_L:\Ker[L(B)\to L(R)]\to\Ker[\Lie(L)\to\Lie(L_R)]$
such that $\log_{\hat\GG_m}$ 
is given by the usual logarithm series.
It can be described as follows:
The Cartier dual $L^*=\uHom(L,\Gm)$ is an
affine group scheme $\Spec A^*$. We have isomorphisms 
$L\cong\uHom(G^*,\Gm)$ and $\uLie(L)\cong\uHom(G^*,\Ga)$, 
under which $\log_L$ is induced by $\log_{\hat\GG_m}$.
If the divided powers on $\Fb$ are nilpotent,
$\log_G$ is an isomorphism; its inverse is given by
the usual exponential series for $\Gm$. 
Let us call $L$ unipotent if the augmentation ideal 
$J\subset A^*$ is a nilideal. If $L$ is unipotent,
arbitrary divided powers on $\Fb$ induce nilpotent
divided powers on $\Fb J$, which implies that $\log_L$ 
is an isomorphism again.

The construction of the Dieudonn\'e crystal 
for nilpotent divided powers in \cite{Messing-Crys}
is based on the exponential of the formal completion of 
the universal vector extension $EG$ of a $p$-divisible 
group $G$ over $B$.
If $G$ is a unipotent $p$-divisible
group, the formal completion of $EG$ is a unipotent 
formal Lie group. Therefore, in the case of unipotent 
$p$-divisible groups, the construction of the Dieudonn\'e
crystal of \cite{Messing-Crys} and the proof of the
deformation theorem \cite[V.1.6]{Messing-Crys} are
valid for not necessarily nilpotent divided powers.
The comparison of crystals in \cite[3.2.11]{Berthelot-Messing} 
carries over to this case as well.
\qed
\end{Remark}

Recall that we denote by 
$\DD(G)$ the covariant Dieudonn\'e crystal
of a $p$-divisible group $G$.

\begin{Cor}
\label{Co-DD(G)-DD(PhiG)}
Let $B\to R$ be a divided power extension of $p$-adic rings 
which is compatible with the canonical divided powers of $p$, 
and let $G$ be a $p$-divisible group over $R$ which is unipotent 
or infinitesimal. There is a natural isomorphism of projective
$B$-modules 
$$
\DD(G)_{B/R}\cong\DD(\Phi_R(G))_{B/R}.
$$
\end{Cor}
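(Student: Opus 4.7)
The strategy is to identify both sides of the asserted isomorphism with a single $B$-module, namely $\tilde P/I_B\tilde P$, where $\tilde P$ is the underlying $W(B)$-module of the display $\Phi_{B/R}(G)$ for $B/R$ supplied by Proposition \ref{Pr-disp-functor-pd}.

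First I would show that $\Phi_{B/R}(G)$ is a lift of $\Phi_R(G)$ to a display for $B/R$: this is immediate from the compatibility with base change in Proposition \ref{Pr-disp-functor-pd} applied to the frame morphism $\WWW_{B/R}\to\WWW_R$. Since $G$ is infinitesimal or unipotent, the proposition characterising these properties in terms of the nilpotence of $\Phi_R(G)$ shows that $\Phi_R(G)$ is nilpotent or $F$-nilpotent. Zink's deformation theorem, cited in the paragraph introducing $\DD(\PPP)_{B/R}$, then gives uniqueness of the lift in the nilpotent case, and the $F$-nilpotent case is reduced to it via the duality $\Phi_R(G^\vee)\cong\Phi_R(G)^t$ of Remark \ref{Re-Phi-dual} together with the definition of $\DD$ on $F$-nilpotent displays by duality. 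Thus $\Phi_{B/R}(G)$ coincides with the canonical lift, and by definition
$$
\DD(\Phi_R(G))_{B/R} \;=\; \tilde P/I_B\tilde P.
$$

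For the other identification, the isomorphism $\Theta_{B/R}\cong\Upsilon_{\!B/R}\circ\Phi_{B/R}$ of Proposition \ref{Pr-disp-functor-pd} expresses $\tilde P$ as $\DD(G)_{W(B)\to R}$, the value of the covariant Dieudonné crystal of $G$ on the PD-thickening $W(B)\to R$ with the natural divided powers on $I_{B/R}$. Here Remark \ref{Re-Groth-Messing} is essential in order to ensure that this value is defined and behaves as expected in the infinitesimal or unipotent case, even though the given divided powers on $\Fb=\Ker(B\to R)$ need not be nilpotent. Crystal functoriality applied to the PD-morphism $W(B)\to B$ of PD-thickenings of $R$ then yields
$$
\tilde P/I_B\tilde P \;=\; \DD(G)_{W(B)\to R}\otimes_{W(B)}B \;=\; \DD(G)_{B/R}.
$$

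Combining these two computations produces the desired natural isomorphism; naturality in $G$ and in $(B,R,\delta)$ is inherited from that of $\Phi_{B/R}$ and of the crystal. The main subtlety lies on the crystalline side: one must check that $W(B)\to B$ is a PD-morphism of PD-thickenings of $R$, i.e.\ that the divided powers on $I_{B/R}$ from the frame construction project to the given $\delta$ on $\Fb$, and one must invoke Remark \ref{Re-Groth-Messing} to extend the crystal formalism past the case of nilpotent divided powers. Once these points are in place, what remains is a formal chase of the defining constructions.
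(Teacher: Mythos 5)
Your proposal is correct and follows essentially the same route as the paper: identify $\Phi_{B/R}(G)$ as the unique nilpotent (resp.\ $F$-nilpotent, via duality) lift of $\Phi_R(G)$, so that $\DD(\Phi_R(G))_{B/R}=\tilde P/I_B\tilde P$ by definition, and then use $\tilde P=\DD(G)_{W(B)\to R}$ together with crystal functoriality along the PD-morphism $W(B)\to B$ over $R$. The only quibble is that the value $\DD(G)_{B/R}$ is already defined by \cite{BBM} for arbitrary compatible divided powers, so Remark \ref{Re-Groth-Messing} is not actually needed at that step; otherwise the argument matches the paper's.
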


\begin{proof}
If $G$ is infinitesimal (unipotent) then $\Phi_R(G)$
is nilpotent ($F$-nilpotent), and
the display $\Phi_{B/R}(G)=(\tilde P,\tilde Q,F,F_1)$ is the 
unique lift of $\Phi_R(G)$ to a nilpotent ($F$-nilpotent)
display for $B/R$. Since $\tilde P=\DD(G)_{W(B)\to R}$ 
and since the projection $W(B)\to B$ is a homomorphism of 
divided power extensions of $R$, we get an isomorphism
$\DD(G)_{B/R}\cong\tilde P/I_B\tilde P=\DD(\Phi_R(G))_{B/R}$. 
\end{proof}

\begin{Cor}
\label{Co-Phi-form-etale}
The restriction of\/ $\Phi_R$ to infinitesimal or unipotent
$p$ di\-vis\-ible groups is formally \'etale, i.e.\ for a
surjective homomorphism $B\to R$ of rings in which $p$
is nilpotent with nilpotent kernel, the category of
infinitesimal (unipotent) $p$-divisible groups over
$B$ is equivalent to the category of such groups $G$
over $R$ together with a lift of\/ $\Phi_R(G)$ to a
display over $B$.
\end{Cor}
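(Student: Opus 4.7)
The plan is to show that, for such an extension $B\to R$, deformations of an infinitesimal (resp.\ unipotent) $p$-divisible group $G$ over $R$ and lifts of the display $\Phi_R(G)$ to a display over $B$ are both classified, in parallel, by lifts of the Hodge filtration inside a common $B$-module, and that this common classification is realised by the functor $\tilde G\mapsto\Phi_B(\tilde G)$.

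First I would reduce to the case of a divided power extension. Filtering $\Fb=\ker(B\to R)$ successively by $\Fb^{i+1}$ and then by $p\Fb^i$ reduces the claim to small extensions where $\Fb^2=0$ and $p\Fb=0$; on such an ideal the trivial divided powers are compatible with the canonical divided powers of $p$. For extensions of this form, Proposition \ref{Pr-disp-functor-pd} and Corollary \ref{Co-DD(G)-DD(PhiG)} apply.

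On the $p$-divisible side, the extended Grothendieck--Messing theorem recorded in Remark \ref{Re-Groth-Messing} puts isomorphism classes of lifts of $G$ to a $p$-divisible group over $B$ in natural bijection with the direct summands $V\subseteq\DD(G)_{B/R}$ whose image modulo $\Fb$ is the Hodge filtration of $G$. On the display side, since $\Phi_R(G)$ is nilpotent (resp.\ $F$-nilpotent), Zink's deformation theorem \cite[Thm.~44]{Zink-Disp} shows that $\Phi_R(G)$ admits a unique lift to a display for $B/R$, namely $\Phi_{B/R}(G)=(\tilde P,\tilde Q,F,F_1)$; refining its filtration $\tilde Q$ to a submodule defining a display over $B$ is equivalent to specifying a lift of the Hodge filtration of $\Phi_R(G)$ inside $\tilde P/I_B\tilde P=\DD(\Phi_R(G))_{B/R}$. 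Corollary \ref{Co-DD(G)-DD(PhiG)} identifies the two ambient modules compatibly with their Hodge filtrations, so both deformation problems are classified by the same set.

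Finally I would verify that this common classification is intertwined by $\tilde G\mapsto\Phi_B(\tilde G)$: for a lift $\tilde G$ of $G$ to $B$, the display $\Phi_B(\tilde G)$ has Hodge filtration given by the kernel of $\DD(\tilde G)_B\to\Lie(\tilde G)$, which under the identification $\DD(\tilde G)_B=\DD(G)_{B/R}\cong\DD(\Phi_R(G))_{B/R}$ is exactly the Hodge-filtration lift that Grothendieck--Messing assigns to $\tilde G$. The equivalence on morphisms then follows from the bijection on objects by the standard graph-of-a-homomorphism device used at the end of the proof of Proposition \ref{Pr-disp-functor}: a homomorphism $g:\tilde G_1\to\tilde G_2$ is encoded as the automorphism $\bigl(\begin{smallmatrix}1&g\\0&1\end{smallmatrix}\bigr)$ of $\tilde G_1\oplus\tilde G_2$, reducing the morphism statement to the object statement applied to such automorphisms. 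The main technical point, and the place where care is required, is the compatibility in the previous paragraph: one must check that the isomorphism of Corollary \ref{Co-DD(G)-DD(PhiG)} respects the Hodge filtrations and that the bijections of lifts assembled from the two deformation theories are indeed induced by $\Phi_B$; both are built into the construction of $\Phi_{B/R}$ via base change along $\WWW_B\to\WWW_{B/R}$.
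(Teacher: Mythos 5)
Your treatment of the divided power case is essentially the paper's: combine the extended Grothendieck--Messing equivalence of Remark \ref{Re-Groth-Messing} with the deformation theory of nilpotent ($F$-nilpotent) displays and the identification of Corollary \ref{Co-DD(G)-DD(PhiG)}, so that both lifting problems are governed by lifts of the Hodge filtration in the same $B$-module. That part is fine, and your insistence on checking that the resulting bijection is actually induced by $\Phi_B$ (via the construction of $\Phi_{B/R}$ by base change along $\WWW_B\to\WWW_{B/R}$) is a compatibility worth making explicit.

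The gap is in the reduction. You filter $\Fb$ so that each graded piece $\mathfrak{a}$ satisfies $\mathfrak{a}^2=0$ and $p\mathfrak{a}=0$, and then claim that the trivial divided powers on $\mathfrak{a}$ are compatible with the canonical divided powers of $p$. This is true for $p$ odd (since $p\mathfrak{a}=0$ forces $p^2b=0$ whenever $pb\in\mathfrak{a}$, while $v_p(p^n/n!)\ge 2$ for $n\ge 2$), but it fails for $p=2$: take $B=\ZZ/4\to R=\ZZ/2$ with $\Fb=2\ZZ/4$, so $\Fb^2=0=2\Fb$, yet the canonical divided power $\gamma_2(2)=2^2/2!=2$ is nonzero in $\ZZ/4$ while the trivial one vanishes; the two structures disagree on $\Fb\cap 2B=\Fb$, hence are not compatible. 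The paper's reduction avoids this by proceeding in the opposite order: first apply the divided power case to $B\to B/pB$ and $R\to R/pR$, whose kernels carry the canonical divided powers of $p$ themselves; this reduces the statement to the case $pB=0$, and only there does one filter $\Fb$ by square-zero ideals with trivial divided powers, whose compatibility with the canonical divided powers of $p$ is then vacuous. With that reordering your argument goes through for all $p$.
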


\begin{proof}
If the kernel of $B\to R$ carries divided powers compatible
with the canonical divided powers of $p$, in view of
Corollary \ref{Co-DD(G)-DD(PhiG)} the assertion follows
from the crystalline deformation theorem of $p$-divisible groups 
(see Remark \ref{Re-Groth-Messing}) and its counterpart
for nilpotent (unipotent) displays \cite[Prop.~45]{Zink-Disp}.
Thus the corollary holds for $B\to B/pB$ and for $R\to R/pR$.
Hence we may assume that $B$ is annihilated by $p$.
Then $B\to R$ is a finite composition of divided power
extensions with trivial divided powers, which are
compatible with the canonical divided powers of $p$
since $pB$ is zero.
\end{proof}

The following is a special case of Theorem \ref{Th-formal-classif}.

\begin{Cor}
Let $R$ be a complete local ring with perfect residue field
of characteristic $p$. The functor $\Phi_R$ induces an
equivalence between $p$-divisible groups over $R$ with
infinitesimal (unipotent) special fibre and displays over
$R$ with nilpotent ($F$-nilpotent) special fibre.
\end{Cor}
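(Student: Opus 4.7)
The plan is to reduce the statement to the residue field by means of the formally \'etale property of Corollary \ref{Co-Phi-form-etale}, and then pass to the inverse limit along the quotients by powers of the maximal ideal. Write $k$ for the residue field of $R$ and $\Fm$ for its maximal ideal; since the residue characteristic is $p$, each $R/\Fm^n$ is a ring in which $p$ is nilpotent and $R\cong\varprojlim R/\Fm^n$ is $p$-adic.

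First I would settle the base case $R=k$. Because $W(k)$ has no $p$-torsion, Proposition \ref{Pr-disp-functor} shows that $\Phi_k$ is uniquely determined by the covariant Dieudonn\'e crystal together with its filtered $F$-$V$-module structure; consequently $\Phi_k$ is identified with the classical covariant Dieudonn\'e functor, viewed as landing in displays over $k$. Classical Dieudonn\'e theory together with Zink's equivalence between nilpotent displays and $p$-divisible formal groups over $k$ then yields the equivalence between infinitesimal (resp.\ unipotent) $p$-divisible groups and nilpotent (resp.\ $F$-nilpotent) displays over $k$. The proposition of the previous subsection relating the infinitesimality/unipotency of $G$ to the nilpotency/$F$-nilpotency of $\Phi_R(G)$ guarantees that these subcategories correspond under $\Phi_k$.

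Next I would lift the equivalence inductively along the surjections $R/\Fm^{n+1}\to R/\Fm^n$, whose kernels $\Fm^n/\Fm^{n+1}$ are nilpotent ideals in rings in which $p$ is nilpotent. Corollary \ref{Co-Phi-form-etale} identifies the category of infinitesimal (unipotent) $p$-divisible groups over $R/\Fm^{n+1}$ with the category of pairs consisting of such a group $\bar G$ over $R/\Fm^n$ together with a lift of $\Phi_{R/\Fm^n}(\bar G)$ to a display over $R/\Fm^{n+1}$. Combined with the inductive hypothesis and the fact that any lift of a nilpotent (resp.\ $F$-nilpotent) display is again of the same type, this propagates the base-case equivalence to every finite level $R/\Fm^n$.

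Finally I would pass to the limit: a $p$-divisible group over $R$ is equivalent to a compatible system of $p$-divisible groups over the $R/\Fm^n$, and a display over a $p$-adic ring is by definition such a compatible system. The nilpotent/$F$-nilpotent and infinitesimal/unipotent conditions are both detected on the special fibre, hence on any of the finite levels, so the level-wise equivalences assemble into the desired equivalence over $R$. I expect the main obstacle to be the base case: one must match $\Phi_k$ precisely with the classical Dieudonn\'e equivalence, using both the uniqueness in Proposition \ref{Pr-disp-functor} and the standard identification of $\DD(G)$ with the classical covariant Dieudonn\'e module of $G$ over a perfect field. Once this identification is secured, the deformation step and the passage to the limit are essentially formal.
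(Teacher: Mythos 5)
Your proposal is correct and follows essentially the same route as the paper, which likewise settles the case of the perfect residue field by classical Dieudonn\'e theory and then deduces the general case from Corollary \ref{Co-Phi-form-etale} by passing to the limit over $R/\Fm^n$. Your write-up simply makes explicit the induction along $R/\Fm^{n+1}\to R/\Fm^n$ and the identification of $\Phi_k$ with the classical functor, both of which the paper leaves implicit.
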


\begin{proof}
Over perfect fields this is classical. The general
case follows by Corollary \ref{Co-Phi-form-etale}
and by passing to the limit over $R/\Fm_R^n$.
\end{proof}


\section{Truncated displays}

\subsection{Preliminaries}

For a $p$-adic ring $R$ and a positive integer $n$ let 
$W_n(R)$ be the ring of truncated Witt vectors of length $n$ 
and let $I_{n,R}\subset W_n(R)$ be the kernel of
the augmentation to $R$. The Frobenius of $W(R)$
induces a ring homomorphism 
$$
f:W_{n+1}(R)\to W_n(R).
$$
The inverse of the Verschiebung of $W(R)$ induces a 
bijective $f$-linear map 
$$
f_1:I_{n+1,R}\to W_n(R).
$$
If $R$ is an $\FF_p$-algebra, the
Frobenius of $W(R)$ induces a ring endomorphism $f$ of 
$W_n(R)$, and the ideal $I_{n+1}(R)$ of $W_{n+1}(R)$ 
is a $W_n(R)$-module. 

\begin{Defn}
\label{Def-pre-disp}
A \emph{pre-display} over an $\FF_p$-algebra $R$ is a sextuple
$$
\PPP=(P,Q,\iota,\varepsilon,F,F_1)
$$ 
where $P$ and $Q$ are $W(R)$-modules with homomorphisms
$$
\xymatrix@M+0.2em{
I_{R}\otimes_{W(R)}P \ar[r]^-\varepsilon & Q \ar[r]^\iota & P
}
$$
and where $F:P\to P$ and $F_1:Q\to P$ are $f$-linear maps
such that the following relations hold: 
The compositions $\iota\varepsilon$
and $\varepsilon(1\otimes\iota)$ are the multiplication
homomorphisms, and we have $F_1\varepsilon=f_1\otimes F$.

If $P$ and $Q$ are $W_n(R)$-modules, 
we call $\PPP$ a pre-display of level $n$.
\end{Defn}

The axioms imply that $F\iota=pF_1$; 
cf.\ \cite[Eq.~(2)]{Zink-Disp}.

\medskip

Pre-displays over $R$ form an abelian category
$(\predispC/R)$ which contains the category of
displays $(\dispC/R)$ as a full subcategory. 
Pre-displays of level $n$ over $R$ form an abelian subcategory
$(\predispC_n/R)$ of $(\predispC/R)$.
For a homomorphism of $p$-adic rings $\alpha:R\to R'$,
the restriction of scalars defines a functor 
$$
\alpha^*:(\predispC/R')\to(\predispC/R).
$$
It has a left adjoint $\PPP\mapsto W(R')\otimes_{W(R)}\PPP$ 
given by the tensor product in each component.
The restriction of $\alpha^*$ to pre-displays of level $n$
is a functor
$
(\predispC_n/R')\to(\predispC_n/R)
$
with left adjoint $\PPP\mapsto W_n(R')\otimes_{W_n(R)}\PPP$.

\subsection{Truncated displays}

Truncated displays of level $n$ are pre-displays of level
$n$ with additional properties. We begin with the conditions
imposed on the linear data of the pre-display.
For an $\FF_p$-algebra $R$ let
$$
J_{n+1,R}=\Ker(W_{n+1}(R)\to W_n(R)).
$$ 

\begin{Defn}
\label{Def-trunc-pair}
A \emph{truncated pair} of level $n$ over an 
$\FF_p$-algebra $R$ is a 
quadruple $\BBB=(P,Q,\iota,\varepsilon)$ where
$P$ and $Q$ are $W_n(R)$-modules with homomorphisms
$$
\xymatrix@M+0.2em{
I_{n+1,R}\otimes_{W_n(R)}P \ar[r]^-\varepsilon & Q \ar[r]^\iota & P
}
$$
such that the following properties hold.
\begin{enumerate}
\item
The compositions $\iota\varepsilon$ and $\varepsilon(1\otimes\iota)$
are the multiplication maps.
\item
The $W_n(R)$-module $P$ is projective of finite type.
\item
The $R$-module $\Coker(\iota)$ is projective.
\item
We have an exact sequence, where 
$\bar\varepsilon$ is induced by $\varepsilon$:
\begin{equation}
\label{Eq-trunc-4term}
0\to J_{n+1,R}\otimes_R\Coker(\iota)\xrightarrow{\bar\varepsilon}
Q\xrightarrow\iota P\to\Coker(\iota)\to 0.
\end{equation}
\end{enumerate}
A \emph{normal decomposition} for a truncated pair consists
of projective $W_n(R)$-modules $L\subseteq Q$ and
$T\subseteq P$ such that we have bijective homomorphisms
$$
L\oplus T\xrightarrow{\iota+1}P,\qquad
L\oplus(I_{n+1,R}\otimes_{W_n(R)}T)\xrightarrow{1+\varepsilon}Q.
$$
\end{Defn}

Each pair $(L,T)$ of projective $W_n(R)$-modules of finite
type defines a unique truncated pair for which $(L,T)$ is
a normal decomposition.

\begin{Lemma}
\label{Le-pair-norm-dec}
Every truncated pair $\BBB$ admits a normal decomposition.
\end{Lemma}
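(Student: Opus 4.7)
The plan is to construct $T \subseteq P$ first by lifting a splitting of the surjection $P \twoheadrightarrow \Coker(\iota)$, and then to construct $L \subseteq Q$ by splitting the resulting extensions using projectivity. The key facts I will use are: $\Coker(\iota)$ is a projective $R$-module annihilated by $I_{n,R}$ (so $I_{n,R}P \subseteq \iota(Q)$); the kernel $I_{n,R}$ of $W_n(R) \to R$ is nilpotent, so direct-summand decompositions over $R$ lift to direct-summand decompositions of projective $W_n(R)$-modules by idempotent lifting; and the exact sequence \eqref{Eq-trunc-4term} identifies $\Ker(\iota)$ with $J_{n+1,R} \otimes_R \Coker(\iota)$ embedded via $\bar\varepsilon$.

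First, I split the surjection of projective $R$-modules $P/I_{n,R}P \twoheadrightarrow \Coker(\iota)$ and lift the splitting through $W_n(R) \to R$ to obtain a projective direct summand $T \subseteq P$ with $T/I_{n,R}T \xrightarrow{\sim} \Coker(\iota)$. This immediately gives $T + \iota(Q) = P$ and $T \cap \iota(Q) = I_{n,R}T$. Picking any complement $P'' \subseteq P$ of $T$, the projection $\iota(Q) \to P''$ is surjective with kernel $I_{n,R}T$; splitting it by projectivity of $P''$ produces a direct summand $M \subseteq \iota(Q)$ with $M \xrightarrow{\sim} P''$. Since $\iota : Q \twoheadrightarrow \iota(Q)$ has kernel $J_{n+1,R} \otimes_R \Coker(\iota)$ by the exact sequence and $M$ is projective, I lift the inclusion $M \hookrightarrow \iota(Q)$ to a submodule $L \subseteq Q$ with $\iota|_L : L \xrightarrow{\sim} M$. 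This delivers the first bijection $L \oplus T \xrightarrow{\iota + 1} P$, and a direct check using $\iota(Q) = P'' \oplus I_{n,R}T$ yields $Q = L \oplus \iota^{-1}(I_{n,R}T)$.

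It then remains to identify $\iota^{-1}(I_{n,R}T)$ with $\varepsilon(I_{n+1,R} \otimes_{W_n(R)} T)$. Tensoring $0 \to J_{n+1,R} \to I_{n+1,R} \to I_{n,R} \to 0$ with the flat module $T$ and using that $J_{n+1,R}$ is an $R$-module, so $J_{n+1,R} \otimes_{W_n(R)} T = J_{n+1,R} \otimes_R \Coker(\iota)$, the composite $\iota\varepsilon$ on $I_{n+1,R} \otimes_{W_n(R)} T$ has kernel $J_{n+1,R} \otimes_R \Coker(\iota)$ and image $I_{n,R}T$. Noting that $\varepsilon$ restricted to this kernel coincides with $\bar\varepsilon$, which is injective by \eqref{Eq-trunc-4term}, a short diagram chase (or the five-lemma applied to the two extensions of $I_{n,R}T$ by $\Ker(\iota)$) shows that $\varepsilon : I_{n+1,R} \otimes_{W_n(R)} T \to Q$ is injective with image exactly $\iota^{-1}(I_{n,R}T)$, giving the second bijection. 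The main subtlety I expect is the identification $\varepsilon|_{J_{n+1,R} \otimes_R \Coker(\iota)} = \bar\varepsilon$ under the natural embedding $J_{n+1,R} \otimes_R \Coker(\iota) \hookrightarrow I_{n+1,R} \otimes_{W_n(R)} T$; once that compatibility is in place, everything else is routine linear algebra.
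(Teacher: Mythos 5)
Your proof is correct and follows essentially the same route as the paper's: lift $\Coker(\iota)$ through the nilpotent ideal $I_{n,R}$ to a projective summand $T$, produce $L$ from a complement, and use the exactness of \eqref{Eq-trunc-4term} together with the five lemma to handle $Q$. The only difference is one of packaging --- the paper compares $\BBB$ with the abstract truncated pair built from $(L,T)$ and applies the five lemma to the whole four-term sequence, whereas you verify the two defining bijections directly inside $P$ and $Q$.
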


\begin{proof}
Let $\bar L=\Coker(\varepsilon)$, an $R$-module,
and $\bar T=\Coker(\iota)$, a projective $R$-module.
The 4-term exact sequence 
\eqref{Eq-trunc-4term} induces a short exact sequence
\begin{equation}
\label{Eq-trunc-3term}
0\to\bar L\xrightarrow{\bar\iota} P/I_{n,R}P\to\bar T\to 0.
\end{equation}
Thus $\bar L$ is a projective $R$-module. Let $L$ and
$T$ be projective $W_n(R)$-modules which lift
$\bar L$ and $\bar T$. Let $L\to Q$ and $T\to P$
be homomorphisms which commute with the obvious
projections to $\bar L$ and $\bar T$, respectively.
The exact sequence \eqref{Eq-trunc-3term} implies that
the homomorphism $\iota+1:L\oplus T\to P$ becomes an
isomorphism over $R$, so it is an isomorphism as both sides
are projective. Let $\BBB'$ be the truncated pair defined
by $(L,T)$. We have a homomorphism of truncated pairs
$\BBB'\to\BBB$ such that the associated homomorphism
of the 4-term sequences \eqref{Eq-trunc-4term} is an
isomorphism except possibly at $Q$.
Hence it is an isomorphism by the 5-Lemma.
\end{proof}

\begin{Defn}
\label{Def-trunc-disp}
A truncated display of level $n$ over an $\FF_p$-algebra
$R$ is pre-display $\PPP=(P,Q,\iota,\varepsilon,F,F_1)$
over $R$ such $(P,Q,\iota,\varepsilon)$ is 
a truncated pair of level $n$ and such that the image of $F_1$ 
generates $P$ as a $W_n(R)$-module. 
\end{Defn}

Let $(\dispC_n/R)$ be the category of truncated displays 
of level $n$ over $R$. This is a full subcategory of the 
abelian category $(\predispC_n/R)$.

If $(P,Q,\iota,\varepsilon)$ is a truncated pair with
given normal decomposition $(L,T)$, the set of pairs $(F,F_1)$
such that $(P,Q,\iota,\varepsilon,F,F_1)$ is a truncated
display is bijective to the set of $f$-linear isomorphisms
$\Psi:L\oplus T\to P$ such that $\Psi|_L=F_1|_L$
and $\Psi|_T=F|_T$. If $L$ and $T$ are free $W_n(R)$-modules,
$\Psi$ is described by an invertible matrix over $W_n(R)$.
This is analogous to the case of displays; 
see \cite[Lemma 9]{Zink-Disp} and the subsequent discussion.
The triple $(L,T,\Psi)$ is called a normal representation 
of $(P,Q,\iota,\varepsilon,F,F_1)$.

Let $k$ be a perfect field of characteristic $p$.
A truncated Dieudonn\'e module of level $n$ over $k$
is a triple $(M,F,V)$ where $M$ is a free $W(k)$-module 
of finite rank with an $f$-linear map $F:M\to M$ and an 
$f^{-1}$-linear map $V:M\to M$ such that $FV=p=VF$.
If $n=1$ we require that $\Ker F=\Image V$, which is
equivalent to $\Ker V=\Image F$.

\begin{Lemma}
\label{Le-Dieud-disp-field}
Truncated displays of level $n$ over a perfect field
$k$ are equivalent
to truncated Dieudonn\'e modules of level $n$ over $k$.
\end{Lemma}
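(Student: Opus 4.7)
The plan is to construct explicit mutually inverse functors by means of normal decompositions, exploiting that $k$ perfect makes $f$ bijective on $W_n(k)$.

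Given a truncated display $\PPP=(P,Q,\iota,\varepsilon,F,F_1)$ of level $n$, I would choose a normal decomposition $(L,T)$ of the underlying truncated pair, which exists by Lemma~\ref{Le-pair-norm-dec}, and form the $f$-linear isomorphism $\Psi:L\oplus T\to P$ determined by $\Psi|_L=F_1$ and $\Psi|_T=F$. Since $F_1(pq)=F(q)$ for $q\in Q$, the Frobenius satisfies $F=\Psi\circ(p\cdot\id_L\oplus\id_T)$ in the decomposition $P=L\oplus T$. Taking $M$ to be $P$ (or its canonical lift to a free $W(k)$-module of the same rank) with $F$ as given, I would define $V=(\id_L\oplus p\cdot\id_T)\circ\Psi^{-1}$, which is $f^{-1}$-linear since $\Psi^{-1}$ is. A direct matrix computation yields $FV=VF=p$. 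For $n=1$, both $\Ker F$ and $\Image V$ coincide with the image of $L$ in $P$, verifying the additional axiom.

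In the opposite direction, starting from $(M,F,V)$, I would set $P$ to be $M$ (reduced modulo $p^n$ if necessary) and build a normal decomposition of $P$ adapted to $V$. Since $V(M)$ is a direct summand of $M$---using the axiom $\Ker F=\Image V$ when $n=1$, and the projectivity of $\Coker V$ over the local ring $W_n(k)$ when $n\geq 2$---I would write $P=L\oplus T$ in such a way that $V$ reads $\mathrm{diag}(1,p)$ after composition with a suitable $f$-linear automorphism $\Psi:L\oplus T\to P$. The pair $(L,T)$ then defines a truncated pair in the sense of Definition~\ref{Def-trunc-pair}, and $\Psi$ provides $F_1|_L=\Psi|_L$ and $F|_T=\Psi|_T$, which extend uniquely to the full display operators on $P$ and $Q$.

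The main obstacle is verifying that both functors are independent of the chosen normal decomposition and mutually quasi-inverse. Both points reduce to showing that any two normal decompositions (on either side) differ by a unique automorphism of $P$ that intertwines all operators---a direct matrix verification over $W_n(k)$, where Frobenius is an automorphism. The $n=1$ case is where the extra axiom $\Ker F=\Image V$ becomes essential for producing the splitting in the reverse direction, whereas for $n\geq 2$ the local ring structure of $W_n(k)$ automatically furnishes the required lifts of splittings.
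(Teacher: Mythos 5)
Your forward direction is correct, but your route differs from the paper's, which avoids normal decompositions altogether: since multiplication by $p$ identifies $W_n(k)$ with $I_{n+1,k}$, a truncated display over $k$ is the same as a quintuple $(P,Q,\iota,\epsilon,F_1)$ with $\iota\epsilon=\epsilon\iota=p$ (plus an exactness condition that matters only for $n=1$) and $F_1:Q\to P$ a \emph{bijective} $f$-linear map (its linearisation is surjective between free $W_n(k)$-modules of equal rank, hence bijective), and one simply sets $F=F_1\epsilon$ and $V=\iota F_1^{-1}$. This intrinsic formula also disposes of what you call the main obstacle: your $V=(\id_L\oplus p\,\id_T)\circ\Psi^{-1}$ is nothing but $\iota\circ F_1^{-1}$, so independence of the normal decomposition is immediate and no comparison of two decompositions is needed.

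The reverse direction, however, contains a false step. You assert that $V(M)$ is a direct summand of $M$, justified for $n\ge 2$ by the projectivity of $\Coker V$ over $W_n(k)$. Both statements already fail for the Dieudonn\'e module of $\QQ_p/\ZZ_p$: there $M=W_n(k)$, $F=f$, $V=pf^{-1}$, so $V(M)=pM$ is not a direct summand and $\Coker V=k$ is not projective over $W_n(k)$ when $n\ge 2$. What is true, and what you actually need, is the statement modulo $p$: in $\bar M=M/pM$ one has $\Image(\bar V)=\Ker(\bar F)$ and $\Image(\bar F)=\Ker(\bar V)$ (for $n=1$ this is the imposed axiom; for $n\ge 2$ it follows from $FV=VF=p$ and freeness by the short computation in Lemma \ref{Le-Dieud-trunc-1}). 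Since $\bar M$ is a vector space, $\Image(\bar V)$ is automatically a direct summand there; lifting a basis of $\Image(\bar V)$ to elements $l_i\in V(M)$ and a complementary basis to elements $t_j$ gives $M=L\oplus T$, and one sets $\Psi(l_i)=m_i$ with $V(m_i)=l_i$ and $\Psi(t_j)=F(t_j)$, checking that $\Psi$ is bijective modulo $p$ via $\Image(\bar F)=\Ker(\bar V)$. So your construction is salvageable, but the splitting must be performed on $M/pM$ and then lifted, not read off from $V(M)\subseteq M$ itself; as written, the claimed splitting does not exist.
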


\begin{proof}
Multiplication by $p$ gives an isomorphism 
$W_n(k)\cong I_{n+1,k}$. Thus truncated displays
of level $n$ are equivalent to quintuples
$\PPP=(P,Q,\iota,\epsilon,F_1)$ where $P$ and $Q$
are free $W_n(k)$-modules with homomorphisms
$P\xrightarrow{\epsilon}Q\xrightarrow\iota P$
with $\epsilon\iota=p$ and $\iota\epsilon=p$
such that the sequence
$$
Q\xrightarrow\iota
P\xrightarrow{p^{n-1}\epsilon}Q\xrightarrow\iota P
$$
is exact, and where $F_1:Q\to P$ is a bijective $f$-linear map. 
The exactness is automatic if $n\ge 2$.
The operator $F$ of the truncated display
is given by $F=F_1\epsilon$. Let
$V=\iota F_1^{-1}$. The assignment $\PPP\mapsto(P,F,V)$
is an equivalence between truncated displays and
truncated Dieudonn\'e modules.
\end{proof}

\begin{Lemma}
\label{Le-trunc-base-change}
For a homomorphism of\/ $\FF_p$-algebras $\alpha:R\to R'$ 
there is a unique base change functor
$$
\alpha_*:(\dispC_n/R)\to(\dispC_n/R')
$$
with a natural isomorphism 
$$
\Hom_{(\predispC_n/R)}(\PPP,\alpha^*\PPP')\cong
\Hom_{(\dispC_n/R)}(\alpha_*\PPP,\PPP')
$$
for all truncated displays $\PPP$ of level $n$ 
over $R$ and $\PPP'$ of level $n$ over $R'$.
\end{Lemma}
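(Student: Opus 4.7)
The plan is to verify that the left adjoint to $\alpha^*$ already present on pre-displays, namely $\PPP \mapsto W_n(R') \otimes_{W_n(R)} \PPP$ (with $F,F_1$ extended $f$-linearly via $F'(a\otimes x)=f(a)\otimes F(x)$), preserves the subcategory of truncated displays of level $n$. Granted this, the asserted natural isomorphism is simply the componentwise tensor--hom adjunction for pre-displays, combined with the full faithfulness of the inclusion $(\dispC_n/R')\hookrightarrow(\predispC_n/R')$; uniqueness of $\alpha_*\PPP$ up to unique isomorphism then follows by Yoneda.

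First I would fix a truncated display $\PPP=(P,Q,\iota,\varepsilon,F,F_1)$ over $R$ and choose a normal decomposition $(L,T)$ provided by Lemma \ref{Le-pair-norm-dec}. Setting $L' = W_n(R')\otimes_{W_n(R)}L$ and $T' = W_n(R')\otimes_{W_n(R)}T$, I claim $(L',T')$ is a normal decomposition of $\PPP' := W_n(R')\otimes_{W_n(R)}\PPP$. The splitting $L\oplus T\cong P$ base changes by right exactness, and the parallel statement for $Q$ reduces to identifying $W_n(R')\otimes_{W_n(R)}I_{n+1,R}$ with $I_{n+1,R'}$ as $W_n(R')$-modules. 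This uses that Verschiebung furnishes a natural (in $R$) isomorphism from the Frobenius-twisted $W_n(R)$ onto $I_{n+1,R}$ and that Frobenius twists commute with scalar extension along the functorial map $W_n(R)\to W_n(R')$.

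Once $(L',T')$ is in place, the axioms of Definition \ref{Def-trunc-pair} hold automatically, since any pair of finitely generated projective modules defines a truncated pair. The normal representation $\Psi$ of $\PPP$ base changes to the $f$-linear isomorphism $\Psi\otimes 1$ on $L'\oplus T'$, so it remains bijective and the image of the extended $F_1'$ still generates $P'$. Hence $\PPP'$ is a truncated display over $R'$, and I set $\alpha_*\PPP := \PPP'$. This assignment is functorial in $\PPP$ and is independent of the normal decomposition because on underlying pre-displays it agrees with the intrinsic componentwise tensor product.

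The adjunction isomorphism
$$
\Hom_{(\predispC_n/R)}(\PPP,\alpha^*\PPP')\;\cong\;
\Hom_{(\predispC_n/R')}(\alpha_*\PPP,\PPP')
$$
is then the tensor--hom adjunction of pre-displays, and its right side coincides with $\Hom_{(\dispC_n/R')}(\alpha_*\PPP,\PPP')$ by full faithfulness. The main obstacle is the identification $W_n(R')\otimes_{W_n(R)}I_{n+1,R}\cong I_{n+1,R'}$, which requires careful bookkeeping of the $f$-twisted $W_n(R)$-action on $I_{n+1,R}$ coming from Verschiebung; once this naturality is established, the remaining verifications that $(L',T')$ is a normal decomposition and that the $f$-linear extensions of $F,F_1$ assemble correctly become formal.
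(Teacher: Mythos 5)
Your reduction to the pre-display left adjoint does not work: the componentwise tensor product $W_n(R')\otimes_{W_n(R)}\PPP$ of a truncated display is in general \emph{not} a truncated display, and the identification $W_n(R')\otimes_{W_n(R)}I_{n+1,R}\cong I_{n+1,R'}$ that you flag as ``the main obstacle'' is genuinely false for a general $\alpha$ --- no amount of bookkeeping will establish it. Via Verschiebung, $I_{n+1,R}$ is $W_n(R)$ with its module structure twisted by the Witt vector Frobenius $W_n(\mathrm{Frob}_R)$, and such Frobenius twists do not commute with arbitrary scalar extension. Concretely, take $n=1$, $R=\FF_p$ and $R'=k$ a non-perfect field of characteristic $p$: then $I_{2,\FF_p}\cong\FF_p$, so $W_1(k)\otimes_{W_1(\FF_p)}I_{2,\FF_p}\cong k$ is one-dimensional, whereas $I_{2,k}\cong k$ with the action $a\cdot x=a^p x$ has dimension $[k:k^p]>1$ over $k$. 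The same failure occurs for $J_{n+1,R}$ in the four-term sequence \eqref{Eq-trunc-4term}. This is precisely why the paper records only a natural homomorphism $W_n(R')\otimes_{W_n(R)}\PPP\to\alpha_*\PPP$, an isomorphism when $\alpha$ is ind-\'etale (Remark \ref{Re-ind-et}, Corollary \ref{Co-trunc-base-change-ind-et}); if your identification held in general, that corollary would be vacuous.

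The correct construction keeps your skeleton --- choose a normal decomposition $(L,T)$ by Lemma \ref{Le-pair-norm-dec}, base change $L$ and $T$, transport $\Psi$ --- but defines $\alpha_*\PPP$ as the truncated display with normal representation $(W_n(R')\otimes_{W_n(R)}L,\,W_n(R')\otimes_{W_n(R)}T,\,f\otimes\Psi)$, i.e.\ with $Q$-component $L'\oplus(I_{n+1,R'}\otimes_{W_n(R')}T')$ built from the ideal of the \emph{target} ring rather than from the naive base change of $I_{n+1,R}$. Consequently the adjunction is not the tensor--hom adjunction for pre-displays: one verifies directly that a pre-display homomorphism $\PPP\to\alpha^*\PPP'$ is determined by its restrictions to $L$ and $T$, and that the induced maps $L'\to Q'$ and $T'\to P'$ extend uniquely over $I_{n+1,R'}\otimes_{W_n(R')}T'$ by means of the structure map $\varepsilon'$ of $\PPP'$. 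Independence of the chosen normal decomposition then follows from this universal property, not from an identification of $\alpha_*\PPP$ with the intrinsic componentwise tensor product.
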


\begin{proof}
This is straightforward. In terms 
of normal representations, $\alpha_*$ is given by
$(L,T,\Psi)\mapsto(W_n(R')\otimes_{W_n(R)}L,
W_n(R')\otimes_{W_n(R)}T,f\otimes\Psi)$.
\end{proof}

\begin{Remark}
\label{Re-ind-et}
If $\alpha$ is ind-\'etale, then $W_n(R)\to W_n(R')$
is ind-\'etale, and the ideal $v^m(I_{{n-m},R'})$ is equal to
$W_n(R')\otimes_{W_n(R)}v^m(I_{n-m,R})$ for $0\le m\le n$. 
This is proved in 
\cite[Prop.~A.8]{Langer-Zink-dRW} if $\alpha$ is \'etale, and the 
functor $W_n$ preserves filtered direct limits of rings. 
As a consequence we obtain:
\end{Remark}

\begin{Cor}
\label{Co-trunc-base-change-ind-et}
For each truncated display $\PPP$ of level $n$ over $R$
there is a natural homomorphism of pre-displays over $R'$
$$
W_n(R')\otimes_{W_n(R)}\PPP\to\alpha_*\PPP.
$$
If $\alpha$ is ind-\'etale, this homomorphism is an isomorphism.
\end{Cor}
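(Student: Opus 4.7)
The plan is to construct the natural map using the adjunctions between restriction and tensor-extension/base-change, and then verify it is an isomorphism for ind-\'etale $\alpha$ by choosing a normal decomposition and reducing the question to a Witt-vector base-change statement provided by Remark \ref{Re-ind-et}.

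For the construction, the adjunction from Lemma \ref{Le-trunc-base-change} applied to $\id_{\alpha_*\PPP}$ gives a pre-display homomorphism $\PPP\to\alpha^*(\alpha_*\PPP)$ over $R$. Since $W_n(R')\otimes_{W_n(R)}(-)$ is left adjoint to $\alpha^*$ on the categories of pre-displays (as recorded just after Definition \ref{Def-pre-disp}), this unit corresponds by adjunction to a pre-display homomorphism over $R'$
$$\eta_\PPP: W_n(R')\otimes_{W_n(R)}\PPP\to\alpha_*\PPP,$$
which is natural in $\PPP$ and gives the desired canonical map.

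To show $\eta_\PPP$ is an isomorphism when $\alpha$ is ind-\'etale, I would fix a normal decomposition $(L,T)$ of $\PPP$ with associated normal representation $(L,T,\Psi)$. By Lemma \ref{Le-trunc-base-change}, $\alpha_*\PPP$ admits the normal representation $(L',T',f\otimes\Psi)$ with $L'=W_n(R')\otimes_{W_n(R)}L$ and $T'=W_n(R')\otimes_{W_n(R)}T$. Both source and target of $\eta_\PPP$ then have $P$-component equal to $L'\oplus T'$, and $\eta_\PPP$ restricts to the identity there. Using $Q=L\oplus(I_{n+1,R}\otimes_{W_n(R)}T)$ and the analogous formula for $\alpha_*\PPP$, the comparison on the $Q$-component reduces, tensored with $T$, to the canonical base-change map $W_n(R')\otimes_{W_n(R)}I_{n+1,R}\to I_{n+1,R'}$. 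Preservation of the structure maps $\iota,\varepsilon,F,F_1$ is automatic since $\eta_\PPP$ is a morphism of pre-displays.

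The main technical step is to recognise this remaining base-change map on $I_{n+1}$ as an isomorphism for ind-\'etale $\alpha$. The $W_n(R)$-module structure on $I_{n+1,R}$ is characterised by the $f$-semilinearity of the bijection $f_1:I_{n+1,R}\to W_n(R)$, so the statement amounts to the compatibility of the Frobenius on the Witt vectors with ind-\'etale base change. This is exactly what Remark \ref{Re-ind-et} provides, via \cite[Prop.~A.8]{Langer-Zink-dRW} together with the preservation of filtered direct limits by $W_n$.
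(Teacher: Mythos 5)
Your argument is correct and follows the same route as the paper, whose proof is simply the remark that, in view of Remark \ref{Re-ind-et}, the claim follows from the normal-representation description of $\alpha_*$ given in the proof of Lemma \ref{Le-trunc-base-change}; you have merely spelled out the adjunction yoga for the natural map and the reduction of the $Q$-component to the base-change isomorphism $W_n(R')\otimes_{W_n(R)}I_{n+1,R}\cong I_{n+1,R'}$. No changes needed.
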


\begin{proof}
In view of Remark \ref{Re-ind-et}, this follows from the 
proof of Lemma \ref{Le-trunc-base-change}.
\end{proof}

\begin{Lemma}
\label{Le-trunc-disp-local}
Assume that $\alpha:R\to R'$ 
is a faithfully flat ind-\'etale homomorphism
of $\FF_p$-algebras. If $\PPP$ is a pre-display of level
$n$ over $R$ such that $\PPP'=W_n(R')\otimes_{W_n(R)}\PPP$ is
a truncated display of level $n$ over $R'$, then $\PPP$ 
is a truncated display of level $n$ over $R$.
\end{Lemma}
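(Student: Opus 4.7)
The plan is a routine faithfully flat descent argument. The essential input is that the ring homomorphism $W_n(R)\to W_n(R')$ is faithfully flat: by Remark \ref{Re-ind-et} it is ind-\'etale, hence flat, and since the kernels $I_{n,R}\subset W_n(R)$ and $I_{n,R'}\subset W_n(R')$ are nilideals, the projections $\Spec W_n(R)\to\Spec R$ and $\Spec W_n(R')\to\Spec R'$ are homeomorphisms; thus surjectivity of $\Spec R'\to\Spec R$ gives surjectivity of $\Spec W_n(R')\to\Spec W_n(R)$.

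With this in hand, I would verify each clause in Definitions \ref{Def-trunc-pair} and \ref{Def-trunc-disp} by descending it from $\PPP'=W_n(R')\otimes_{W_n(R)}\PPP$ to $\PPP$. First, $P$ is projective of finite type as a $W_n(R)$-module: finite generation, finite presentation, flatness, and (for a finitely presented module) projectivity all descend along faithfully flat ring extensions by the usual arguments. Second, $\Coker(\iota)$ is killed by $I_{n,R}$: applying $W_n(R')\otimes_{W_n(R)}-$ to the submodule $I_{n,R}\cdot\Coker(\iota)\subset\Coker(\iota)$ gives $I_{n,R'}\cdot\Coker(\iota')$ in view of Remark \ref{Re-ind-et}, which vanishes since $\Coker(\iota')$ is an $R'$-module, so faithful flatness forces $I_{n,R}\cdot\Coker(\iota)=0$. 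The same descent then yields projectivity of $\Coker(\iota)$ as an $R$-module. Exactness of the four-term sequence \eqref{Eq-trunc-4term} over $R$ follows from exactness over $R'$ by faithful flatness, once one identifies the base change of $J_{n+1,R}\otimes_R\Coker(\iota)$ with $J_{n+1,R'}\otimes_{R'}\Coker(\iota')$. Finally, the condition that $F_1$ generate $P$ is equivalent to surjectivity of the $W_n(R)$-linear map $F_1^\sharp\colon W_n(R)\otimes_{f,W_n(R)}Q\to P$; its cokernel vanishes after base change to $W_n(R')$, hence it vanishes.

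The only point that requires a moment's care is the base change identification for the leftmost term of \eqref{Eq-trunc-4term}, because the $R$-module structure on $J_{n+1,R}=v^n(W_1(R))$ factors through a power of the Frobenius of $W_n(R)$. This is handled by Remark \ref{Re-ind-et}, which ensures that all the $v$-power ideals in $W_n$ are compatible with ind-\'etale base change; since in addition the Frobenius of $W_n$ is compatible with ind-\'etale base change (as ind-\'etale morphisms of $\FF_p$-algebras commute with Frobenius up to a Cartesian square), the identification is automatic. No further ingredient is needed, and the lemma follows.
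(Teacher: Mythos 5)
Your proof is correct and follows essentially the same route as the paper's: both reduce the truncated-display condition to projectivity of $P$, exactness of the four-term sequence \eqref{Eq-trunc-4term}, and surjectivity of $F_1^\sharp$, and descend these along the faithfully flat homomorphism $W_n(R)\to W_n(R')$ using Remark \ref{Re-ind-et}. Your extra attention to the Frobenius-twisted term $J_{n+1,R}\otimes_R\Coker(\iota)$ and to why $W_n(\alpha)$ is faithfully flat merely fills in details the paper leaves implicit.
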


\begin{proof}
The pre-display $\PPP$ is a truncated display if and only if
$P$ is projective of finite type over $W_n(R)$, the homomorphism
$F_1^\sharp:Q^{(1)}\to P$ is surjective, and the
4-term sequence \eqref{Eq-trunc-4term} is exact.
These properties descend from $\PPP'$ to $\PPP$
since in all components of $\PPP$ and of \eqref{Eq-trunc-4term},
the passage from $\PPP$ to $\PPP'$ is given by the tensor product 
with the faithfully flat homomorphism 
$W_n(\alpha)$; see Remark \ref{Re-ind-et}.
\end{proof}

\begin{Lemma}
For each $\FF_p$-algebra $R$ there are unique
truncation functors
$$
\tau_n:(\dispC/R)\to(\dispC_n/R),
$$
$$
\tau_n:(\dispC_{n+1}/R)\to(\dispC_n/R),
$$
together with a natural isomorphism
$$
\Hom_{(\predispC/R)}(\PPP,\PPP')\cong
\Hom_{(\dispC_n/R)}(\tau_n\PPP,\PPP')
$$
if $\PPP$ is a display or truncated display of level $n+1$ over
$R$ and if $\PPP'$ is a truncated display of level $n$ over $R$.
The truncation functors are compatible with base change in $R$.
\end{Lemma}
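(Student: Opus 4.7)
The plan is to build $\tau_n\PPP$ by tensoring $\PPP$ along the appropriate Witt-ring truncation, observe that this operation automatically has the required universal property, and verify that the result is a truncated display of level $n$ using normal representations. The full-display and truncated-display cases are formally parallel: in the first case one tensors $W(R)$-modules with $W_n(R)$ along $W(R)\twoheadrightarrow W_n(R)$, and in the second case one tensors $W_{n+1}(R)$-modules with $W_n(R)$ along $W_{n+1}(R)\twoheadrightarrow W_n(R)$.

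Given a truncated display $\PPP$ of level $n+1$ with normal representation $(L,T,\Psi)$, I would define $\tau_n\PPP$ to have normal representation $(L_n,T_n,\Psi_n)$, where $L_n=L\otimes_{W_{n+1}(R)}W_n(R)$, $T_n=T\otimes_{W_{n+1}(R)}W_n(R)$, and $\Psi_n$ is induced by $\Psi$. The induced map is well-defined because on an $\FF_p$-algebra the Frobenius of $W(R)$ acts componentwise as $p$-th power, so it commutes with the truncation $W_{n+1}(R)\to W_n(R)$; and tensor product preserves the fact that $\Psi_n$ is an isomorphism. Intrinsically this amounts to $P_n=P/J_{n+1,R}P$ with $Q_n$ the corresponding quotient of $Q$ in which the $L$-summand is reduced modulo $J_{n+1,R}$ and the summand $I_{n+2,R}\otimes T$ is pushed to $I_{n+1,R}\otimes T_n$ via the surjection $W_{n+2}(R)\twoheadrightarrow W_{n+1}(R)$. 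This intrinsic description does not depend on the chosen normal decomposition, so $\tau_n$ is an honest functor.

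The adjunction is then essentially automatic. A morphism $g:\PPP\to\PPP'$ in $(\predispC/R)$ with $\PPP'$ of level $n$ consists of $W(R)$-linear maps whose targets $P'$ and $Q'$ are $W_n(R)$-modules; hence the components of $g$ annihilate $J_{n+1,R}P$ and the analogous submodule of $Q$, and so factor uniquely through $\tau_n\PPP$. Compatibility with $\iota,\varepsilon,F,F_1$ is inherited from $g$, and conversely any morphism $\tau_n\PPP\to\PPP'$ in $(\dispC_n/R)$ pulls back along the canonical projection $\PPP\to\tau_n\PPP$. Base change compatibility follows from Lemma~\ref{Le-trunc-base-change}: both $\tau_n$ and $\alpha_*$ are given by tensor products, and the two compositions $W_{n+1}(R)\to W_n(R)\to W_n(R')$ and $W_{n+1}(R)\to W_{n+1}(R')\to W_n(R')$ coincide.

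The main obstacle is verifying that $\tau_n\PPP$ is actually a truncated display of level $n$ rather than merely a pre-display—namely, that the 4-term exact sequence of Definition~\ref{Def-trunc-pair} holds after truncation and that the image of $F_1$ still generates $P_n$ over $W_n(R)$. Both properties are visible from the normal representation $(L_n,T_n,\Psi_n)$, but some care is required with the bookkeeping of the ideals $I_{k,R}$, $J_{k,R}$ and with the interplay between the Verschiebung $v:W_n(R)\xrightarrow{\sim}I_{n+1,R}$ at successive levels.
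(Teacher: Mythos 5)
Your construction is correct and is essentially the paper's own proof: the paper defines $\tau_n$ in terms of normal representations by $(L,T,\Psi)\mapsto(W_n(R)\otimes_{W(R)}L,\,W_n(R)\otimes_{W(R)}T,\,f\otimes\Psi)$ and declares the rest straightforward. The step you flag as the ``main obstacle'' is automatic in the paper's framework, since any pair of projective $W_n(R)$-modules of finite type together with an $f$-linear isomorphism $\Psi_n$ already determines a truncated display of level $n$.
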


\begin{proof}
Again this is straightforward.
In terms of normal representations, $\tau_n$ is given by
$(L,T,\Psi)\mapsto(W_n(R)\otimes_{W(R)}L,
W_n(R)\otimes_{W(R)}T,f\otimes\Psi)$.
\end{proof}

\begin{Lemma}
\label{Le-disp-trunc-disp}
For an $\FF_p$-algebra $R$, the category $(\dispC/R)$ 
is the projective limit over $n$ of the
categories $(\dispC_n/R)$.
\end{Lemma}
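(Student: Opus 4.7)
The plan is to exhibit the canonical functor
\[
(\dispC/R) \longrightarrow \varprojlim_n (\dispC_n/R), \qquad \PPP \mapsto (\tau_n\PPP)_{n \ge 1},
\]
as an equivalence by proving it fully faithful and essentially surjective. The key underlying fact is that for an $\FF_p$-algebra $R$ the ring $W(R)$ is separated and complete with respect to the filtration by the ideals $v^n W(R)$, with quotients $W(R)/v^n W(R) = W_n(R)$. Consequently, finitely generated projective $W(R)$-modules are equivalent, via $P \mapsto (W_n(R) \otimes_{W(R)} P)_n$, to compatible projective systems of finitely generated projective $W_n(R)$-modules.

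For fully faithfulness, a morphism of displays $\PPP \to \PPP'$ is a $W(R)$-linear map $P \to P'$ carrying $Q$ into $Q'$ and intertwining $F$ and $F_1$. Since all this data lives in finitely generated projective $W(R)$-modules that equal the projective limits of their truncations, such a map is determined by, and can be recovered from, its reductions modulo $v^n W(R)$, giving the natural bijection
\[
\Hom_{(\dispC/R)}(\PPP,\PPP') \;\xrightarrow{\;\sim\;}\; \varprojlim_n \Hom_{(\dispC_n/R)}(\tau_n\PPP,\tau_n\PPP').
\]

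For essential surjectivity, given a compatible system $(\PPP_n)$ with truncation isomorphisms $\tau_n \PPP_{n+1} \cong \PPP_n$, I would inductively construct a compatible sequence of normal representations $(L_n, T_n, \Psi_n)$ in the sense of the discussion following Definition~\ref{Def-trunc-disp}, then form the limit $(L, T, \Psi) = (\varprojlim L_n, \varprojlim T_n, \varprojlim \Psi_n)$, which via the normal-representation formalism defines an object $\PPP \in (\dispC/R)$ whose truncations recover $(\PPP_n)$. At the inductive step, the splitting $P_n = L_n \oplus T_n$ corresponds to an idempotent of $P_n$ that lifts to an idempotent of $P_{n+1}$, using that the kernel $v^n W_{n+1}(R)$ of $W_{n+1}(R) \twoheadrightarrow W_n(R)$ is square-zero in characteristic $p$ (since $v^n(x) v^n(y) = v^{2n}(xy) = 0$ in $W_{n+1}(R)$ for $n \ge 1$); the refinement that the lift of $L_n$ lies inside $Q_{n+1}$ follows from the 4-term exact sequence of Definition~\ref{Def-trunc-pair} applied to $\PPP_{n+1}$, together with projectivity of $T_n$. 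The main obstacle is that these lifts must be chosen compatibly with the given truncation isomorphism $\tau_n \PPP_{n+1} \cong \PPP_n$ so that the $f$-linear data $\Psi_n$ match under restriction; this amounts to standard projective-module and idempotent lifting over the square-zero extension $W_{n+1}(R) \twoheadrightarrow W_n(R)$.
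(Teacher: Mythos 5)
Your proposal is correct and follows essentially the same route as the paper: full faithfulness from the $v$-adic completeness of $W(R)$ and finitely generated projective modules over it, and essential surjectivity by forming the componentwise limit after inductively lifting normal decompositions from level $n$ to level $n+1$ (the paper cites the proof of Lemma~\ref{Le-pair-norm-dec} for this lifting, which amounts to the same projective-module and idempotent lifting over the nilpotent kernel $J_{n+1,R}$ that you spell out). The only nitpick is the identity $v^n(x)v^n(y)=v^{2n}(xy)$, which is only correct up to Frobenius twists, but the conclusion that the product lies in $v^{2n}W(R)$ and hence vanishes in $W_{n+1}(R)$ is what matters and is right.
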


\begin{proof}
It is easy to see that the truncation functor from 
displays over $R$ to compatible systems of truncated 
displays of level $n$ over $R$ is fully faithful. 
For a given compatible system of truncated displays 
$(\PPP_n)_{n\ge 1}$ we define $\PPP=\varprojlim_n\PPP_n$ 
componentwise.
The proof of Lemma \ref{Le-pair-norm-dec} shows
that each normal decomposition of $\PPP_n$ can
be lifted to a normal decomposition of $\PPP_{n+1}$.
It follows easily that $\PPP$ is a display.
\end{proof}

\subsection{Descent}

We recall the descent of projective modules over  
truncated Witt rings. 
Let $R\to R'$ be a faithfully flat homomorphism of rings 
in which $p$ is nilpotent and let $R''=R'\otimes_RR'$.
We denote by $\VVV_n(R)$ the category
of projective $W_n(R)$-modules of finite type and by
$\VVV_n(R'/R)$ the category of modules in $\VVV_n(R')$
together with a descent datum relative to $R\to R'$.

\begin{Lemma}
\label{Le-Wn-descent}
The obvious functor $\gamma:\VVV_n(R)\to\VVV_n(R'/R)$ is an
equivalence.
\end{Lemma}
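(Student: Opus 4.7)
The plan is to proceed by induction on $n$, with the base case $n=1$ being classical fpqc descent for finitely generated projective modules (since $W_1(R)=R$).

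The first ingredient is to show that the functor $W_n$ preserves faithful flatness: if $R\to R'$ is faithfully flat then so is $W_n(R)\to W_n(R')$. Faithfulness reduces to $n=1$ via the augmentation $W_n(R)\to R$, whose kernel is nilpotent---from the identity $v^{n-1}(a)\,v^{n-1}(b)=p^{n-1}v^{n-1}(ab)$ together with $p$-nilpotence in $R$ one deduces $(v^{n-1}R)^k=0$ for $k\gg 0$. For flatness I would induct on $n$ using the compatible short exact sequences of $W_n(R)$-modules
$$
0\to v^{n-1}R\to W_n(R)\to W_{n-1}(R)\to 0
$$
and the local flatness criterion; compatibility with base change is immediate.

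The second ingredient is that $W_n(R)\to W_{n-1}(R)$ is a nilpotent thickening (by the same nilpotence computation). Along a nilpotent thickening $A\to\bar A$ with kernel $J$, finitely generated projective $\bar A$-modules lift uniquely up to non-canonical isomorphism to projective $A$-modules, and between any two lifts of a given $\bar A$-module the set of isomorphisms reducing to the identity is a torsor under $\Hom_{\bar A}(\bar M,\bar M)\otimes_{\bar A}J$ (iterated over the nilpotent filtration when $J$ is not square-zero). These two ingredients reduce both full faithfulness and essential surjectivity of $\gamma_n$ to $\gamma_{n-1}$ plus a lifting step.

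Concretely, given $(M',\varphi)\in\VVV_n(R'/R)$, I would first reduce modulo the kernel of $W_n(R')\to W_{n-1}(R')$ to obtain $(\bar M',\bar\varphi)\in\VVV_{n-1}(R'/R)$, descend it by induction to $\bar N\in\VVV_{n-1}(R)$, and lift $\bar N$ to $N\in\VVV_n(R)$ using the first ingredient. Both $\gamma_n(N)$ and $M'$ then lift $\bar M'$ to projective $W_n(R')$-modules and so are abstractly isomorphic; the delicate point is to produce such an isomorphism matching the given descent datum $\varphi$ with the one naturally induced on $\gamma_n(N)$. The discrepancy between the two descent data lies in a torsor under a Hom-module tensored with the kernel ideal, subject to a cocycle condition, and it is trivialised by applying fpqc descent at level $n-1$ for this Hom-module, possibly after modifying the chosen lift $N$. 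Full faithfulness is a similar but easier variant. I expect this descent-data-matching step to be the main technical obstacle of the argument.
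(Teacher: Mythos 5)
Your overall architecture --- induction on $n$ along the nilpotent thickenings $W_n(R)\to W_{n-1}(R)$, deformation theory of finitely generated projective modules, and killing the discrepancy between descent data by the vanishing of a \v Cech $H^1$ with quasi-coherent coefficients --- is sound, and is essentially an unwinding of the paper's argument (the paper rigidifies by a trivialisation of $M/I_RM$ and shows $\check H^1(R'/R,\underline A)=0$ because $\underline A$ is filtered with quasi-coherent graded quotients). However, your first ingredient is false: the functor $W_n$ does \emph{not} preserve flatness. Already for the faithfully flat map $\FF_p\to\FF_p[x]$ the ring $W_2(\FF_p[x])$ is not flat over $W_2(\FF_p)=\ZZ/p^2\ZZ$: multiplication by $p$ on $W_2(A)$ for an $\FF_p$-algebra $A$ is $(a_0,a_1)\mapsto(0,a_0^p)$, so $p\,W_2(\FF_p[x])=\{(0,a^p)\}$ while the annihilator of $p$ is all of $v(\FF_p[x])=\{(0,a)\}$, which is strictly larger; a $\ZZ/p^2\ZZ$-module with this property cannot be flat. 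Your induction breaks exactly at the step you call immediate: one has $v^{n-1}(R)\cdot W_n(R')=v^{n-1}\bigl(R\cdot f^{n-1}(W_n(R'))\bigr)$, which is in general a proper subgroup of $v^{n-1}(R')$ (for $\FF_p$-algebras it involves only $p^{n-1}$-st powers of elements of $R'$), so the sequence $0\to v^{n-1}R\to W_n(R)\to W_{n-1}(R)\to 0$ is \emph{not} compatible with base change along $W_n(R)\to W_n(R')$ and the local flatness criterion cannot be applied. (The functor $W_n$ does preserve \'etale maps and localisations, cf.\ Remark \ref{Re-ind-et}, but not general flat or even smooth maps.)

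The gap is repairable, and the repair is what the paper's proof does: instead of asserting flatness of $W_n(R)\to W_n(R')$, one proves directly that for every \emph{flat $W_n(R)$-module} $M$ the \v Cech--Amitsur complex
$$
C_n(M)=[0\to M\to M\otimes_{W_n(R)}W_n(R')\to M\otimes_{W_n(R)}W_n(R'')\to\cdots]
$$
is exact. This goes by d\'evissage in your spirit but with the twist put in the right place: tensoring the exact sequences $0\to v^{n-1}(S)\to W_n(S)\to W_{n-1}(S)\to 0$ for $S=R',R'',\dots$ with the flat $W_n(R)$-module $M$ exhibits $C_n(M)$ as an extension with sub $C_1(M\otimes_{W_n(R),f^{n-1}}R)$ and quotient $C_{n-1}(M\otimes_{W_n(R)}W_{n-1}(R))$, reducing everything to ordinary faithfully flat descent for $R\to R'$. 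This yields full faithfulness and the $H^1$-vanishing you need for matching descent data, without ever claiming that $W_n(R')$ is flat over $W_n(R)$. With that substitution, the rest of your plan goes through.
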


\begin{proof}
First we note that for each flat $W_n(R)$-module $M$ the complex
$$
C_n(M)=[0\to M\to M\otimes_{W_n(R)}W_n(R')\to
M\otimes_{W_n(R)}W_n(R'')\to\cdots]
$$
is exact. Indeed, this is clear if $n=1$, and in general
$C_n(M)$ is an extension of $C_1(M\otimes_{W_n(R),f^{n-1}}R)$
and $C_{n-1}(M\otimes_{W_n(R)}W_{n-1}(R))$.

It follows that the functor $\gamma$ is fully faithful.
We have to show that $\gamma$ is essentially surjective.
If $R'$ is a finite product of localisations of $R$ then
$W_n(R)\to W_n(R')$ has the same property, and thus $\gamma$ 
is an equivalence. Hence we may always pass to an open
cover of $\Spec R$ by spectra of localisations of $R$.
For $M'$ in $\VVV_n(R'/R)$ the descent datum induces a 
descent datum for the projective $R'$-module $M'/I_{R'}M'$, 
which is effective. By passing to a localisation of $R$ 
we may assume that the descended $R$-module is free. 

For fixed $r$ let $\VVV_n^o(R)$ be the category of modules
$M$ in $\VVV_n(R)$ together with an isomorphism of $R$-modules
$\beta:R^r\cong M/I_RM$; homomorphisms in $\VVV_n^o(R)$
preserve the $\beta$'s. In view of the preceding
remarks it suffices to show that the obvious functor
$\VVV_n^o(R)\to\VVV_n^o(R'/R)$ is essentially surjective.
Since every object of $\VVV_n^o(R)$ is isomorphic to the 
standard object $(W_n(R)^r,\id)$, we have to show
that all objects in $\VVV_n^o(R'/R)$ are isomorphic.
This means that for the sheaf of groups $\underline A$ on the 
category of affine $R$-schemes defined by 
$\underline A(\Spec S)=\Aut(W_n(S)^r,\id)$, the \v Cech 
cohomology group $\check H^1(R'/R,\underline A)$ is trivial. 
This is true because $\underline A$ has a finite filtration 
with quotients isomorphic to quasi-coherent modules.
\end{proof}

We turn to descent of truncated pairs. Let $R\to R'$
be a faithfully flat homomorphism of $\FF_p$-algebras.
We denote by $\CCC_n(R)$
the category of truncated pairs of level $n$ over $R$
and by $\CCC_n(R'/R)$ the category of truncated
pairs of level $n$ over $R'$ together with a descent
datum relative to $R\to R'$.

\begin{Lemma}
\label{Le-pair-descent}
The obvious functor $\gamma:\CCC_n(R)\to\CCC_n(R'/R)$ is
an equivalence.
\end{Lemma}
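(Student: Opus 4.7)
The strategy is to reduce descent of truncated pairs to descent of projective modules, for which Lemma~\ref{Le-Wn-descent} handles the projective $W_n$-modules and classical fpqc descent handles projective $R$-modules.

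Full faithfulness of $\gamma$ is routine: a morphism of truncated pairs is specified by a pair of module homomorphisms commuting with $\iota$ and $\varepsilon$. Since $P$ is projective of finite type over $W_n(R)$, $\Hom_{W_n(R)}(P_1,P_2)$ satisfies fpqc descent by Lemma~\ref{Le-Wn-descent}; the commutation relations are local, so descent of the map on $Q$ reduces to the descent of its constituents (see below).

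For essential surjectivity I would descend the pieces of $\BBB'=(P',Q',\iota',\varepsilon')$ one at a time. First, $P'$ descends to a projective $W_n(R)$-module $P$ by Lemma~\ref{Le-Wn-descent}. The descent datum on $\BBB'$ induces compatible descent data on the projective $R'$-modules $\bar T'=\Coker(\iota')$ and $\bar L'=\Ker(P'/I_{n,R'}P'\to\bar T')$, which descend to projective $R$-modules $\bar T,\bar L$ fitting in the analogue of \eqref{Eq-trunc-3term}. The 4-term sequence \eqref{Eq-trunc-4term} then splits into two short exact sequences whose outer terms $P$, $\bar T$, $Q_0:=\Ker(P\to\bar T)$ and $J_{n+1,R}\otimes_R\bar T$ all descend; consequently $Q'$ descends as an extension to a $W_n(R)$-module $Q$. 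The maps $\iota$ and $\varepsilon$ descend because they are compatible with the descent data on the descended modules, and the relations in Definition~\ref{Def-trunc-pair} are inherited locally.

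The main obstacle is the extension step: one must verify that the extension $0\to J_{n+1,R'}\otimes_{R'}\bar T'\to Q'\to Q_0'\to 0$, together with its descent datum, descends as a sequence of $W_n$-modules in a way compatible with the descent of $\iota'$ and $\varepsilon'$. A cleaner alternative is to use Lemma~\ref{Le-pair-norm-dec}: after passing to a further fpqc cover of $R'$, choose a normal decomposition $(L',T')$ of $\BBB'$, and descend the projective $W_n$-modules $L'$ and $T'$ directly via Lemma~\ref{Le-Wn-descent}. Since normal decompositions are not functorial, one must then argue that the truncated pair associated to $(L,T)$ depends only on the pair $(P,\bar T)$ (with the surjection $P/I_{n,R}P\to\bar T$) up to unique isomorphism, so that locally constructed truncated pairs glue to a global object of $\CCC_n(R)$ matching the original descent datum.
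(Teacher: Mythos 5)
There is a genuine gap, and you have in fact put your finger on it yourself without closing it: both of your routes for essential surjectivity stall exactly where the paper's key idea is needed, namely a cocycle (torsor) argument. In your first route, the extension step does not go through by ``descending the pieces'': the map $W_n(R)\to W_n(R')$ is \emph{not} flat for a general faithfully flat $R\to R'$ (this is precisely why Lemma~\ref{Le-Wn-descent} is proved by a filtration argument rather than by quoting ordinary fpqc descent), and $Q'$ is not a projective $W_n(R')$-module, so there is no descent theorem available that makes the middle term of the extension effective. In your second route, the assertion that the truncated pair attached to $(L,T)$ is determined by $(P,\bar T)$ ``up to unique isomorphism'' is false --- it is determined only up to non-unique isomorphism, since the pair has many automorphisms inducing the identity on $\Coker(\iota)$ and $\Coker(\varepsilon)$. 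Locally isomorphic objects glue only if the resulting \v{C}ech $1$-cocycle in the automorphism sheaf is a coboundary, and that is the whole content of the problem, not a formality.

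The paper resolves this as follows. One first reduces to a Zariski-local situation where the descended $R$-modules $\Coker(\iota)$ and $\Coker(\varepsilon)$ are free, and then rigidifies: one works in the category $\CCC_n^o$ of truncated pairs equipped with trivialisations $\beta_1:R^r\cong\Coker(\iota)$, $\beta_2:R^s\cong\Coker(\varepsilon)$. By Lemma~\ref{Le-pair-norm-dec} all objects of $\CCC_n^o(R)$ are isomorphic, so essential surjectivity amounts to the vanishing of $\check H^1(R'/R,\underline A)$ for $\underline A=\uAut(\BBB,\beta_1,\beta_2)$. This group is identified with matrices $\left(\begin{smallmatrix}A&B\\C&D\end{smallmatrix}\right)$ with $A\equiv\id$, $D\equiv\id$ modulo $I_R$, and it admits a finite filtration whose graded quotients are quasi-coherent modules; hence its \v{C}ech $H^1$ vanishes. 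This filtration-of-the-automorphism-sheaf step is the missing idea in your proposal; without it neither the extension argument nor the normal-decomposition argument closes. (Your treatment of full faithfulness, via left-exactness of the descent sequences for $P$ and then $Q$ using \eqref{Eq-trunc-4term}, matches the paper and is fine.)
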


\begin{proof}
For a truncated pair $\BBB$ over $R$ we denote by
$\BBB',\BBB''$ the base change to $R',R''$ etc.
We have an exact sequence $0\to P\to P'\to P''$ by 
the proof of Lemma \ref{Le-Wn-descent}, and thus
$0\to Q\to Q'\to Q''$ by the 4-term exact sequence
\eqref{Eq-trunc-4term}. It follows easily that the
functor $\gamma$ is fully faithful. We show that
$\gamma$ is essentially surjective by a variant of
the proof of Lemma \ref{Le-Wn-descent}.
Again, the assertion holds if $R'$ is a finite product 
of localisations of $R$, and thus we may pass to an
open cover of $\Spec R$ defined by localisations.
For $\BBB'$ in $\CCC_n(R'/R)$ the given descent datum
induces a descent datum for the projective $R$-modules
$\Coker(\iota)$ and $\Coker(\varepsilon)$. By passing
to a localisation of $R$ we may assume that the
descended $R$-modules are free.

For fixed $r,s$ let $\CCC_n^o(R)$ be the category of
truncated pairs $\BBB$ in $\CCC_n(R)$ together with
isomorphisms $\beta_1:R^r\cong\Coker(\iota)$
and $\beta_2:R^s\cong\Coker(\varepsilon)$ of $R$-modules; homomorphisms
in $\CCC_n^o(R)$ preserve the $\beta_i$. It suffices to
show that $\CCC_n^o(R)\to\CCC_n^o(R'/R)$ is essentially
surjective. By Lemma \ref{Le-pair-norm-dec} and its proof,
all objects of $\CCC_n^o(R)$ are isomorphic. For fixed 
$(\BBB,\beta_1,\beta_2)$ in $\CCC_n^o(R)$ with normal
decomposition $(L,T)$ the group $\Aut(\BBB,\beta_1,\beta_2)$ 
can be identified with the group of matrices
$\left(\begin{smallmatrix}A&B\\C&D\end{smallmatrix}\right)$
with $A\in\Aut(L)$, $B\in\Hom(T,L)$, 
$C\in\Hom(L,I_{n+1,R}\otimes_{W_n(R)}T)$, and $T\in\Aut(T)$
such that $A\equiv\id$ and $D\equiv\id$ modulo $I_R$.
The sheaf of groups $\underline A=\uAut(\BBB,\beta_1,\beta_2)$ has
a finite filtration with quotients isomorphic to quasi-coherent
modules. Thus $\check H^1(R'/R,\underline A)$ is trivial,
which implies that all objects of $\CCC_n^o(R'/R)$ are
isomorphic.
\end{proof}

\begin{Prop}
\label{Pr-trunc-descent}
Faithfully flat descent is effective for truncated displays
over $\FF_p$-algebras.
\end{Prop}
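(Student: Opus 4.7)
The plan is to take a truncated display $\PPP' = (P', Q', \iota', \varepsilon', F', F_1')$ of level $n$ over $R'$ together with a descent datum $\sigma$ relative to a faithfully flat $\FF_p$-algebra homomorphism $\alpha : R \to R'$, and to construct a descended truncated display $\PPP$ over $R$. First I would forget down to the underlying truncated pair and apply Lemma \ref{Le-pair-descent} to obtain a truncated pair $\BBB = (P, Q, \iota, \varepsilon)$ of level $n$ over $R$ whose base change to $R'$ reconstructs the underlying pair of $\PPP'$ together with the descent datum induced by $\sigma$.

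Next I would descend the $f$-linear operators. Their linearisations $(F')^\sharp : (P')^{(1)} \to P'$ and $(F_1')^\sharp : (Q')^{(1)} \to P'$ are $W_n(R')$-linear maps between projective $W_n(R')$-modules (using that $f$ restricts to a ring endomorphism of $W_n(R)$ for any $\FF_p$-algebra $R$). Since the Frobenius twist is natural in $R$, the descent data on $P'$ and $Q'$ induce descent data on $(P')^{(1)}$ and $(Q')^{(1)}$; the hypothesis that $\sigma$ is a morphism of pre-displays translates precisely into the statement that $(F')^\sharp$ and $(F_1')^\sharp$ intertwine these descent data. By the fully faithful part of Lemma \ref{Le-Wn-descent}, applied to morphisms of projective $W_n$-modules, the linearised operators descend uniquely to $W_n(R)$-linear maps $F^\sharp : P^{(1)} \to P$ and $F_1^\sharp : Q^{(1)} \to P$, equivalently to $f$-linear maps $F : P \to P$ and $F_1 : Q \to P$. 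The pre-display axioms $\iota \varepsilon = \mathrm{mult}$, $\varepsilon (1 \otimes \iota) = \mathrm{mult}$, and $F_1 \varepsilon = f_1 \otimes F$ are equalities of morphisms that hold after base change to $W_n(R')$, and therefore descend by the same injectivity. This produces a pre-display $\PPP$ of level $n$ over $R$ whose base change reconstructs $\PPP'$ together with $\sigma$.

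Finally, to see that $\PPP$ is a truncated display, only the surjectivity of $F_1^\sharp : Q^{(1)} \to P$ needs to be verified, the remaining conditions of Definition \ref{Def-trunc-pair} being already built into $\BBB$. This surjectivity is faithfully flat local over $W_n$: the cokernel of $F_1^\sharp$ is a finitely generated $W_n(R)$-module which vanishes after tensoring with $W_n(R')$, and $W_n(R) \to W_n(R')$ is itself faithfully flat, via the finite Verschiebung filtration $V^k W_n(R)$ whose successive quotients are, up to Frobenius twist, the faithfully flat $R$-module $R'$. The main technical point is the correct interaction of Frobenius twist and linearisation with descent, which is settled by the naturality of $f$ in $R$; the remaining assertions reduce to Lemmas \ref{Le-pair-descent} and \ref{Le-Wn-descent} together with standard faithfully flat descent of morphisms.
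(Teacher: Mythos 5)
Your first step, descending the underlying truncated pair by Lemma \ref{Le-pair-descent}, agrees with the paper, but the descent of $F_1$ and the verification of surjectivity both rest on claims that fail. The central error is the assertion that $W_n(R)\to W_n(R')$ is faithfully flat whenever $R\to R'$ is: the graded pieces of the Verschiebung filtration of $W_n(R')$ are modules through the non-flat quotient $W_n(R)\to R$ (twisted by Frobenius), so the filtration argument does not give flatness, and flatness is in fact false. Concretely, for $R=\FF_p$ and $R'=\FF_p[\epsilon]/(\epsilon^2)$ one has $p\cdot(a_0,a_1)=(0,a_0^p)$ in $W_2(R')$, so $pW_2(R')=\{(0,c):c\in\FF_p\}$ has order $p$ while $W_2(R')$ has order $p^4$; hence $W_2(R')$ is not free, and therefore not flat, over $W_2(\FF_p)=\ZZ/p^2\ZZ$. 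This is exactly why the proof of Lemma \ref{Le-Wn-descent} establishes exactness of the complex $C_n(M)$ by a d\'evissage on $n$ rather than by quoting flatness of $W_n(R)\to W_n(R')$. A second, independent problem concerns $Q$: it is not a projective $W_n(R)$-module (its normal decomposition involves the summand $I_{n+1,R}\otimes_{W_n(R)}T$), and its formation does not commute with base change along general faithfully flat maps --- Corollary \ref{Co-trunc-base-change-ind-et} and Remark \ref{Re-ind-et} give $Q'\cong W_n(R')\otimes_{W_n(R)}Q$ only for ind-\'etale $\alpha$; in the example above $I_{2,R}\otimes_RR'\to I_{2,R'}$ is not even surjective. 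Consequently Lemma \ref{Le-Wn-descent} does not license the descent of $(F_1')^\sharp:(Q')^{(1)}\to P'$, and $\Coker(F_1^\sharp)\otimes_{W_n(R)}W_n(R')$ need not compute $\Coker((F_1')^\sharp)$, so the surjectivity of $F_1^\sharp$ is not established.

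The paper circumvents both issues at once: after descending the pair, it chooses a normal decomposition $(L,T)$ over $R$ (Lemma \ref{Le-pair-norm-dec}); a truncated display structure on the descended pair is then the same thing as an $f$-linear isomorphism $\Psi:L\oplus T\to P$, i.e.\ a $W_n(R)$-linear isomorphism $(L\oplus T)^{(1)}\to P$ between projective modules of finite type. Such morphisms form an fpqc sheaf by the exactness of $C_n(M)$, and the descended morphism is automatically bijective because an element of $W_n(R)$ is a unit exactly when its image in $R$ is, which can be tested after the faithfully flat map $R\to R'$; in particular the surjectivity of $F_1^\sharp$ is absorbed into the bijectivity of $\Psi$ and never has to be descended separately. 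If you wish to keep your more direct route, you can recover $F_1$ by restricting $F_1'$ along the canonical map $Q\to Q'$ (its image lands in the equalizer of $P'\rightrightarrows P''$, which is $P$), but you should then verify surjectivity of $F_1^\sharp$ modulo the nilpotent ideal $I_{n,R}$, where it becomes a statement about finitely generated $R$-modules and ordinary faithfully flat descent applies; as written the argument does not go through.
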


\begin{proof}
By Lemmas \ref{Le-pair-descent} and \ref{Le-pair-norm-dec} 
it suffices to show that for a given truncated pair $\BBB$ over 
an $\FF_p$-algebra $R$ with a normal decomposition $(L,T)$, 
the truncated display structures on $\BBB$ form an fpqc sheaf 
on the category of affine schemes over $\Spec R$.  
This is true because these structures correspond to
$f$-linear isomorphisms $L\oplus T\to P$.
\end{proof}

\subsection{Smoothness}

As in section \ref{Subse-stack-pdiv} we fix a positive integer $h$.
We denote by $\Disp_n\to\Spec\FF_p$ the stack of truncated 
displays of level $n$ and rank $h$. Thus $\Disp_n(\Spec R)$ 
is the groupoid of truncated displays 
of level $n$ and rank $h$ over $R$. 
The truncation functors induce morphisms
$$
\tau_n:\Disp_{n+1}\to\Disp_n.
$$

\begin{Prop}
\label{Pr-Disp-n-smooth}
The fibered category $\Disp_n$ is a smooth Artin algebraic
stack of dimension zero over $\FF_p$ with affine diagonal. 
The morphism $\tau_n$ is smooth and surjective 
of relative dimension zero.
\end{Prop}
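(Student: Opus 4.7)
The plan is to decompose $\Disp_n = \bigsqcup_{d=0}^{h} \Disp_n^d$ by the dimension $d := \mathrm{rank}_R\Coker(\iota)$ (a locally constant function on $\Disp_n$, yielding open and closed substacks) and to present each $\Disp_n^d$ as an explicit quotient stack. Fix $d$ and let $\BBB_d$ denote the ``standard'' truncated pair over $\FF_p$ determined by the free normal decomposition $L = W_n^d$, $T = W_n^{h-d}$. A truncated display structure on $\BBB_d$ is an $f$-linear isomorphism $\Psi \colon L \oplus T \to P = L \oplus T$; in the standard bases this is an invertible $h \times h$ matrix with entries in $W_n$. Hence the functor of such structures is representable by the open subscheme $Y_d \subset M_h(W_n) \cong \mathbb{A}^{h^2 n}_{\FF_p}$ of unit-determinant matrices, which is smooth affine of dimension $h^2 n$. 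Let $G_d := \uAut(\BBB_d)$ act on $Y_d$ by $\Psi \mapsto \Psi \circ g^{-1}$. By Lemma~\ref{Le-pair-norm-dec} every truncated display over $R$ admits a normal decomposition, which is Zariski-locally free of rank $(d, h-d)$, and Proposition~\ref{Pr-trunc-descent} provides effective fpqc descent. Thus $\Disp_n^d \cong [Y_d / G_d]$ as fpqc stacks.

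Next I check that $G_d$ is smooth affine of the same dimension $h^2 n$. By the block-matrix description used in the proof of Lemma~\ref{Le-pair-descent}, the $R$-points of $G_d$ consist of matrices $\bigl(\begin{smallmatrix}A&B\\C&D\end{smallmatrix}\bigr)$ with $A \in \GL_d(W_n(R))$, $D \in \GL_{h-d}(W_n(R))$, $B \in \Hom_{W_n(R)}(T, L)$, and $C \in \Hom_{W_n(R)}(L, I_{n+1,R} \otimes_{W_n(R)} T)$. Each component is representable by a smooth affine $\FF_p$-scheme (using that $I_{n+1}$ is representable as a $W_n$-module functor by a smooth $\FF_p$-scheme of relative dimension $n$), with respective dimensions $d^2 n$, $(h-d)^2 n$, $d(h-d) n$, $d(h-d) n$, summing to $h^2 n$. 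The filtration of $G_d$ by these quasi-coherent pieces then shows that $G_d$ is smooth affine over $\FF_p$. Therefore $[Y_d / G_d]$ is a smooth Artin stack of dimension $0$ over $\FF_p$, and its diagonal, covered by the affine morphism $G_d \times Y_d \to Y_d \times Y_d$, $(g, y) \mapsto (gy, y)$, is affine.

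For $\tau_n$, the analogous construction at level $n+1$ yields a compatible presentation $Y_d^{(n+1)} \to \Disp_{n+1}^d$ and equivariant morphisms $Y_d^{(n+1)} \to Y_d^{(n)}$ and $G_d^{(n+1)} \to G_d^{(n)}$ induced by the ring-scheme projection $W_{n+1} \to W_n$. This projection is smooth surjective with one-dimensional affine kernel, so both morphisms are smooth surjective of relative dimension $h^2$; passing to quotient stacks yields that $\tau_n \colon \Disp_{n+1}^d \to \Disp_n^d$ is smooth surjective of relative dimension $0$. Summing over $d$ finishes the argument. The main obstacle is the careful bookkeeping needed for the smooth affine structure and dimension of $G_d$: once the $W_n$-module scheme $I_{n+1}$ is identified as smooth of relative dimension $n$ over $\FF_p$ and the filtration argument of Lemma~\ref{Le-pair-descent} is used to promote smoothness of the quasi-coherent layers to smoothness of $G_d$, everything else reduces to the normal-form and descent results already established.
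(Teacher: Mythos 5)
Your proof is correct and follows essentially the same route as the paper: decompose $\Disp_n$ by the rank of $\Coker(\iota)$, present each piece as the quotient of the scheme of invertible $W_n$-matrices by the automorphism group scheme of the standard truncated pair (both smooth affine of dimension $nh^2$ over $\FF_p$), and deduce smoothness and surjectivity of $\tau_n$ from the compatible presentations in levels $n$ and $n+1$. The only cosmetic difference is that the paper checks affineness of the diagonal directly by exhibiting $\uIsom(\PPP_1,\PPP_2)$ as an affine scheme for arbitrary truncated displays, whereas you deduce it from the groupoid presentation by descent of affineness; both are fine.
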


\begin{proof}
By Proposition \ref{Pr-trunc-descent}, $\Disp_n$ is an
fpqc stack. In order to see that its diagonal is affine 
we have to show that for truncated displays $\PPP_1$ and
$\PPP_2$ over an $\FF_p$-algebra $R$ the sheaf 
$\uIsom(\PPP_1,\PPP_2)$ is represented by an affine
scheme. By passing to an open cover of $\Spec R$ we may
assume that $\PPP_1$ and $\PPP_2$ have normal decompositions
by free modules. Then homomorphisms of the underlying
truncated pairs are represented by an affine space.
To commute with $F$ and $F_1$ is a closed condition,
and a homomorphism of truncated pairs 
is an isomorphism if and only if it induces isomorphisms
on $\Coker(\iota)$ and $\Coker(\varepsilon)$,
which means that two determinants are invertible. Thus
$\uIsom(\PPP_1,\PPP_2)$ is an affine scheme.

For each integer $d$ with $0\le d\le h$ let 
$\Disp_{n,d}$ be the substack of $\Disp_n$ where 
the projective module $\Coker(\iota)$ has rank $d$.
Let $X_{n,d}$ be the functor on affine $\FF_p$-schemes 
such that $X_{n,d}(\Spec R)$ is the set of invertible 
$W_n(R)$-matrices of rank $h$. Then
$X_{n,d}$ is an affine open subscheme of the
affine space of dimension $nh^2$ over $\FF_p$.
We define a morphism $\pi_{n,d}:X_{n,d}\to\Disp_{n,d}$
such that the truncated display $\pi_{n,d}(M)$ is given by 
the normal representation $(L,T,\Psi)$, where $L=W_{n}(R)^{h-d}$
and $T=W_{n}(R)^d$, and $M$ is the matrix representation
of $\Psi$. Let $G_{n,d}$ be the sheaf of groups 
such that $G_{n,d}(R)$ is the group of invertible matrices
$\left(\begin{smallmatrix}A&B\\C&D\end{smallmatrix}\right)$
with $A\in\Aut(L)$, $B\in\Hom(T,L)$, 
$C\in\Hom(L,I_{n+1,R}\otimes_{W_n(R)}T)$, and $T\in\Aut(T)$
for $L$ and $T$ as above. Then $G_{n,d}$ is an affine open
subscheme of the affine space of dimension $nh^2$ over
$\FF_p$. The morphism $\pi_{n,d}$ is a $G_{n,d}$-torsor.
Thus $\Disp_{n,d}$ and $\Disp_n$ are smooth algebraic 
stacks of dimension zero over $\FF_p$.

The truncation morphism $\tau_n$ is smooth and surjective
because it commutes
with the obvious projection $X_{n+1,d}\to X_{n,d}$,
which is smooth and surjective. 
The relative dimension of $\tau$ is the
difference of the dimensions of its source and target,
which are both zero.
\end{proof}

Let $\Disp\to\Spec\FF_p$ be the stack of displays
over $\FF_p$-algebras. 

\begin{Cor}
The fibered category $\Disp$ is an
affine algebraic stack over $\FF_p$, which has a
presentation $\pi:X\to\Disp$ such that $\pi$ and
the compositions $X\to\Disp\xrightarrow\tau\Disp_n$
for $n\ge 0$ are $\infty$-smooth coverings.
\end{Cor}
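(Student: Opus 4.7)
The strategy is to adapt the proof of Lemma \ref{Le-BT-pres} almost verbatim, replacing $\BTst_n$ by $\Disp_n$ and invoking Proposition \ref{Pr-Disp-n-smooth} in place of \cite{Illusie-BT}. The key inputs already established are that each $\Disp_n$ is a smooth algebraic stack with affine diagonal and that the truncation morphisms $\tau_n : \Disp_{n+1} \to \Disp_n$ are smooth and surjective, together with the identification $\Disp = \varprojlim_n \Disp_n$ from Lemma \ref{Le-disp-trunc-disp}.

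First I would verify that the diagonal of $\Disp$ is representable affine. For two displays $\PPP_1,\PPP_2$ over an $\FF_p$-algebra $R$, one has
$$\uIsom(\PPP_1,\PPP_2) = \varprojlim_n \uIsom(\tau_n\PPP_1,\tau_n\PPP_2),$$
a projective limit of affine schemes (by the diagonal of $\Disp_n$ being affine), hence affine. Next, using Proposition \ref{Pr-Disp-n-smooth}, pick smooth surjective presentations $\psi_n : Y_n \to \Disp_n$ with $Y_n$ affine, and define recursively $X_1 = Y_1$ and $X_{n+1} = Y_{n+1} \times_{\Disp_n} X_n$, obtaining smooth surjective presentations $\pi_n : X_n \to \Disp_n$ with $X_n$ affine. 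The transition morphisms $X_{n+1} \to X_n$ are smooth and surjective as base changes of the smooth surjective $\tau_n$.

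Set $X = \varprojlim_n X_n$, which is affine, and let $\pi : X \to \Disp$ be the limit morphism. Exactly as in the proof of Lemma \ref{Le-BT-pres}, rewrite
$$X = \varprojlim_n X_n = \varprojlim_n \bigl(X_n \times_{\Disp_n} \Disp\bigr).$$
In the first presentation the transitions are smooth surjective (base changes of $\tau_n$), so the composition $X \to X_n \to \Disp_n$ is an $\infty$-smooth covering for every $n$; in the second presentation the transitions are base changes of the smooth surjective $\psi_{n+1}$, so $\pi : X \to \Disp$ is an $\infty$-smooth covering as well. Surjectivity (hence faithful flatness at each finite stage) and affineness of the diagonal then identify $\Disp$ with the stack associated to the resulting flat groupoid of affine schemes, establishing that $\Disp$ is an affine algebraic stack in the sense of Section \ref{Subse-aff-alg-stack}.

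The argument is essentially formal given Proposition \ref{Pr-Disp-n-smooth}, so there is no serious obstacle; the only thing to keep an eye on is that at every finite step the fiber product $Y_{n+1} \times_{\Disp_n} X_n$ is genuinely affine and smooth over $X_n$, which is immediate because smoothness is stable under base change and $X_n$ is affine while $\Disp_n$ has affine diagonal.
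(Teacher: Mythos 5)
Your proposal is correct and follows exactly the route of the paper, whose entire proof reads: since $\Disp=\varprojlim_n\Disp_n$ by Lemma \ref{Le-disp-trunc-disp}, the corollary follows from Proposition \ref{Pr-Disp-n-smooth} by the proof of Lemma \ref{Le-BT-pres}. You have simply written out that transcription in full (affine diagonal via a projective limit of the affine $\uIsom$ schemes, the recursive construction $X_{n+1}=Y_{n+1}\times_{\Disp_n}X_n$, and the two descriptions of $X=\varprojlim X_n=\varprojlim(X_n\times_{\Disp_n}\Disp)$ giving the two $\infty$-smooth covering claims), which is exactly what the paper intends.
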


\begin{proof}
By Lemma \ref{Le-disp-trunc-disp}, $\Disp$ is the projective 
limit over $n$ of $\Disp_n$. Thus the corollary follows from
Proposition \ref{Pr-Disp-n-smooth} by the 
proof of Lemma \ref{Le-BT-pres}.
\end{proof}

\subsection{Nilpotent truncated displays}

Let $R$ be an $\FF_p$-algebra.
For each truncated display $\PPP=(P,Q,\iota,\varepsilon,F,F_1)$
of positive level $n$ over $R$ there is a unique homomorphism
$$
V^\sharp:P\to P^{(1)}=W_n(R)\otimes_{f,W_n(R)}P
$$
such that $V^\sharp(F_1(x))=1\otimes x$ for $x\in Q$.
If $F^\sharp:P^{(1)}\to P$ denotes the linearisation of $F$,
we have $F^\sharp V^\sharp=p$ and $V^\sharp F^\sharp=p$.
This is analogous to the case of displays; see
\cite[Lemma 10]{Zink-Disp}. 
The construction of $V^\sharp$ is compatible with truncation.
We call $\PPP$ nilpotent if for some $n$ the $n$-th 
iterate of $V^\sharp$
$$
P\to P^{(1)}\to\cdots\to P^{(n)}
$$
is zero. Since the ideal $I_{n,R}$ is nilpotent,
$\PPP$ is nilpotent if and only if the
truncation $\tau_1(\PPP)$ of level $1$ is nilpotent.
A display over $R$ is nilpotent if and only if all
its truncations are nilpotent.

\begin{Lemma}
\label{Le-Disp-o}
There are unique reduced closed substacks
$\Disp^o\subseteq\Disp$ and $\Disp^o_n\subseteq\Disp_n$
for $n\ge 1$ such that the geometric points of $\Disp^o$
and $\Disp^o_n$ are precisely the nilpotent (truncated)
displays. There is a Cartesian diagram
$$
\xymatrix@M+0.2em{
\Disp^o \ar[r] \ar[d] &
\Disp^o_{n+1} \ar[r] \ar[d] &
\Disp^o_{n} \ar[d] \\
\Disp \ar[r]^-\tau &
\Disp_{n+1} \ar[r]^-\tau &
\Disp_{n}.\!
}
$$
In particular, the closed immersion $\Disp^o\to\Disp$ is 
of finite presentation.
\end{Lemma}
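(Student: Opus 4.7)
The plan is to follow the pattern of Lemma~\ref{Le-Newton}, using the $h$-th iterate of the operator $V^\sharp$ defined just before the lemma to carve out the nilpotent locus as a reduced closed substack.

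Fix $n\ge 1$ and take a smooth presentation $X_n\to\Disp_n$ from Proposition~\ref{Pr-Disp-n-smooth} with tautological truncated display $\PPP=(P,Q,\iota,\varepsilon,F,F_1)$. The $h$-fold iterate $V^{\sharp,h}:P\to P^{(h)}$ is a morphism of locally free $W_n(\OOO_{X_n})$-modules of the same rank, and its vanishing is a closed condition on $X_n$. The vanishing locus agrees set-theoretically with the nilpotent locus: one inclusion is trivial, and for the other, at a geometric point nilpotence reduces (by the discussion before the lemma) to nilpotence of the level-$1$ truncation, where $V^\sharp$ is an $f$-linear endomorphism of a rank-$h$ vector space; after passing to the perfect closure of the residue field, a standard Jordan-form or Cayley--Hamilton argument for Frobenius-semilinear nilpotent endomorphisms shows that the $h$-th iterate already vanishes. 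Equipping this subset with its reduced structure yields a reduced closed subscheme $Z_n\subseteq X_n$; as in Lemma~\ref{Le-Newton}, the two pullbacks of $Z_n$ to $X_n\times_{\Disp_n}X_n$ are reduced with the same underlying set and therefore coincide, so $Z_n$ descends to a reduced closed substack $\Disp^o_n\subseteq\Disp_n$.

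I would then define $\Disp^o\subseteq\Disp$ as the scheme-theoretic pullback of $\Disp^o_1$ under $\tau:\Disp\to\Disp_1$; since $\tau$ is smooth (Proposition~\ref{Pr-Disp-n-smooth} together with the corollary following it) and smooth morphisms are reduced in the sense of the preliminaries, this pullback is reduced. The two Cartesian squares in the statement both follow from the same observation: pulling back $\Disp^o_n$ along the smooth truncation $\tau$ produces a reduced closed substack whose geometric points are, by the remark that nilpotence of any (truncated) display depends only on its level-$1$ truncation, exactly the nilpotent (truncated) displays of the higher level; hence the pullback equals $\Disp^o_{n+1}$ (resp.\ $\Disp^o$) by uniqueness of the reduced substack with prescribed geometric points.

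Finally, $\Disp_1$ is of finite type over $\FF_p$ by Proposition~\ref{Pr-Disp-n-smooth}, so the closed immersion $\Disp^o_1\to\Disp_1$ is of finite presentation, and $\Disp^o\to\Disp$ is of finite presentation as its base change. The only non-formal step is the pointwise identification of the nilpotent locus of $V^\sharp$ with the vanishing locus of $V^{\sharp,h}$; everything else is a direct transcription of Lemmas~\ref{Le-Newton} and~\ref{Le-BT-o}.
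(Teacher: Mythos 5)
Your architecture is exactly the paper's (closedness of the nilpotent locus, descent of the reduced subscheme as in Lemma~\ref{Le-Newton}, the Cartesian squares from reducedness of $\tau$, finite presentation from $\Disp_1$ being of finite type), but the one step you flag as non-formal is also the one that is wrong as stated. You claim that on the level-$n$ presentation the vanishing locus of $V^{\sharp,h}\colon P\to P^{(h)}$ coincides set-theoretically with the nilpotent locus, and you justify the nontrivial inclusion by passing to the level-$1$ truncation. That argument only shows that the $h$-th iterate of the \emph{level-$1$} operator vanishes; it gives no control on the level-$n$ operator, and the claim fails for $n\ge 2$. Take $h=1$ and the level-$n$ truncation of the display of $\mu_{p^\infty}$ over a perfect field $k$: here $F^\sharp$ is bijective, so $V^\sharp=p\,(F^\sharp)^{-1}$ is $p$ times a bijection of $W_n(k)$, hence $V^{\sharp,h}=V^\sharp\ne 0$, although the display is nilpotent (its level-$1$ truncation has $V^\sharp=0$, and $p^n=0$ in $W_n(k)$). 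So your $Z_n$ is a proper closed subset of the nilpotent locus for $n\ge 2$, your $\Disp^o_n$ has the wrong geometric points, and your Cartesian squares then fail: the pullback of your $\Disp^o_n$ under $\tau$ strictly contains your $\Disp^o_{n+1}$.

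The repair is small and is what the paper does: apply the $h$-th-iterate criterion only at level $1$, where $P$ really is a rank-$h$ vector space over the field and your semilinear dimension-count argument (after passing to the perfection) is correct, and then invoke the observation recorded just before the lemma --- a display or truncated display of positive level is nilpotent if and only if its level-$1$ truncation is, because $I_{n,R}$ is nilpotent --- to conclude that the nilpotent locus at every level is the preimage of a closed subset at level $1$, hence closed. (Equivalently, replace $V^{\sharp,h}$ by a sufficiently high iterate at level $n$.) With that correction the remainder of your write-up, including the definition of $\Disp^o$ as the reduced pullback of $\Disp^o_1$, the descent step, and the finite-presentation statement, coincides with the paper's proof.
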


\begin{proof}
Over a field, a truncated display of level $1$ and rank
$h$ is nilpotent if and only if the $h$-th iterate of
$V^\sharp$ vanishes. Thus for a display or truncated
display of positive level $\PPP$ over an $\FF_p$-algebra
$R$ the points of $\Spec R$ where $\PPP$ is nilpotent
form a closed subset. Since $\Disp$ and $\Disp_n$ have
reduced presentations and since the truncation morphisms
$\tau$ are reduced, the existence of the reduced closed
substacks $\Disp^o$ and $\Disp^o_n$
and the Cartesian diagram follow; cf.\ Lemma \ref{Le-BT-o}.
\end{proof}


\section{Smoothness of the truncated display functor}

\subsection{The truncated display functor}

We begin with the observation that the display
functors $\Phi_R$ induce truncated display functors
on each level. Recall that $(\pdivC_n/R)$ is
the category of truncated Barsotti-Tate 
groups of level $n$ over $R$, and $(\dispC_n/R)$ is
the category of truncated displays of level $n$ over $R$,
which is defined if $R$ is an $\FF_p$-algebra.

\begin{Prop}
\label{Pr-Phi-n-R}
For each $\FF_p$-algebra $R$ and each positive integer $n$
there is a unique functor 
$$
\Phi_{n,R}:(\pdivC_n/R)\to(\dispC_n/R)
$$ 
which is compatible with base change in $R$ and with the
truncation functors from $n+1$ to $n$ on both sides such 
that the system $(\Phi_{n,R})_{n\ge 1}$ induces $\Phi_R$ in the
projective limit. 
\end{Prop}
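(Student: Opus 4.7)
I would start by noting that if such a system exists, then compatibility with truncations and with the projective-limit identity $\Phi_R=\varprojlim_n\Phi_{n,R}$ forces
\[
\Phi_{n,R}(G)\;\cong\;\tau_n\Phi_R(\tilde G)
\]
for any lift $\tilde G$ of $G$ to a $p$-divisible group over $R$. Since $\BTst\to\BTst_n$ is smooth and surjective, such lifts exist fpqc-locally on $R$, and Proposition \ref{Pr-trunc-descent} then pins down $\Phi_{n,R}$ up to unique isomorphism.

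\textbf{Existence.} My plan is to promote the above formula to a definition via fpqc descent. Given $G$ over $R$, I would choose a smooth cover $R\to R'$ equipped with a lift $\tilde G$ of $G_{R'}$ to a $p$-divisible group and set $\Phi_{n,R}(G)_{R'}:=\tau_n\Phi_{R'}(\tilde G)$. The heart of the proof is then to produce canonical descent data on $R''=R'\otimes_R R'$, namely a canonical isomorphism $\tau_n\Phi_{R''}(\tilde G_1)\cong\tau_n\Phi_{R''}(\tilde G_2)$ between the two lifts coming from the two projections, refining the tautological identification $\tilde G_1[p^n]=G_{R''}=\tilde G_2[p^n]$, together with the cocycle condition on triple overlaps. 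Granted this, Proposition \ref{Pr-trunc-descent} assembles the pieces into $\Phi_{n,R}(G)$. I would then handle morphisms via the $\left(\begin{smallmatrix}1&g\\0&1\end{smallmatrix}\right)$-trick from the end of the proof of Proposition \ref{Pr-disp-functor}; the required base-change and truncation compatibilities are visible on any lift, and $\varprojlim_n\Phi_{n,R}=\Phi_R$ follows from Lemma \ref{Le-disp-trunc-disp}.

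\textbf{The universal case.} To produce the canonical isomorphism I would reduce to a universal $p$-torsion-free situation, exactly as in the proofs of Propositions \ref{Pr-disp-functor} and \ref{Pr-disp-functor-pd}. Let $X=\Spec A\to\BTst\times\Spec\ZZ_p$ be the presentation of Lemma \ref{Le-BT-pres}, which is also $\infty$-smooth over $\BTst_n$; then the coordinate ring $B$ of $X\times_{\BTst_n\times\Spec\ZZ_p}X$ is $\infty$-smooth over $\ZZ_p$, hence $p$-torsion free with $B/pB$ reduced, so $W(\hat B)$ has no $p$-torsion and $\Upsilon_{\hat B}$ is fully faithful. The two projections give $p$-divisible groups $\GG_1,\GG_2$ over $\hat B$ with a tautological identification of their level-$n$ truncations, and crystalline Dieudonn\'e theory for the single truncated group $\GG_1[p^n]=\GG_2[p^n]$ identifies the level-$n$ reductions of the filtered $F$-$V$-modules $\Theta_{\hat B}(\GG_i)$; by full faithfulness of $\Upsilon_{\hat B}$ this lifts to the desired canonical isomorphism $\tau_n\Phi_{\hat B}(\GG_1)\cong\tau_n\Phi_{\hat B}(\GG_2)$. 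Pulling back to $R''$ along a factorisation $R''\to\hat B$, available after further fpqc-localisation thanks to the $\infty$-smoothness of $X\to\BTst_n$, yields the required descent datum; the cocycle condition follows from the parallel universal argument over the triple fibre product $X\times_{\BTst_n}X\times_{\BTst_n}X$.

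\textbf{Main obstacle.} The central technical point will be the claim that, over a base $S$ with $W(S)$ $p$-torsion free, the level-$n$ reduction of the filtered $F$-$V$-module $\Theta_S(\GG)$ depends on $\GG$ only through $\GG[p^n]$. This is the crystalline shadow of the intuition that a truncated Barsotti--Tate group of level $n$ already carries a well-defined truncated display of the same level, and extracting it cleanly from the crystal formalism is the heart of the proof; once it is in hand, everything else is formal descent.
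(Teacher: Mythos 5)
Your scaffolding (uniqueness via $\Phi_{n,R}(G)\cong\tau_n\Phi_R(\tilde G)$ plus descent, and the reduction of the descent datum to a universal $p$-torsion-free ring as in Propositions \ref{Pr-disp-functor} and \ref{Pr-disp-functor-pd}) is sound, but the step you yourself flag as the heart of the argument is not just unfinished — the mechanism you propose for it does not work. You need a canonical isomorphism $\tau_n\Phi(\GG_1)\cong\tau_n\Phi(\GG_2)$ for two $p$-divisible groups with identified $p^n$-torsion, and you want to get it from (a) the Dieudonn\'e crystal of the truncated group $\GG_1[p^n]=\GG_2[p^n]$ and (b) full faithfulness of $\Upsilon_{\hat B}$. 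For (a): the crystal of $\GG[p^n]$ controls $\DD(\GG)$ modulo $p^n$, whereas $\tau_n$ is reduction modulo $V^nW$, i.e.\ to $W_n$; over a $p$-torsion-free $W(S)$ these quotients are entirely different, so identifying the crystals of the truncations does not identify the level-$n$ reductions of $\Theta(\GG_i)$. For (b): full faithfulness of $\Upsilon_{\hat B}$ is a statement about untruncated windows over the $p$-torsion-free ring $W(\hat B)$ (where $F_1$ is recovered from $F=pF_1$ by dividing by $p$); it says nothing about the truncated level, where $p$ is a zero-divisor — indeed $p^n=0$ on $W_n$ of an $\FF_p$-algebra — so $F_1$ on the truncation is genuinely extra data not determined by the truncated filtered $F$-$V$-module, and there is no truncated analogue of $\Upsilon$ being fully faithful to invoke. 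The full filtered $F$-$V$-modules of $\GG_1$ and $\GG_2$ are of course not isomorphic, so full faithfulness upstairs gives you nothing either. Since "the truncation depends only on $G[p^n]$" is essentially the content of the proposition, asserting it as a known consequence of crystalline theory leaves the proof without its main ingredient.

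The paper closes exactly this gap by a different construction: for $G=\Ker(H_0\to H_1)$ an isogeny kernel, set $\Phi'_{n,R}(G)=\Coker\bigl(\tau_n\Phi_R(H_0)\to\tau_n\Phi_R(H_1)\bigr)$ in the abelian category of pre-displays, prove independence of the chosen isogeny by comparing two presentations through $H_0\times H_0'$ (the cones of the resulting quasi-isomorphisms of two-term complexes are exact and remain exact after $\tau_n\circ\Phi_R$), and then observe that for $G=H[p^n]$ the isogeny $p^n:H\to H$ yields $\Phi'_{n,R}(G)=\tau_n\Phi_R(H)$ precisely because $p^n$ annihilates $W_n(R)$. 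This is the device that makes the dependence on $H[p^n]$ alone provable, and it builds in the operator $F_1$ from the start since one never leaves the category of (pre-)displays. The remaining steps — ind-\'etale coverings over which every truncated BT group is a $p^n$-torsion, Lemma \ref{Le-trunc-disp-local} to see that the output is a truncated display, and ind-\'etale descent of pre-displays via Remark \ref{Re-ind-et} — then play the role your "formal descent" was meant to play. I would encourage you to either adopt the isogeny-cokernel construction or find a genuine proof of your central claim; as it stands the proposal reduces the proposition to an equivalent unproved statement.
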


\begin{proof}
Let $(\pdivC_n/R)'$ be the category of all $G$ in $(\pdivC_n/R)$ 
which can be written as the kernel of an isogeny of $p$-divisible
groups $H_0\to H_1$ over $R$.
First we define a functor
$$
\Phi_{n,R}':(\pdivC_n/R)'\to(\predispC_n/R).
$$

For $G$ in $(\pdivC_n/R)'$ we choose an isogeny of $p$-divisible
group $H_0\to H_1$ with kernel $G$ and define
$$
\Phi_{n,R}'(G)=\Coker(\tau_n\Phi_R(H_0)\to\tau_n\Phi_R(H_1)),
$$
where $\Phi_R$ is given by Proposition \ref{Pr-disp-functor}, 
and where
$\tau_n$ is the truncation from displays to truncated 
displays of level $n$. If $g:G\to G'$ is a homomorphism
in $(\pdivC_n/R)'$ such that $G$ is the kernel of $H_0\to H_1$ 
and $G'$ is the kernel of $H_0'\to H_1'$,
we define $\Phi_{n,R}'(g)$ as follows.
Let $H_0''=H_0\times H_0'$, let $G\to H_0''$ be 
given by $(1,g)$, and let $H_1''=H_0''/G$. The projections
$H_0\leftarrow H_0''\to H_0'$ extend uniquely to homomorphisms 
of complexes $H_*\leftarrow H_*''\to H'_*$, where the first arrow 
is a quasi-isomorphism. This means that its cone is exact,
which is preserved by $\tau_n\circ\Phi_R$.
Thus the homomorphisms of complexes 
$$
\tau_n\Phi_R(H_*)\leftarrow\tau_n\Phi_R(H''_*)\to
\tau_n\Phi_R(H'_*)$$ 
induce a homomorphism of pre-displays
$\Phi_{n,R}'(g):\Phi_{n,R}'(G)\to\Phi_{n,R}'(G')$
on the cokernels.
It is easy to verify that $\Phi_{n,R}'$ is a well-defined 
functor, which is independent of the chosen isogenies;
see also \cite[8.5]{Lau-Frames} and \cite[4.1]{Lau-Relation}.

Since $\Phi_R$ and $\tau_n$ are compatible with base change 
in $R$, for a ring homomorphism $\alpha:R\to R'$ and for
$G$ in $(\pdivC_n/R)'$ we get a natural homomorphism of
pre-displays over $R'$
$$
u':W_n(R')\otimes_{W_n(R)}\Phi'_{n,R}(G)\to\Phi'_{n,R'}(G\otimes_RR').
$$
If $\alpha$ is ind-\'etale, Corollary 
\ref{Co-trunc-base-change-ind-et}
implies that $u'$ is an isomorphism.
Since $W_n$ preserves ind-\'etale coverings
of rings by \cite[Prop.~A.8]{Langer-Zink-dRW}; 
cf.\ Remark \ref{Re-ind-et},
ind-\'etale descent is effective for pre-displays
of level $n$. 

Assume that $G$ is the $p^n$-torsion of a $p$-divisible
group $H$ over $R$. Then we can use the isogeny
$p^n:H\to H$ in the construction of $\Phi'_{n,R}(G)$.
Since $p^n$ annihilates $W_n(R)$, it follows that 
$\Phi'_{n,R}(G)=\tau_n\Phi_{R}(H)$. In particular,
in this case the pre-display $\Phi'_{n,R}(G)$ is a
truncated display of level $n$. 

For each $G\in(\pdivC_n/R)$ 
there is a sequence of faithfully flat smooth ring
homomorphisms $R=R_0\to R_1\to R_2\cdots$ such that,
if we write $R'=\varinjlim R_i$, the group $G\otimes_RR'$ 
is the $p^n$-torsion of a $p$-divisible group over $R'$;
see Lemma \ref{Le-BT-pres}.
Since a surjective smooth morphism of schemes has a section
\'etale locally in the base, we can assume that $R\to R'$
is ind-\'etale. 
By Lemma \ref{Le-trunc-disp-local} it follows that the image 
of $\Phi_{n,R}'$ lies in $(\dispC_n/R)$, and by ind-\'etale
descent we get a unique extension of $\Phi'_{n,R}$
to a functor $\Phi_{n,R}$ 
as in the proposition which is compatible with ind-\'etale 
base change in $R$.

For an arbitrary homomorphism $\alpha:R\to R'$ of 
$\FF_p$-algebras and for $G$ in $(\pdivC_n/R)$, 
by ind-\'etale descent,
the above homomorphisms $u'$ induce a homomorphism
of pre-displays over $R'$
$$
u:W_n(R')\otimes_{W_n(R)}\Phi_{n,R}(G)\to\Phi_{n,R'}(G\otimes_RR').
$$
Since $\Phi_{n,R}(G)$ and $\Phi_{n,R'}(G\otimes R')$ are 
truncated displays, $u$ induces a base change homomorphism
of truncated displays over $R'$
$$
\tilde u:\alpha_*\Phi_{n,R}(G)\to\Phi_{n,R'}(G\otimes_RR').
$$
We claim that $\tilde u$ is an isomorphism.
If $G$ is the $p^n$-torsion of a $p$-divisible
group $H$ over $R$, this is true because then
$\Phi'_{n,R}(G)=\tau_n\Phi_R(H)$.
The general case follows by passing
to an ind-\'etale covering of $R$.
\end{proof}

\begin{Remark}
For the construction of $\Phi_{n,R}$ as a functor from
$(\pdivC_n/R)$ to $(\predisp_n/R)$ one can also use the
theorem of Raynaud \cite[3.1.1]{BBM} that a 
commutative finite flat group scheme can be embedded into
an Abelian variety 
locally in the base. However, an additional argument is
needed to ensure that the image of $\Phi_{n,R}$ consists
of truncated displays.
\end{Remark}

\begin{Remark}
\label{Re-Phi-n-perf}
If $k$ is a perfect field of characteristic $p$,
in view of Lemma \ref{Le-Dieud-disp-field},
the functor $\Phi_{n,k}:(\pdivC_n/k)\to(\dispC_n/k)$
is an equivalence of categories by classical
Dieudonn\'e theory.
\end{Remark}

\begin{Remark}
The definition of the dual display carries over to truncated
displays, and the functor $\Phi_{n,R}$ preserves duality
because this holds for $\Phi_R$; see Remark \ref{Re-Phi-dual}.
We leave out the details.
\end{Remark}

\subsection{Smoothness}

The functors $\Phi_{n,R}$ for variable $\FF_p$-algebras $R$ 
induce a morphism of algebraic stacks over $\FF_p$
$$
\phi_n:\overline{\BTst_n}\to\Disp_n.
$$
The source and target of $\phi_n$ are smooth over $\FF_p$
of pure dimension zero. For a perfect field $k$ of 
characteristic $p$ the functor
$$
\phi_n(k):\overline{\BTst_n}(k)\to\Disp_n(k)
$$ 
is an equivalence; see Remark \ref{Re-Phi-n-perf}.

\begin{Thm}
\label{Th-phi-n-smooth}
The morphism $\phi_n$ is smooth and surjective.
\end{Thm}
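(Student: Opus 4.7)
My strategy is to prove surjectivity and smoothness separately, following the sketch in the introduction; smoothness is the harder of the two.

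Surjectivity is almost immediate. Every geometric point of $\Disp_n$ is given by a truncated display over an algebraically closed field $k$, and by Remark~\ref{Re-Phi-n-perf} the functor $\phi_n(k)$ is an equivalence of groupoids via classical Dieudonn\'e theory; thus $\phi_n$ is surjective on geometric points, hence surjective.

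The first substantive step is to verify smoothness at points of the closed substack $\BTst_n^o$ (Lemma~\ref{Le-BT-o}) of infinitesimal truncated BT groups. The key input is Corollary~\ref{Co-Phi-form-etale}, which asserts that $\Phi_R$ is formally \'etale on infinitesimal $p$-divisible groups. To transfer this statement from full $p$-divisible groups down to the truncated setting, I would use that the truncation morphisms $\BTst\to\BTst_n$ (Illusie) and $\Disp\to\Disp_n$ (Proposition~\ref{Pr-Disp-n-smooth} together with Lemma~\ref{Le-disp-trunc-disp}) are smooth and surjective, so locally around an infinitesimal point $G$ of $\BTst_n^o(k)$ one can lift to an infinitesimal $p$-divisible group; Proposition~\ref{Pr-Phi-n-R} then identifies $\phi_n$ with the truncation of $\Phi$ on the image. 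Combining these, $\phi_n$ is formally \'etale at every geometric point of $\BTst_n^o$. Since source and target of $\phi_n$ are smooth Artin stacks of pure dimension zero over $\FF_p$, formal \'etaleness at a point forces the morphism to be \'etale, in particular smooth, at that point.

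The last step, extending smoothness from $\BTst_n^o$ to all of $\overline{\BTst_n}$, is where I expect the main obstacle to lie. The smooth locus is an open substack containing $\BTst_n^o$; by Serre duality (Remark~\ref{Re-Phi-dual}) and the unipotent case of Corollary~\ref{Co-Phi-form-etale}, it also contains the dual closed substack of unipotent points. To close the gap at truncated BT groups of mixed Newton type, where neither of these extremes applies, a further argument is needed. A plausible approach combines a density or specialisation argument based on the Newton stratification (Lemma~\ref{Le-Newton-trunc}) with the dimension-zero property of the stacks involved; alternatively, one could perform an explicit coordinate analysis of $\phi_n$ using normal representations of truncated displays (parallel to the presentation of $\Disp_n$ in Proposition~\ref{Pr-Disp-n-smooth}) together with an analogous description of $\overline{\BTst_n}$. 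Bridging the infinitesimal and unipotent extremes to cover the mixed stratum is the technical heart of the theorem.
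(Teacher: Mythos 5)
Your reduction of the problem is the right one, and the first two steps (surjectivity via Remark~\ref{Re-Phi-n-perf}, and smoothness at infinitesimal and unipotent points via Corollary~\ref{Co-Phi-form-etale} together with the smoothness of the truncation morphisms on both sides) match the paper's proof in substance. But the step you explicitly leave open --- passing from the infinitesimal and unipotent loci to all of $\overline{\BTst_n}$ --- is a genuine gap, and it is exactly the point where the paper has to do real work (Lemma~\ref{Le-open-in-BT}). Note that a naive density argument runs in the wrong direction: the ordinary locus is dense and the isoclinic points lie in its closure, so openness of the smooth locus at isoclinic points says nothing about ordinary or mixed points by density alone. What is needed is a \emph{generisation} argument: given $G=H[p^n]$ over an algebraically closed field $k$, one produces a $p$-divisible group $H'$ over the ring of integers $R$ of an algebraic closure of $k((t))$ whose generic fibre is $H\otimes_kK$ and whose special fibre is isoclinic. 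This uses Oort's theorem \cite[Thm.~3.2]{Oort-Newton+Formal} to get a group over $R$ with generic Newton polygon $\nu$ and special Newton polygon the unique linear $\beta\preceq\nu$, followed by an isogeny modification (schematic closure of the kernel of a generic isogeny) to make the generic fibre literally isomorphic to $H\otimes_kK$. Since an isoclinic group is infinitesimal or unipotent, the special fibre lands in the smooth locus $\UUU$; since $\UUU$ is open, its preimage in $\Spec R$ contains the closed point and hence the generic point, so $G\in\UUU(k)$. Without this (or an equivalent substitute) the theorem is not proved; your alternative suggestion of an explicit coordinate analysis of $\phi_n$ on the mixed strata is not carried out in the paper and would be substantially harder.

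A smaller imprecision: $\phi_n$ is not formally \'etale at infinitesimal points of $\BTst^o_n$ --- its diagonal is a torsor under a nontrivial finite flat group scheme (Theorem~\ref{Th-phi-n-diag} and Remark~\ref{Re-phi-n-diag}), so $t_G(\phi_n)$ need not be injective. What the argument actually yields is \emph{surjectivity} of $t_G(\phi_n)$: the bijection $t_H(\phi)$ from Corollary~\ref{Co-Phi-form-etale} composed with the surjection $t_{\tilde\PPP}(\Disp)\to t_\PPP(\Disp_n)$ (truncations of displays are smooth) factors through $t_G(\phi_n)$. Since source and target of $\phi_n$ are smooth, surjectivity on tangent spaces at a geometric point is what characterises membership in the smooth locus, so your conclusion survives, but the word ``\'etale'' should be dropped.
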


\begin{proof}
Let $\UUU\subset\overline{\BTst_n}$ be the open substack
where $\phi_n$ is smooth. We consider a geometric
point in $\overline{\BTst_n}(k)$ for an algebraically closed
field $k$, given by a truncated Barsotti-Tate group $G$
over $k$. The tangent space
$t_G(\overline{\BTst_n})$ is the set of isomorphism classes
of deformations of $G$ over $k[\varepsilon]$.
Let $\PPP=\phi(G)$. Since $\overline{\BTst_n}$
and $\Disp_n$ are smooth, $G$ lies in $\UUU(k)$ if and
only if $\phi_n$ induces a surjective map on tangent
spaces
$$
t_G(\phi_n):t_G(\overline{\BTst_n})\to t_{\PPP}(\Disp_n).
$$
There is a $p$-divisible
group $H$ over $k$ such that $G\cong H[p^n]$. Let 
$\tilde\PPP=\Phi_k(H)$ be the associated display,
thus $\PPP=\tau_n\tilde\PPP$.
We have a commutative square of tangent spaces
$$
\xymatrix@M+0.2em@C+1em{
t_H(\overline\BTst) \ar[r]^-{t_H(\phi)} \ar[d]_{t_H(\tau)} &
t_{\!\tilde\PPP}(\Disp) \ar[d]^{t_{\!\tilde\PPP}(\tau)} \\
t_G(\overline{\BTst_n}) \ar[r]^-{t_G(\phi_n)} &
t_\PPP(\Disp_n)
}
$$
where $\tau$ denotes the truncation morphisms and where
$\phi:\overline\BTst\to\Disp$ is induced by the functors
$\Phi_R$ for $\FF_p$-algebras $R$.
Here $t_{\!\tilde\PPP}(\tau)$ is surjective because the 
truncation morphisms $\Disp_{m+1}\to\Disp_m$
for $m\ge n$ are smooth.

If $G$ is infinitesimal or unipotent, $H$ is infinitesimal or
unipotent as well, and the map $t_H(\phi)$ is bijective by 
Corollary \ref{Co-Phi-form-etale}.
Thus $\UUU$ contains all infinitesimal and unipotent groups,
and $\UUU=\overline{\BTst_n}$ by Lemma \ref{Le-open-in-BT} below.
\end{proof}

\begin{Lemma}
\label{Le-open-in-BT}
Let $\UUU$ be an open substack of $\overline{\BTst_n}$ 
that contains all points which correspond to infinitesimal 
or unipotent groups. Then $\UUU=\overline{\BTst_n}$.
\end{Lemma}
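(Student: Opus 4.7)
The plan is to reduce to the untruncated stack $\overline\BTst$ and apply the Newton polygon stratification there. Using the smooth surjective truncation $\tau\colon\overline\BTst\to\overline{\BTst_n}$ obtained by composing the truncations $\tau_n$ of Section~\ref{Subse-stack-pdiv}, together with the Cartesian diagram of Lemma~\ref{Le-BT-o} and its analogue for unipotent groups obtained by Serre duality, the preimage $\tau^{-1}(\UUU)$ is an open substack of $\overline\BTst$ containing both the infinitesimal locus $\BTst^o$ and the analogous unipotent locus. Since $\tau$ is surjective, it suffices to prove that $\tau^{-1}(\UUU)=\overline\BTst$.

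Decompose $\overline\BTst$ according to dimension $d$ with $0\leq d\leq h$. For each such $d$, Lemma~\ref{Le-Newton} produces a closed substack corresponding to the isoclinic Newton polygon with all slopes equal to $d/h$, which is the maximum Newton polygon in that dimension. Its geometric points are étale (hence unipotent) when $d=0$, multiplicative (hence infinitesimal) when $d=h$, and bi-infinitesimal (with slopes in $(0,1)$) when $0<d<h$. In every case this closed substack is contained in the infinitesimal-or-unipotent locus, and hence in $\tau^{-1}(\UUU)$.

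The substantive geometric input is that every geometric point of $\overline\BTst$ admits a specialization to a point of one of these isoclinic strata: for any $p$-divisible group $H$ over an algebraically closed field $\overline k$, one constructs a $p$-divisible group over $\overline k[[t]]$ whose generic fiber is isomorphic to $H\otimes_{\overline k}\overline k((t))$ and whose special fiber is isoclinic of the same height and dimension. This is the existence direction of the Grothendieck-Katz Newton polygon stratification, established by Oort. Granting it, the stability of open substacks under generalization forces every point of $\overline\BTst$ to lie in $\tau^{-1}(\UUU)$, so $\tau^{-1}(\UUU)=\overline\BTst$ and $\UUU=\overline{\BTst_n}$. The main obstacle is this deformation construction; the remaining argument is a topological consequence of the openness of $\UUU$ together with the fact that isoclinic groups lie in the infinitesimal-or-unipotent locus.
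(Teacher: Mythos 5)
Your strategy coincides with the paper's: specialize an arbitrary geometric point into the isoclinic locus, observe that isoclinic groups are infinitesimal or unipotent (unipotent for $d=0$, infinitesimal for $d=h$, both for $0<d<h$), and conclude because open substacks are stable under generalization. The reduction to $\overline\BTst$ via the truncation $\tau$ and the topological endgame over a local base are fine.

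The gap is precisely in the step you single out as the substantive input. What \cite[Thm.~3.2]{Oort-Newton+Formal} provides is a $p$-divisible group $H''$ over a valuation ring whose generic and special fibres have prescribed Newton polygons $\nu$ and $\beta$; it does \emph{not} produce a family whose generic fibre is isomorphic to the given group $H\otimes_{k}k((t))$, and the latter is not ``the existence direction of the Newton stratification.'' Without control of the isomorphism class of the generic fibre you only learn that \emph{some} point with Newton polygon $\nu$ specializes into the isoclinic stratum, which is not enough to place the given point $[H[p^n]]$ in $\UUU$. The missing argument, which is how the paper closes this gap, is an isogeny adjustment: let $K$ be an algebraic closure of $k((t))$ with ring of integers $R$; since $K$ is algebraically closed and $H''_K$, $H\otimes_kK$ have the same Newton polygon, they are isogenous by Dieudonn\'e--Manin, so one may choose an isogeny $H''_K\to H\otimes_kK$, take the schematic closure $C_R\subset H''$ of its kernel, and set $H'=H''/C_R$. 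Then $H'_K\cong H\otimes_kK$ while the special fibre of $H'$ remains isoclinic, and $\Spec R\to\overline{\BTst_n}$ given by $H'[p^n]$ realizes the required specialization. Note also that this family lives over the valuation ring $R$, not over $k[[t]]$ as you assert; the isogeny is in general only available after passing to $K$, so the base of the specialization must be enlarged accordingly (which is harmless for the openness argument, since $\Spec R$ is still local).
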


\begin{proof}
For an algebraically closed field $k$ and
$G\in\overline{\BTst_n}(k)$ we have to show that $G$ lies in
$\UUU(k)$. We write $G=H[p^n]$ for a $p$-divisible
group $H$ over $k$. Let $K$ be an algebraic closure of $k((t))$
and let $R$ be the ring of integers of $K$. 
Let $\nu$ be the Newton polygon of $H$ and let $\beta$ be
the unique linear Newton polygon with $\beta\preceq\nu$.
By \cite[Thm.~3.2]{Oort-Newton+Formal} there is a $p$-divisible 
group $H''$ over $R$ with generic Newton polygon $\nu$ and special
Newton polygon $\beta$. Since $K$ is algebraically closed,
there is an isogeny $H''_K\to H\otimes_kK$. Let $C$ be its
kernel, let $C_R\subset H''$ be the schematic closure of $C$,
and let $H'=H''/C_R$. Then $H'_K\cong H\otimes_kK$, and 
the special fibre $H'_k$ is isoclinic.
We obtain a commutative diagram where $g$ is given by $G$,
and $g'$ is given by $H'[p^n]$. 
$$
\xymatrix{
\Spec K \ar[r] \ar[d] & \Spec k \ar[d]^g \\
\Spec R \ar[r]^{g'} & \overline{\BTst_n}.
}
$$
Here $g^{\prime -1}(\UUU)$ is an open subset of $\Spec R$, 
which contains the closed point since the special fibre
of $H'[p^n]$ is unipotent or infinitesimal. Thus
$g^{\prime -1}(\UUU)$ is all of $\Spec R$, which implies 
that $G$ lies in $\UUU(k)$.
\end{proof}

We consider the diagonal morphism
$$
\Delta:\overline{\BTst_n}\to
\overline{\BTst_n}\times_{\Disp_n}\overline{\BTst_n}
$$
and view it as a morphism over 
$\overline{\BTst_n}\times\overline{\BTst_n}$.
Let $X$ be a affine $\FF_p$-scheme. For
$g:X\to\overline{\BTst_n}\times\overline{\BTst_n}$,
corresponding to two truncated Barsotti-Tate
groups $G_1$ and $G_2$ over $X$, with associated truncated 
displays $\PPP_1$ and $\PPP_2$, the inverse image of 
$\Delta$ under $g$ is the morphism of affine $X$-schemes
$$
\uIsom(G_1,G_2)\to\uIsom(\PPP_1,\PPP_2).
$$
For $G\in\overline\BTst_n(X)$, 
with associated truncated display $\PPP$, let
$$
\uAut^o(G)=\Ker(\uAut G\to\uAut\PPP).
$$
This is an affine group scheme over $X$.
For varying $X$ and $G$
we obtain a relative affine group scheme
$\uAut^o(G^{\univ})$ over $\overline{\BTst_n}$.
Let
$$
\pi_1,\pi_2:\overline{\BTst_n}\times_{\Disp_n}\overline{\BTst_n}
\to\overline{\BTst_n}
$$
be the two projections. 

\begin{Thm}
\label{Th-phi-n-diag}
The representable affine morphism $\Delta$ is 
finite, flat, radicial, and surjective.
The group scheme $\uAut^o(G^{\univ})\to\overline{\BTst_n}$ 
is commutative and finite flat, and $\Delta$ is a torsor 
under $\pi_i^*\uAut^o(G^{\univ})$ for $i=1,2$.
\end{Thm}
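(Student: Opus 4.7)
My approach is to derive the theorem as a formal consequence of Theorem \ref{Th-phi-n-smooth} and Remark \ref{Re-Phi-n-perf}. Fix an affine test scheme $X = \Spec R$ with two truncated Barsotti--Tate groups $G_1, G_2$ of level $n$ over $R$ and their associated truncated displays $\PPP_i = \Phi_{n,R}(G_i)$. Pulling back along $(G_1, G_2)\colon X \to \overline{\BTst_n} \times \overline{\BTst_n}$, the morphism $\Delta$ becomes the natural $X$-morphism of affine schemes $\uIsom(G_1, G_2) \to \uIsom(\PPP_1, \PPP_2)$, and the fibres of this map over $R$-points of the target are, by construction, pseudo-torsors under $\uAut^o(G_i)$ acting by composition. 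Thus all properties of $\Delta$ are controlled by the group scheme $\uAut^o(G^{\univ})$.

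First I would establish that $\Delta$ is surjective and radicial. By Remark \ref{Re-Phi-n-perf}, the induced map $\uIsom(G_1, G_2)(\bar k) \to \uIsom(\PPP_1, \PPP_2)(\bar k)$ is bijective for every algebraically closed field $\bar k$. Since $\Delta$ is of finite presentation (both $\overline{\BTst_n}$ and $\Disp_n$ are of finite type over $\FF_p$), bijectivity on geometric points immediately yields both surjectivity and universal injectivity of $\Delta$. In particular, $\uAut^o(G)$ has trivial geometric fibres, hence is purely infinitesimal, and every pseudo-torsor fibre of $\Delta$ is non-empty on geometric points.

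The main obstacle will be to prove that $\uAut^o(G^{\univ})$ is commutative and finite flat over $\overline{\BTst_n}$. The group scheme is representable affine and of finite type, being a closed subgroup of $\uAut(G)$ cut out by the condition of acting trivially on $\PPP$. For flatness and finiteness I would exploit the smoothness of $\phi_n$ between smooth algebraic stacks of equal pure dimension zero: via the normal-representation presentations of $\Disp_n$ in Proposition \ref{Pr-Disp-n-smooth} and of $\overline{\BTst_n}$ from Lemma \ref{Le-BT-pres}, $\uAut^o$ admits an explicit description as the kernel of a map of finite-type affine group schemes, and smoothness of $\phi_n$ should force this kernel to be flat of locally constant finite rank. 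Commutativity does not follow purely from infinitesimality and is more delicate; I would hope to derive it from the self-duality of the display functor (Remark \ref{Re-Phi-dual}), identifying $\uAut^o(G)$ with a group built from the relative cotangent complex of $\phi_n$ in such a way that the group law is forced to be abelian. Once $\uAut^o$ is known to be commutative, finite flat, and infinitesimal, the non-empty pseudo-torsor fibres established in the second paragraph upgrade to a $\pi_i^*\uAut^o$-torsor structure on $\Delta$ via fppf descent, and $\Delta$ inherits finite, flat, radicial, and surjective automatically.
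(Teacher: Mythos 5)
Your reductions of surjectivity and radiciality to bijectivity on geometric points, and the identification of the fibres of $\Delta$ as quasi-torsors under $\uAut^o(G_i)$, are correct and agree with the paper. But the heart of the theorem --- flatness and finiteness --- is not proved in your proposal: the sentence ``smoothness of $\phi_n$ should force this kernel to be flat of locally constant finite rank'' is the assertion to be established, not an argument, and kernels of homomorphisms of finite type affine group schemes are not flat in general. The paper's actual mechanism is different in kind and in order. One pulls $\Delta$ back along a smooth presentation $X\times X\to\overline{\BTst_n}\times\overline{\BTst_n}$ to obtain a morphism $\phi'\colon X'=X\times_{\overline{\BTst_n}}X\to Y'=X\times_{\Disp_n}X$ between \emph{smooth} $\FF_p$-schemes of the \emph{same} pure dimension (this is where Theorem \ref{Th-phi-n-smooth} enters, to guarantee that $Y'$ is smooth and equidimensional); since $\phi'$ is bijective on geometric points it is quasi-finite, and ``miracle flatness'' \cite[Thm.~23.1]{Matsumura} gives flatness; finiteness then follows from Zariski's main theorem by identifying $X'$ with the normalisation of $Y'$ in the purely inseparable extension of function fields. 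Only \emph{after} $\Delta$ is known to be finite, flat and surjective does one conclude that the quasi-torsor is a torsor and hence that $\uAut^o(G^{\univ})$ is finite flat --- the logical order is opposite to yours.

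Two further steps in your plan would fail as written. First, a quasi-torsor under a finite flat group scheme that is merely surjective on geometric points need not be a torsor (take $S=\Spec k[\epsilon]$, the trivial group, and $T=\Spec k$), so your final ``upgrade via fppf descent'' requires flatness of $\Delta$ itself, which is exactly the missing ingredient. Second, commutativity is left as a hope; the duality of Remark \ref{Re-Phi-dual} identifies $\uAut^o(G)$ with $\uAut^o(G^\vee)$ but does not by itself abelianise the group law. The paper instead uses that $\uAut^o(G^{\univ})$ is finite flat over the reduced stack $\overline{\BTst_n}$, so that commutativity may be checked on the dense open ordinary locus, where for $G=(\ZZ/p^n\ZZ)^r\times(\mu_{p^n})^s$ one computes directly that $\uAut^o(G)$ is contained in the commutative group of matrices $\left(\begin{smallmatrix}1&0\\a&1\end{smallmatrix}\right)$ with $a\in\mu_{p^n}^{rs}$.
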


\begin{proof}
We write $\XXX=\overline{\BTst_n}$ and $\YYY=\Disp_n$.
Let $\pi:X\to\XXX$ be a smooth presentation with affine $X$. 
We can assume that $X$ has pure dimension $m$, 
which implies that $\pi$ has pure dimension $m$.
By Theorem \ref{Th-phi-n-smooth}, the composition
$\psi=\phi_n\circ\pi:X\to\XXX\to\YYY$ is a smooth
presentation of pure dimension $m$ as well.
It follows that $X'=X\times_\XXX X$ and $Y'=X\times_\YYY X$
are smooth $\FF_p$-schemes of pure dimension $2m$.
The natural morphism $\phi':X'\to Y'$ can be identified
with the inverse image of $\Delta:\XXX\to\XXX\times_\YYY\XXX$ 
under the smooth presentation $X\times X\to\XXX\times\XXX$.

Since $\phi_n:\XXX\to\YYY$ is an equivalence on geometric points,
$\phi'$ is bijective on geometric points. Since $X'$ and $Y'$ are 
equidimensional, the irreducible components of $X'$ 
are in bijection to the irreducible components of $Y'$.
Thus $\phi'$ is flat; see \cite[Thm.~23.1]{Matsumura}.
Let $Z$ be the normalisation of $Y'$ in the purely inseparable 
extension of function fields defined by $X'\to Y'$. 
Then $Z\to Y'$ is bijective on geometric points, so
$X'\to Z$ is bijective on geometric points, and
$X'=Z$ by Zariski's main theorem. Thus $\phi'$ is 
finite, flat, radicial, and surjective, which implies that $\Delta$ 
is finite, flat, radicial, and surjective.

Recall that a morphism $T\to S$ with an action of
an $S$-group $A$ on $T$ is called a quasi-torsor if for
each $S'\to S$ the fibre $T(S')$ is either empty
or isomorphic to $A(S')$ as an $A(S')$-set.
Clearly $\Delta$ is a quasi-torsor under the obvious
right action of $\pi_1^*\uAut^o(G^{\univ})$
and under the obvious left action of 
$\pi_2^*\uAut^o(G^{\univ})$. 
Since $\Delta$ is finite, flat, and surjective, it
follows that the quasi-torsor $\Delta$ is a torsor,
and that $\pi_i^*\uAut^o(G^{\univ})$ is finite 
and flat over $\XXX\times_\YYY\XXX$. 
Since $\phi_n$ is smooth and surjective,
the same holds for the projections $\pi_i$, and it follows
that $\uAut^o(G^{\univ})$ is finite and flat over $\XXX$.

It remains to show that $\uAut^o(G^{\univ})$ is commutative.
It suffices to show this on a dense open substack of $\XXX$,
and thus is suffices to show that for 
$G=(\ZZ/p^n\ZZ)^r\times(\mu_{p^n})^s$ over a field $k$
the $k$-group scheme $\uAut^o(G)$ is commutative.
Now $\uHom(\mu_{p^n},\ZZ/p^n\ZZ)$ is zero, and
the group schemes $\uAut((\ZZ/p^n\ZZ)^r)$ and 
$\uAut((\mu_{p^n})^s)$ are \'etale. Since $\phi_n$
is an equivalence on geometric points, it follows that
$\uAut^o(G)$ is contained in the group scheme 
$\{\left(
\begin{smallmatrix}1&0\\a&1\end{smallmatrix}
\right)\mid a\in\mu_{p^n}^{rs}\}$,
which is commutative.
\end{proof}

\begin{Remark}
\label{Re-phi-n-diag}
For $G=(\ZZ/p^n\ZZ)^r\times(\mu_{p^n})^s$ as above,
$\uAut^o(G)$ is in fact equal to 
$\{\left(\begin{smallmatrix}1&0\\a&1\end{smallmatrix}
\right)\mid a\in\mu_{p^n}^{rs}\}$. 
Since ordinary groups are dense in $\overline{\BTst_n}$,
it follows that on the open and closed substack of 
$\overline{\BTst_n}$ where the universal group has dimension 
$s$ and codimension $r$, the degree of the finite flat
group scheme $\uAut^o(G^{\univ})$ is equal to $p^{rsn}$. 

To prove the first equality,
it suffices to show that the truncated displays 
$\PPP_1=\Phi_{n,\FF_p}(\ZZ/p^n\ZZ)$ and 
$\PPP_2=\Phi_{n,\FF_p}(\mu_{p^n})$ 
satisfy $\uHom(\PPP_1,\PPP_2)=0$.
For an $\FF_p$-algebra $R$, if $i:I_{n+1,R}\to W_n(R)$ 
denotes the natural homomorphism, we have
$$
\PPP_1=(W_n(R),W_n(R),\id,i,f,pf), 
$$
$$
\PPP_2=(W_n(R),I_{n+1,R},i,\id,f_1,f).
$$
Thus $\uHom(\PPP_1,\PPP_2)$ can be identified with the
set of $a\in I_{n+1,R}$ such that $f_1(a)=i(a)$, or
equivalently $a=v(i(a))$, which implies that $a=0$.
\end{Remark}

By passing to the limit,
Theorems \ref{Th-phi-n-smooth} and \ref{Th-phi-n-diag} 
give the following information on the morphism
$\phi:\overline\BTst\to\Disp$. 
For a $p$-divisible group $G$ over an $\FF_p$-algebra $R$
with associated display $\PPP$ let $\uAut^o(G)$ be the
kernel of $\uAut(G)\to\uAut(\PPP)$.
This is an affine group scheme over $R$, which is the
projective limit over $n$ of the finite flat group schemes
$\uAut^o(G[p^n])$; thus $\uAut^o(G)$ is commutative and flat.
Let $G^{\univ}$ be the universal $p$-divisible group and
let $\pi_1,\pi_2:\Disp\times_{\overline\BTst}\Disp\to\Disp$ be
the two projections.

\begin{Cor}
\label{Co-phi}
The morphism $\phi$ is faithfully flat, and its diagonal 
is a torsor under the flat affine group scheme 
$\pi_i^*\uAut^o(G^{\univ})$ for $i=1,2$. 
\qed
\end{Cor}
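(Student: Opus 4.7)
The plan is to deduce both statements by passing to the inverse limit in $n$. Since $\overline{\BTst}=\varprojlim_n\overline{\BTst_n}$ and $\Disp=\varprojlim_n\Disp_n$ admit presentations by $\infty$-smooth coverings compatible at every truncation level (Lemma \ref{Le-BT-pres} and the corollary following Proposition \ref{Pr-Disp-n-smooth}), the morphism $\phi$ is the inverse limit of the $\phi_n$, and $\overline{\BTst}\times_\Disp\overline{\BTst}=\varprojlim_n(\overline{\BTst_n}\times_{\Disp_n}\overline{\BTst_n})$, so the diagonal $\Delta$ is the inverse limit of the diagonals $\Delta_n$ studied in Theorem \ref{Th-phi-n-diag}.

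To verify faithful flatness, I would work on presentations: let $X\to\overline{\BTst}$ and $Y\to\Disp$ be the presentations above. Then $X\times_\Disp Y$ is an affine scheme (the diagonal of $\Disp$ is affine) and it equals $\varprojlim_n(X\times_{\Disp_n}Y)$. Each projection $X\times_{\Disp_n}Y\to Y$ is faithfully flat, being the composition of a base change of $\phi_n$ (smooth and surjective by Theorem \ref{Th-phi-n-smooth}) with a base change of the $\infty$-smooth covering $X\to\overline{\BTst_n}$. Since the inverse limit of affine schemes corresponds to a filtered colimit of rings, and filtered colimits of faithfully flat homomorphisms remain faithfully flat, we conclude that $X\times_\Disp Y\to Y$ is faithfully flat, which gives the claim.

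For the torsor statement, Theorem \ref{Th-phi-n-diag} shows each $\Delta_n$ is a torsor under the commutative finite flat group scheme $\pi_i^*\uAut^o(G[p^n]^{\univ})$. By definition, $\uAut^o(G^{\univ})=\varprojlim_n\uAut^o(G[p^n]^{\univ})$, and these properties pass to the inverse limit: commutativity and affineness are preserved, and flatness follows from the fact that a filtered colimit of flat ring homomorphisms is flat. Hence $\uAut^o(G^{\univ})$ is commutative, affine, and flat over $\overline{\BTst}$, and $\Delta$ inherits the structure of a torsor under $\pi_i^*\uAut^o(G^{\univ})$ by taking limits of the torsor structures on the $\Delta_n$.

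The main technical subtlety is ensuring that the inverse limit of the torsors $\Delta_n$ remains a torsor in the limit. This requires the transition maps $\uAut^o(G[p^{n+1}])\to\uAut^o(G[p^n])$, given by restricting an automorphism to the $p^n$-torsion subgroup, to be faithfully flat maps of finite flat group schemes. I expect this to follow from the explicit degree computation in Remark \ref{Re-phi-n-diag} (giving the ratio $p^{rs}$ between successive levels) together with the smoothness of the truncation morphisms $\tau_n:\overline{\BTst_{n+1}}\to\overline{\BTst_n}$ and $\Disp_{n+1}\to\Disp_n$.
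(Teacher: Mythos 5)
Your proposal matches the paper's own argument, which consists of exactly this passage to the limit (the corollary is stated with a \qed, the justification being the preceding paragraph: $\uAut^o(G^{\univ})$ is the projective limit of the finite flat group schemes $\uAut^o(G[p^n]^{\univ})$, hence flat, affine and commutative, and flatness/surjectivity of affine morphisms survive filtered limits). The worry in your last paragraph is dispensable: the quasi-torsor condition of Theorem \ref{Th-phi-n-diag} is a pointwise condition on $S'$-valued points and so passes to the limit automatically, and to upgrade the quasi-torsor $\Delta$ to a torsor one only needs $\Delta$ itself to be faithfully flat, which follows from the $\Delta_n$ being finite, flat, radicial and surjective without any faithful flatness of the transition maps $\uAut^o(G[p^{n+1}])\to\uAut^o(G[p^n])$.
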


The limit $\uAut^o(G)=\varprojlim_n\uAut^o(G[p^n])$
can show quite different behaviour depending on $G$;
see Corollary \ref{Co-uAut-o} in the next section.


\section{Classification of formal $p$-divisible groups}

As an application of Theorems \ref{Th-phi-n-smooth} and
\ref{Th-phi-n-diag} together with 
Corollary \ref{Co-Phi-form-etale}
we will prove the following.

\begin{Thm}
\label{Th-formal-classif}
For each $p$-adic ring $R$, the functor $\Phi_R$ from
$p$-divisible group over $R$ to displays over $R$ induces
an equivalence
$$
\Phi_R^1:(\text{formal $p$-divisible groups}/R)\to
(\text{nilpotent displays}/R).
$$
\end{Thm}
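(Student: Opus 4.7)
The plan is to reduce to the case where $R$ is an $\FF_p$-algebra, then to show that the restriction $\phi^o\colon \BTst^o\to\Disp^o$ of $\phi$ to the loci of infinitesimal $p$-divisible groups and nilpotent displays is an isomorphism of algebraic stacks. By the torsor description of the diagonal of $\phi$ in Corollary \ref{Co-phi}, this reduces to showing $\uAut^o(G)=1$ for every infinitesimal $G$ over an $\FF_p$-algebra, and the latter vanishing will come from the formal étaleness in Corollary \ref{Co-Phi-form-etale}.

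First I would reduce to $R$ of characteristic $p$. Both infinitesimal $p$-divisible groups and nilpotent displays over a $p$-adic ring $R$ are projective limits of the corresponding categories over $R/p^nR$, so we may assume $p$ is nilpotent in $R$. The projection $R\to R/pR$ then factors into finitely many square-zero extensions whose kernels are killed by $p$ and hence carry the trivial divided powers compatible with the canonical divided powers of $p$. Corollary \ref{Co-Phi-form-etale} identifies the two deformation problems at each step, so an equivalence over $R/pR$ propagates up to $R$.

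For $R$ an $\FF_p$-algebra, I would observe that $\phi^{-1}(\Disp^o)=\BTst^o$ (since infinitesimal is equivalent to nilpotent display by the proposition following Proposition \ref{Pr-disp-functor}), so base change of $\phi$ along the closed immersion $\Disp^o\hookrightarrow\Disp$ yields $\phi^o$, which is therefore faithfully flat by Corollary \ref{Co-phi}. Its diagonal is a torsor under the restriction of $\uAut^o(G^{\univ})$ to $\BTst^o$. If this group scheme is trivial, the diagonal of $\phi^o$ is an isomorphism, hence $\phi^o$ is a monomorphism; and a faithfully flat monomorphism of algebraic stacks is an isomorphism. Thus the whole theorem reduces to the vanishing $\uAut^o(G)=1$ for every infinitesimal $G$ over an $\FF_p$-algebra.

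To prove the vanishing, I would apply Corollary \ref{Co-Phi-form-etale} to an arbitrary nilpotent thickening $B\twoheadrightarrow R'$ of $R$-algebras. An element of $\ker(\uAut^o(G)(B)\to\uAut^o(G)(R'))$ is an automorphism of $G_B$ that restricts to the identity on $G_{R'}$ and induces the identity on the associated display; the equivalence of categories in Corollary \ref{Co-Phi-form-etale} forces such an automorphism to be the identity. Hence $\uAut^o(G)$ is formally étale over $R$, and since $\phi$ is an equivalence on geometric points (Theorem \ref{Th-A}), its geometric fibres are trivial. The main obstacle is the implication that a flat affine group scheme over $R$ which is formally étale and has trivial geometric fibres must be trivial: this is the cotangent-complex argument hinted at in the introduction --- non-triviality of $\uAut^o(G)$ would give a non-trivial class in the first homology of its cotangent complex at the identity section, contradicting formal étaleness. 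The subtle point is that the individual truncations $\uAut^o(G[p^n])$ are \emph{not} trivial by Remark \ref{Re-phi-n-diag}, so the vanishing of $\uAut^o(G)$ only appears in the projective limit over $n$, and making this limit argument precise is the hardest part of the proof.
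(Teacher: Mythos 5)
Your overall strategy is the one the paper follows: reduce to rings in which $p$ is nilpotent via Corollary \ref{Co-Phi-form-etale}, identify the statement with the assertion that $\phi^o\colon\BTst^o\to\Disp^o$ is an isomorphism (this is Lemma \ref{Le-phi-o} and Theorem \ref{Th-phi-o} in the paper, together with a small d\'evissage using that $\BTst^o\to\BTst$ and $\Disp^o\to\Disp$ are of finite presentation, so the relevant nil-ideals are nilpotent), and then kill the obstruction group $\uAut^o(G)$ for infinitesimal $G$ using the torsor structure of the diagonal and the formal \'etaleness of Corollary \ref{Co-Phi-form-etale}. You also correctly locate where the difficulty sits. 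But the step you defer --- ``making this limit argument precise is the hardest part of the proof'' --- is precisely the content of the paper's Lemma \ref{Le-m-n}, and without it the proof is not complete. Your intermediate claim, that a flat affine group scheme over $R$ which is formally \'etale and has trivial geometric fibres must be trivial, is not a usable black box: formally \'etale affine schemes can be highly nontrivial (perfect rings have acyclic cotangent complex), and indeed $\uAut^o(\QQ_p/\ZZ_p\oplus\mu_{p^\infty})=\varprojlim(\mu_{p^n},\zeta\mapsto\zeta^p)$ is a nontrivial flat affine group scheme whose coordinate ring is a colimit along Frobenius-like maps. What saves the infinitesimal case is the specific pro-(finite flat infinitesimal) structure, and extracting the contradiction from $H_1(L_{B/k})=0$ requires two nonformal inputs that you do not supply: (a) at a geometric point, the images of the transition maps $A_{n,k}\to A_{m,k}$ stabilise to quotient groups $A'_m$ with \emph{faithfully flat} transition maps, so that Messing's injectivity of $H_1(L_{B_n/k})\otimes_{B_n}B_{n+1}\to H_1(L_{B_{n+1}/k})$ forces each $H_1(L_{B_n/k})$, and hence each finite infinitesimal $A'_n$, to vanish; and (b) a spreading-out argument over the (huge, non-noetherian) base $Y'=X\times_{\Disp^o}X$, carried out in the paper via the open loci $U_{m,n}$ where the truncated transition maps are nonzero and the finiteness of their sets of generic points on the reduced presentations. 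Fibrewise vanishing of a pro-object does not by itself give vanishing, or even eventual vanishing of transition maps, over the base.

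A secondary remark on packaging: you propose to prove $\uAut^o(G)=1$ for all infinitesimal $G$ first and then conclude that $\phi^o$ is a faithfully flat monomorphism, hence an isomorphism. In the paper this vanishing statement is Corollary \ref{Co-uAut-o} and is a \emph{consequence} of Theorem \ref{Th-phi-o}; the proof of Theorem \ref{Th-phi-o} itself establishes the slightly different (and more directly usable) statement that the transition maps $A_n\to A_m$ of the pro-system of torsors $X'_n\to Y'_n$ are eventually zero, which immediately splits the limit torsor. Your reversal of the logical order is legitimate in principle, but the independent proof of the vanishing you would need is exactly Lemma \ref{Le-m-n}, so nothing is gained and the gap remains.
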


It is known by \cite{Zink-Disp} and \cite{Lau-Disp} that
the functor $\BT_R$ defined in \cite{Zink-Disp} from displays
over $R$ to formal Lie groups over $R$ induces an equivalence 
between nilpotent displays over $R$ and formal $p$-divisible 
groups over $R$. The relation between $\Phi_R$ and $\BT_R$
is discussed in section \ref{Se-BT-functor}.

\begin{Lemma}
\label{Le-phi-o}
For $n\ge 1$ there is a Cartesian diagram of Artin algebraic stacks
$$
\xymatrix@M+0.2em{
\BTst_n^o \ar[r]^-{\phi_n^o} \ar[d] &
\Disp_n^o \ar[d] \\
\overline{\BTst_n} \ar[r]^-{\phi_n} &
\Disp_n
}
$$
where the vertical arrows are the immersions given
by Lemmas \ref{Le-BT-o} and \ref{Le-Disp-o}, and
where $\phi_n$ is given by the functors $\Phi_{n,R}$.
The projective limit over $n$ is a Cartesian diagram
of affine algebraic stacks
$$
\xymatrix@M+0.2em{
\BTst^o \ar[r]^-{\phi^o} \ar[d] &
\Disp^o \ar[d] \\
\overline{\BTst} \ar[r]^-{\phi} &
\Disp
}
$$
where $\phi$ is given by the functors $\Phi_R$.
\end{Lemma}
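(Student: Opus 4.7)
The plan is to establish the finite-level Cartesian square, then pass to the limit. Write $\WWW_n = \overline{\BTst_n}\times_{\Disp_n}\Disp_n^o$ for the pullback, a priori a closed substack of $\overline{\BTst_n}$; I want to show $\WWW_n=\BTst_n^o$. Both are closed substacks of the reduced smooth stack $\overline{\BTst_n}$, so it suffices to check that both are reduced and that they have the same geometric points.

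For reducedness of $\WWW_n$, I would use that $\phi_n$ is smooth by Theorem \ref{Th-phi-n-smooth} and that $\Disp_n^o\hookrightarrow\Disp_n$ is a reduced closed immersion by Lemma \ref{Le-Disp-o}. Smooth morphisms are flat with geometrically regular fibres, so base change along $\phi_n$ preserves reducedness, whence $\WWW_n$ is reduced; $\BTst_n^o$ is reduced by construction (Lemma \ref{Le-BT-o}).

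For the agreement of geometric points, let $k$ be an algebraically closed field of characteristic $p$ and $G\in\overline{\BTst_n}(k)$. Since $k$ is perfect, classical Dieudonn\'e theory together with Remark \ref{Re-Phi-n-perf} gives a $p$-divisible group $H$ over $k$ with $G\cong H[p^n]$, and $G$ is infinitesimal iff $H$ is infinitesimal. On the display side, the construction of $\Phi_{n,k}$ (Proposition \ref{Pr-Phi-n-R}) identifies $\Phi_{n,k}(G)$ with $\tau_n\Phi_k(H)$, and nilpotence of a truncated display of positive level is detected on its level-$1$ truncation (as noted after the definition of nilpotent truncated displays); hence $\Phi_{n,k}(G)$ is nilpotent iff $\Phi_k(H)$ is. The Proposition following Proposition \ref{Pr-disp-functor} then identifies infinitesimality of $H$ with nilpotence of $\Phi_k(H)$, so $G\in\BTst_n^o(k)$ iff $\phi_n(G)\in\Disp_n^o(k)$, i.e.\ iff $G\in\WWW_n(k)$. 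Thus $\WWW_n=\BTst_n^o$, proving the first Cartesian square.

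For the projective limit, Lemmas \ref{Le-BT-o} and \ref{Le-Disp-o} give Cartesian squares expressing $\BTst^o$ as the pullback of $\BTst_n^o$ along $\overline{\BTst}\to\overline{\BTst_n}$, and similarly $\Disp^o$ as the pullback of $\Disp_n^o$ along $\Disp\to\Disp_n$, where the latter identification also uses Lemma \ref{Le-disp-trunc-disp}. Since the morphisms $\phi_n$ are compatible with truncation by Proposition \ref{Pr-Phi-n-R}, stacking these Cartesian squares with the one just proved and taking $\varprojlim_n$ yields the limit diagram. The only non-routine step in the argument is the comparison on geometric points, which is essentially already in the paper; there is no further substantial obstacle.
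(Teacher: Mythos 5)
Your proposal is correct and follows essentially the same route as the paper: identify the pullback $\overline{\BTst_n}\times_{\Disp_n}\Disp_n^o$ as a reduced closed substack of $\overline{\BTst_n}$ using the smoothness of $\phi_n$, match its geometric points with those of $\BTst_n^o$ via classical Dieudonn\'e theory over perfect fields, and then obtain the limit square from the Cartesian diagrams of Lemmas \ref{Le-BT-o} and \ref{Le-Disp-o}. The only difference is that you spell out the geometric-point comparison in more detail than the paper, which simply cites classical Dieudonn\'e theory.
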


\begin{proof}
Since $\phi_n$ is smooth, the inverse image of
$\Disp_n^o$ under $\phi_n$ is a reduced closed
substack $\BTst_n'$ of $\overline{\BTst_n}$.
By classical Dieudonn\'e theory, the geometric
points of $\BTst_n'$ and of $\BTst_n^o$ coincide;
thus $\BTst^o_n=\BTst_n'$, and we get the first
Cartesian square. The projective limit
of $\BTst^o_n$ is $\BTst^o$ by Lemma \ref{Le-BT-o},
and the projective limit of $\Disp_n^o$ is $\Disp^o$
by Lemma \ref{Le-Disp-o}. Hence the second Cartesian 
square follows from the first one.
\end{proof}

The essential part of Theorem \ref{Th-formal-classif}
is the following result.

\begin{Thm}
\label{Th-phi-o}
The morphism $\phi^o:\BTst^o\to\Disp^o$ is an isomorphism.
\end{Thm}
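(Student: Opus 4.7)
The plan is to show that $\phi^o$ is a smooth, surjective monomorphism, from which the isomorphism assertion follows. First I would use Lemma \ref{Le-phi-o} to inherit from Theorem \ref{Th-phi-n-smooth} that $\phi^o$ is smooth and surjective, and from Theorem \ref{Th-phi-n-diag} together with Corollary \ref{Co-phi} that the diagonal of $\phi^o$ is a torsor under the restriction of $\uAut^o(G^{\univ})$ to $\BTst^o$. Over a perfect field $k$, classical Dieudonn\'e theory gives an equivalence $(\pdivC_n/k)\simeq(\dispC_n/k)$ (Remark \ref{Re-Phi-n-perf}), which passes to the limit via Lemma \ref{Le-disp-trunc-disp}, so $\phi^o$ is bijective on geometric points. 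The whole assertion thus reduces to showing that $\uAut^o(G)=1$ for every infinitesimal $p$-divisible group $G$ over an arbitrary $\FF_p$-algebra $R$: granted that, the diagonal of $\phi^o$ is an isomorphism, so $\phi^o$ becomes a smooth monomorphism, hence automatically \'etale of relative dimension zero; passing to a smooth presentation of $\Disp^o$ reduces the isomorphism assertion to the fact that a surjective \'etale monomorphism of schemes is an isomorphism.

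To prove $\uAut^o(G)=1$ I would follow the cotangent-complex approach suggested in the introduction. Corollary \ref{Co-Phi-form-etale} says that $\Phi_R$ restricted to infinitesimal groups is formally \'etale, i.e.\ induces an equivalence of deformation groupoids for any surjective $B\to R$ with nilpotent kernel in which $p$ is nilpotent. Translated into an assertion about the group scheme $\uAut^o(G)=\Ker(\uAut(G)\to\uAut(\Phi_R(G)))$, this equivalence of deformations forces the relative cotangent complex $L_{\uAut^o(G)/R}$ to be acyclic: any nonzero homology class would produce an infinitesimal automorphism of $G$ that becomes trivial under $\Phi_R$, contradicting the deformation equivalence. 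By Theorem \ref{Th-phi-n-diag}, $\uAut^o(G)$ is the projective limit of the commutative infinitesimal finite flat group schemes $\uAut^o(G[p^n])$, and acyclicity of the cotangent complex together with this pro-infinitesimal structure forces each $\uAut^o(G[p^n])$ to be trivial.

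The hard part is this last step: converting the formally \'etale property of the functor $\Phi_R$ into vanishing of the cotangent complex of $\uAut^o(G)$, and then translating acyclicity plus the pro-infinitesimal structure into triviality at each finite level. The subtlety is illustrated by Remark \ref{Re-phi-n-diag}, where $\uAut^o(G[p^n])$ has degree $p^{rsn}$ for an ordinary group with $r,s>0$ and is thus quite nontrivial; it is the infinitesimal hypothesis on $G$ (which kills the \'etale factor, i.e.\ the index $r$) that makes the cotangent-complex obstruction vanish.
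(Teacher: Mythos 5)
Your overall strategy matches the paper's: reduce everything to showing that $\uAut^o$ is trivial on the infinitesimal locus, using that the diagonal of $\phi^o$ is a torsor under it (Theorem \ref{Th-phi-n-diag}, Corollary \ref{Co-phi}), and prove that triviality from the formal \'etaleness of $\Phi$ (Corollary \ref{Co-Phi-form-etale}) via the cotangent complex. The reduction itself is sound (with ``faithfully flat'' in place of ``smooth'' for the limit morphism $\phi^o$, which is all one needs: a faithfully flat quasi-compact monomorphism is an isomorphism).

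However, the key step as you state it is false. You claim that acyclicity of the cotangent complex ``forces each $\uAut^o(G[p^n])$ to be trivial,'' and that the infinitesimal hypothesis ``kills the \'etale factor, i.e.\ the index $r$.'' It does not: $r$ is the codimension of $G$, not the rank of its \'etale part, and Remark \ref{Re-phi-n-diag} shows that $\uAut^o(G^{\univ})$ has degree $p^{rsn}$ on the \emph{entire} open and closed substack where $\dim=s$ and $\mathrm{codim}=r$. So for a supersingular $G$ of height $2$, say, $\uAut^o(G[p^n])$ has degree $p^{n}>1$ for every $n$, even though $G$ is infinitesimal. What is actually true, and what the paper proves (Lemma \ref{Le-m-n}), is that the \emph{transition maps} $\uAut^o(G[p^{n}])\to\uAut^o(G[p^{m}])$ vanish for $n\gg m$, so only the projective limit is trivial. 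To extract this from the cotangent complex one cannot work level by level: the paper first replaces $A_{m}$ over a geometric point by the stabilized image $A'_m$ of the transition maps (so that $A'_{m+1}\to A'_m$ is an epimorphism and $H_1(L_{B_m/k})\otimes B_{m+1}\to H_1(L_{B_{m+1}/k})$ is injective), applies the acyclicity of $L_{B/k}$ for $B=\varinjlim B_m$ to kill each $A'_m$, and then needs a separate constructibility argument (openness of the non-vanishing locus $U_{m,n}\subseteq Y'_n$, reducedness, and a limit over generic points) to get a uniform $n$ working over the whole base. These two ingredients --- stabilized images instead of the groups themselves, and the passage from geometric fibres to a uniform statement --- are exactly what is missing from your plan, and they are the substance of the proof.
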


\begin{proof}
We use the following notation.
$$
\XXX=\BTst^o,\qquad\XXX_n=\BTst_n^o, 
$$
$$
\YYY=\Disp^o,\qquad\YYY_n=\Disp_n^o.
$$
As in the proof of Lemma \ref{Le-BT-pres}
we choose smooth presentations $X_n\to\XXX_n$ with affine $X_n$
such that the truncation morphisms $\XXX_{n+1}\to\XXX_n$ 
lift to morphisms $X_{n+1}\to X_n$ where 
$X_{n+1}\to X_n\times_{\XXX_n}\XXX_{n+1}$ is smooth
and surjective.
Since $\XXX_n\to\YYY_n$ is smooth and surjective by 
Theorem \ref{Th-phi-n-smooth} and Lemma \ref{Le-phi-o}, 
the composition $X_n\to\XXX_n\to\YYY_n$ is a 
smooth presentation. 
Let $X=\varprojlim_n X_n$ and
$$
X_n'=X_n\times_{\XXX_n}X_n,\qquad X'=
X\times_\XXX X=\varprojlim_n X_n', 
$$
$$
Y_n'=X_n\times_{\YYY_n}X_n,\qquad\: 
Y'=X\times_\YYY X=\varprojlim_n Y'_n.
$$

Here $X\to\XXX$ is a faithfully flat presentation because 
for $Z\to\XXX$ with affine $Z$ we have 
$Z\times_\XXX X=\varprojlim_n (Z\times_{\XXX_n}X_n)$,
and a projective limit of faithfully flat affine $Z$-schemes is
a faithfully flat affine $Z$-scheme. Similarly, the compositoin
$X\to\XXX\to\YYY$ is a faithfully flat presentation.  
We have an infinite commutative diagram:
$$
\xymatrix@M+0.2em{
X'_1 \ar[d] &
X'_2 \ar[l] \ar[d] &
X'_3 \ar[l] \ar[d] &
\cdots \ar[l] \\
Y'_1 &
Y'_2 \ar[l] &
Y'_3 \ar[l] &
\cdots \ar[l]
}
$$
The theorem means that the limit $X'\to Y'$ is an isomorphism.

Let $G_n$ be the infinitesimal truncated Barsotti-Tate
group over $X_n$ which defines the presentation 
$X_n\to\XXX_n$ and let $\pi_n:Y_n'\to X_n$ be the
first projection. By Theorem \ref{Th-phi-n-diag},
$X'_n\to Y'_n$ is a torsor under the commutative 
infinitesimal finite flat
group scheme $A_n=\uAut^o(\pi_n^*G_n)$. 
The truncation induces a homomorphism of finite
flat group schemes over $Y'_{n+1}$
$$
\psi_n:A_{n+1}\to A_n\times_{Y'_n}Y'_{n+1}
$$
and a morphism 
$$
X'_{n+1}\to X'_n\times_{Y'_n}Y'_{n+1}
$$ 
which is equivariant with respect to $\psi_n$. 

\begin{Lemma}
\label{Le-m-n}
For each $m$ there is an $n\ge m$ such
that the transition homomorphism
$$
\psi_{m,n}:A_{n}\to A_m\times_{Y'_m}Y'_{n}
$$
is zero.
\end{Lemma}

If Lemma \ref{Le-m-n} is proved, it follows that there is a unique 
diagonal morphism which makes the following diagram commute.
$$
\xymatrix@M+0.2em{
X_m' \ar[d] &
X_n' \ar[l] \ar[d] \\
Y_m' &
Y_n' \ar[l] \ar[lu]  
}
$$
Thus $\varprojlim_n X'_n\to\varprojlim_n Y'_n$ is an
isomorphism, and Theorem \ref{Th-phi-o} follows. 

The essential case of Lemma \ref{Le-m-n} is the following.
Consider a geometric point $y:\Spec k\to Y'$ 
for an algebraically closed field $k$.
Let $A_{n,k}=A_n\times_{Y_n'}\Spec k$ and 
$Z_n=X_n'\times_{Y_n'}\Spec k$. 
Thus $Z_n$ is an $A_{n,k}$-torsor. We have homomorphisms of finite
$k$-group schemes $A_{n+1,k}\to A_{n,k}$ and equivariant morphisms
$Z_{n+1}\to Z_n$. Since $k$ is algebraically closed and
since $A_{n,k}$ is infinitesimal, $Z_n(k)$ has precisely one
element, and there are compatible isomorphisms $Z_n\cong A_{n,k}$.
For each $m$ the images of $A_{n,k}\to A_{m,k}$ for $n\ge m$ 
stabilise to a subgroup scheme $A_m'$ of $A_{m,k}$, and
$A'_{m+1}\to A'_m$ is an epimorphism.
Let $A'_n=\Spec B_n$ and $B=\varinjlim_n B_n$.

The geometric point $y:\Spec k\to Y'$ corresponds to two
formal $p$-divisible groups $G_1$ and $G_2$ over $k$ 
together with an isomorphism of the associated displays
$\alpha:\Phi_k(G_1)\cong\Phi_k(G_2)$. The $k$-scheme
$X'\times_{Y'}\Spec k$ can be identified with
$\varprojlim_n Z_n=\varprojlim_n A'_n=\Spec B$ and
classifies lifts of $\alpha$ to an isomorphism $G_1\cong G_2$. 
Thus Corollary \ref{Co-Phi-form-etale} implies that $B$ 
is a formally \'etale $k$-algebra. It follows that the 
cotangent complex $L_{B/k}$ has trivial homology in degrees 
$0$ and $1$; see Lemma \ref{Le-form-etale} below. 
The epimorphism $A_{n+1}'\to A_n'$ 
induces an injective homomorphism
$H_1(L_{B_n/k})\otimes_{B_n}B_{n+1}\to H_1(L_{B_{n+1}/k})$;
see \cite[Chap.~I, Prop.~3.3.4]{Messing-Crys}.
Since $L_{B/k}=\varinjlim_n L_{B_n/k}$ it follows that
$H_1(L_{B_n/k})$ is zero, which implies that the
finite infinitesimal group scheme $A_n'$ is zero
for each $n$.
Thus $X'\times_{Y'}\Spec k\cong\Spec k$.

Let us now prove Lemma \ref{Le-m-n}. First we note that 
$X_{n+1}'\to X_n'$ is smooth and surjective because 
this morphism can be factored as follows. 
$$
X_{n+1}\times_{\XXX_{n+1}}X_{n+1}\to
(X_n\times_{\XXX_n}\XXX_{n+1})
\times_{\XXX_{n+1}}
(X_n\times_{\XXX_n}\XXX_{n+1})
\to X_n\times_{\XXX_n}X_n.
$$
The first arrow is smooth and surjective by our assumptions 
on $X_n$, and the second arrow is smooth and surjective because 
this holds for $\XXX_{n+1}\to\XXX_n$ by Lemma \ref{Le-BT-o}.
Since $X'_n\to Y'_n$ is faithfully flat, it follows 
that $Y_{n+1}'\to Y'_n$ is faithfully flat as well.

Let $U_{m,n}\subset Y'_{n}$ be the open set of all points
$y$ such that the fibre $(\psi_{m,n})_y$ is non-zero. Since
$Y_n'$ is reduced, it suffices to show that for
each $m$ there is an $n$ such that $U_{m,n}$ is empty.
Assume that for some $m$ the set $U_{m,n}$ is non-empty
for all $n\ge m$. Let $V_{m,n}$ be the set of
generic points of $U_{m,n}$. Since $Y_{n+1}'\to Y_n'$
is flat, we have $V_{m,n+1}\to V_{m,n}$.
Since $V_{m,n}$ is finite and non-empty, the projective 
limit over $n$ of $V_{m,n}$ is non empty.
Hence there is a geometric point $y:\Spec k\to Y'$ such that 
$\Spec k\to Y'_n$ lies in $V_{m,n}$ for each $n\ge m$. 
If we perform the above construction for $y$, the group 
$A'_m$ is non-zero, which is impossible. 
Thus Lemma \ref{Le-m-n} and Theorem \ref{Th-phi-o} are proved.
\end{proof}

\begin{Lemma}
\label{Le-form-etale}
A ring homomorphism $\alpha:A\to B$ is formally \'etale if and only
if the cotangent complex $L_{B/A}$ has trivial homology in
degrees $0$ and $1$.
\end{Lemma}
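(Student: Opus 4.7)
The plan is to derive both directions from the classical obstruction-theoretic interpretation of $L_{B/A}$, see \cite[III.1.2.3, III.2.1.7]{Illusie-CC-I}: for every square-zero extension of $A$-algebras $\tilde C\to C$ with kernel $J$ and every $A$-algebra homomorphism $g:B\to C$, there is an obstruction class $\omega(g)\in\Ext^1_B(L_{B/A},J)$ whose vanishing is equivalent to the existence of a lift $\tilde g:B\to\tilde C$ of $g$; when non-empty, the set of such lifts is a torsor under $\Hom_B(\Omega_{B/A},J)=\Hom_B(H_0(L_{B/A}),J)$. Only $H_0$ and $H_1$ of $L_{B/A}$ enter first-order deformation theory, which is what makes a characterization in terms of these two homology groups plausible.

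For the implication ``formally \'etale $\Rightarrow H_0=H_1=0$'', I would first apply the torsor description to the split square-zero extension $B\oplus J\to B$; unique liftability forces $\Hom_B(\Omega_{B/A},J)=0$ for every $B$-module $J$, so $H_0(L_{B/A})=\Omega_{B/A}=0$. For $H_1$, I would invoke the standard realization of each class in $\Ext^1_B(L_{B/A},J)$ as the obstruction to lifting $\id_B$ along some genuine square-zero $B$-algebra extension of $B$ by $J$; formal \'etaleness annihilates all of these obstructions, so $\Ext^1_B(L_{B/A},J)=0$ for every $B$-module $J$. Feeding $H_0=0$ into the hyperext spectral sequence $E_2^{p,q}=\Ext^p_B(H_q(L_{B/A}),J)\Rightarrow\Ext^{p+q}_B(L_{B/A},J)$ collapses the total-degree-$1$ line to an isomorphism $\Ext^1_B(L_{B/A},J)\cong\Hom_B(H_1(L_{B/A}),J)$; specialising to $J=H_1(L_{B/A})$ and testing the identity forces $H_1(L_{B/A})=0$.

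Conversely, assuming $H_0(L_{B/A})=H_1(L_{B/A})=0$ and given a square-zero extension $\tilde C\to C$ with kernel $J$ together with $g:B\to C$, uniqueness of a lift is immediate from $\Omega_{B/A}=0$. For existence I would read off the same spectral sequence: in total degree $1$ the only potentially non-zero contributions are $E_2^{1,0}=\Ext^1_B(H_0(L_{B/A}),J)$ and $E_2^{0,1}=\Hom_B(H_1(L_{B/A}),J)$, both of which now vanish by hypothesis, so $\Ext^1_B(L_{B/A},J)=0$, the obstruction $\omega(g)$ is zero, and the lift $\tilde g$ exists. The only substantive input is Illusie's deformation-theoretic interpretation of $L_{B/A}$, which I would cite as a black box rather than reprove; once this is granted, the remaining spectral-sequence bookkeeping is elementary because only two terms can ever contribute in total degree $1$.
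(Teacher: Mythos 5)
Your proof is correct, and for the substantive direction it takes a genuinely different route from the paper's. The two arguments agree on the easy parts: $H_0(L_{B/A})=\Omega_{B/A}=0$ is equivalent to formal unramifiedness, and once $H_0=H_1=0$ the obstruction group $\Ext^1_B(L_{B/A},J)$ vanishes for every $J$ (only $E_2^{1,0}=\Ext^1_B(H_0,J)$ and $E_2^{0,1}=\Hom_B(H_1,J)$ can contribute in total degree one), which gives formal smoothness. Where you diverge is in proving $H_1(L_{B/A})=0$ from formal \'etaleness: you invoke Illusie's identification of $\Ext^1_B(L_{B/A},J)$ with the group of square-zero $A$-algebra extensions of $B$ by $J$, note that formal smoothness splits every such extension so the group vanishes for all $J$, and then use $H_0=0$ to identify it with $\Hom_B(H_1(L_{B/A}),J)$, forcing $H_1=0$ by testing against $J=H_1(L_{B/A})$. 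The paper instead argues by hand: writing $B=R/I$ with $R$ a polynomial $A$-algebra and $C=R/I^2$, the transitivity triangles show that the map $H_1(L_{C/A})\to H_1(L_{B/A})$ injects into $H_1(L_{C/R})\to H_1(L_{B/R})$, which is $I^2/I^4\to I/I^2=0$; on the other hand the section $B\to C$ of $C\to B$ supplied by formal smoothness makes that same map surjective, so $H_1(L_{B/A})=0$. Your version is shorter and conceptually cleaner but uses the full classification theorem for square-zero extensions as a black box; the paper's version gets by with only the computation $H_1(L_{B/R})=I/I^2$ for a surjection and the transitivity triangle, at the price of a more ad hoc construction. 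Both arguments are complete.
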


\begin{proof}
Clearly $\alpha$ is formally unramified if and only if
$\Omega_{B/A}=H_0(L_{B/A})$ is zero. Let us assume that this 
holds. Since the obstructions to formal smoothness lie in
$\Ext^1_B(L_{B/A},M)$ for varying $B$-modules $M$, the implication
$\Leftarrow$ of the lemma is clear. 
So assume that $\alpha$ is formally smooth. We write $B=R/I$ 
for a polynomial ring $R$ over $A$; let $C=R/I^2$. 
Since $L_{R/A}$ is a free $R$-module in degree zero, 
the natural homomorphisms $H_1(L_{C/A})\to H_1(L_{C/R})$
and $H_1(L_{B/A})\to H_1(L_{B/R})$ are injective.
The homomorphism $H_1(L_{C/R})\to H_1(L_{B/R})$ can be
identified with $I^2/I^4\to I/I^2$, which is zero.
Thus $u:H_1(L_{C/A})\to H_1(L_{B/A})$ is zero as well.
Since $\alpha$ is formally smooth, $\id_B$ factors 
into $A$-algebra homomorphisms $B\to C\xrightarrow\pi B$, 
where $\pi$ is the projection.
Thus $u$ is surjective, and $H_1(L_{B/A})$ is zero.
\end{proof}

\begin{proof}[Proof of Theorem \ref{Th-formal-classif}]
We may assume that $p$ is nilpotent in $R$.
Since $\Phi_R$ is an additive functor, in order to
show that $\Phi_R^1$ is fully faithful it suffices 
to show that for two formal $p$-divisible groups 
$G_1$ and $G_2$ over $R$ with
associated nilpotent displays $\PPP_1$ and $\PPP_2$,
the map $\gamma:\Isom(G_1,G_2)\to\Isom(\PPP_1,\PPP_2)$
is bijective. We define an ideal $I\subset R$
by the Cartesian diagram
$$
\xymatrix@M+0.2em@C+1em{
\Spec R/I \ar[r] \ar[d] & \BTst^o\times\BTst^o \ar[d] \\
\Spec R \ar[r]^-{(G_1,G_2)} & \BTst\times\BTst
}
$$
If we write $G_1'=G_1\otimes_RR/I$ etc., 
$\Isom(G_1',G_2')\to\Isom(\PPP_1',\PPP_2')$ is
bijective by Theorem \ref{Th-phi-o}.
Since $\BTst^o\to\BTst$ is of finite presentation
by Lemma \ref{Le-BT-o} and
since $I$ is a nil-ideal, $I$ is nilpotent. By 
Corollary \ref{Co-Phi-form-etale} it follows that 
$\gamma$ is bijective.
We show that $\Phi_R^1$ is essentially surjective.
Since $\Disp^o\to\Disp$ is of finite presentation
by Lemma \ref{Le-Disp-o}, for a given nilpotent display $\PPP$ 
over $R$ we find a nilpotent ideal $I\subset R$ such
that the associated morphism $\Spec R/I\to\Spec R\to\Disp$ 
factors over $\Disp^o$. By Theorem \ref{Th-phi-o},
$\PPP_{R/I}$ lies in the image of $\Phi_{R/I}^1$.
Thus $\PPP$ lies in the image of $\Phi_R^1$ by
Corollary \ref{Co-Phi-form-etale}.
\end{proof}

\begin{Cor}
\label{Co-uAut-o}
Let $G$ be a $p$-divisible group over an $\FF_p$-algebra
$R$. The affine group scheme $\uAut^o(G)$ is
trivial if and only if for all $x\in\Spec R$ the fibre
$G_x$ is connected or unipotent.
\end{Cor}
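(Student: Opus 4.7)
The plan is to reduce the question to a statement at geometric points and then to combine the two directions via Theorem~\ref{Th-phi-o} on one side and the explicit computation in Remark~\ref{Re-phi-n-diag} on the other.

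\textbf{Step 1 (base change and flatness).} First I would note that $\uAut^o(G)$ commutes with arbitrary base change: for every homomorphism $R\to R'$ there is a natural isomorphism $\uAut^o(G)_{R'}\cong\uAut^o(G_{R'})$. Indeed each $\uAut^o(G[p^n])$ is compatible with base change, because the truncated display functor $\Phi_{n,R}$ is, and for affine schemes one has $(\varprojlim_n\Spec A_n)\times_RR'=\Spec(\varinjlim_n(A_n\otimes_RR'))=\varprojlim_n(\Spec A_n\times_RR')$. The discussion preceding Corollary~\ref{Co-phi} already says that $\uAut^o(G)=\varprojlim_n\uAut^o(G[p^n])$ is a flat affine group scheme over $R$; writing its coordinate ring as $R\oplus I$, flatness of the augmentation ideal $I$ follows from the short exact sequence $0\to I\to A\to R\to 0$. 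Hence $\uAut^o(G)=1$ if and only if $I\otimes_Rk(x)=0$ at every point, i.e.\ if and only if $\uAut^o(G_x)=1$ at every geometric point $x$ of $\Spec R$. This reduces the statement to $p$-divisible groups over algebraically closed fields.

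\textbf{Step 2 (geometric points, easy direction).} Let $k$ be an algebraically closed field of characteristic $p$ and $G_0$ a $p$-divisible group over $k$. If $G_0$ is infinitesimal, then $G_0$ lies in $\BTst^o(k)$ and by Theorem~\ref{Th-phi-o} the morphism $\phi^o$ is an isomorphism of stacks; in particular the induced map $\uAut(G_0)\to\uAut(\Phi_k(G_0))$ of $k$-group schemes is an isomorphism, so $\uAut^o(G_0)=1$. If $G_0$ is unipotent, then $G_0^\vee$ is infinitesimal and by duality (Remark~\ref{Re-Phi-dual}) $\uAut^o(G_0)\cong\uAut^o(G_0^\vee)=1$.

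\textbf{Step 3 (geometric points, hard direction).} Conversely, suppose $G_0$ is neither infinitesimal nor unipotent. Over the algebraically closed field $k$ one has a direct sum decomposition $G_0=G_0^{\mathrm{mult}}\oplus G_0^{\mathrm{biconn}}\oplus G_0^{\mathrm{et}}$ with $G_0^{\mathrm{mult}}$ and $G_0^{\mathrm{et}}$ both nonzero. Any homomorphism $a\colon G_0^{\mathrm{et}}\to G_0^{\mathrm{mult}}$ defines an automorphism $\left(\begin{smallmatrix}1&0\\a&1\end{smallmatrix}\right)$ of $G_0^{\mathrm{et}}\oplus G_0^{\mathrm{mult}}$, extended by the identity on $G_0^{\mathrm{biconn}}$. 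The computation underlying Remark~\ref{Re-phi-n-diag} shows that at every finite level $\uHom$ between the displays of the étale and multiplicative parts vanishes; passing to the projective limit one obtains $\Hom(\Phi_k(G_0^{\mathrm{et}}),\Phi_k(G_0^{\mathrm{mult}}))=0$, so every such $1+a$ lies in $\uAut^o(G_0)$. Finally $\Hom(G_0^{\mathrm{et}},G_0^{\mathrm{mult}})$ is a nonzero group scheme over $k$ (for example $\Hom(\QQ_p/\ZZ_p,\mu_{p^\infty})\cong\mu_{p^\infty}$ has nontrivial points on any Artinian $k$-algebra in which a $p$-th root of unity is nontrivial), which contradicts $\uAut^o(G_0)=1$.

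\textbf{Main obstacle.} The genuinely delicate step is Step 3: one must verify both that the constructed automorphisms $1+a$ are killed by the display functor in the limit (not only at a single truncation level) and that, despite $\mu_{p^\infty}(k)=0$, the group scheme $\Hom(G_0^{\mathrm{et}},G_0^{\mathrm{mult}})$ still has nontrivial $T$-points for suitable Artinian $k$-algebras $T$, so that $\uAut^o(G_0)$ is nontrivial as a group scheme over $k$. Both points are immediate from Remark~\ref{Re-phi-n-diag} once one is careful to distinguish the group scheme $\uAut^o$ from its set of $k$-points. The remaining issue of commuting $\uAut^o=\varprojlim_n\uAut^o(G[p^n])$ with base change is routine since limits of affine schemes correspond to colimits of rings and colimits commute with tensor products.
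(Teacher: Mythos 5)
Your Steps 2 and 3 are essentially sound and agree with the paper: the implication ``some fibre neither connected nor unipotent $\Rightarrow\uAut^o(G)$ nontrivial'' only uses the easy half of your Step 1 (compatibility with base change, plus the fact that a nontrivial fibre forces nontriviality), and your computation via $\uHom(\Phi(\QQ_p/\ZZ_p),\Phi(\mu_{p^\infty}))=0$ is exactly the paper's use of Remark \ref{Re-phi-n-diag}. The genuine gap is in the hard half of Step 1, namely the claim that the flat augmentation ideal $I$ of $\uAut^o(G)$ vanishes as soon as $I\otimes_Rk(x)=0$ for every $x\in\Spec R$. This is false for general flat modules, even for filtered colimits of finite free modules, which is what $I$ is here: take $R=k[x_1,x_2,\dots]/(x_i^2x_j^2)_{i\neq j}$ and $M=\varinjlim\bigl(R\xrightarrow{x_1}R\xrightarrow{x_2}R\to\cdots\bigr)$; every prime of $R$ contains some $x_i$, so every fibre of $M$ vanishes, yet $M\neq0$ because the squarefree monomials $x_1\cdots x_k$ are nonzero for all $k$. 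In the situation at hand this is precisely the delicate point: by Remark \ref{Re-phi-n-diag} each finite level $\uAut^o(G[p^n])$ is a nontrivial group scheme of degree $p^{rsn}$ even when $G$ is infinitesimal, so triviality of the limit is a statement about the transition maps being \emph{eventually zero}, and passing from ``eventually zero on each fibre'' to ``eventually zero'' requires a uniformity that is not formal. (In the proof of Theorem \ref{Th-phi-o} this uniformity is extracted from the reduced, finite-type presentations $Y'_n$ by a generic-point argument, Lemma \ref{Le-m-n}; it is not available over an arbitrary $R$.)

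The paper proves the direction you are missing without reducing to geometric points. Let $\Spec R/I$ and $\Spec R/J$ be the preimages of $\BTst^o$ under the morphisms $\Spec R\to\BTst$ defined by $G$ and by $G^\vee$. By Lemma \ref{Le-BT-o} these ideals are finitely generated, and the hypothesis that every fibre is connected or unipotent makes $IJ$ a nilideal, hence nilpotent. Using the flatness of $\uAut^o(G)$ (Corollary \ref{Co-phi}) and Nakayama for nilpotent ideals, the triviality of $\uAut^o(G)$ is reduced to the cases of $R/I$ and $R/J$; over those rings $G$, respectively $G^\vee$, is classified by a morphism to $\BTst^o$, so Theorem \ref{Th-phi-o} applies globally (together with $\uAut^o(G)\cong\uAut^o(G^\vee)$ from Remark \ref{Re-Phi-dual}). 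If you want to keep the structure of your argument, Step 1 must be replaced by a reduction of this kind rather than by a fibrewise vanishing criterion for flat modules.
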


\begin{proof}
Assume that $G$ is a $p$-divisible group over an algebraically
closed field $k$ such that $H=\QQ_p/\ZZ_p\oplus\mu_{p^\infty}$
is a direct summand of $G$. By Remark \ref{Re-phi-n-diag}, 
the group $\uAut^o(H[p^n])$ is isomorphic to $\mu_{p^n}$, 
with transition maps $\mu_{p^{n+1}}\to\mu_{p^n}$ given by
$\zeta\mapsto\zeta^p$. Thus $\uAut^o(H)$ is non-trivial,
which implies that $\uAut^o(G)$ is non-trivial as well.
This proves the implication $\Rightarrow$.

Assume now that all fibres of $G$ over $R$ are connected
or unipotent. Let $\Spec R/I$ and $\Spec R/J$ be the
inverse images of $\BTst^o$ under the morphisms 
$\Spec R\to\BTst$ defined by $G$ and by $G^\vee$.
Then $I$ and $J$ are finitely generated by Lemma \ref{Le-BT-o}, 
and $IJ$ is a nilideal, thus $IJ$ is nilpotent. 
In order to show that $\uAut^o(G)$ is trivial we may replace
$R$ by $R/IJ$. Then $R\to R/I\times R/J$ is injective.
Since $\uAut^o(G)$ is flat over $R$ by Corollary \ref{Co-phi},
we may further replace $R$ by $R/I$ and by $R/J$.
Since $\uAut^o(G)\cong\uAut^o(G^\vee)$ by Remark \ref{Re-Phi-dual},
in both cases the assertion follows from Theorem \ref{Th-phi-o}.
\end{proof}


\section{Dieudonn\'e theory over perfect rings}

The results of this section were obtained earlier
by Gabber by a different method.
For a perfect ring $R$ of characteristic $p$
we consider the Dieudonn\'e ring 
$$
D(R)=W(R)\{F,V\}/J
$$ 
where $W(R)\{F,V\}$ is the non-commutative polynomial ring 
in two variables over $W(R)$ and 
where $J$ is the ideal generated by the relations $Fa=f(a)F$ 
and $aV=Vf(a)$ for $a\in W(R)$, and $FV=VF=p$.

\begin{Defn} 
A \emph{projective Dieudonn\'e module over $R$} is a 
$D(R)$-module which is a projective $W(R)$-module of finite type. 
A \emph{truncated Dieudonn\'e module of level $n$ over $R$} is a 
$D(R)$-module $M$ which is a projective $W_n(R)$-module
of finite type; if $n=1$ we require also that 
$\Ker(F)=\Image(V)$ and $\Ker(V)=\Image(F)$ and that
$M/VM$ is a projective $R$-module.
An \emph{admissible torsion $W(R)$-module} is a finitely
presented $W(R)$-module
of projective dimension $\le 1$ which is annihilated 
by a power of $p$.
A \emph{finite Dieudonn\'e module over $R$} is a $D(R)$-module
which is an admissible torsion $W(R)$-module.
\end{Defn}

We denote by $(\DieudC/R)$, by $(\DieudC_n/R)$, and by
$(\DieudC^f/R)$ the categories of projective, truncated
level $n$, and finite Dieudonn\'e modules over $R$,
respectively.
For a homomorphism of perfect rings $R\to R'$ the scalar
extension by $W(R)\to W(R')$ induces functors from
projective, truncated, or finite Dieudonn\'e modules over
$R$ to such modules over $R'$. 

\begin{Lemma}
\label{Le-Dieud-trunc-1}
If $M$ is a projective or truncated Dieudonn\'e module of 
level $\ge 2$ then $M/pM$ is a truncated Dieudonn\'e module 
of level\/ $1$.
\end{Lemma}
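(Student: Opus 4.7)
My plan is to verify, for $\bar M := M/pM$, the four defining conditions of a truncated Dieudonn\'e module of level $1$ over $R$: (i) $\bar M$ is a finite projective $R$-module; (ii) the $D(R)$-action descends, so $\bar F$ and $\bar V$ act on $\bar M$ with $\bar F\bar V=\bar V\bar F=0$; (iii) $\Ker\bar F=\Image\bar V$ and $\Ker\bar V=\Image\bar F$; and (iv) $\bar M/\bar V\bar M$ is projective over $R$.

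The structural ingredient is that $R$ is perfect, so that on $W(R)$ the Verschiebung equals $p\circ f^{-1}$; in particular $VW(R)=pW(R)$ and $W_n(R)\cong W(R)/p^n W(R)$, which yields $W_n(R)/pW_n(R)=R$. Thus $\bar M$ is finite projective over $R$ by base change, and $F,V$ descend with $\bar F\bar V=\bar V\bar F=0$ since $FV=VF=p$ vanishes modulo $p$. This handles (i) and (ii).

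For (iii), I would run the following $p$-torsion argument. Given $m\in M$ with $Fm\in pM$, write $Fm=pm'$ and apply $V$ to obtain $pm=VFm=pVm'$, so $p(m-Vm')=0$. In the projective case $W(R)$ is $p$-torsion-free (since $V=pf^{-1}$ is injective on $W(R)$), hence so is $M$, and $m=Vm'$. In the level $n\ge 2$ case $M$ is flat over $W_n(R)\cong W(R)/p^n$, so $M[p]=p^{n-1}M\subseteq pM$ because $n-1\ge 1$, and therefore $m\equiv Vm'\pmod{pM}$. Either way $\bar m\in\Image\bar V$; the reverse inclusion is immediate from $\bar F\bar V=0$, and the statement for $\bar V$ is symmetric.

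For (iv), the inclusion $pM=VFM\subseteq VM$ shows $\bar M/\bar V\bar M\cong M/VM$, which is finitely presented over $R$. I would show its fiber rank is locally constant on $\Spec R$ and invoke the standard fact that a finitely presented module with locally constant fiber rank is projective. At each $\mathfrak p\in\Spec R$ the residue field $k(\mathfrak p)$ is perfect (Frobenius on $R$ is bijective and descends to the quotient domain), so step (iii) applied to the base change of $M$ to $k(\mathfrak p)$ shows that the linearized complex $\bar M^{(1)}\to\bar M\to\bar M^{(1)}$ is exact over each fiber. Writing $a(\mathfrak p)$ and $b(\mathfrak p)$ for the fiber ranks of $\bar F^\sharp$ and $\bar V^\sharp$, fiber exactness forces $a+b=\mathrm{rk}_R\bar M$, a constant; combined with lower semicontinuity of both $a$ and $b$ (as ranks of $R$-linear maps between finite projective modules), this forces each to be locally constant, and hence $\dim(M/VM)\otimes k(\mathfrak p)=a(\mathfrak p)$ is locally constant. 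The main obstacle is precisely this last step: while (i)--(iii) follow essentially formally from $V=pf^{-1}$, projectivity of $M/VM$ genuinely requires a reduction to fiber dimensions exploiting the classical theory over perfect fields.
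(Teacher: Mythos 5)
Your proof is correct and follows essentially the same route as the paper: the same $p$-torsion argument (apply $V$ to $Fm=pm'$ and use that $W(R)$ is $p$-torsion-free for perfect $R$) to get $\Ker\bar F=\Image\bar V$, and the same fibrewise rank-counting plus semicontinuity to deduce projectivity of $M/VM$. The only cosmetic difference is that the paper first replaces $M$ by $M/p^2M$ to treat the projective and truncated cases uniformly, whereas you handle the two torsion computations separately.
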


\begin{proof}
We can replace $M$ by $M/p^2M$. The operators $F$ and $V$
of $M$ induce operators $\bar F$ and $\bar V$ of $\bar M=M/pM$.
Assume that $\bar F(\bar x)=0$ for an element $x\in M$. Then 
$F(x)\in pM$ and thus $F(x)=F(V(y))$ for some
$y\in M$. Hence $px=pV(y)$ and thus $\bar x=\bar V(\bar y)$.
This shows that $\Ker(\bar F)=\Image(\bar F)$, and similarly
$\Ker(\bar V)=\Image(\bar V)$. Since these relations remain 
true after base change, for each point of $\Spec R$
the dimensions of the fibres of $M/VM$ and 
of $M/FM$ add up to the dimension of the fibre of $\bar M$.
Thus the dimension of the fibre of $M/VM$ is an upper and
lower semicontinuous function on $\Spec R$, and
$M/VM$ is a projective $R$-module since $R$ is reduced.
\end{proof}

\begin{Lemma}
\label{Le-Dieud-disp}
Truncated Dieudonn\'e modules of level $n\ge 1$ over $R$
are equivalent to truncated displays of level $n$ over $R$,
and projective Dieudonn\'e modules over $R$ are equivalent
to displays over $R$.
\end{Lemma}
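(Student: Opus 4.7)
The plan is to lift the proof of Lemma \ref{Le-Dieud-disp-field}, which handles the case of a perfect field, to an arbitrary perfect ring $R$ of characteristic $p$. Two perfection facts carry the whole argument: (i) the Frobenius $f : W_n(R) \to W_n(R)$ is a ring automorphism, so the $f$-twist $P^{(1)}$ of any $W_n(R)$-module is canonically identified with $P$; and (ii) multiplication by $p$ defines an isomorphism $W_n(R) \xrightarrow{\sim} I_{n+1,R}$ of $W_n(R)$-modules, so the tensor factor $I_{n+1,R}\otimes_{W_n(R)} P$ may be identified with $P$. Once the equivalence is established at each finite level $n$, the projective case will follow by passing to the projective limit: Lemma \ref{Le-disp-trunc-disp} already expresses $(\dispC/R)$ as the limit of $(\dispC_n/R)$, and for perfect $R$ one has $W(R)/p^n W(R) = W_n(R)$, so every projective Dieudonn\'e module $M$ is the limit of its reductions $M/p^n M$ in $(\DieudC_n/R)$.

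For the functor from truncated displays to truncated Dieudonn\'e modules, given $\PPP = (P,Q,\iota,\varepsilon,F,F_1)$ of level $n$, I set $M := P$, keep $F$, and define the $f^{-1}$-linear map $V := \iota\circ F_1^{-1} : M \to M$, which is meaningful because $F_1 : Q \to P$ is bijective at every level (the normal decomposition identifies $F_1$ with the structure isomorphism $\Psi$ under $I_{n+1,R}\otimes T \cong T$). The identities $FV = p = VF$ are then a direct translation of $F^\sharp V^\sharp = p = V^\sharp F^\sharp$. In the inverse direction, given $(M,F,V)$ of level $n$, I put $P := M$, take $Q$ to be the abelian group $P$ equipped with the $W_n(R)$-action twisted by Frobenius, let $F_1 : Q \to P$ be the tautological $f$-linear isomorphism, and set $\iota := V$ and, after the identification $I_{n+1,R}\otimes_{W_n(R)} P \cong P$ via $p$, $\varepsilon := F$. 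The $W_n(R)$-linearity of $\iota$ and $\varepsilon$ uses that $V$ is $f^{-1}$-linear and $F$ is $f$-linear, and the relations $\iota\varepsilon = p = \varepsilon\iota$ reduce to $VF = p = FV$. Compatibility with base change and that the two constructions are mutually inverse is then a routine check.

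The main obstacle is to match the extra axioms at level $n = 1$: on the display side, the $4$-term exact sequence \eqref{Eq-trunc-4term} together with projectivity of $\Coker(\iota)$ over $R$; on the Dieudonn\'e side, $\Ker(F)=\Image(V)$, $\Ker(V)=\Image(F)$, and projectivity of $M/VM$ over $R$. To handle this I will invoke the normal decomposition (Lemma \ref{Le-pair-norm-dec}), which provides $P = L \oplus T$ with $Q = L \oplus (I_{2,R}\otimes T)$. Under the identifications of the first paragraph, this becomes $M = VM \oplus T$ with $VM = L$ a direct summand, from which all three level-$1$ Dieudonn\'e conditions are transparent. Conversely, starting from a level-$1$ Dieudonn\'e module $(M,F,V)$ satisfying the axioms, the projective $R$-module $M/VM$ lifts to a direct summand $T\subseteq M$ (since $M \twoheadrightarrow M/VM$ splits), yielding $M = VM \oplus T$ with $VM$ projective; this is precisely the data of a normal decomposition on the display side. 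Once the finite-level equivalence is in place, the projective statement follows by the limit argument outlined in the first paragraph, after verifying that both equivalences are functorial in $R$ and compatible with the truncation functors.
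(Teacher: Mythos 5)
Your overall strategy and explicit formulas ($V=\iota F_1^{-1}$, $F=F_1\epsilon$ via the identifications $W_n(R)\cong I_{n+1,R}$ and $f$ bijective, and the inverse construction with $\iota=V$, $\varepsilon=F$) coincide with the paper's, and your treatment of the level-$1$ axioms via normal decompositions is correct. But there is a genuine gap in the direction from Dieudonn\'e modules to displays when $n\ge 2$. You describe the $4$-term exact sequence \eqref{Eq-trunc-4term} and the projectivity of $\Coker(\iota)$ as ``extra axioms at level $n=1$'', but these are axioms of a truncated pair at \emph{every} level; it is only on the Dieudonn\'e side that extra conditions are imposed solely at level $1$. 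Consequently, for a truncated Dieudonn\'e module $M$ of level $n\ge 2$ your quintuple is a priori only a pre-display: you must still prove that $M/VM$ is a projective $R$-module and that \eqref{Eq-trunc-4term} is exact (equivalently, exactness of $M\xrightarrow{V}M\xrightarrow{p^{n-1}F}M\xrightarrow{V}M$), and neither is part of the definition of $(\DieudC_n/R)$ for $n\ge 2$. Your appeal to Lemma \ref{Le-pair-norm-dec} cannot supply this, since normal decompositions are only available once one already knows one has a truncated pair.

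This missing step is exactly what the paper extracts from the proof of Lemma \ref{Le-Dieud-trunc-1}: from $FV=VF=p$ and projectivity of $M$ over $W_n(R)$ one first shows $\Ker(\bar F)=\Image(\bar V)$ and $\Ker(\bar V)=\Image(\bar F)$ on $\bar M=M/pM$ (using that $W(R)$ is $p$-torsion free because $R$ is perfect), and then deduces projectivity of $M/VM$ from the resulting upper and lower semicontinuity of the fibre ranks together with the reducedness of $R$; the exactness in condition ($\star$) is obtained along the same lines. This is a genuinely non-formal input, using perfectness of $R$ beyond the two identifications you list, and your proof is incomplete without it. The remaining parts of your argument --- bijectivity of $F_1$, the level-$1$ dictionary, and the passage to the limit for the projective case --- are sound and agree with the paper.
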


\begin{proof}
This extends Lemma \ref{Le-Dieud-disp-field}.
Since multiplication by $p$ is a isomorphism
$W_n(R)\cong I_{n+1,R}$ and since truncated displays
have normal decompositions by Lemma \ref{Le-pair-norm-dec},
truncated displays of level $n$ over $R$ are equivalent 
to quintuples $\PPP=(P,Q,\iota,\epsilon,F_1)$ where
$P$ and $Q$ are projective $W_n(R)$-modules of finite type
with homomorphisms $P\xrightarrow\epsilon Q\xrightarrow\iota P$
such that $\epsilon\iota=p$ and $\iota\epsilon=p$,
and where $F_1:Q\to P$ is a bijective $f$-linear map,
such that 

($\star$) {} $\Coker(\iota)$ is projective over $R$, and
$Q\xrightarrow\iota P\xrightarrow
{p^{n-1}\epsilon}Q\xrightarrow\iota P$ is exact.

\medskip
\noindent
By the proof of Lemma \ref{Le-Dieud-trunc-1}, condition ($\star$) 
is automatic if $n\ge 2$. It follows that truncated displays
of level $n\ge 1$ are equivalent to truncated Dieudonn\'e
modules of level $n$ by $\PPP\mapsto(P,F,V)$ 
with $F=F_1\epsilon$ and $V=\iota F_1^{-1}$.
The equivalence between displays and projective Dieudonn\'e 
modules follows easily; see also \cite[Lemma 2.4]{Lau-Relation}.
\end{proof}

\begin{Thm}
\label{Th-Gabber-trunc}
For a perfect ring $R$ of characteristic $p$, the functors 
$$
\Phi_{n,R}:(\pdivC_n/R)\to(\DieudC_n/R), 
$$
$$
\Phi_{R}:(\pdivC/R)\to(\DieudC/R)
$$
are equivalences; these functors are well-defined by
Lemma \ref{Le-Dieud-disp}.
\end{Thm}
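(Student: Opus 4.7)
By Lemma \ref{Le-Dieud-disp}, the Dieudonné targets over perfect $R$ are canonically identified with the corresponding categories of displays, so it suffices to show that the truncated display functors $\Phi_{n,R}$ are equivalences; the projective case then follows by passing to $\varprojlim_n$ via Lemma \ref{Le-disp-trunc-disp} and the analogous inverse-limit description of $p$-divisible groups as compatible systems of their truncations. Thus the core task reduces to showing that the morphism of stacks $\phi_n: \overline{\BTst_n} \to \Disp_n$ induces an equivalence on $R$-valued points for every perfect $\FF_p$-algebra $R$.

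The geometric input is Theorem \ref{Th-phi-n-smooth} ($\phi_n$ smooth and surjective) combined with Theorem \ref{Th-phi-n-diag} (diagonal $\Delta$ finite, flat, radicial, and surjective, with kernel $\uAut^o(G^{\univ})$ commutative, infinitesimal, and finite flat). Picking a smooth affine presentation $\pi: X \to \overline{\BTst_n}$ as in Lemma \ref{Le-BT-pres}, smoothness of $\phi_n$ makes $\phi_n \circ \pi$ a smooth affine presentation of $\Disp_n$ as well. Hence both stacks admit affine groupoid presentations over the same object $X$, namely $X_1 = X \times_{\overline{\BTst_n}} X \rightrightarrows X$ and $X_2 = X \times_{\Disp_n} X \rightrightarrows X$, linked by a canonical map $h: X_1 \to X_2$ (the pullback of $\Delta$) which is finite, flat, radicial, and surjective. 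Any such morphism of $\FF_p$-schemes has the property that every element of $\OOO(X_1)$ has a $p^N$-th power in $\OOO(X_2)$ for some uniform $N$, so $h$ becomes an isomorphism after passing to perfect hulls.

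For perfect $R$ I verify the equivalence $\overline{\BTst_n}(R) \simeq \Disp_n(R)$ directly in both directions. For full faithfulness, given $G_1, G_2$ over $R$ with displays $\PPP_1, \PPP_2$, Theorem \ref{Th-phi-n-diag} exhibits $\uIsom(G_1, G_2) \to \uIsom(\PPP_1, \PPP_2)$ as a torsor under the commutative infinitesimal finite flat $R$-group scheme $A = \uAut^o(G_1)$. Perfectness of $R$ implies $A(R) = 0$ (Frobenius is injective on $R$) and $H^1_{\mathrm{fppf}}(R, A) = 0$ (by dévissage to $A = \alpha_p$ or $\mu_p$, whose $H^1$ groups are $R/R^p$ and $R^\times / (R^\times)^p$, both zero for perfect $R$); so the torsor is trivial with unique section, giving the required bijection. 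For essential surjectivity, smoothness and surjectivity of $\phi_n$ yield an fpqc-local lift $G' \in \overline{\BTst_n}(R')$ of any $\PPP \in \Disp_n(R)$; the descent datum for $\PPP$ transfers via full faithfulness to a descent datum on $G'$, yielding the required $G/R$ with $\phi_n(G) \cong \PPP$.

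The main technical obstacle I expect is the descent step in essential surjectivity: tensor powers $R' \otimes_R R'$ of a perfect $R$-algebra $R'$ over a perfect $R$ need not themselves be perfect, so full faithfulness over perfect rings does not apply literally. The cleanest remedy is either to check that the descent obstructions live in $H^1_{\mathrm{fppf}}$-groups of infinitesimal commutative finite flat group schemes that vanish on perfect base after fpqc refinement, or to argue intrinsically via the identification $X_1^{\mathrm{perf}} \cong X_2^{\mathrm{perf}}$ of groupoid schemes: this gives an equivalence of their stackifications when restricted to the site of perfect $\FF_p$-algebras, which is exactly the statement $\phi_n$ is an equivalence on perfect $R$-points.
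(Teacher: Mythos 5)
Your proposal follows essentially the same route as the paper: identify Dieudonn\'e modules with displays via Lemma \ref{Le-Dieud-disp}, get full faithfulness from the torsor structure of Theorem \ref{Th-phi-n-diag} under an infinitesimal finite flat group scheme, get essential surjectivity from the smooth surjectivity of $\phi_n$ (Theorem \ref{Th-phi-n-smooth}) plus faithfully flat descent, and pass to the limit for $\Phi_R$. Two corrections. First, the ``main technical obstacle'' you anticipate is not an obstacle at all: if $R\to R'$ is a homomorphism of perfect $\FF_p$-algebras, then $R'\otimes_RR'$ \emph{is} perfect --- the absolute Frobenius of $R'\otimes_RR'$ is induced by the automorphism $(F_{R'},F_R,F_{R'})$ of the pushout diagram, hence is bijective. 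This is exactly what the paper uses: it sets $R'=S^{\per}$ for $\Spec S=X\times_{\Disp_n}\Spec R$ with $X\to\Disp_n$ a smooth presentation, so that $R'$ and $R''=R'\otimes_RR'$ are both perfect and full faithfulness applies to the descent datum directly; no cohomological vanishing or groupoid-perfection detour is needed (though your second remedy, $X_1^{\mathrm{perf}}\cong X_2^{\mathrm{perf}}$, is the viewpoint taken in the introduction and would also work). Second, your d\'evissage of $H^1_{\mathrm{fppf}}(R,A)$ to $\alpha_p$ and $\mu_p$ is not available over a general perfect base, since an infinitesimal commutative finite flat group scheme need not admit such a filtration globally; the clean argument is that $A$ is killed by a power of relative Frobenius, so any $A$-torsor $T$ has $T^{(p^N)}\cong\Spec R$, and perfectness of $R$ gives $T(R)\cong T^{(p^N)}(R)\ne\emptyset$ --- equivalently, the paper's observation that a finite flat radicial surjective morphism induces an isomorphism on perfections, and $R$-points of an $R$-scheme depend only on its perfection when $R$ is perfect.
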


Here $\Phi_R$ and $\Phi_{n,R}$ are defined
by $G\mapsto(\DD(G)_{W(R)/R},F,V)$, where $\DD(G)$
is the covariant Dieudonn\'e crystal of $G$,
and where $F$ and $V$ are induced by the Verschiebung
$G^{(1)}\to G$ and Frobenius $G\to G^{(1)}$.

\begin{proof}
For an $\FF_p$-algebra $A$, the perfection $A^{\per}$
is the direct limit of Frobenius $A\to A\to\cdots\,$.
For an $\FF_p$-scheme $X$, the perfection $X^{\per}$ is the
projective limit of Frobenius $X\leftarrow X\leftarrow\cdots\,$.
This is a local construction, which coincides with the
perfection of rings in the case of affine schemes.

Since the functor $\Phi_{n,R}$ is additive, 
it is fully faithful if 
for two groups $G_1$ and $G_2$ in $(\pdivC_n/R)$ with 
associated truncated displays $\PPP_1$ and $\PPP_2$, the 
map $\gamma:\Isom(G_1,G_2)\to\Isom(\PPP_1,\PPP_2)$ induced
by $\Phi_{n,R}$ is bijective. The morphism of 
affine $R$-schemes $\uIsom(G_1,G_2)\to\uIsom(\PPP_1,\PPP_2)$
is a torsor under an infinitesimal finite flat group 
scheme by Theorem \ref{Th-phi-n-diag}.
Thus $\uIsom(G_1,G_2)^{\per}\to\uIsom(\PPP_1,\PPP_2)^{\per}$ 
is an isomorphism, which implies that $\gamma$ is bijective.

Let us show that $\Phi_{n,R}$ is essentially surjective.
If $X\to\BTst_n$ is a smooth presentation with affine $X$, 
the composition $X\to\BTst_n\to\Disp_n$ is smooth and surjective 
by Theorem \ref{Th-phi-n-smooth}. For a given truncated
display $\PPP$ of level $n$ over $R$ let
$\Spec S=X\times_{\Disp_n}\Spec R$ and let $R'=S^{\per}$.
Then $R'$ is a perfect faithfully flat $R$-algebra such that 
$\PPP_{R'}$ lies in the image of $\Phi_{n,R'}$.
Since $R''=R'\otimes_RR'$ is perfect, the functors $\Phi_{n,R'}$
and $\Phi_{n,R''}$ are fully faithful. 
Thus $\PPP$ lies in the image of $\Phi_{n,R}$
by faithfully flat descent.

By passing to the projective limit it follows that
$\Phi_R$ is an equivalence.
\end{proof}

We denote by $(\pgrpC/R)$ the category of commutative
finite flat group schemes of $p$-power order over $R$.

\begin{Cor}
\label{Co-Gabber}
The covariant Dieudonn\'e crystal defines
a functor 
$$
\Phi_R^f:
(\pgrpC/R)\to(\DieudC^f/R)
$$
which is an equivalence of categories.
\end{Cor}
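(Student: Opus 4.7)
The plan is to deduce this from the equivalence of Theorem \ref{Th-Gabber-trunc} by the standard d\'evissage, using on one side that finite flat commutative group schemes of $p$-power order are locally kernels of isogenies of $p$-divisible groups, and on the other side that admissible torsion Dieudonn\'e modules admit two-term projective resolutions in the Dieudonn\'e category.

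First I would construct $\Phi_R^f$. By Raynaud's theorem (\cite[3.1.1]{BBM}), applied locally on $\Spec R$, every $H \in (\pgrpC/R)$ becomes, after an fppf base change, the kernel of an isogeny $u \colon G_1 \to G_2$ of $p$-divisible groups. I would set $\Phi_R^f(H) := \Coker(\Phi_R(u))$; since $u$ is an isogeny, this cokernel is annihilated by a power of $p$ and has $W(R)$-projective dimension $\le 1$, so it lies in $(\DieudC^f/R)$. Independence of the choice of $u$ and functoriality in $H$ are then handled by the same fibre-product trick used in the construction of $\Phi_{n,R}$ in Proposition \ref{Pr-Phi-n-R}. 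Effectivity of fpqc descent, which on the Dieudonn\'e side follows from Proposition \ref{Pr-trunc-descent} via Lemma \ref{Le-Dieud-disp}, then globalises the construction; the identification of $\Phi_R^f(H)$ with the value of the covariant Dieudonn\'e crystal on $H$ is standard.

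Full faithfulness of $\Phi_R^f$ should be a direct consequence of Theorem \ref{Th-Gabber-trunc} via a five-lemma argument. Given presentations $0 \to H \to G_1 \to G_2 \to 0$ and $0 \to H' \to G'_1 \to G'_2 \to 0$, after replacing $G_1$ by an appropriate pullback every morphism $H \to H'$ extends to a morphism of the two-term complexes $G_\bullet \to G'_\bullet$, and two such lifts differ by a morphism $G_2 \to G'_1$. The analogous statement on the Dieudonn\'e side is, by the equivalence of Theorem \ref{Th-Gabber-trunc}, identified with the first, yielding bijectivity of $\Hom(H, H') \to \Hom(\Phi_R^f(H), \Phi_R^f(H'))$.

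For essential surjectivity, given $M \in (\DieudC^f/R)$, I would construct a two-term resolution $0 \to N_1 \to N_0 \to M \to 0$ by projective Dieudonn\'e modules; applying the inverse of the equivalence in Theorem \ref{Th-Gabber-trunc} to $N_1 \hookrightarrow N_0$ produces an isogeny of $p$-divisible groups whose kernel $H$ satisfies $\Phi_R^f(H) \cong M$ by construction. The main obstacle is this resolution step: the admissibility of $M$ furnishes a $W(R)$-projective resolution, but the $D(R)$-action has to be compatibly lifted to $N_0$, and $N_1$ has to inherit a Dieudonn\'e structure. I would build $N_0$ by choosing $W(R)$-module generators $m_i$ of $M$ and lifting their iterated images under $F$ and $V$ — which are only finitely many independent data because of the relation $FV = VF = p$ together with the $p$-power annihilation of $M$ — to produce a $D(R)$-stable surjection from a $W(R)$-projective module; the kernel $N_1$ is then automatically $W(R)$-projective by the projective-dimension bound and inherits a $D(R)$-action. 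Passing to an fpqc cover as needed takes care of globalisation.
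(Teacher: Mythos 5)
Your overall architecture matches the paper's: construct $\Phi_R^f$ via Raynaud and cokernels of $\Phi_R$ applied to isogenies, and go back by resolving a finite Dieudonn\'e module by an isogeny of projective ones. But the step you yourself flag as the main obstacle --- putting a $D(R)$-structure on a two-term projective resolution of $M$ --- is where your argument has a genuine gap, and the fix is not the one you sketch. Lifting the matrices of $F$ and $V$ from $M$ to a finite projective $W(R)$-module $N$ surjecting onto $M$ is unobstructed for each operator separately (projectivity of $N$), but an arbitrary pair of lifts only satisfies $\tilde F\tilde V\equiv p$ and $\tilde V\tilde F\equiv p$ modulo the kernel of $N\to M$, and there is no evident way to correct the lifts so that these relations hold exactly; your remark that the iterated images $F^a m_i$, $V^b m_i$ constitute ``only finitely many independent data'' does not address this multiplicative relation (and is itself not quite right: the $F^a m_i$ are infinitely many elements, they merely lie in the $W(R)$-span of the generators). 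The paper circumvents the problem by never lifting $F$ and $V$ directly: it chooses free modules $N_1$ and $Q$ with $W(R)$-linear maps $N_1\xrightarrow{\epsilon}Q\xrightarrow{\iota}N_1$ satisfying $\iota\epsilon=p=\epsilon\iota$ (so the relation $FV=VF=p$ is built into the linear skeleton), together with a linear surjection $\pi:N_1\to M$ and an $f$-linear surjection $\psi:Q\to M$ compatible with $F$ and $V$ on $M$, and then the only lifting problem is to find a single \emph{bijective} $f$-linear map $F_1:Q\to N_1$ with $\pi F_1=\psi$, which exists locally on $\Spec R$ by Nakayama. Setting $F=F_1\epsilon$ and $V=\iota F_1^{-1}$ then yields the relations for free, and $N_0=\Ker(\pi)$ is the desired projective Dieudonn\'e module. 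Without some device of this kind your resolution step does not go through.

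Two smaller points. First, Raynaud's theorem is Zariski-local, and the paper deliberately uses only Zariski descent; your appeal to fppf/fpqc descent for \emph{finite} Dieudonn\'e modules is not covered by Proposition \ref{Pr-trunc-descent} and Lemma \ref{Le-Dieud-disp}, which concern truncated displays, i.e.\ modules projective over $W_n(R)$, not admissible torsion modules. Second, your five-lemma argument for full faithfulness is a workable alternative, but the paper instead constructs the two functors $G\mapsto\Coker[\Phi_R(H_0)\to\Phi_R(H_1)]$ and $M\mapsto\Ker[\Phi_R^{-1}(N_0)\to\Phi_R^{-1}(N_1)]$ and checks directly that they are mutually inverse, which avoids having to extend a given morphism $H\to H'$ to a morphism of presentations.
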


Over perfect field this is classical, over perfect valuation 
rings the result is proved in \cite{Berthelot-Parfait},
and the general case was first proved by Gabber 
by a reduction to the case of valuation rings.
Theorem \ref{Th-Gabber-trunc} is an immediate consequence
of Corollary \ref{Co-Gabber}.

The functor $\Phi_R^f$ can be defined by
$G\mapsto(\DD(G)_{W(R)},F,V)$ as above.

\begin{proof}[Proof of Corollary \ref{Co-Gabber}]
By a standard construction, 
the functor $\Phi_R$ and its inverse $\Phi_R^{-1}$ induce
formally a functor $\Phi_R^f$ as in Corollary \ref{Co-Gabber}
and a functor $\Psi_R^f$ in the opposite direction, which
are mutually inverse; this new definition of $\Phi_R^f$
coincides with the previous one by the construction of $\Phi_R$.

Let us explain this in more detail.
Since Zariski descent is effective for finite flat group schemes
and for finite Dieudonn\'e modules, 
in order to define $\Phi_R^f$ and $\Psi_R^f$
we may always pass to an open cover of $\Spec R$.
For each group $G$ in $(\pgrpC/R)$,
by a theorem of Raynaud \cite[Thm.~3.1.1]{BBM} 
there is an open cover of $\Spec R$ where $G$ can be written 
as the kernel of an isogeny of $p$-divisible groups $H_0\to H_1$. 
We define $\Phi_R^f(G)=\Coker[\Phi_R(H_0)\to\Phi_R(H_1)]$. 
This is independent of the chosen isogeny, functorial in $G$,
and compatible with localisations in $R$; see the proof of
Proposition \ref{Pr-Phi-n-R}. 

A homomorphism of projective Dieudonn\'e modules $u:N_0\to N_1$ 
over $R$ is called an isogeny if $u$ becomes bijective when $p$ 
is inverted. Then $u$ is injective, and its cokernel is
a finite Dieudonn\'e module $M$. In this case we define 
$\Psi_R^f(M)=\Ker[\Phi_R^{-1}(N_0)\to\Phi_R^{-1}(N_1)]$.
This depends only on $M$, the construction is functorial 
in $M$, compatible with localisations in $R$, and inverse to 
$\Phi_R^f$ when it is defined.

It remains to show that each finite Dieudonn\'e module 
$M$ over $R$ can be written as the cokernel of an isogeny of 
projective Dieudonn\'e modules locally in $\Spec R$. 
It is easy to find a commutative diagram 
$$
\xymatrix@M+0.2em{
N_1 \ar[r]^\epsilon \ar[d]^\pi &
Q \ar[r]^\iota \ar[d]^\psi &
N_1 \ar[d]^\pi \\
M \ar[r]^F & M \ar[r]^V & M
}
$$
where $Q$ and $N$ are free $W(R)$-modules of the same
finite rank, where $\iota,\epsilon,\pi$ are $W(R)$-linear
maps, and where $\psi$ is an $f$-linear map,
such that $\iota\epsilon=p$ and $\epsilon\iota=p$
and $\pi,\psi$ are surjective. The kernel of $\pi$
is a projective $W(R)$-module $N_0$ of finite type.
If we find a bijective $f$-linear map $F_1:Q\to N_1$ 
with $\pi F_1=\psi$, we can define $F=F_1\epsilon$ 
and $V=\iota F_1^{-1}$, and $M$ is the cokernel of the
isogeny if Dieudonn\'e modules $N_0\to N_1$.
Thus it suffices to show that $F_1$ exists locally in
$\Spec R$, which is a easy application of Nakayama's lemma.
\end{proof}


\section{Small presentations}
\label{Se-small-present}

In addition to the infinite dimensional presentation 
of $\BTst$ constructed in Lemma \ref{Le-BT-pres} one
can also find presentations where the covering space
is noetherian, or even of finite type. This will be
used in section \ref{Se-BT-functor}.
Assume that $G$ is a $p$-divisible group over
a $\ZZ_p$-algebra $A$. It defines a morphism
$$
\pi:\Spec A\to\BTst\times\Spec\ZZ_p.
$$
For each positive integer $m$ we also consider the restriction
$$
\pi^{(m)}:\Spec A/p^mA\to\BTst\times\Spec\ZZ/p^m\ZZ.
$$
We recall that the points of $\overline\BTst$ are pairs $(k,H)$
where $H$ is a $p$-divisible group over a
field $k$ of characteristic $p$, 
modulo the minimal equivalence relation such that
$(k,H)\sim(k',H')$ if there is 
$k\to k'$ with $H_{k'}\cong H'$.

\begin{Prop}
\label{Pr-AG-exist}
There is a pair $(A,G)$ with the following properties:
\begin{enumerate}
\renewcommand{\theenumi}{\roman{enumi}}
\item
The ring $A$ is $p$-adic and excellent.
\item
For each maximal ideal $\Fm$ of $A$, the residue field
$A/\Fm$ is perfect, and the group
$G\otimes\hat A_\Fm$ is a universal deformation of its
special fibre.
\item
All points of\/ $\overline\BTst$ 
which correspond to isoclinic $p$-divisible
groups lie in the image of $\pi$.
\end{enumerate}
\end{Prop}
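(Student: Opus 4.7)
The plan is to take $A$ as a finite product of universal deformation rings of carefully chosen isoclinic $p$-divisible groups, one for each isoclinic Newton polygon of height $h$. For each integer $d$ with $0 \le d \le h$, let $\nu_d$ denote the linear Newton polygon of slope $d/h$, and fix a minimal isoclinic $p$-divisible group $H_d$ with Newton polygon $\nu_d$ in the sense of Oort; by Manin's classification such an $H_d$ can be chosen over a finite field $k_d = \FF_{p^{N_d}}$. Let $R_d$ be the universal deformation ring of $H_d$ in the category of complete local noetherian $W(k_d)$-algebras with residue field $k_d$; by Grothendieck--Messing deformation theory, $R_d$ is a formal power series ring $W(k_d)[[t_1,\ldots,t_{d(h-d)}]]$ carrying a universal $p$-divisible group $\mathcal{G}_d$. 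I set $A=\prod_{d=0}^{h}R_d$ and let $G$ be the $p$-divisible group over $A$ whose component over the factor $R_d$ is $\mathcal{G}_d$.

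Properties (i) and (ii) are then formal. The ring $A$ is $p$-adic as a finite product of $p$-adic rings, and it is excellent as a finite product of excellent rings, each factor $R_d$ being a formal power series ring over a complete discrete valuation ring with perfect residue field. The maximal ideals of $A$ are precisely the ideals $\Fm_d\times\prod_{d'\ne d}R_{d'}$ for $d=0,\ldots,h$, each with perfect residue field $k_d$. The $\Fm$-adic completion of $A$ at such an ideal coincides with $R_d$, where the restriction of $G$ is $\mathcal{G}_d$, a universal deformation of its special fibre by construction.

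The essential content is property (iii). Given an isoclinic geometric point of $\overline\BTst$ represented by an isoclinic $p$-divisible group $H$ of Newton polygon $\nu_d$ over an algebraically closed field $k$ of characteristic $p$, one must exhibit a field $k'$ and a ring map $A\to k'$ whose pullback of $G$ realizes $H_{k'}$. Any such map factors through $A/pA=\prod_d R_d/pR_d$, and since this is a finite product of local rings, through exactly one factor $R_d/pR_d$. It therefore suffices to show: every isoclinic $p$-divisible group over an algebraically closed field of characteristic $p$ with Newton polygon $\nu_d$ arises, after passing to a larger algebraically closed field if necessary, as a geometric fibre of $\mathcal{G}_d$ at some point of $\Spec R_d/pR_d$. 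This is the point where Oort's theorem on minimal $p$-divisible groups enters: since $H_d$ is minimal for the Newton polygon $\nu_d$ (which is the unique isoclinic one of its height and dimension), the universal deformation space of $H_d$ sweeps out every isomorphism class of isoclinic $p$-divisible group with Newton polygon $\nu_d$, via specializations that arise, by the universal property of $R_d$, from appropriate $p$-adic discrete valuation rings over which $H$ can be made to specialize to an extension of scalars of $H_d$.

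The main obstacle is this surjectivity statement in step (iii); in particular, the fact that a single minimal representative $H_d$ per isoclinic Newton polygon is enough to exhaust the isoclinic locus. Steps (i) and (ii) are essentially formal from the structure of universal deformation rings, but the \emph{global} surjectivity of a finite union of local deformation spaces onto the (non-noetherian) isoclinic locus of $\overline\BTst$ rests on Oort's structure theory of minimal $p$-divisible groups and the corresponding specialization statements inside the Newton stratum $\BTst_{\nu_d}$.
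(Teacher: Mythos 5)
Your reduction of (i) and (ii) to the structure of a finite product of universal deformation rings is fine, and your reformulation of (iii) as a statement about the fibres of $\mathcal{G}_d$ over $\Spec R_d/pR_d$ is correct. But the proof has a genuine gap exactly at the point you yourself flag as ``the main obstacle'': the claim that the universal deformation space of a \emph{single} minimal isoclinic group $H_d$ sweeps out \emph{every} isomorphism class of isoclinic $p$-divisible group with Newton polygon $\nu_d$. Unwinding the universal property, this claim is equivalent to: for every isoclinic $G$ over an algebraically closed field there is a $p$-divisible group over a valuation ring with geometric generic fibre $G$ and special fibre $H_d$. This does not follow from Oort's minimality theorem (which says that a minimal group is determined by its $p$-kernel), nor from the mere existence of an isogeny $G\sim H_d$, nor from Oort's specialization result \cite[Thm.~3.2]{Oort-Newton+Formal} (which produces a family whose special fibre is \emph{some} isoclinic group, not a prescribed one). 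Note also that for $\gcd(d,h)>1$ the central leaves in the isoclinic stratum are zero-dimensional, so one cannot reach a given isomorphism class $G$ by a density or genericity argument; one really needs a specialization $G\rightsquigarrow H_d$ within the Newton stratum, and no such construction or precise reference is given. As written, (iii) is asserted, not proved.

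The paper avoids this difficulty entirely by replacing the formal completion at one point with the whole Rapoport--Zink isogeny space $M_d$ of a decent isoclinic $\GG_d$: by definition $M_d(\bar k)$ consists of \emph{all} pairs $(G,\rho)$ with $\rho$ a quasi-isogeny to $\GG_d$, so every isoclinic group of height $h$ appears tautologically, and (iii) reduces to a quasi-compactness statement, which is supplied by the classical bound on the degree of an isogeny between isogenous $p$-divisible groups of fixed height (so finitely many affine opens $\Spf A_{d,i}\subset M_d$ suffice). Properties (i) and (ii) then come from $M_d$ being locally formally of finite type over $\Spf\ZZ_p$ (hence each $A_{d,i}$ is $I$-adic noetherian with $A_{d,i}/I$ of finite type over $\FF_p$, so $p$-adic and excellent with finite residue fields) together with rigidity of quasi-isogenies, which identifies the completion of $M_d$ at a closed point with the universal deformation space there. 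If you want to salvage your approach, you would need to prove the specialization statement above; the honest fix is to work with the full (reduced) isogeny space rather than with its completion at the minimal point.
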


\begin{Prop}
\label{Pr-AG-present}
Assume that $(A,G)$ satisfies (i) and (ii). Then $\pi$ is 
ind-smooth, and $\pi^{(m)}$ is ind-smooth and quasi-\'etale.
If (iii) holds as well, the morphisms
$\pi$ and $\pi^{(m)}$ are also faithfully flat.
\end{Prop}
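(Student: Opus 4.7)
The argument rests on three inputs: the Grothendieck--Illusie description of universal deformation rings of $p$-divisible groups over perfect fields \cite{Illusie-BT}, Popescu's desingularisation theorem \cite[Thm.~2.5]{Popescu}, and the Newton stratification of $\overline\BTst$. The proof splits into ind-smoothness of $\pi$, the upgrade to quasi-\'etaleness for $\pi^{(m)}$, and faithful flatness under hypothesis (iii).

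For ind-smoothness of $\pi$, the plan is to pull back along a noetherian smooth presentation $Y\to\BTst\times\Spec\ZZ_p$ and apply Popescu to the resulting morphism $\pi_Y$ between noetherian affine schemes: since $A$ is excellent, $\pi_Y$ is ind-smooth if and only if it is regular (flat with geometrically regular fibres). Regularity can be tested at each maximal ideal $\Fm\subset A$ via the completion $\hat A_\Fm$, using that $A_\Fm\to\hat A_\Fm$ is regular (a part of excellence). Hypothesis (ii) identifies $\hat A_\Fm$ with the universal deformation ring of $G_\Fm$; since $A/\Fm$ is perfect, Grothendieck--Illusie yields $\hat A_\Fm\cong W(A/\Fm)[[t_1,\ldots,t_r]]$, where $r$ is the product of dimension and codimension of $G_\Fm$. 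By the universal property and the unobstructedness of $p$-divisible group deformations, the map $\Spec\hat A_\Fm\to\BTst\times\Spec\ZZ_p$ is formally smooth; pulling back along $Y$ gives a formally smooth morphism between noetherian rings, which is geometrically regular. This confirms regularity of $\pi_Y$ at each completion, and hence everywhere by excellence.

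For quasi-\'etaleness of $\pi^{(m)}$, the same Popescu argument (applied to the excellent quotient $A/p^m$) yields ind-smoothness; hence the cotangent complex $L_{(A/p^m)/(\BTst\times\Spec\ZZ/p^m)}$ is concentrated in degree zero and equal to $\Omega$. Quasi-\'etaleness then reduces to formal unramifiedness, i.e.\ $\Omega=0$. At each completion, $W_m(A/\Fm)$ is finite \'etale over $\ZZ/p^m$ and $\hat A_\Fm/p^m\cong W_m(A/\Fm)[[t_1,\ldots,t_r]]$ is the universal deformation ring of $G_\Fm\otimes W_m(A/\Fm)$ in the category of $W_m$-algebras. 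The universal property then forces unique lifts of a given $\BTst$-morphism $T\to\Spec A/p^m$ along an infinitesimal thickening $T\hookrightarrow T'$: the unique lift is the one whose associated deformation matches the prescribed $p$-divisible group on $T'$. This formal unramifiedness propagates from the completions to $A/p^m$ by excellence, yielding $\Omega=0$ and hence quasi-\'etaleness.

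For faithful flatness under (iii), ind-smoothness gives flatness, so by going-down the image of $\pi$ is stable under generisation. Among Newton polygons of height $h$ with endpoints $(0,0)$ and $(h,c)$, the maximal one is the straight line of slope $c/h$, which is isoclinic; hence every point of $\overline\BTst$ is a generisation of some isoclinic point, and (iii) then shows that the image contains all of $\overline\BTst$. The characteristic-$0$ geometric points of $\BTst\times\Spec\ZZ_p$ are generisations of characteristic-$p$ points with \'etale special fibre (which are isoclinic of slope $0$), hence also in the image. The same argument handles $\pi^{(m)}$. The main technical obstacle I foresee is the propagation in the first two parts of formal smoothness/unramifiedness from the completions $\hat A_\Fm$ to the global ring $A$ (resp.\ $A/p^m$); this rests on excellence, Popescu, and the good behaviour of the cotangent complex under regular base change.
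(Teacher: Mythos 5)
Your overall strategy (Popescu plus excellence to reduce ind-smoothness to regularity at the completions $\hat A_\Fm$, which are then identified with universal deformation rings; quasi-\'etaleness via ind-smoothness plus formal unramifiedness; surjectivity from flatness and the isoclinic locus) is the same as the paper's, but the two steps where all the work lies are asserted rather than proved. The central claim that $\Spec\hat A_\Fm\to\BTst\times\Spec\ZZ_p$ is formally smooth, and likewise that ``the universal property forces unique lifts'' along an arbitrary infinitesimal thickening $T\hookrightarrow T'$, does not follow from the universal property of the deformation ring: that property only governs artinian local test rings with residue field $A/\Fm$, whereas formal smoothness and formal unramifiedness must be tested against arbitrary square-zero extensions of affine schemes --- and that is exactly what is needed to make the pullback to a chart regular, resp.\ to make the relative $\Omega$ vanish. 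This is the heart of the proposition. The paper fills it with two concrete arguments: for ind-smoothness, a tangent-space comparison of $\hat A_\Fm$ with the completed local ring $\hat\OOO_{Y,y}$ of a smooth chart $Y\to\BTst_n$, producing a faithfully flat morphism $\Spec\hat\OOO_{Y,y}\to\Spec\hat A_\Fm$ over $\BTst_n$ along which regularity descends; for unramifiedness, the Kodaira--Spencer homomorphism $\Lambda_G\to\Omega$, shown to be an isomorphism because it becomes one after completion, where the module of differentials of $\hat A_\Fm/p^m\hat A_\Fm\cong W_m(A/\Fm)[[t_1,\dots,t_r]]$ is free on $dt_1,\dots,dt_r$. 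You would need to supply arguments of this kind. A related point of setup: $\BTst\times\Spec\ZZ_p$ admits no noetherian smooth presentation, so the pullback must be taken along presentations of the finite-type stacks $\BTst_n$ and the conclusion passed to the limit, using that ind-smoothness is stable under filtered colimits.

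In the surjectivity step, the inference from ``the isoclinic polygon is the maximal Newton polygon'' to ``every point of $\overline\BTst$ is a generisation of an isoclinic point'' is a non sequitur as written: the partial order on Newton polygons does not by itself produce a specialisation of a given $p$-divisible group to an isoclinic one. This is Grothendieck's specialisation conjecture, proved by Oort \cite[Thm.~3.2]{Oort-Newton+Formal}, which the paper invokes explicitly by constructing, over the valuation ring of an algebraic closure of $k((t))$, a group with the prescribed generic fibre and isoclinic special fibre. With that input your generisation argument, including the treatment of the characteristic-zero point via the constant \'etale group, goes through and agrees with the paper's.
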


We note the following consequence of
Propositions \ref{Pr-AG-exist} and \ref{Pr-AG-present}.

\begin{Cor}
\label{Co-small-present}
There is a presentation $\pi:\Spec A\to\BTst\times\Spec\ZZ_p$ 
such that $A$ is an excellent $p$-adic ring, $\pi$ ind-smooth,
and the residue fields of the maximal ideals of $A$ are perfect.
\qed
\end{Cor}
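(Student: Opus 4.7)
The plan is to simply combine Propositions \ref{Pr-AG-exist} and \ref{Pr-AG-present}. First I would invoke Proposition \ref{Pr-AG-exist} to produce a pair $(A,G)$ satisfying all three conditions (i), (ii), (iii); the pair comes together with the morphism $\pi:\Spec A\to\BTst\times\Spec\ZZ_p$ classifying $G$. By construction $A$ is an excellent $p$-adic ring and the residue fields of the maximal ideals of $A$ are perfect, which already accounts for two of the three statements we need.

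It then remains to verify that $\pi$ is a presentation of $\BTst\times\Spec\ZZ_p$, i.e.\ a faithfully flat morphism from an affine scheme. This is precisely the content of Proposition \ref{Pr-AG-present}: since conditions (i) and (ii) hold, $\pi$ is ind-smooth, and since (iii) also holds, $\pi$ is additionally faithfully flat. Ind-smoothness implies flatness, so $\pi$ is faithfully flat, and $\Spec A$ is affine by construction, so $\pi$ gives an ind-smooth presentation as required.

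There is no real obstacle in this corollary itself; the substantive work is encapsulated in the two cited propositions. The only minor point to be careful about is that ``presentation'' in the sense of section \ref{Subse-aff-alg-stack} requires faithful flatness from an affine scheme, and both properties are supplied by Proposition \ref{Pr-AG-present} together with the choice of $(A,G)$ from Proposition \ref{Pr-AG-exist}.
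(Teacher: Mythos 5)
Your proposal is correct and is exactly the argument the paper intends: the corollary is stated with an immediate \qed precisely because it is the direct combination of Propositions \ref{Pr-AG-exist} and \ref{Pr-AG-present}, with (i) and (ii) supplying excellence, $p$-adicness, and perfect residue fields, and (iii) upgrading the ind-smooth morphism to a faithfully flat presentation. Nothing is missing.
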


Let us prove Proposition \ref{Pr-AG-exist}.
As explained in \cite[Sec.~2]{NVW}, pairs $(A,G)$ that satisfy
(i)--(iii) can be constructed using integral models of 
suitable PEL-Shimura varieties. In that case, 
$A/p^nA$ is smooth over $\ZZ/p^n\ZZ$.
For completeness we give another construction using 
the Rapoport-Zink isogeny spaces of $p$-divisible groups.

\begin{proof}[Proof of Proposition \ref{Pr-AG-exist}]
Let $\GG$ be a $p$-divisible group over $\FF_p$ which is 
descent in the sense of \cite[Def.~2.13]{Rapoport-Zink}.
By \cite[Thm.~2.16]{Rapoport-Zink} there is a formal
scheme $M$ over $\Spf\ZZ_p$ which is locally
formally of finite type and which represents
the following functor on the category of rings
$R$ in which $p$ is nilpotent: $M(R)$ is the set
of isomorphism classes of pairs $(G,\rho)$ where
$G$ is a $p$-divisible group over $R$ and where
$\rho:\GG\otimes R/pR\to G\otimes_RR/pR$ is a quasi-isogeny. 
Let $G^{\univ}$ be the universal group over $M$.

If $U=\Spf A$ is an affine open subscheme of $M$,
then $A$ is an $I$-adic noetherian $\ZZ_p$-algebra
such that $A/I$ is of finite type over $\FF_p$.
Thus $A$ is $p$-adic and excellent; see \cite[Thm.~9]{Valabrega}.
The restriction of $G^{\univ}$ to $U$ defines
a $p$-divisible group $G$ over $\Spec A$ because
$p$-divisible groups over $\Spf A$ and over $\Spec A$ 
are equivalent; see \cite[Lemma 4.16]{Messing-Crys}.
For each maximal ideal $\Fm$ of $A$ the residue field
$A/\Fm$ is finite, and the definition of $M$ implies that
$G\otimes\hat A_\Fm$ is a universal deformation.
Thus (i) and (ii) hold.

For $0\le d\le h$ let $\GG_d$
be a decent isoclinic $p$-divisible group 
of height $h$ and dimension $d$ over $\FF_p$, 
and let $M_d$ be the associated isogeny space
over $\Spf\ZZ_p$.
It is well-known that there is a positive integer $\delta$
depending on $h$ such that if two $p$-divisible groups
of height $h$ over an algebraically closed field $k$ are
isogeneous, then there is an isogeny between them of
degree at most $\delta$. Thus there is a 
finite set of affine open subschemes $\Spf A_{d,i}$
of $M_d$ such that each isoclinic $p$-divisible group
of height $h$ over $k$ appears in the universal group 
over $\Spec A_{d,i,\red}$ for some $(d,i)$.
Let $A$ be the product of all $A_{d,i}$ and let
$G$ be the $p$-divisible group over $A$ defined
by the universal groups over $A_{d,i}$. Then
$(A,G)$ satisfies (i)--(iii).
\end{proof}

Let us prove the first part of Proposition \ref{Pr-AG-present}.

\begin{Lemma}
\label{Le-pi-ind-smooth}
If $(A,G)$ satisfies (i) and (ii), then $\pi$ is ind-smooth.
\end{Lemma}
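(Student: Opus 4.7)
The plan is to combine condition (ii) with Popescu's desingularisation theorem recalled in the preliminaries: a homomorphism of noetherian rings is ind-smooth if and only if it is regular. By Lemma \ref{Le-BT-pres} there is a presentation $X=\varprojlim X_n\to\BTst\times\Spec\ZZ_p$ in which each $X_n\to\BTst_n\times\Spec\ZZ_p$ is a smooth affine presentation of finite type over $\ZZ_p$ and the transition maps $X_{n+1}\to X_n$ are smooth and surjective. Unwinding the definition of ind-smoothness of the representable affine morphism $\pi$, it is enough to check that each pullback $Z_n=\Spec A\times_{\BTst_n\times\Spec\ZZ_p}X_n\to X_n$ is ind-smooth, since then the ring map $\OOO(X)=\varinjlim\OOO(X_n)\to\varinjlim\OOO(Z_n)$ on the global pullback is a filtered colimit of ind-smooth maps. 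Both $A$ and $\OOO(X_n)$ being noetherian (as $A$ is excellent and $X_n$ is of finite type over $\ZZ_p$), Popescu then reduces the task to showing that $Z_n\to X_n$ is a \emph{regular} morphism of noetherian schemes for every $n$.

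Condition (ii) already yields regularity of $A$ itself: at each maximal ideal $\Fm$ the residue field $k=A/\Fm$ is perfect, and the universal deformation ring $\hat A_\Fm$ of $G\otimes\hat A_\Fm$ is isomorphic to $W(k)[[t_1,\dots,t_{rs}]]$ by the classical description of deformations of $p$-divisible groups over perfect base fields; this is regular, excellence of $A$ transports regularity back from the completion to $A_\Fm$, and every prime of $A$ specialises to a maximal one. To obtain regularity of $Z_n\to X_n$ I would check the condition at a closed point $z\in Z_n$ lying above $\Fm\in\Spec A$ and $y\in X_n$, and then spread out. Smoothness of $X_n\to\BTst_n$ identifies $\hat\OOO_{X_n,y}$ with a formally smooth extension of the miniversal deformation ring of the truncated group $G_\Fm[p^n]$, while Illusie's smoothness of the truncation $\BTst\to\BTst_n$ ensures that the universal deformation ring $\hat A_\Fm$ of the full group $G_\Fm$ is formally smooth over this same deformation ring of $G_\Fm[p^n]$. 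Combining these two extensions, $\hat\OOO_{Z_n,z}$ is formally smooth, hence regular, over $\hat\OOO_{X_n,y}$. Since regularity of a morphism between noetherian schemes is an open condition and every point of $Z_n$ specialises to some closed point, this yields regularity of $Z_n\to X_n$ everywhere, completing the reduction.

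The main obstacle is the last identification: rigorously presenting $\hat\OOO_{Z_n,z}$ as a completed tensor product of $\hat A_\Fm$ with $\hat\OOO_{X_n,y}$ over the deformation ring of $G_\Fm[p^n]$, and concluding formal smoothness over $\hat\OOO_{X_n,y}$. This is precisely where the two hypotheses combine — the universal deformation property in (ii) governing the full $p$-divisible group $G_\Fm$, and Illusie's smoothness of $\BTst\to\BTst_n$ bridging deformations of $G_\Fm$ and of its truncation $G_\Fm[p^n]$.
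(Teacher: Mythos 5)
Your skeleton --- reduce via Popescu to regularity of a morphism of noetherian schemes, localise at closed points, and feed in the two inputs ``$G\otimes\hat A_\Fm$ is universal'' and ``$\BTst\to\BTst_n$ is smooth'' --- is exactly the paper's, but the step you defer as ``the main obstacle'' is a genuine gap rather than a technical loose end. There is no ``miniversal deformation ring of the truncated group $G_\Fm[p^n]$'' over which your completed tensor product could be formed: the deformation functor of a truncated Barsotti--Tate group over a field is not pro-representable, because such a group has non-trivial \emph{infinitesimal} automorphisms. (This is a central theme of this very paper: $\uAut^o(G)$ is a non-trivial finite flat infinitesimal group scheme, of degree $p^{rsn}$ for an ordinary group.) At best Schlessinger gives a hull, and the fibre product $Z_n=\Spec A\times_{\BTst_n}X_n$ is \emph{not} the completed tensor product of $\hat A_\Fm$ and $\hat\OOO_{X_n,y}$ over any such hull --- its local structure also records an isomorphism between the two induced truncated groups, i.e.\ a torsor contribution from the non-smooth automorphism scheme. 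So the two ``formally smooth extensions'' you want to combine have no common base ring to be combined over, and the decisive implication is missing.

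The paper closes this gap by running the comparison in the opposite direction. For a closed point $x\in\Spec A$ with residue field $k$, choose a smooth presentation $Y_0\to\BTst_n$ through which $\pi_n(x)$ lifts, set $Y=Y_0\otimes_{\ZZ_p}W(k)$ and $\hat Y=\Spec\hat\OOO_{Y,y}$. By \cite[Thm.~4.4]{Illusie-BT} the truncated group over $\hat Y$ lifts to a $p$-divisible group $H$ deforming $G_x$, and the universal property of $\hat A_\Fm=\hat\OOO_{X,x}$ produces a morphism $\psi:\hat Y\to\Spec\hat A_\Fm$ over $\BTst_n$. A tangent-space computation ($\Spec\hat A_\Fm\to\BTst$ and $\BTst\to\BTst_n$ bijective, $\hat Y\to\BTst_n$ surjective on tangent spaces) shows $\psi$ is surjective on tangent spaces, hence $\hat\OOO_{Y,y}$ is a power series ring over $\hat A_\Fm$ and $\psi$ is faithfully flat; regularity of $\hat A_\Fm\to\BTst_n$ then \emph{descends} along $\psi$ from the automatic regularity of $\hat Y\to\BTst_n$. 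If you want to keep your original shape of argument, the fix is to replace the non-existent deformation ring by the deformation \emph{groupoid}: rigidity of $p$-divisible groups over artinian thickenings makes $\Spec\hat A_\Fm\to\BTst$ formally \'etale at $x$, so the composite to $\BTst_n$ is formally smooth, and one base-changes along the presentation $X_n\to\BTst_n$; but some such device --- the paper's faithfully flat section $\psi$, or the stacky completion --- is indispensable.
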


\begin{proof}
Let $X=\Spec A$ and $\XXX=\BTst\times\Spec\ZZ_p$ and
$\XXX_n=\BTst_n\times\Spec\ZZ_p$.
For $T\to\XXX$ with affine $T$, the affine scheme 
$X\times_\XXX T$ is the projective limit of the 
affine schemes $X\times_{\XXX_n}T$. Since ind-smooth 
homomorphisms are stable under direct limits of rings,
to prove the lemma it suffices to show that the composition
$$
\pi_n:X\xrightarrow\pi\XXX\to\XXX_n
$$ 
is ind-smooth for each $n\ge 1$. By Popescu's theorem
this means that $\pi_n$ is regular. By EGA IV, 6.8.3 
this holds if and only if for each closed point $x\in X$
the composition
$$
\hat X=\Spec\hat\OOO_{X,x}\to X\to\XXX_n
$$
is regular. Let $k$ be the finite residue field of $x$.
Let $Y_0\to\XXX_n$ be a smooth presentation such that
$\pi_n(x)$ lifts to a $k$-valued point $y_0$ of $Y_0$.
Let $Y=Y_0\otimes_{\ZZ_p}W(k)$ and choose a closed point 
$y\in Y$ over $y_0$ with residue field $k$. 
Let $\hat Y=\Spec\hat\OOO_{Y,y}$.
Since $Y\to\XXX_n$ is smooth, $\hat Y\to\XXX_n$ is regular.
There is a $p$-divisible
group $H$ over $\hat Y$ such that the special fibre $H_y$
is equal to $G_x$ and such that the truncation $H[p^n]$ is the 
inverse image of the universal group over $\XXX_n$;
see \cite[Thm.~4.4]{Illusie-BT}. 
There is a unique morphism $\psi:\hat Y\to\hat X$ such that
$\psi^*G_{\hat X}$ is equal to $H$ as deformations of $G_x$
because $G_{\hat X}$ is assumed to be universal.
$$
\xymatrix@M+0.2em{
\hat X \ar[r] & X \ar[r]^G & \XXX \ar[r] & \XXX_n \\
& \hat Y \ar[ul]^\psi \ar[rr] \ar[ur]_H && Y \ar[u]
}
$$
Since $\XXX_n$ is smooth over $\ZZ_p$ by \cite[Thm.~4.4]{Illusie-BT}, 
$X$ and $Y_0$ are smooth over $\ZZ_p$. 
Thus $\hat\OOO_{X,x}$ and $\hat\OOO_{Y,y}$ are power series
rings over $W(k)$. The diagram induces a diagram of the tangent
spaces at the images of the closed point of $\hat Y$.
Here $\hat X\to\XXX$ is bijective on the tangent spaces 
since $G_{\hat X}$ is a universal deformation, $\XXX\to\XXX_n$
is bijective on the tangent spaces by \cite[Thm.~4.4]{Illusie-BT},
and $\hat Y\to\XXX_n$ is surjective on the tangent spaces since
$Y\to\XXX_n$ is smooth. Thus
$\psi$ is surjective on the tangent spaces, which implies that 
$\hat\OOO_{Y,y}$ is a power series
ring over $\hat\OOO_{X,x}$; in particular $\psi$ is
faithfully flat. Since $\hat Y\to\XXX_n$ is regular
it follows that $\hat X\to\XXX_n$ is regular.
\end{proof}

Now we prove the second part of Proposition \ref{Pr-AG-present}.
This is not used later.

\begin{Lemma}
\label{Le-pi-quasi-etale}
If $(A,G)$ satisfies (i) and (ii), then $\pi^{(m)}$ is quasi-\'etale.
\end{Lemma}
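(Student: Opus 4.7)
The plan is to reduce quasi-\'etaleness of $\pi^{(m)}$ to the vanishing of its module of relative differentials, then to a tangent-space computation at each closed point, where assumption (ii) applies.

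First, I would observe that $\pi^{(m)}$ is ind-smooth, being the base change of the ind-smooth $\pi$ (Lemma \ref{Le-pi-ind-smooth}) along $\Spec\ZZ/p^m\ZZ \to \Spec\ZZ_p$, and ind-smoothness is stable under base change. Hence $L_{\pi^{(m)}}$ is concentrated in degree zero and equals a projective module $\Omega := \Omega^1_{\pi^{(m)}}$ over $\bar A := A/p^m A$, so it remains to show $\Omega = 0$. Because $\bar A$ is noetherian (as $A$ is excellent) and $\Omega$ is projective, and each localization $\Omega_\Fm$ is free over the local ring $\bar A_\Fm$, this reduces to checking that $\Omega \otimes_{\bar A} k(\Fm) = 0$ at every maximal ideal $\Fm \subset \bar A$. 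Combined with the surjectivity on tangent spaces supplied by ind-smoothness, this amounts to showing that $\pi^{(m)}$ is bijective on tangent spaces at every closed point of $\Spec \bar A$.

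Now I fix such an $\Fm$, lying under the maximal ideal $\Fm' \subset A$ with residue field $k$ and special fibre $G_x := G \otimes_A k$. By assumption (ii), $\Spf \hat A_{\Fm'}$ pro-represents the deformation functor of $G_x$ on Artinian local $W(k)$-algebras with residue field $k$. Reducing mod $p^m$, the formal spectrum $\Spf(\hat A_{\Fm'}/p^m)$ pro-represents the restriction of this functor to Artinian local $\ZZ/p^m\ZZ$-algebras. On the other hand, the formal neighborhood of $G_x \in (\BTst \times \Spec\ZZ/p^m\ZZ)(k)$ pro-represents exactly the same functor, since for a ring $R$ with $p^m R = 0$ a $p$-divisible group over $R$ is classified equally well by $\BTst$ and by $\BTst \times \Spec\ZZ/p^m\ZZ$. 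Hence the induced morphism of formal neighborhoods is an equivalence of pro-representable functors, and in particular bijective on tangent spaces, giving $\Omega \otimes k(\Fm) = 0$.

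The main obstacle is the identification of the two pro-representing deformation functors at $\Fm$; once this comparison is in place the rest of the argument is routine localization and Nakayama on projective modules. The comparison itself is essentially a tautology, resting on the observation that test rings annihilated by $p^m$ cannot distinguish the ambient base $\Spec\ZZ_p$ from $\Spec\ZZ/p^m\ZZ$, so universality of $\hat A_{\Fm'}$ as a deformation ring of $G_x$ over $W(k)$ passes automatically to universality of $\hat A_{\Fm'}/p^m$ over $W(k)/p^m$.
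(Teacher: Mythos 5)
Your overall strategy coincides with the paper's: use ind-smoothness to reduce quasi-\'etaleness to the vanishing of $\Omega:=\Omega_{\pi^{(m)}}$, i.e.\ to $\pi^{(m)}$ being formally unramified, and then feed in assumption (ii) at the closed points. The tangent-space computation at a maximal ideal $\Fm$ via the universal deformation property of $\hat A_{\Fm}$ is also essentially what the paper does. The problem is the globalization step in between.

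You assert that ind-smoothness makes $L_{\pi^{(m)}}$ quasi-isomorphic to a \emph{projective} module in degree zero, and you need projectivity to pass from ``$\Omega\otimes_{\bar A}k(\Fm)=0$ for all maximal ideals $\Fm$'' to ``$\Omega=0$''. But a filtered colimit of smooth algebras only gives that $L$ is concentrated in degree zero and that $\Omega$ is a filtered colimit of finite projective modules, hence \emph{flat}; projectivity is genuinely stronger (it is equivalent to formal smoothness of the map, and a filtered colimit of smooth algebras need not be formally smooth), and you give no argument for it. Without projectivity or finite generation the reduction to closed-point fibres fails: over $\bar A=\FF_p[[t]]$ the flat module $\FF_p((t))=\varinjlim(\bar A\xrightarrow{t}\bar A\xrightarrow{t}\cdots)$ has vanishing fibre at the unique maximal ideal but is nonzero, and nothing you have said rules out such behaviour for $\Omega$ at non-closed points. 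The paper closes exactly this gap by proving the stronger statement that $\Omega\otimes_{\bar A}\hat A_{\Fm}=0$ for every maximal ideal, which does suffice since $\bar A_{\Fm}\to\hat A_{\Fm}$ is faithfully flat and a module with vanishing localizations at all maximal ideals is zero. To get this one needs to control what happens to differentials under completion: the paper uses the exact sequence
$$0\to i^*\Omega_{X^{(m)}}\to\Omega_{\hat X^{(m)}}\to\Omega_{\hat X^{(m)}/X^{(m)}}\to 0$$
together with the explicit description $\hat A_{\Fm}/p^m\cong W_m(k)[[t_1,\ldots,t_r]]$ (this is where excellence and the perfectness of $A/\Fm$ enter) to see that $\Omega_{\hat X^{(m)}/X^{(m)}}=0$ and that the Kodaira--Spencer map becomes an isomorphism of finite free modules after $\otimes\,\hat A_{\Fm}$. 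Your argument, which only compares tangent spaces (equivalently, fibres of $\Omega$ at closed points), does not capture this; you would need either to justify projectivity of $\Omega$ or to upgrade your comparison of deformation functors to a statement about the completed local rings as in the paper.
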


\begin{proof}
Let $X^{(m)}=\Spec A/p^m A$ and 
$\XXX^{(m)}=\BTst\times\Spec\ZZ/p^m\ZZ$. For a morphism
$Y\to\XXX^{(m)}$ with affine $Y$ let $Z=X\times_\XXX Y$
and let $\pi':Z\to Y$ be the second projection.
We have to show that $L_{Z/Y}$ is acyclic.
Here $\pi'$ is ind-smooth by Lemma \ref{Le-pi-ind-smooth}.
Thus $L_{Z/Y}$ is isomorphic to $\Omega_{Z/Y}$, and it suffices 
to show that $\pi^{(m)}$ is formally unramified. 

For an arbitrary
morphism $g:\Spec B\to\XXX^{(m)}$, given by a
$p$-divisible group $H$ over $B$, we write
$\Lambda_H=\Lie(H)\otimes_B\Lie(H^\vee)$.
There is a Kodaira-Spencer homomorphism
$\kappa_H:\Lambda_H\to\Omega_B$, which is surjective
if and only if $g$ is formally unramified. 
For a closed point $x\in X^{(m)}$ let $A_x$ be the
complete local ring at $x$ and let $\hat X^{(m)}=\Spec A_x$.
We have $i:\hat X^{(m)}\to X^{(m)}$.
There is a commutative diagram
$$
\xymatrix@M+0.2em{
0 \ar[r] &
i^*\Omega_{X^{(m)}} \ar[r] \ar[d]^{i^*\kappa_G} &
\Omega_{\hat X^{(m)}} \ar[r] \ar[d]^{\kappa_{i^*G}} &
\Omega_{\hat X^{(m)}/X^{(m)}} \ar[r] & 0 \\
& i^*\Lambda_G \ar@{=}[r] & \Lambda_{i^*G}.
}
$$

The upper line is exact because $\hat X^{(m)}\to X^{(m)}$ 
is regular, thus ind-smooth, which implies that 
$L_{\hat X^{(m)}/X^{(m)}}$ is concentrated in degree zero.
Since $A_x$ is isomorphic to a power series ring
$W_m(k)[[t_1,\ldots,t_r]]$ for a perfect field $k$, the $A_x$-module 
$\Omega_{\hat X^{(m)}}$ is free with basis $dt_1,\ldots,dt_r$.
These elements appear in $i^*\Omega_{X^{(m)}}$,
and thus $\Omega_{\hat X^{(m)}/X^{(m)}}$ is zero.
The homomorphism $\kappa_{i^*G}$ is an isomorphism
because $i^*G$ is assumed to be universal. 
Thus $i^*\kappa_G$ is an isomorphism for all $x$,
which implies that $\kappa_G$ is an isomorphism,
and $\pi^{(m)}$ is formally unramified as desired.
\end{proof}

Finally we prove the last part of Proposition \ref{Pr-AG-present}.

\begin{Lemma}
\label{Le-pi-surjective}
Assume that $(A,G)$ satisfies (i) and (ii).
Then $\pi$ is surjective if and only if (iii) holds.
\end{Lemma}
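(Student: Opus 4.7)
The plan is to prove both implications. The ``only if'' direction is immediate: isoclinic characteristic-$p$ points are particular points of $\BTst\times\Spec\ZZ_p$, so surjectivity of $\pi$ forces them into its image.

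For the converse, assume (iii). By Lemma \ref{Le-pi-ind-smooth} the morphism $\pi$ is ind-smooth and hence flat. I would then invoke going-down for flat morphisms to conclude that the image of $\pi$ in $|\BTst\times\Spec\ZZ_p|$ is stable under generization: after pulling back along a smooth presentation $Y\to\BTst\times\Spec\ZZ_p$ the projection $\Spec A\times_{\BTst\times\Spec\ZZ_p}Y\to Y$ is a flat morphism of schemes whose image is closed under generization, and since $Y\to\BTst\times\Spec\ZZ_p$ is smooth (hence also satisfies going-down) generizations in $|\BTst\times\Spec\ZZ_p|$ may be lifted to $|Y|$ and the conclusion descends. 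It therefore suffices to show that every point of $\BTst\times\Spec\ZZ_p$ is a generization of some isoclinic characteristic-$p$ point, which by (iii) lies in the image.

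The two types of points can be treated separately. For a characteristic-$p$ point represented by a $p$-divisible group $H$ over a field $K$, I would first extend $K$ to an algebraic closure $\overline K$, which does not change the point, and then apply \cite[Thm.~3.2]{Oort-Newton+Formal} exactly as in the proof of Lemma \ref{Le-open-in-BT}: this produces a discrete valuation ring $R$ of characteristic $p$ with residue field $\overline K$ and a $p$-divisible group $H'$ over $R$ whose generic fibre becomes isomorphic to $H\otimes_K\overline K$ after base change and whose special fibre is isoclinic. The resulting morphism $\Spec R\to\BTst\times\Spec\ZZ_p$ realises the given point as a generization of an isoclinic characteristic-$p$ point. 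For a characteristic-$0$ point, Tate-module classification shows that every $p$-divisible group of height $h$ over an algebraically closed characteristic-$0$ field is isomorphic to $(\QQ_p/\ZZ_p)^h$, so $\BTst\times\Spec\ZZ_p$ has a unique characteristic-$0$ point; the constant étale $p$-divisible group over $\Spec\ZZ_p$ then realises this point as a generization of the étale (isoclinic of slope $0$) characteristic-$p$ point.

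The main obstacle I anticipate is making the going-down argument precise for a morphism whose target is a stack rather than a scheme; the key step is the reduction via a smooth presentation, after which the statement becomes the classical going-down for flat morphisms combined with going-down for the smooth covering itself. A secondary subtlety lies in the characteristic-$0$ case, where one must observe that $\BTst\times\Spec\ZZ_p$ has only a single characteristic-$0$ point and that the constant étale family over $\Spec\ZZ_p$ connects it to an isoclinic characteristic-$p$ point.
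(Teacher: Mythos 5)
Your proof is correct and follows essentially the same route as the paper: flatness of $\pi$ (coming from ind-smoothness via Lemma \ref{Le-pi-ind-smooth}) makes its image stable under generization, and the Oort construction from the proof of Lemma \ref{Le-open-in-BT} connects every characteristic-$p$ point of $\BTst\times\Spec\ZZ_p$ to an isoclinic one over a valuation ring; you additionally treat the characteristic-$0$ point explicitly, which the paper leaves implicit. One small slip: the ring of integers $R$ of an algebraic closure of $\overline K((t))$ is a rank-one but non-discrete valuation ring, though this is harmless since your argument only uses that the generic point of $\Spec R$ is a generization of the closed point.
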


\begin{proof}
It suffices to prove the implication $\Leftarrow$.
We write $X=\Spec A$ and $\XXX=\BTst\times\Spec\ZZ_p$.
Let $k$ be an algebraically closed field of characteristic
$p$ and let $H:\Spec k\to\XXX$ be a geometric point, 
i.e.\ a $p$-divisible group $H$ over $k$.
It suffices to show that $X_H=X\times_\XXX\Spec k$ is non-empty.
Let $K$ be an algebraic closure of $k((t))$ and let
$R$ be the ring of integers in $K$. 
As in the proof of Lemma \ref{Le-open-in-BT} we find
a $p$-divisible group $H'$ over $R$ with generic fibre
$H\otimes_kK$ and with isoclinic special fibre.
We consider the morphism $\Spec R\to\XXX$ defined by $H'$.
The projection $X\times_\XXX\Spec R\to\Spec R$
is flat because it is ind-smooth by Lemma 
\ref{Le-pi-ind-smooth}, and its image contains the 
closed point of $\Spec R$ by (iii).
Thus the projection is surjective,
which implies that $X\times_\XXX\Spec K=X_H\otimes_kK$ 
is non-empty; thus $X_H$ is non-empty.
\end{proof}

\begin{proof}[Proof of Proposition \ref{Pr-AG-present}]
Use Lemmas \ref{Le-pi-ind-smooth},
\ref{Le-pi-quasi-etale}, and \ref{Le-pi-surjective}.
\end{proof}


\section{Relation with the functor BT}
\label{Se-BT-functor}

For a $p$-adic ring $R$
we consider the following commutative diagram of categories, 
where f.=formal, g.=groups, n.=nilpotent.
The vertical arrows are the inclusions.
The functor $\BT_R$ is defined in \cite[Thm.~81]{Zink-Disp}.
Its restriction to nilpotent displays gives
formal $p$-divisible groups by \cite[Cor.~89]{Zink-Disp}.
$$
\xymatrix@M+0.2em{
(\pdivC/R) \ar[r]^-{\Phi_R} &
(\dispC/R) \ar[r]^-{\BT_R} &
(\fgC/R) \\
(\fpdivC/R) \ar[r]^-{\Phi^1_R} \ar[u] &
(\ndispC/R) \ar[r]^-{\BT^1_R} \ar[u] &
(\fpdivC/R) \ar[r]^-{\Phi^1_R} \ar[u] &
(\ndispC/R)
}
$$
Here $\BT_R^1$ is an equivalence by \cite{Zink-Disp} 
if $R$ is excellent and by \cite{Lau-Disp} in general,
and $\Phi_R^1$ is an equivalence by 
Theorem \ref{Th-formal-classif}.

\begin{Lemma}
\label{Le-Phi-BT}
There is a natural isomorphism of functors
$\Phi_R^1\circ\BT_R^1\cong\id$.
\end{Lemma}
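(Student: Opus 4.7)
The plan is to construct the required natural isomorphism by combining the uniqueness of $\Phi_R$ established in Proposition \ref{Pr-disp-functor} with Zink's description of the Dieudonn\'e crystal of $\BT_R(\PPP)$ in terms of the nilpotent display $\PPP$. The latter provides, for every nilpotent display $\PPP$ over a $p$-adic ring $R$, a canonical, base-change compatible isomorphism of filtered $F$-$V$-modules
\[
\alpha_\PPP\colon\Upsilon_R(\PPP)\cong\Theta_R(\BT_R(\PPP))
\]
as recalled in the introduction (``a description of the Dieudonn\'e crystal of a $p$-divisible group $\BT(\PPP)$ in terms of the nilpotent display $\PPP$''); concretely, evaluating the crystal of $\BT_R(\PPP)$ at $W(R)\to R$ gives back the underlying module $P$ of $\PPP$, with the induced Frobenius and Verschiebung matching $F^\sharp$ and $V^\sharp$.

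The first step handles the case in which $W(R)$ has no $p$-torsion, so that the functor $\Upsilon_R$ is fully faithful. In this situation, Proposition \ref{Pr-disp-functor} characterises $\Phi_R(\BT_R(\PPP))$ as the unique display whose image under $\Upsilon_R$ is identified with $\Theta_R(\BT_R(\PPP))$. Combining this characterisation with $\alpha_\PPP$ and the full faithfulness of $\Upsilon_R$ yields a unique isomorphism of displays $\PPP\cong\Phi_R^1(\BT_R^1(\PPP))$, functorial in $\PPP$, because on nilpotent displays $\Phi_R^1\circ\BT_R^1$ agrees with $\Phi_R\circ\BT_R$.

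For a general $p$-adic ring $R$ I would argue by fpqc descent, exactly as in the proof of Proposition \ref{Pr-disp-functor}. Choose a presentation $\pi\colon\Spec A\to\BTst\times\Spec\ZZ_p$ as in Lemma \ref{Le-BT-pres}, with the key property that the $p$-adic completions of $A$ and of $A\otimes_{\ZZ_p} A$ are $p$-torsion free, so that their Witt rings satisfy the hypothesis of the first step. Any nilpotent display $\PPP$ over $R$ corresponds via $\BT_R^1$ to a formal $p$-divisible group which, after passing to a suitable faithfully flat cover $R\to R'$, is pulled back from $\hat A$. Over $R'$ the first step produces a canonical isomorphism; uniqueness of that isomorphism applied over $R'\otimes_R R'$ makes it automatically compatible with the descent datum, and descent for displays (\cite[Thm.~37]{Zink-Disp}, or at truncated level Proposition \ref{Pr-trunc-descent} combined with Lemma \ref{Le-disp-trunc-disp}) then delivers the natural isomorphism over $R$.

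The main obstacle is verifying that Zink's construction really yields a natural isomorphism $\alpha_\PPP$ that is functorial in $\PPP$ and compatible with arbitrary base change in $R$; this is essentially built into the definition of $\BT$ in \cite{Zink-Disp}, but needs to be spelled out. Once this is granted, the remainder is the standard uniqueness-plus-descent pattern already employed for $\Phi_R$ itself, and the naturality of $\alpha_\PPP$ transfers directly to the naturality of $\PPP\cong\Phi_R^1(\BT_R^1(\PPP))$.
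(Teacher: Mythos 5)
Your first two steps coincide with the paper's: Zink's results (the paper cites \cite[Thm.~94 and Cor.~97]{Zink-Disp}) supply a functorial isomorphism $u_R(\PPP)$ on the level of filtered $F$-$V$-modules, and when $W(R)$ has no $p$-torsion the relation $pF_1=F$ (equivalently, full faithfulness of $\Upsilon_{\!R}$) forces this isomorphism to preserve $F_1$. The gap is in your globalization step. You propose to pull everything back from the $p$-adic completion $\hat A$ of a presentation $\Spec A\to\BTst\times\Spec\ZZ_p$, where $W(\hat A)$ is $p$-torsion free, and to apply the first step there. But the universal $p$-divisible group over $\hat A$ is not formal and the display $\Phi_{\hat A}(G_{\hat A})$ is not nilpotent, so neither $\BT^1$ nor the isomorphism $u$ is defined over $\hat A$: the composite $\Phi^1\circ\BT^1$ only makes sense after restricting to the formal locus, i.e.\ to $\Spec A/J^n$ where $J$ is the ideal with $X\times_{\BTst}\BTst^o=\Spec A/J$. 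Since $p\in J$, the rings $A/J^n$ have $p$ nilpotent, so the $p$-torsion-freeness on which your first step rests is destroyed exactly where you need it.

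The missing idea, which is how the paper proceeds, is to take a \emph{noetherian} presentation (Corollary \ref{Co-small-present}, not the infinite-dimensional one of Lemma \ref{Le-BT-pres}) and pass to the $J$-adic completion of $A$: by noetherianity this is flat over $A$, hence over $\ZZ_p$, hence $p$-torsion free, and the compatible system of nilpotent displays and isomorphisms $[u_{A/J^n}(\Phi^1_{A/J^n}(H))]_n$ lives over it, so $F_1$ is preserved in the universal nilpotent situation. Two further points you elide are then needed to descend to a general $R$: the ideal $J$ is finitely generated (Lemma \ref{Le-BT-o}), so that $JS$ being a nilideal on $S=R\otimes_{\BTst}A$ implies it is nilpotent and $\Spec S\to\Spec A$ factors through some $\Spec A/J^n$; and $R\to S$ is faithfully flat, hence injective, so that preservation of $F_1$ --- which is a property of the already globally defined $u_R(\PPP)$, not extra structure, so no descent of the isomorphism itself is required, contrary to your framing --- can be checked after base change to $S$. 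Note also that the paper first reduces to $\PPP=\Phi_R^1(G)$ using that $\Phi_R^1$ is an equivalence (Theorem \ref{Th-formal-classif}), which is available at this point of the paper.
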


We have a functor
$\Upsilon_{\!R}:(\dispC/R)\to
(\text{filtered $F$-$V$-modules over }\WWW_R)$.

\smallskip

Let $\Upsilon_{\!R}^1:(\ndispC/R)\to
(\text{filtered $F$-$V$-modules over }\WWW_R)$
be its restriction.

\begin{proof}[Proof of Lemma \ref{Le-Phi-BT}]
By \cite[Thm.~94 and Cor.~97]{Zink-Disp}, for each nilpotent 
display $\PPP$ over $R$ there is an isomorphism,
functorial in $\PPP$ and in $R$,
$$
u_R(\PPP):\Upsilon_{\!R}^1(\Phi_R^1(\BT_R^1(\PPP)))
\cong\Upsilon_{\!R}^1(\PPP).
$$
We have to show that $u_R(\PPP)$ commutes with $F_1$.
This is automatic if $R$ has no $p$-torsion because
then $W(R)$ has no $p$-torsion, and $pF_1=F$.
Since $\Phi_R^1$ is an equivalence, we may assume
that $\PPP=\Phi_R^1(G)$ for a formal $p$-divisible
group $G$ over $R$. We may also assume that $p$ is
nilpotent in $R$.

Let $\Spec A\to\BTst\times\Spec\ZZ_p$ be a presentation 
given by a $p$-divisible group $H$ over $A$
such that $A$ is noetherian;
see Corollary \ref{Co-small-present}.
Let $J\subset A$ be the ideal such that
$X\times_{\BTst}\BTst^o=\Spec A/J$ and
let $\hat A$ be the $J$-adic completion of $A$.
If $A$ is constructed using isogeny spaces as in the
proof of Proposition \ref{Pr-AG-exist}, 
then $A$ is $I$-adic for an ideal 
$I$ which contains $J$, and thus $A$ is already $J$-adic.
In any case, since $\ZZ_p\to A\to\hat A$ is flat,
$\hat A$ has no $p$-torsion. Thus
the compatible system $[u_{A/J^n}(\Phi_{A/J^n}^1(H))]_{n\ge 1}$ 
necessarily preserves $F_1$.

For $G$ over $R$ as above we consider 
$\Spec R\times_{\BTst}\Spec A=\Spec S$.
Since $\Spec R_{\red}\to\Spec R\to\BTst$ factors
over $\BTst^o$, the ideal $JS$ is a nilideal.
Thus $JS$ is nilpotent as $J$ is finitely generated
by Lemma \ref{Le-BT-o}; hence for sufficiently large $n$ 
we have $\Spec S=\Spec R\times_{\BTst}\Spec A/J^n$.
By construction, $R\to S$ is faithfully flat,
thus injective, and $G\otimes_RS\cong H\otimes_AS$.
Thus $u_R(\Phi_R^1(G))$ preserves $F_1$ since this
holds for $u_{A/J^n}(\Phi_{A/J^n}^1(H))$.
\end{proof}

\begin{Remark}
The above proof of Lemma \ref{Le-Phi-BT} uses than $\Phi_R^1$ 
is an equivalence, but this could be avoided by an more careful
(elementary) analysis of the stack of displays over rings in 
which $p$ is nilpotent. Thus the facts that $\Phi_R^1$ and
$\BT_R^1$ are equivalences can be derived from each other. 
\end{Remark}

Since $\Phi_R^1$ is an equivalence, the isomorphism 
$\Phi_R^1\circ\BT_R^1\cong\id$ of Lemma \ref{Le-Phi-BT} 
induces for each formal $p$-divisible group $G$ over a $R$
an isomorphism
$$
\rho_R(G):G\cong\BT_R^1(\Phi_R^1(G)).
$$

\begin{Thm}
\label{Th-BT-Phi}
For each $p$-divisible group $G$ over a $p$-adic ring $R$ there 
is a unique isomorphism which is functorial in $G$ and $R$ 
$$
\tilde\rho_R(G):\hat G\cong\BT_R(\Phi_R(G))
$$ 
which coincides with $\rho_R(G)$ if $G$ is infinitesimal.
\end{Thm}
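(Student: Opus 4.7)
The plan is to build $\tilde\rho_R(G)$ as a composition involving $\rho_R(\hat G)$ and the functoriality of $\Phi_R$ and $\BT_R$, and then to verify it is an isomorphism by reduction to geometric points in a universal setting. First I would reduce to $p$ nilpotent in $R$, since both $\hat G$ and $\BT_R(\Phi_R(G))$ are compatible with projective limits over $R/p^nR$. For such $R$, the formal completion $\hat G$ is an infinitesimal $p$-divisible group, so $\Phi_R^1(\hat G)$ is a nilpotent display. The natural inclusion $\hat G \hookrightarrow G$ induces a homomorphism of displays $\Phi_R^1(\hat G) \to \Phi_R(G)$, and applying $\BT_R$ together with the isomorphism $\rho_R(\hat G)\colon \hat G \xrightarrow{\sim} \BT_R^1(\Phi_R^1(\hat G))$ of Lemma \ref{Le-Phi-BT} produces
$$
\tilde\rho_R(G)\colon \hat G \xrightarrow{\rho_R(\hat G)} \BT_R(\Phi_R^1(\hat G)) \longrightarrow \BT_R(\Phi_R(G)).
$$
By construction this is functorial in $G$ and $R$ and coincides with $\rho_R(G)$ when $G$ is infinitesimal, since then $\hat G = G$ and the second arrow is the identity.

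The substantive point is that $\tilde\rho_R(G)$ is an isomorphism. I would reduce to the universal case via a small presentation $\pi\colon \Spec A \to \BTst \times \Spec\ZZ_p$ supplied by Corollary \ref{Co-small-present}, with $A$ excellent, $p$-adic, reduced mod $p$, and with perfect residue fields at closed points. Faithful flatness of $\pi$ reduces the question to the universal $\mathcal{G}$ over $A$. Since both $\hat{\mathcal{G}}$ and $\BT_A(\Phi_A(\mathcal{G}))$ are formal schemes over $A$ whose underlying topological space is $\Spec A$, flatness arguments and Nakayama's lemma allow one to check $\tilde\rho$ at closed points of $\Spec A/p^mA$. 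Over the perfect residue field $k$ at such a point, $\mathcal{G}_k$ sits in a short exact sequence
$$
0 \to \hat{\mathcal{G}}_k \to \mathcal{G}_k \to \mathcal{G}_k^{et} \to 0
$$
with $\mathcal{G}_k^{et}$ étale, which yields a corresponding short exact sequence of displays. The key ingredient is that $\BT_k$ annihilates the display of an étale $p$-divisible group, so the map $\BT_k(\Phi_k^1(\hat{\mathcal{G}}_k)) \to \BT_k(\Phi_k(\mathcal{G}_k))$ is an isomorphism; combined with $\rho_k(\hat{\mathcal{G}}_k)$ this shows that $\tilde\rho_k(\mathcal{G}_k)$ is an isomorphism.

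Uniqueness is formal: for any candidate $\sigma_R(G)$ satisfying the hypotheses, naturality along $\hat G \hookrightarrow G$, the identification $\widehat{\hat G} = \hat G$, and the requirement $\sigma_R(\hat G) = \rho_R(\hat G)$ together force $\sigma_R(G)$ to factor as the composition defining $\tilde\rho_R(G)$. The principal obstacle in the whole argument is the input that $\BT_R$ kills displays arising from étale $p$-divisible groups; this rests on a direct analysis of Zink's construction of $\BT$ on non-nilpotent displays, using that for such a display the Lie algebra $P/Q$ vanishes and $V^\sharp$ is invertible, so the iterated Verschiebung produces nothing in the limit. Once that compatibility is secured, the combination of $\rho_R$ with faithfully flat descent along $\pi$ delivers $\tilde\rho_R(G)$ for arbitrary $(R,G)$.
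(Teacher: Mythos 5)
The construction you propose does not get off the ground in general: for a $p$-divisible group $G$ over a ring $R$ in which $p$ is nilpotent, the formal completion $\hat G$ is a formal Lie group, but it is \emph{not} in general a $p$-divisible group (let alone an infinitesimal one), because the heights of its fibres can jump. Take $G=E[p^\infty]$ for an elliptic curve over $\FF_p[t]$ that is generically ordinary with a supersingular fibre: the fibres of $\hat G$ have height $1$ generically and height $2$ at the supersingular point. Consequently $\Phi_R^1(\hat G)$ is undefined, there is no homomorphism of displays $\Phi_R^1(\hat G)\to\Phi_R(G)$, and the composite you write down for $\tilde\rho_R(G)$ does not exist. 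The connected--\'etale sequence you invoke exists over fields and henselian local rings, but not over a general base, and it cannot be produced by fppf localisation. This is precisely the difficulty the theorem addresses: the paper itself observes that \emph{if} $G$ is an extension of an \'etale group by an infinitesimal group, then the statement follows formally from Lemma \ref{Le-BT-Phi-etale} --- essentially your argument --- so the entire content of the proof is the case where no such extension structure is available.

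The paper's actual route is: (1) define $\tilde\rho_A(G)$ directly, as the limit of the $\rho_{A/I^n}(G_{A/I^n})$, when $A$ is $I$-adic and $G_{A/I}$ is infinitesimal; (2) take for $A$ the presentation of $\BTst\times\Spec\ZZ_p$ built from Rapoport--Zink isogeny spaces (Proposition \ref{Pr-AG-exist}), where the special fibre of the universal group is isoclinic, so that $\tilde\rho_A$ is defined on the relevant components; (3) descend to arbitrary $R$. The hard point, which has no counterpart in your sketch, is checking that the descent datum is compatible with $\tilde\rho_A$; this is done by comparing the construction over the infinitesimal locus with the one over the ordinary locus (diagram \eqref{Diag-BT-Phi} and Lemma \ref{Le-BT-Phi}), reducing through a chain of injective base changes to the case of $\mu_{p^\infty}$ over $W_n(L)$ with $L$ a complete algebraically closed field. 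Your uniqueness argument also leans on naturality along $\hat G\hookrightarrow G$, which again is not a morphism of $p$-divisible groups; uniqueness instead follows from functoriality in $R$ combined with the faithfully flat presentation. A smaller point: Lemma \ref{Le-BT-Phi-etale} is proved by showing that $F_1-1$ is bijective on $\hat W(N)\otimes_{W(R)}P$ because $F_1$ is topologically nilpotent there, not by an argument about iterates of $V^\sharp$ on the display itself.
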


If $G$ is an extension of an \'etale $p$-divisible group
by an infinitesimal $p$-divisible group, Theorem \ref{Th-BT-Phi} 
follows from Lemma \ref{Le-BT-Phi-etale} below because both sides of 
$\tilde\rho_R(G)$ preserve short exact sequences.

\begin{Lemma}
\label{Le-BT-Phi-etale}
If $G$ is \'etale, then $\BT_R(\Phi_R(G))$ is zero.
\end{Lemma}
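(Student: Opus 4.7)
The plan is to observe that for an étale $G$ the display $\Phi_R(G)$ has $Q=P$ and bijective linearised $F_1$, and then to read off from Zink's construction of $\BT$ that such displays are annihilated.

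Write $\Phi_R(G)=(P,Q,F,F_1)$. By the construction of $\Phi_R$ in Proposition \ref{Pr-disp-functor} (together with the defining isomorphism $\Theta_R\cong\Upsilon_R\circ\Phi_R$), the submodule $Q\subset P$ is the kernel of the natural surjection $P\to\Lie(G)$. Since $G$ is étale, $\Lie(G)=0$, so $Q=P$. The display axiom that the image of $F_1$ generate $P$ as a $W(R)$-module then amounts to surjectivity of the linearisation $F_1^\sharp:P^{(1)}\to P$. Since $F_1^\sharp$ is a surjection of finitely generated projective $W(R)$-modules of the same locally constant rank, it is in fact an isomorphism; equivalently, $V^\sharp:P\to P^{(1)}$ is an isomorphism.

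I would then invoke Zink's construction of $\BT$ from \cite[Thm.~81]{Zink-Disp}. For a nilpotent $R$-algebra $\mathcal{N}$, the abelian group $\BT(\PPP)(\mathcal{N})$ is presented as the cokernel of an operator of the shape $\iota-(F_1\otimes v)$ on $Q\otimes_{W(R)}\hat W(\mathcal{N})$, where $\hat W(\mathcal{N})\subset W(R\oplus\mathcal{N})$ is the $v$-adically complete and separated ideal of Witt vectors supported on $\mathcal{N}$. In our situation $Q=P$ and $\iota=\id_P$, so this becomes $\id-(F_1\otimes v)$. Since $\mathcal{N}$ is a nilpotent ideal, every element of $\hat W(\mathcal{N})$ is killed by a sufficiently high power of $v$, so the series $\sum_{n\ge 0}(F_1\otimes v)^n$ converges termwise and furnishes an inverse to $\id-(F_1\otimes v)$. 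Hence the cokernel vanishes and $\BT_R(\Phi_R(G))(\mathcal{N})=0$ for every nilpotent $\mathcal{N}$, i.e.\ $\BT_R(\Phi_R(G))=0$.

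The main difficulty is notational rather than conceptual: one must match the heuristic above with Zink's precise Cartier-module presentation of $\BT$, carefully tracking the various $\sigma$-twists and identifying the module on which the cokernel is taken. Once this bookkeeping is in place the geometric-series inversion is immediate from the topological nilpotence of $v$ on $\hat W(\mathcal{N})$; as a sanity check the same heuristic matches the classical picture over a perfect field, where an étale Dieudonné module has bijective $V$ and the associated formal group is trivially zero.
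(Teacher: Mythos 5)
Your reduction is the right one and is exactly the first half of the paper's proof: for \'etale $G$ the display $\Phi_R(G)=(P,Q,F,F_1)$ has $Q=P$ (since $\Lie(G)=0$) and the linearisation of $F_1$ is a surjection, hence a bijection, of projective $W(R)$-modules of equal rank. The gap is in the second half, where you justify inverting the defining operator of $\BT_R$. The operator whose cokernel computes $\BT_R(\PPP)(N)$ is $F_1-1$ on $\hat W(N)\otimes_{W(R)}P$, where $F_1$ acts through the \emph{Frobenius} on the Witt-vector factor, $F_1(\xi\otimes x)=f(\xi)\otimes F_1(x)$; no Verschiebung enters. (A map of the shape ``$F_1\otimes v$'', $f$-linear in one tensor factor and $v$-linear in the other, is not even well defined over $W(R)$.) Correspondingly, the geometric series $\sum_{m\ge0}F_1^m$ converges because $f$ is pointwise nilpotent on $\hat W(N)$ when $N$ is a nilpotent algebra over a ring in which $p$ is nilpotent, so the series applied to any fixed element has only finitely many nonzero terms. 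Your stated reason --- that every element of $\hat W(N)$ is killed by a sufficiently high power of $v$ --- is false: $v$ is injective on Witt vectors, so no nonzero element is killed by any power of $v$. Nor does a $v$-adic reading save the step: $\hat W(N)$ is the group of Witt vectors with finitely many nonzero components, which is $v$-adically separated but not complete, so a series of the form $\sum_m F_1^m(x)\otimes v^m(\xi)$ has no limit in $\hat W(N)\otimes P$. Once you replace $v$ by $f$ and invoke the pointwise nilpotence of $f$ on $\hat W(N)$, your argument coincides with the paper's proof.
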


\begin{proof}
Let $\Phi_R(G)=\PPP=(P,Q,F,F_1)$. We have $P=Q$, and $F_1:P\to P$
is an $f$-linear isomorphism. Let $N$ be a nilpotent
$R/p^nR$-algebra for some $n$. By the definition of $\BT_R$,
the group $\BT_R(\PPP)(N)$ is the cokernel of the endomorphism $F_1-1$ 
of $\hat W(N)\otimes_{W(R)}P$. This endomorphism is bijective
because here $F_1$ is nilpotent since $f$ is nilpotent on $\hat W(N)$.
\end{proof}

\begin{proof}[Proof of Theorem \ref{Th-BT-Phi}]
Assume that $G$ is a $p$-divisible group over an $I$-adic 
ring $A$ such that $G_{A/I}$ is infinitesimal. Then
$G_n=G_{A/I^n}$ is infinitesimal as well. Since formal Lie
groups over $A$ are equivalent to compatible systems of formal
Lie groups over $A/I^n$ for $n\ge 1$, the isomorphisms 
$\rho_{A/I^n}(G_n)$ define the desired isomorphism 
$\tilde\rho_A(G)$, which is clearly unique. 
The construction is functorial in the triple $(A,I,G)$.

Assume that in addition a ring homomorphism $u:A\to B$ is
given such that $B$ is $J$-adic and such that $G_{B/J}$
is ordinary; we do note assume that $u(I)\subseteq J$. 
There is a unique exact sequence of $p$-divisible groups 
over $B$
$$
0\to H\xrightarrow\alpha G_B\to H'\to 0
$$ 
such that $H$ is of multiplicative type and $H'$ is \'etale. 
Consider the following diagram of isomorphisms; 
cf.\ Lemma \ref{Le-BT-Phi-etale}.
\begin{equation}
\label{Diag-BT-Phi}
\xymatrix@M+0.2em@C+2em{
\hat H \ar[r]^-{\tilde\rho_B(H)} \ar[d]_\alpha &
\BT_B(\Phi_B(H)) \ar[d]^\alpha \\
\hat G_B \ar[r]^-{(\tilde\rho_A(G))_B} &
(\BT_A(\Phi_A(G)))_B
}
\end{equation}

Since the construction of $\tilde\rho_A(G)$ is functorial in 
$(A,I)$, the diagram commutes if $u(I)\subseteq J$.
If Theorem \ref{Th-BT-Phi} holds, \eqref{Diag-BT-Phi} 
commutes always.
We show directly that \eqref{Diag-BT-Phi} commutes in a special 
case that allows to define $\tilde\rho_R$ in general by descent.
As in the proof of Proposition \ref{Pr-AG-exist} 
we consider a decent
$p$-divisible group $\GG$ over a perfect field $k$ and an affine 
open subscheme $U=\Spf A$ of the isogeny space of $\GG$ over
$\Spf W(k)$. Let $U_{\red}=\Spec A/I$ and let
$G$ be the univeral $p$-divisible group over $A$. By passing
to a connected component of $U$ we may assume that $A$ is integral.
Since $A$ has no $p$-torsion and since $A/pA$ is regular, 
$pA$ is a prime ideal. 
Let $B=\hat A_{pA}$ be the complete local ring of $A$ at this
prime and let $J=pB$.

\begin{Lemma}
\label{Le-BT-Phi}
For this choice of $(A,I,B,J,G)$ the diagram \eqref{Diag-BT-Phi}
is defined and commutes.
\end{Lemma}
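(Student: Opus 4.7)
The plan is first to pin down what definedness of \eqref{Diag-BT-Phi} requires of $\GG$, then to verify commutativity using Theorem \ref{Th-formal-classif} applied to $B$.

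For definedness, the universal group $G$ over $A$ is quasi-isogenous to $\GG$ over every reduced characteristic $p$ quotient (by the Rapoport--Zink moduli description), so its Newton polygon is constantly that of $\GG$ throughout $\Spec A/pA$. Consequently $G_{A/I}$ is infinitesimal iff $\GG$ has only positive Newton slopes, and $G_{B/J}$ is ordinary iff $\GG$'s slopes lie in $\{0,1\}$ (since $B/J$ is the fraction field of $A/pA$). Both conditions together force $\GG$ to be of multiplicative type; hence $H=G_B$, $H'=0$, the vertical maps $\alpha$ are identities, and \eqref{Diag-BT-Phi} reduces to the equality $\tilde\rho_B(G_B) = (\tilde\rho_A(G))_B$ as isomorphisms $\hat G_B \cong \BT_B(\Phi_B(G_B))$.

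For commutativity, I would exploit that $B$ is a complete discrete valuation ring with uniformizer $p$, so $B$, and hence $W(B)$, has no $p$-torsion. By Theorem \ref{Th-formal-classif} the functor $\Phi_B^1$ from formal $p$-divisible groups over $B$ to nilpotent displays is an equivalence, hence fully faithful; equality of two morphisms of formal $p$-divisible groups over $B$ can thus be checked after applying $\Phi_B$. The isomorphism $\tilde\rho_B(G_B)$ is the inverse limit of the $\rho_{B/p^nB}(G \otimes B/p^nB)$, each of which under $\Phi_{B/p^nB}$ yields the canonical identity by Lemma \ref{Le-Phi-BT}; in the limit $\Phi_B(\tilde\rho_B(G_B))$ is the canonical identity on $\Phi_B(G_B)$. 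Similarly $\Phi_A(\tilde\rho_A(G))$ is the canonical identity on $\Phi_A(G)$, which base-changes under $u : A \to B$ to the canonical identity on $\Phi_B(G_B)$ by functoriality of $\Phi$ in the ring. Hence both sides of \eqref{Diag-BT-Phi} have the same image under $\Phi_B$, and full faithfulness concludes the argument.

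The main obstacle is the mismatch between the $I$-adic topology on $A$ and the $pB$-adic topology on $B$: generically $u(I) \not\subseteq pB$, since $I$ strictly contains $pA$ in the Rapoport--Zink setup, so the two defining limit constructions for $\tilde\rho$ cannot be compared by direct base change. I bypass this by treating $\tilde\rho_A(G)$ as a single morphism of formal groups over $A$ that base-changes freely, and by carrying out the comparison at the level of displays, where base change along arbitrary ring homomorphisms is transparent and Lemma \ref{Le-Phi-BT} provides explicit control.
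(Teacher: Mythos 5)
The reduction in your first paragraph rests on a false premise and collapses the lemma to a nearly vacuous special case. The quasi-isogeny $\rho:\GG\otimes(-)\to G\otimes(-)$ in the Rapoport--Zink moduli problem is only given over $A/(I^n+pA)$ for each $n$, i.e.\ over the formal completion of $\Spec A/pA$ along $I$, not over $\Spec A/pA$ itself. Hence the Newton polygon of $G$ equals that of $\GG$ only on $\Spec A/I$; at the generic point of $\Spec A/pA$ (which is exactly what $B/J=\mathrm{Frac}(A/pA)$ sees) it generalises, and in fact $G_{B/J}$ is ordinary no matter what $\GG$ is --- this is precisely what Step 1 of the paper's proof establishes, via the universal-deformation property at closed points (the Lubin--Tate case, slope $1/2$ specialising to ordinary at the generic point of $k[[t]]$, already refutes constancy). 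So the two conditions ``$G_{A/I}$ infinitesimal'' and ``$G_{B/J}$ ordinary'' do \emph{not} force $\GG$ to be of multiplicative type: the typical case is $\GG$ isoclinic of slope $d/h$ with $0<d<h$, where $H\subsetneq G_B$ is a proper multiplicative part and $H'\neq 0$. Since the descent argument following the lemma needs \eqref{Diag-BT-Phi} for a presentation covering \emph{all} isoclinic groups, restricting to multiplicative $\GG$ would also destroy the application.

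Second, even granting your reduction, the commutativity argument has a gap at ``$\Phi_A(\tilde\rho_A(G))$ is the canonical identity on $\Phi_A(G)$, which base-changes under $u$.'' The morphism $\tilde\rho_A(G)$ has source $\hat G$, which is not a formal $p$-divisible group over $A$ (its height is $h$ over $A/I$ but drops to $\dim G$ at the generic point of $\Spec A/pA$), so $\Phi_A^1$ cannot be applied to it; $\tilde\rho_A(G)$ is only defined as the $I$-adic limit of the $\rho_{A/I^n}(G_n)$. Converting ``$\Phi^1_{A/I^n}(\rho_{A/I^n}(G_n))$ is canonical for all $n$'' into a statement about $\Phi_B^1\bigl((\tilde\rho_A(G))_B\bigr)$ requires comparing an $I$-adic limit with base change along $A\to B$, where $u(I)\not\subseteq J$ --- which is precisely the difficulty the lemma exists to resolve. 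The paper resolves it by the functoriality of $\xi$ through Steps 1--4: localising at closed points, reducing to one parameter, passing to $W_n(\OOO)$ for $\OOO$ the integers of the completed algebraic closure of $k_2((t))$, and finally to $\mu_{p^\infty}$, where the ideal containment $pA'\subseteq J'$ holds and functoriality of $\tilde\rho$ applies directly. Your proposal does not supply a substitute for that reduction.
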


\begin{proof}
In order that \eqref{Diag-BT-Phi} is defined we need 
that $G_{B/J}$ is ordinary. Then the defect of commutativity 
of \eqref{Diag-BT-Phi}
is an automorphism $\xi=\xi(A,I,B,J,G)$ of $\hat H$,
which is functorial with respect to $(A,I,B,J,G)$.

Step 1. 
For an arbitrary maximal ideal $\Fm$ of $A$ let
$A_1=\hat A_\Fm$, let $I_1$ be the maximal ideal of
$A_1$, let $B_1$ be the complete local ring of $A_1$
at the prime ideal $pA_1$, and let $J_1=pB_1$.
We have compatible injective homomorphisms
$A\to A_1$ with $I\to I_1$ and $B\to B_1$ with $J\to J_1$.
Moreover $A_1\cong W(k_1)[[t_1,\ldots,t_r]]$ for a finite
extension $k_1$ of $k$, and $G_{A_1}$ is a universal deformation.
Thus $G_{B_1/J_1}$ is ordinary, which implies that $G_{B/J}$ is
ordinary, and it suffices to show that 
$\xi(A_1,I_1,B_1,J_1,G_{A_1})=\id$.

Step 2. 
Next we achieve $r=1$ by a blowing-up construction. 
Let $k_2$ be an algebraic closure of the function field
$k_1(\{t_i/t_j\}_{1\le i,j\le r})$,
let $A_2=W(k_2)[[t]]$, let $I_2=tA_2$, let $B_2$ be the completion 
of $A_2$ at the prime ideal ${pA_2}$, and let $J_2=pB_2$. 
Thus $B_2/J_2=k_2((t))$. There is a natural local homomorphism 
$A_1\to A_2$ with $t_i\mapsto[t_i/t_1]t$, which
induces an injective homomorphism $B_1\to B_2$.
Thus it suffices to show that $\xi(A_2,I_2,B_2,J_2,G_{A_2})=\id$.

Step 3. 
Let $L$ be the completion of an algebraic clocure of 
$k_2((t))$ and let $\OOO\subset L$ be its ring of integers. 
For a fixed $n\ge 1$ let $A'=W_n(\OOO)$ and let $I'$ be the 
kernel of $A'\to\OOO/t\OOO$, thus $I'$ is generated by $([t],p)$.
It is easy to see that $A'$ is $I'$-adic, using that $\OOO$
is $t$-adic and that $p^r[t]=v^r[t^{p^r}]$.
Let $B'=W_n(L)$ and $J'=pB'$. The homomorphism $A_2\to A'$
defined by the inclusion $W(k_2)\to W(L)$ and by $t\mapsto [t]$ 
induces a homomorphism $B_2\to B'$
such that the kernels of $B_2\to B'$ for increasing $n$ have zero
intersection. Thus it suffices to show that
$\xi(A',I',B',J',G_{A'})=\id$.

Step 4.
Since $L$ is algebraically closed, $H_L$ is isomorphic
to $\mu_{p^{\infty}}^d$. Since $G_L$ is ordinary, the 
inclusion $H_L\to G_L$ splits uniquely; i.e.\ we have a 
homomorphism $\psi_L:G_L\to\mu_{p^{\infty}}^d$
that induces an isomorphism of the formal completions.
Since $\OOO$ is normal, the Serre dual of $\psi_L$ extends
to a homomorphism over $\OOO$, thus $\psi_L$
extends to a homomorphism 
$\psi_{\OOO}:G_{\OOO}\to\mu_{p^{\infty}}^d$.
The homomorphism $p^n\psi_{\OOO}$ extends to
$\psi':G\to\mu_{p^{\infty}}^d$ over $A'$, and its
restriction $\psi'_B:G_{B'}\to\mu_{p^{\infty}}^d$ over $B$ 
induces an isogeny of the multiplicative parts, which commutes
with the associated $\xi$'s by functoriality. Thus it suffices 
to show that $\xi'=\xi(A',I',B',J',\mu_{p^{\infty}})=\id$.

Since $A'$ is $p$-adic and since $\mu_{p^\infty}$ is infinitesimal
over $A'/pA'$, the element $\xi''=\xi(A',pA',B',J',\mu_{p^{\infty}})$ 
is well-defined, and it induces $\xi'$ by functoriality. 
But we have $\xi''=\id$ because $A'\to B'$ maps $pA'$ into
$J'=pB'$. This proves Lemma \ref{Le-BT-Phi}.
\end{proof}

We continue the proof of Theorem \ref{Th-BT-Phi}
and write $\BT_R(\Phi_R(G))=G^+$.
Let $\Spec A\to\BTst\times\Spec\ZZ_p$ be the presentation
constructed in the proof of Proposition \ref{Pr-AG-exist}
and let $G$ be the universal group over $A$. 
The ring $A$ is $I$-adic such that $G_{A/I}$ is isoclinic.
Thus the above construction applies and gives 
$\tilde\rho_A(G):\hat G\cong G^+$;
here the components of $\Spec A$ where $G$ is \'etale
do not matter in view of Lemma \ref{Le-BT-Phi-etale}.
Let $\Spec A\times_{\BTst\times\Spec\ZZ_p}\Spec A=\Spec C$
and let $\hat C$ be the $p$-adic completion of $C$.

For an arbitrary ring $R$ in which $p$ is nilpotent 
we want to define $\tilde\rho_R$ by descent, starting
from $\tilde\rho_A(G)$. This is possible if and only if
the inverse images of $\tilde\rho_A(G)$ under the two projections
$p_i:\Spec\hat C\to\Spec A$ coincide, i.e.\ if the following
diagram of formal Lie groups over $\hat C$ commutes, 
where $u:p_1^*G\cong p_2^*G$ is the given descent isomorphism.
\begin{equation}
\label{Diag-descent-BT-Phi}
\xymatrix@M+0.2em{
p_1^*\hat G \ar[r]^{\hat u} \ar[d]_{p_1^*(\tilde\rho)} &
p_2^*\hat G \ar[d]^{p_2^*(\tilde\rho)} \\
p_1^*G^+ \ar[r]^{u^+} & p_2^*G^+ 
}
\end{equation}

Let $A=\prod A_i$ be a maximal decomposition so that
each $A_i$ is a domain, let $B_i$ be the complete
local ring of $A_i$ at the prime $pA_i$, and let
$B=\prod B_i$. Since the two homomorphisms $A\to C$ 
are flat and since $A\to B$ is flat and induces
an injective map $A/p^nA\to B/p^nB$, the natural 
homomorphism $C\to B\otimes_AC\otimes_AB=C'$ induces
an injective homomorphism of the $p$-adic completions
$\hat C\to\hat C'$. Thus the commutativity of
\eqref{Diag-descent-BT-Phi} can be verified over
$\hat C'$. Let $H$ be the multiplicative part of
the ordinary $p$-divisible group $G_B$.

Since the construction of $\tilde\rho$ is functorial with 
respect to the projections of $p$-adic rings
$q_1,q_2:\Spec\hat C'\to\Spec B$,
the following diagram of formal Lie groups over $\hat C'$ commutes.
\begin{equation}
\label{Diag-descent-BT-Phi-1}
\xymatrix@M+0.2em{
q_1^*\hat H \ar[r]^{\hat u} \ar[d]_{q_1^*(\tilde\rho)} &
q_2^*\hat H \ar[d]^{q_2^*(\tilde\rho)} \\
q_1^*H^+ \ar[r]^{u^+} & q_2^*H^+ 
}
\end{equation}
Lemma \ref{Le-BT-Phi} implies that the inclusion $H\to G_B$ 
induces an isomorphism of diagrams \eqref{Diag-descent-BT-Phi-1} 
$\cong$ \eqref{Diag-descent-BT-Phi} $\otimes_{\hat C}\hat C'$.
Thus \eqref{Diag-descent-BT-Phi} commutes as well.
\end{proof}

\subsection{Complement to \cite{Lau-Disp}}

The proof that $\BT_R^1$ is an equivalence in \cite{Lau-Disp}
proceeds along the following lines. First, by \cite{Zink-Disp} 
the functor is always faithful, and fully faithful if $R$ is
reduced over $\FF_p$. Second, by using an $\infty$-smooth 
presentation of $\BTst^o$ as in Lemma \ref{Le-BT-pres}, 
one deduces that $\BT_R^1$ is essentially surjective if $R$
is reduced over $\FF_p$. Using this, one shows that $\BT_R^1$ is
fully faithful in general, and the general equivalence follows.

The second step is based on the following consequence of
the first step. A faithfully flat homomorphism of reduced 
rings $R\to S$ is called an \emph{admissible covering} if
$S\otimes_RS$ is reduced. In this case, a formal
$p$-divisible group $G$ over $R$ lies in the image of
$\BT_R^1$ if $G_S$ lies in the image of $\BT_S^1$.
In \cite[Sec.~3]{Lau-Disp}, some effort is needed 
to find a sufficient supply of admissible coverings.

The proof can be simplified as follows
if one starts with a reduced presentation 
$\pi:\Spec A'\to\BTst\times\Spec\ZZ_p$ with excellent $A'$
such that $A'/\Fm$ is perfect for all maximal ideals $\Fm$ 
of $A'$; see Corollary \ref{Co-small-present}.
Let $\Spec A\to\BTst^o$ be the restriction of $\pi$.
As in \cite{Lau-Disp} it suffices to show that the universal
group $G$ over $A$ lies in the image of $\BT_{A}^1$. 
Since $A$ is excellent, this follows from \cite{Zink-Disp}. 
A direct argument goes as follows: The homomorphisms 
$A\to\prod_{\Fm}A_{\Fm}$ 
and $A_{\Fm}\to\hat A_{\Fm}$ are admissible coverings,
which reduces the surjectivity of $\BT_{A}^1$ to the surjectivity
of $\BT_{A/\Fm^n}^1$. By deformation theory this is reduced
to the case of the perfect fields $A/\Fm$, which is classical.

\subsection{Erratum to \cite{Lau-Disp}}
\label{Subse-Erratum}

\cite[Lemma 3.3]{Lau-Disp} asserts that if $R$ is a noetherian
ring and if $R\to S$ and $S\to T$ are admissible coverings,
then $R\to T$ is an admissible covering too. This is false;
see Example \ref{Ex-Erratum} below. The proof assumes incorrectly 
that a field extension $L/K$ such that $L\otimes_KL$ is reduced 
must be separable. The following part loc.\ cit.\ is proved
correctly.

\begin{Lemma}
\label{Le-Erratum}
Let $R\to S$ be a faithfully flat homomorphism of reduced
rings where $R$ is noetherian such that for all minimal 
prime ideals $\xi\subset S$ and $\eta=\xi\cap R$ the field
extension $R_\eta\to S_\xi$ is separable. Then $S\otimes_RS$
is reduced.
\end{Lemma}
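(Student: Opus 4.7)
The plan is to reduce the statement to the case where $R=K$ is a field, and then to embed $S\otimes_K S$ into a product of tensor products of separable field extensions of $K$, each of which is reduced.

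For the reduction, I would use the total ring of fractions $Q(R)=\prod_\eta R_\eta$, a finite product of fields indexed by the minimal primes of $R$, which makes sense because $R$ is reduced noetherian. Since $S$ is $R$-flat, so is $S\otimes_R S$, and the injection $R\hookrightarrow Q(R)$ yields
\[
S\otimes_R S\;\hookrightarrow\;(S\otimes_R S)\otimes_R Q(R)\;=\;\prod_\eta (S_\eta\otimes_{R_\eta} S_\eta),
\]
where $S_\eta=S\otimes_R R_\eta$. By going-down for the flat map $R\to S$, every minimal prime of $S_\eta$ is the image of a minimal prime $\xi$ of $S$ with $\xi\cap R=\eta$, so its residue field is $\kappa(\xi)$, which by hypothesis is separable over $R_\eta=\kappa(\eta)$. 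Hence it suffices to prove the statement when $R=K$ is a field, $S$ is reduced, and the residue fields $\kappa(\xi)$ at the minimal primes of $S$ are separable over $K$.

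In the field case, since $S$ is reduced and every prime contains a minimal prime, the natural map $S\hookrightarrow T:=\prod_{\xi\in\mathrm{Min}(S)}\kappa(\xi)$ is injective. Tensoring over the field $K$ is exact, so $S\otimes_K S\hookrightarrow T\otimes_K T$. I would then use the elementary observation that, for any family $(A_i)_{i\in I}$ of $K$-vector spaces and any $K$-vector space $B$, the canonical map $(\prod_i A_i)\otimes_K B\to\prod_i (A_i\otimes_K B)$ is injective: writing a prospective kernel element as $\sum_k (a_i^{(k)})_i\otimes b^{(k)}$ with the $b^{(k)}$ linearly independent over $K$ forces $a_i^{(k)}=0$ for all $i,k$. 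Applied twice, this produces an embedding
\[
T\otimes_K T\;\hookrightarrow\;\prod_{\xi,\xi'}\kappa(\xi)\otimes_K \kappa(\xi').
\]
Each factor on the right is reduced because $\kappa(\xi)/K$ is separable, which is equivalent to $\kappa(\xi)\otimes_K A$ being reduced for every reduced $K$-algebra $A$ (in particular for the field $\kappa(\xi')$). Therefore $S\otimes_K S$ is reduced.

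The main obstacle is the well-known failure of tensor products to commute with infinite direct products, so one must be slightly careful in passing from the embedding $S\hookrightarrow T$ to a map whose target is a product of the pairwise tensor products $\kappa(\xi)\otimes_K \kappa(\xi')$; this is resolved by the injectivity observation above. The remaining ingredients — flat going-down, the description of $Q(R)$ as a finite product of fields, and the characterisation of separable field extensions in terms of geometric reducedness — are standard.
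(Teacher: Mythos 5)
Your proof is correct and follows the standard route that the paper (deferring to the proof of Lemma~3.3 in \cite{Lau-Disp}) has in mind: reduce to the field case by embedding $S\otimes_RS$ into its base change to the total quotient ring $Q(R)=\prod_\eta\kappa(\eta)$, then embed $S\otimes_KS$ into a product of rings $\kappa(\xi)\otimes_K\kappa(\xi')$, each reduced by separability. Your explicit check that $(\prod_iA_i)\otimes_KB\to\prod_i(A_i\otimes_KB)$ is injective is exactly the care needed since $S$ may have infinitely many minimal primes; no gaps.
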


The incorrect \cite[Lemma 3.3]{Lau-Disp} is only used in 
the proof of \cite[Prop.~3.4]{Lau-Disp}, where it can be 
avoided as follows.
For certain rings $A\to\hat A\to\hat B$ one needs that
$\hat B\otimes_A\hat B$ is reduced. The proof shows that 
$A\to\hat A$ and $\hat A\to\hat B$ satisfy
the hypotheses of Lemma \ref{Le-Erratum}. Thus $A\to\hat B$
satisfies these hypotheses as well, and the assertion follows.

\begin{Example}
\label{Ex-Erratum}
Let $K$ be a field of characteristic $p$ and let
$a,b,c$ be part of a $p$-basis of $K^{1/p}$ over $K$.
Let $L=K(X,a+bX)$ and $M=L(Y,a+cY)$ where $X$ and $Y$
are algebraically independent over $K$. Then
$L\otimes_KL$ and $M\otimes_LM$ are reduced, but
$M\otimes_KM$ is not reduced. In particular, $L$ is
not separable over $K$.
\end{Example}


\end{document}